\renewcommand{\subset}{\subseteq}
\newcommand{\lcb}{\left\lbrace} 
\newcommand{\rcb}{\right\rbrace} 
\newcommand{\cb}[1]{\lcb #1 \rcb} 
\newcommand{\cbOf}[1]{\mathopen{}\lcb #1 \rcb\mathclose{}} 
\newcommand{\lab}{\left[} 
\newcommand{\rab}{\right]} 
\newcommand{\ab}[1]{\lab #1 \rab} 
\newcommand{\abOf}[1]{\!\ab{#1}} 
\newcommand{\lb}{\left(} 
\newcommand{\rb}{\right)} 
\newcommand{\br}[1]{\lb #1 \rb} 
\newcommand{\brOf}[1]{\!\br{#1}} 
\newcommand{\abs}[1]{\left| #1 \right|} 
\newcommand*{\E}{\mathbb{E}} 
\newcommand*{\V}{\mathbb{V}} 
\let\Pr\relax
\newcommand*{\Pr}{\mathbb{P}} 
\newcommand{\sizedMid}[2]{#1 \, \kern-\nulldelimiterspace\mathopen{}\left| \vphantom{#1}\,#2\right.\mathclose{}\kern-\nulldelimiterspace}
\newcommand{\EOf}[1]{\E\abOf{#1}}
\newcommand{\VOf}[1]{\V\abOf{#1}}
\newcommand{\PrOf}[1]{\Pr\mathopen{}\lb #1 \rb\mathclose{}}
\newcommand{\Prof}[1]{\Pr(#1)}
\DeclareMathOperator{\diam}{\mathsf{diam}}
\DeclareMathOperator{\ball}{\mathrm{B}}
\DeclarePairedDelimiterX\Set[1]{\lbrace}{\rbrace}%
{  #1 }
\newcommand{\Ex}{\E\expectarg}
\DeclarePairedDelimiterX{\expectarg}[1]{[}{]}{%
	\ifnum\currentgrouptype=16 \else\begingroup\fi
	\activatebar#1
	\ifnum\currentgrouptype=16 \else\endgroup\fi
}
\newcommand{\innermid}{\nonscript\;\delimsize\vert\nonscript\;}
\newcommand{\activatebar}{%
	\begingroup\lccode`\~=`\|
	\lowercase{\endgroup\let~}\innermid 
	\mathcode`|=\string"8000
}
\newcommand*{\mc}[1]{\mathcal{#1}}
\newcommand*{\mb}[1]{\mathbb{#1}}
\newcommand*{\mr}[1]{\mathrm{#1}}
\newcommand*{\ms}[1]{\mathsf{#1}}
\newcommand*{\mo}[1]{\mathbf{#1}}
\newcommand*{\mf}[1]{\mathfrak{#1}}
\newcommand{\N}{\mathbb{N}}
\newcommand{\R}{\mathbb{R}}
\newcommand{\pr}{^\prime}
\newcommand{\prr}{^{\prime\prime}}
\newcommand{\prrr}{^{\prime\prime\prime}}
\def\integral from #1to #2of #3by #4;{\int_{#1}^{#2} \! #3 \mathrm{d}#4} %
\def\integralMeasure in #1of #2by #3of #4;{\int_{#1} \! #2{#4} #3{\mathrm{d}#4}} %
\def\mapping #1from #2to #3;{#1 \colon #2 \rightarrow #3}
\def\mappingDef #1from #2to #3maps #4to #5;{#1 \colon #2 \rightarrow #3,\ #4 \mapsto #5}
\def\seq #1by #2;{\br{#1}_{#2\in\N}}
\def\seqInText #1by #2;{(#1)_{#2\in\N}}
\newcommand{\innerProduct}[2]{\left\langle#1\,,\, #2\right\rangle}
\newcommand{\ip}[2]{\innerProduct{#1}{#2}}
\newcommand{\lebesgue}{\mathcal{L}}
\newcommand{\lebesguePow}[1]{\lebesgue^{#1}}
\newcommand{\sgn}{\mathsf{sgn}} 
\newcommand{\dl}{\mathrm{d}}
\def\converges for #1to #2;{\xrightarrow{#1} #2}
\def\convergesAlmostSurely for #1to #2;{\xrightarrow{#1}_{\mathsf{fs}} #2}
\def\convergesInProbability for #1to #2;{\xrightarrow{#1}_{\mathsf{p}} #2}
\def\convergesInL #1for #2to #3;{\xrightarrow{#2}_{\lebesguePow{#1}} #3}
\newcommand{\ind}{\mathds{1}}
\newcommand{\indOf}[1]{\ind_{\!#1}}%
\newcommand{\normof}[1]{\Vert #1 \Vert}
\newcommand{\normOf}[1]{\left\Vert #1 \right\Vert}
\newcommand{\equationFullstop}{\, .}
\newcommand{\eqfs}{\equationFullstop}
\newcommand{\equationComma}{\, ,}
\newcommand{\eqcm}{\equationComma}
\newcommand{\euler}{\mathrm{e}}
\DeclareMathOperator*{\argmin}{arg\,min}
\def\postBoxSkip{1.0ex}
\def\postBoxSkipCmd{\vskip\postBoxSkip}
\def\preBoxSkip{1.0ex}
\def\preBoxSkipCmd{\vskip\preBoxSkip}
\declaretheoremstyle[
	bodyfont=\normalfont,
	postfoothook={\postBoxSkipCmd},
	preheadhook={\preBoxSkipCmd},
	mdframed={
		backgroundcolor = black!2,
		startcode={\def\environmentEnumerateLabel{(\roman*)}},
}]{ruledBoxStyle}
\declaretheoremstyle[
	bodyfont=\normalfont,
	postfoothook={\postBoxSkipCmd},
	preheadhook={\preBoxSkipCmd},
	mdframed={
		backgroundcolor=white,
}]{ruledBoxStyleWhite}
\declaretheoremstyle[
	bodyfont=\normalfont,
	postfoothook={\postBoxSkipCmd},
	preheadhook={\preBoxSkipCmd},
	mdframed={
		backgroundcolor=black!2,
		linecolor = black!2,
		tikzsetting = {
			draw = black,
			line width = 2pt,%
			dashed,%
			dash pattern = on 10pt off 3pt
		},
}]{dashedBoxStyle}
\declaretheoremstyle[
	bodyfont=\normalfont,
	postfoothook={\postBoxSkipCmd},
	preheadhook={\preBoxSkipCmd},
	mdframed={
		linecolor = white,
		startcode={\def\environmentEnumerateLabel{(\roman*)}},
		tikzsetting = {
			draw = black,
			line width = 1pt,%
			loosely dotted,
		},
	}
]{dashedStyle}
\declaretheoremstyle[
	bodyfont=\normalfont,
	headformat={\NAME \NOTE},
	postfoothook={\postBoxSkipCmd},
	preheadhook={\preBoxSkipCmd},
	mdframed={
		linecolor = white,
		startcode={\def\environmentEnumerateLabel{(\roman*)}},
		tikzsetting = {
			draw = black,
			line width = 1pt,%
			loosely dotted,
		},
	}
]{dashedStyle2}
\declaretheoremstyle[
	bodyfont=\normalfont,
	postfoothook={\postBoxSkipCmd},
	preheadhook={\preBoxSkipCmd},
	mdframed={
		linecolor = black,
		innerlinewidth=1pt,outerlinewidth=1pt,
		middlelinewidth=1pt,
		linecolor=black,middlelinecolor=white,
		startcode={\def\environmentEnumerateLabel{(\roman*)}},
	}
]{doubleStyle}
\declaretheoremstyle[
	bodyfont=\normalfont,
	postfoothook={\postBoxSkipCmd},
	preheadhook={\preBoxSkipCmd},
	mdframed={
		backgroundcolor = black!4,
		linecolor = black!4,
		startcode={\def\environmentEnumerateLabel{(\alph*)}},
}]{boxStyle}
\declaretheoremstyle[
	headfont=\normalfont\itshape, 
	notefont=\normalfont\itshape, 
	notebraces={}{},
	bodyfont=\normalfont,
	qed=\qedsymbol,
	numbered=no,
	headindent=0pt,
	postheadspace=1ex,
	name={Proof},
	postheadhook={\def\environmentEnumerateLabel{(\roman*)}},
	mdframed={
		hidealllines = true,
		innerrightmargin = 0pt,
		innerleftmargin = 0pt,
		innertopmargin = 0pt,
		innerbottommargin = 0pt,
		leftmargin = 0pt,
		rightmargin = 0pt,
	}
]{proofStyle}
\declaretheoremstyle[
	bodyfont=\normalfont,
	postfoothook={\postBoxSkipCmd},
	preheadhook={\preBoxSkipCmd},
	mdframed={
		backgroundcolor = white,
		linecolor = black,
		startcode={\def\environmentEnumerateLabel{(\alph*)}},
		leftline = false,
		rightline = false,
}]{tobBottomStyle}
\declaretheoremstyle[
bodyfont=\normalfont,
]{standardStyle}
\declaretheorem[style=ruledBoxStyle,name=Definition]{definition}
\declaretheorem[style=ruledBoxStyle,name=Lemma]{lemma}
\declaretheorem[style=ruledBoxStyle,name=Theorem]{theorem}
\declaretheorem[style=ruledBoxStyle,name=Corollary]{corollary}
\declaretheorem[style=ruledBoxStyle,name=Theorem,numbered=no]{theorem*}
\declaretheorem[style=boxStyle,name=Remark]{remark}
\declaretheorem[style=dashedStyle2,name=Assumptions]{assumptions}
\let\proof\@undefined
\let\endproof\@undefined
\declaretheorem[style=proofStyle]{proof}
\def\theoremContentInNewLine{\text{}}
\def\environmentEnumerateLabel{(\roman*)}
\newcounter{subExample}%
\renewcommand{\thmcontinues}[1]{Teil \arabic{subExample}} 
\newcommand{\bigOp}{\mo O_{\Pr}}
\newcommand{\smallop}{\mo o_{\Pr}}
\newcommand{\bigO}{\mo O}
\newcommand{\Ball}[3]{\ball_{#1}(#2,#3)}
\DeclareMathOperator{\tsize}{\ms{entr}}
\DeclareMathOperator{\entrn}{\ms{entr}_\mathit{n}}
\newcommand\leftidx[2]{{\vphantom{#2}}#1#2}
\newcommand{\olb}[2]{\leftidx{^{\mf b}}{\overline{#1#2}}}
\newcommand{\olc}[2]{\leftidx{^{\mf c}}{\overline{#1#2}}}
\newcommand{\olt}[2]{\overline{#1#2}^{2}}
\newcommand{\ol}[2]{\overline{#1#2}}
\newcommand{\assu}[2]{\hyperlink{#1}{\texttt{#2}}}
\newcommand{\assitem}[2]{%
	\item[]\hspace*{-0.5cm}%
	\hypertarget{#1}{\texttt{#2}}%
	\index[iass]{\texttt{#2}}%
	:\\}%
\begin{document}
\title{Convergence Rates for the Generalized Fréchet Mean via the Quadruple Inequality}
\subtitle{\url{https://github.com/chroetz/PaperQuadRate19}}
\author{Christof Schötz\\\href{mailto:math@christof-schoetz.de}{math@christof-schoetz.de}}
\date{}
\maketitle
\section*{Abstract}
For sets $\mathcal Q$ and $\mathcal Y$, the \textit{generalized Fréchet mean} $m \in \mathcal Q$ of a random variable $Y$, which has values in $\mathcal Y$, is any minimizer of $q\mapsto \mathbb E[\mathfrak c(q,Y)]$, where $\mathfrak c \colon \mathcal Q \times \mathcal Y \to \mathbb R$ is a cost function. 
There are little restrictions to $\mathcal Q$ and $\mathcal Y$. In particular, $\mathcal Q$ can be a non-Euclidean metric space.
We provide convergence rates for the empirical generalized Fréchet mean. 
Conditions for rates in probability and rates in expectation are given.
In contrast to previous results on Fréchet means, we do not require a finite diameter of the $\mathcal Q$ or $\mathcal Y$. Instead, we assume an inequality, which we call \textit{quadruple inequality}. It generalizes an otherwise common Lipschitz condition on the cost function. This quadruple inequality is known to hold in Hadamard spaces. We show that it also holds in a suitable way for certain powers of a Hadamard-metric.
\tableofcontents
\section{Introduction}
Let $\mc Q, \mc Y$ be sets, $Y$ a $\mc Y$-valued random variable, and $\mf c \colon \mc Y\times \mc Q \to \R$ a \textit{cost function}. Every element $m$ of the set  $\argmin_{q\in\mc Q}\Ex{\mf c(Y,q)}$ is called \textit{generalized Fréchet mean} or \textit{$\mf c$-Fréchet mean}.
Given independent copies $Y_1, \dots, Y_n$ of $Y$, natural estimators of the generalized Fréchet mean are elements $m_n$ of  the set $\argmin_{q\in\mc Q}  \frac1n\sum_{i=1}^n \mf c(Y_i,q)$. 
Our goal is to find suitable conditions for establishing convergence rates for such plug-in estimators. 

The described setting generalizes the usual setting for \textit{Fréchet means}, where $\mc Q = \mc Y$ is a metric space with metric $d : \mc Q \times \mc Q \to [0, \infty)$ and $\mf c = d^2$, which has been introduced in \cite{frechet48}.

The Fréchet mean has been investigated in many specific settings, often under a different name, e.g., center of mass or barycenter.
In the context of Riemannian manifolds, it has been studied -- among others -- by \cite{bhattacharya03}.
An asymptotic normality result for generalized Fréchet means on finite dimensional manifolds is shown in \cite{eltzner19}.
For complete metric spaces of nonpositive curvature, called Hadamard spaces, \cite{sturm03} shows how some classical results of probability theory in Euclidean spaces (e.g., strong law of large numbers, Jensen's inequality) can be transferred to the Fréchet mean setting. An algorithm for calculating Fréchet means in Hadamard spaces is described in \cite{bacak14}.

One important application of statistics in Hadamard spaces is the space phylogenetic trees. A phylogenetic tree represents the genetic relatedness of biological species. The geometry of the space of phylogenetic trees $T_m$ with $m$ leaves is studied in \cite{billera01}. In particular, it is shown that $T_m$ is a Hadamard space. There has been a lot of recent interest in statistics on $T_m$. E.g., \cite{barden18} show a central limit theorem for the Fréchet mean in $T_m$ and \cite{nye11} apply principal component analysis in that space.

For general metric spaces \cite{ziezold77} shows consistency of the Fréchet mean estimator. This is extended to generalized Fréchet means in \cite{huckemann11}.

The Fréchet mean estimator is a \textit{M-estimator}. Thus, we can build upon many classical and deep results from the M-estimation literature, see, e.g., \cite{vaart96, geer00, talagrand14}.
Using such  M-estimation techniques, rates of convergence in probability for Fréchet means in general bounded metric spaces are obtained in \cite{petersen16}; in fact the authors  consider a more complex regression setting. In \cite{dubey17} results on the analysis of variance in metric spaces are shown. 

Results on convergence rates in expectation, i.e, bounds on $\Ex{d(m,m_n)^2}$, seem to be rare in the literature on the Fréchet mean. Common are convergence rates in probability or exponential concentration. The latter also implies rates in expectation, but under rather strong assumptions. One publication that establishes rates in expectation more directly, for general cost functions in Euclidean spaces is \cite{banholzer17}.

The recent article \cite{legouic18} provides nonasymptotic concentration rates in general bounded metric spaces. Its relation to our results will be discussed in the next subsection.

\subsection{Our Contribution}

Our contribution consists of three parts:
\begin{enumerate}[label=(\alph*)]
\item 
We introduce a condition, which we call \textit{quadruple inequality}, that is used to establish convergence rates in probability and expectation for spaces with infinite diameter, see \autoref{thm:abstr_rate_prob}, \autoref{thm:abstr_rate_exp}, and \autoref{thm:abstr_weak_strong}.
\item 
We formulate our results in the setting of the generalized Fréchet mean with a cost-function $\mf c$ that is not restricted to being the square of a metric.
\item 
We prove a quadruple inequality for exponentiated metrics of Hadamard spaces, \autoref{thm:power_inequ}. We apply it to obtain rates of convergence for estimators of the Fréchet mean of an exponentiated metric.
\end{enumerate}

\cite{petersen16} and \cite{legouic18} show rates of convergence for metric spaces which have a finite diameter (or at least the support of the distribution of observations must be bounded). The proofs in both papers rely on \textit{empirical process theory}. In particular, they make use of \textit{symmetrization} and the \textit{generic chaining} to bound the supremum of an empirical process. But where \cite{legouic18} use that bound to be able to apply \textit{Talagrand's inequality} \cite{bousquet02}, \cite{petersen16} employ a \textit{peeling device} (also called \textit{slicing}; see, e.g., \cite{geer00}) to obtain rates. 
As a consequence, \cite{legouic18} achieve stronger results (nonasymptotic exponential concentration instead of $\bigOp$-statements), but they rely more heavily on the boundedness of the metric.
As our goal is to obtain results for spaces with infinite diameter, our proof technique is closer to \cite{petersen16}, i.e., we also apply a peeling device.

A law of large numbers, such that the estimator of the Fréchet mean converges in probability to the true value, implies that the estimator eventually is in a subset with finite diameter. Thus, for asymptotic rates in probability as in \cite{petersen16}, it is not very restrictive to assume a finite diameter. Our motivation to directly deal with infinite diameter comes from our interest in nonasymptotic results and in rates in expectation (asymptotic or nonasymptotic).

As \cite{petersen16} and \cite{legouic18}, we use the \textit{generic chaining}. Therefore we have entropy bounds as conditions of our theorems. These entropy bounds can be stated by requiring a bound on the \textit{covering numbers} 
\begin{equation*}
	N(Q, d, r) := \min\cbOf{k \in \N \,\middle|\, \exists q_1,\dots,q_k\in\mc Q\colon Q\subset \bigcup_{j=1}^k \ball_r(q_j) }\eqcm
\end{equation*}
where $(\mc Q, d)$ is a metric space, $Q \subset \mc Q$, and $r > 0$.
To be more precise, in a metric space $(\mc Q, d)$, we require $\log N(\ball_\delta(m), d, r) \leq \br{\frac{C\delta}{r}}^D$ for some constants $C,D >0$ and all $0 < r < \delta$, which is the same assumption as in \cite{legouic18}.
We note, that this requirement could be weakened by using the optimal bound on Rademacher (or Bernoulli) processes \cite{bednorz14} at the cost of a more complicated and less comprehensible condition.

In the classical Fréchet mean case, where $(\mc Q, d)$ is a metric space and the cost function is $\mf c = d^2$, the empirical process that has to be bounded consists of functions of the form $y \mapsto d(y,q)^2$ for $q\in\mc Q$. To apply some classical empirical process results, one requires a Lipschitz condition on these functions. In \cite{petersen16} and \cite{legouic18} this \textit{Lipschitz condition} is fulfilled by
\begin{equation}\label{eq:lipbound}
	d(y,q)^2 - d(y,p)^2 \leq  2 \diam(\mc Q) d(q, p)
\end{equation}
for all $y,q, p \in \mc Q$. Thus, a finite diameter is required. We show, that one can instead require that 
\begin{equation}\label{eq:nicequad}
	d(y,q)^2 - d(y,p)^2 - d(z,q)^2 + d(z,p)^2 \leq  2 d(y, z) d(q, p)
\end{equation}
holds for all $y,z,q, p \in \mc Q$ and then bound the supremum of the empirical process even if $\diam(\mc Q) = \infty$. Equation \eqref{eq:nicequad} is a special instance of what we call \textit{quadruple inequality}.

Roughly speaking, the transition from Lipschitz to quadruple condition removes certain squared terms and the right hand side by adding and subtracting further squared terms on the left hand side. This is related to the idea of defining the Fréchet mean as minimizer of $q \mapsto \Ex{d(Y, q)^2 - d(Y, o)^2}$ for an arbitrary fixed point $o\in\mc Q$ instead of $q \mapsto \Ex{d(Y, q)^2}$. Then, for existence of the Fréchet mean, only a first moment condition on $Y$ is required instead of a second moment condition, see \cite[Acknowledgement to Lutz Mattner]{sturm03}. 

The inequality \eqref{eq:nicequad} does not hold in every metric space. But it characterizes Hadamard spaces among geodesic metric spaces, see \cite{berg08}. In Hadamard spaces, \eqref{eq:nicequad} is known as \textit{Reshetnyak’s quadruple inequality} \cite{sturm03} or \textit{quadrilateral inequality} \cite{berg08} and can be interpreted as generalization  of the Cauchy--Schwartz inequality to metric spaces \cite{berg08}. Note that our results are not restricted to geodesic metric spaces.

In (subsets of) Hadamard spaces $(\mc Q, d)$, we can not only utilize the quadruple inequality with the squared metric $d^2$ \eqref{eq:nicequad}. But we show that for $d^a$ with $a \in [1,2]$, we also obtain a version of the quadruple inequality, namely
\begin{equation}\label{eq:power}
	d(y,q)^a - d(y,p)^a - d(z,q)^a + d(z,p)^a \leq  4a 2^{-a} d(y, z)^{a-1} d(q, p)
	\eqcm
\end{equation}
for all $y,z,q, p \in \mc Q$, see \autoref{thm:power_inequ}.
We show that the constant $4a 2^{-a}$ is optimal.
Similar to equation \eqref{eq:lipbound}, one can easily show --- using the mean value theorem --- that 
\begin{equation*}
	d(y,q)^a - d(y,p)^a \leq  a \diam(\mc Q)^{a-1} d(q, p)
\end{equation*}
for $a > 0$, $q,p,y\in\mc Q$, where $(\mc Q, d)$ is an arbitrary metric space. The proof of equation \eqref{eq:power} is much more complicated, see appendix \autoref{app:power_inequality}.

We state our convergence rate results in a general way, where observations live in a space $\mc Y$ and a cost function $\mf c \colon \mc Y \times \mc Q \to \R$ is minimized over $\mc Q$. The quadruple inequality then reads
\begin{equation*}
	\mf c(y,q) - \mf c(y,p) - \mf c(z,q) + \mf c(z,p) \leq  \mf a(y, z) \mf b(q, p)
\end{equation*}
for all $y,z \in \mc Y$ and $q, p \in \mc Q$ and an arbitrary function $\mf a \colon \mc Y \times \mc Y \to [0,\infty)$ and a pseudo-metric $\mf b \colon \mc Q \times \mc Q \to [0,\infty)$. This general formulation includes, among others, arbitrary bounded metric spaces, Hadamard spaces (including Euclidean and non-Euclidean spaces) with an exponentiated metric $d^a$, $a\in[1,2]$, and regression settings with $\mc Q \neq \mc Y$, where observations $(x,y) \in \mc Y$ are described by regression functions $(x \mapsto q(x)) \in \mc Q$.

Furthermore, some trivial statements in appendix \autoref{app:quad_stab} show that the quadruple inequality is stable under many operations such as taking subsets, limits, or product spaces.

We prove -- via a peeling device -- nonasymptotic rates of convergence in probability, \autoref{thm:abstr_rate_prob}. We do not achieve exponential concentration as \cite{legouic18}, but our results can be applied in cases where the cost function is not bounded by a finite constant, i.e., in metric spaces with infinite diameter. Furthermore, we show two ways of obtaining rates in expectation: One -- nonasymptotic -- under the assumption of a stronger version quadruple inequality, \autoref{thm:abstr_rate_exp}; the other -- asymptotic -- with a stricter entropy condition, \autoref{thm:abstr_weak_strong}.

Aside from the application in Hadamard spaces (including the use of the power inequality, \autoref{thm:power_inequ}), we illustrate our results in different toy examples: Euclidean spaces and infinite dimensional Hilbert spaces. In (convex subsets of) Hilbert spaces the Fréchet mean is equal to the expectation. Thus, these examples are interesting as a benchmark, because we can compare results from our general Fréchet mean approach to exact results. In two additional examples, we apply our results to nonconvex subsets of Hilbert spaces and to Hadamard spaces.
\subsection{Outline}
We start by presenting the convergence rates results of \autoref{thm:abstr_rate_prob} (rates in probability) and \autoref{thm:abstr_rate_exp} (rates in expectation) in the abstract setting in \autoref{sec:abstract_results}. The different versions of the quadruple inequality are discussed in \autoref{sec:quadruple}, including the power inequality, \autoref{thm:power_inequ}. This discussion concludes with the statement of \autoref{thm:abstr_weak_strong} (alternative route to rates in expectation). In \autoref{sec:applications}, we apply the abstract results in different settings: Euclidean spaces, infinite dimensional Hilbert spaces, nonconvex sets, and Hadamard spaces.
\section{Abstract Results}\label{sec:abstract_results}
In this section, we prove rates of convergence for the Fréchet mean in a very general setting, see section \ref{ssec:ares:generalsetting}. For rates in probability \autoref{thm:abstr_rate_prob} is stated in section \ref{ssec:ares:inprob} and for rates in expectation \autoref{thm:abstr_rate_exp} is stated in section \ref{ssec:ares:inexpec}. The proofs can be found in appendix \autoref{app:proofs:1and2}. Some remarks on further extensions are given in section \ref{ssec:ares:extensions}.
\subsection{Setting}\label{ssec:ares:generalsetting}
Here we define an \textit{Abstract Setting} in which we will state our most general results. This setting of the generalized Fréchet mean is similar to what is used in \cite{huckemann11, eltzner19}.

Let $\mc Q$ be a set, which is called \textit{descriptor space}. Let $(\mc Y, \Sigma_{\mc Y})$ be a measurable space, which is called \textit{data space}. Let $Y$ be a $\mc Y$-valued random variable. Let $\mf c \colon \mc Y \times \mc Q \to \R$ be a function such that $y\to \mf c (y,q)$ is measurable for every $q\in\mc Q$. We call $\mf c$ \emph{cost function}. Define $F \colon \mc Q \to \R\eqcm q\mapsto \Ex*{\mf c(Y, q)}$, assuming that $\Ex*{\abs{\mf c(Y, q)}} < \infty$ for all $q\in\mc Q$. The function $F$ is called \textit{objective function}. 
Let $n\in\N$. Let $Y_1, \dots, Y_n$ be independent copies of $Y$. Define $F_n \colon \mc Q \to \R\eqcm q\mapsto \frac1n \sum_{i=1}^n \mf c(Y_i, q)$. We call $F_n$ \textit{empirical objective function}. Let $\mf l \colon \mc Q \times \mc Q \to [0,\infty)$ be a function such that $\mf l(m,q)$ measures the \textit{loss} of choosing $q$ given that the true value is $m$. 

We want to bound $\mf l(m, m_n)$ for $m\in\argmin_{q\in\mc Q} F(q)$ and $m_n\in\argmin_{q\in\mc Q} F_n(q)$.
\index[inot]{$\mc Q$}
\index[inot]{$\mc Y$}
\index[inot]{$\Sigma_{\mc Y}$}
\index[inot]{$\mf c$}
\index[inot]{$F$}
\index[inot]{$Y_i$}
\index[inot]{$F_n$}
\index[inot]{$\mf l$}
\index[inot]{$m$}
\index[inot]{$m_n$}
\subsection{Rate of Convergence in Probability}\label{ssec:ares:inprob}
For our result on convergence rates in probability, we make some assumptions, which are listed in the following.
We denote the "closed" ball with center $o\in\mc Q$ of radius $r>0$ in the set $\mc Q$ with respect to an arbitrary distance function $d \colon \mc Q \times \mc Q \to [0 ,\infty)$ as
\begin{equation*}
	\ball_r(o, d) := \cb{q\in\mc Q \colon d(o, q) \leq r}
	\index[inot]{$\ball_r(o, d)$}
	\eqfs
\end{equation*}
\begin{assumptions}
\theoremContentInNewLine
\begin{enumerate}[label=\environmentEnumerateLabel]
\assitem{ass:ex}{Existence}
	We have $\Ex*{\abs{\mf c(Y, q)}} < \infty$ for all $q\in\mc Q$.
	There are $m_n\in\argmin_{q\in\mc Q} F_n(q)$ measurable and $m\in\argmin_{q\in\mc Q} F(q)$.
\assitem{ass:gr}{Growth}
	There are constants $\gamma > 0$ and $c_{\ms g}>0$ such that 
	$F(q)-F(m) \geq c_{\ms g} \mf l(m,q)^\gamma$ for all $q\in\mc Q$.
	\index[inot]{$\gamma$}\index[inot]{$c_{\ms g}$}
\assitem{ass:wquad}{Weak Quadruple}
	There are a function $\mf a \colon \mc Y \times \mc Y \to [0,\infty)$ measurable and a pseudo-metric $\mf b \colon \mc Q \times \mc Q \to [0,\infty)$, such that,
	for all $p,q\in\mc Q$, $y,z\in\mc Y$, we have
	\begin{equation*}
		\olc yq- \olc zq -\olc yp+\olc zp \ \leq \ \mf a(y,z)\, \mf b(q,p)
		\eqcm
	\end{equation*}
	where we use the notation  $\olc yq :=\mf c(y,q)$.\index[inot]{$\olc yq$}
	We call $\mf a$ the \textit{data distance} and $\mf b$ the \textit{descriptor metric}.
	\index[inot]{$\mf a$}
	\index[inot]{$\mf b$}
\assitem{ass:mom}{Moment}
	Let $\zeta \geq 1$. Set
	\begin{equation*}
		\mf M(\zeta) :=
		\begin{cases}
			\Ex*{\mf a(Y\pr,Y)^\zeta}\eqcm &\text{ if } \zeta\geq 2\eqcm\\
			\Ex*{\mf a(Y\pr,Y)^2}^{\frac{\zeta}{2}}\eqcm &\text{ if } \zeta\leq 2\eqcm
		\end{cases}
	\end{equation*}
	where $Y\pr$ is an independent copy of $Y$. We have $\mf M(\zeta)  < \infty$.
	\index[inot]{$\mf M(\zeta)$}
	\index[inot]{$\zeta$}
	\index[inot]{$Y\pr$}
\assitem{ass:ent}{Entropy}
	There are $\alpha, \beta > 0$ with $\frac\alpha\beta < \gamma$ such that
	\begin{equation*}
		\sqrt{\log N(\ball_\delta(m, \mf l), \mf b, r)} \leq c_{\ms e} \frac{\delta^\alpha}{r^\beta}
	\end{equation*}
	for a constant $c_{\ms e}>0$ and all $\delta, r > 0$.
	\index[inot]{$\alpha$}\index[inot]{$\beta$}\index[inot]{$c_{\ms e}$}
\end{enumerate}
\end{assumptions}
Here
\begin{equation*}
	N(A, \mf b, r) = \min\cbOf{k \in \N \,\middle|\, \exists q_1,\dots,q_k\in\mc Q\colon A\subset \bigcup_{j=1}^k \ball_r(q_j, \mf b) }\eqcm
	\index[inot]{$N(Q, d, r)$}
\end{equation*}
is the \textit{covering number} of $A \subset \mc Q$ with respect to $\mf b$-balls $\ball_r(\cdot, \mf b)$ of radius $r$.
\assu{ass:ent}{Entropy} is essentially the same condition as in \cite{legouic18}, but written down for the setting of the generalized Fréchet mean instead of the classical Fréchet mean in metric spaces.

We shortly discuss other assumptions before stating the theorem for rates of convergence in probability.

The measurability assumptions can be weakened by using the \textit{outer expectation}, see \cite{vaart96}. 

In \cite{barrio07}, the \assu{ass:gr}{Growth} condition is called \textit{margin condition}. It is called \textit{low noise assumption} in \cite{legouic18}.
If \assu{ass:gr}{Growth} holds for every distribution of $Y$ and we are in the traditional setting of the (not generalized) Fréchet mean, it implies that the metric space $\mc Q$ has nonpositive curvature:
Assume that $(\mc Q, d)$ is a complete \textit{geodesic space} \cite[Definition 1.1]{sturm03}, i.e., every pair of points $y_1, y_2$ has a \textit{mid-point} $m$, i.e., $\ol {y_1}m=\ol {y_2}m = \frac12 \ol {y_1}{y_2}$, where we use the notation $\ol qp := d(q,p)$. \index[inot]{$\ol qp$}
Set $\mc Y = \mc Q$, $\mf c = d^2$, and $\mf l = d$.
If $\Prof{Y=y_1} = \Prof{Y=y_2} = \frac12$ with $y_1, y_2 \in \mc Q$, the Fréchet mean $m\in\mc Q$ of $Y$ is the mid-point between $y_1$ and $y_2$.
If we assume that the growth condition holds for every distribution of $Y$, in particular, for every uniform 2-point distribution, with $c_{\ms g} = 1$ and $\gamma=2$, then
\begin{equation*}
	\frac12 \ol{y_1}q^2 + \frac12 \ol{y_2}q^2 - \frac12 \ol{y_1}m^2- \frac12 \ol{y_2}m^2 \geq \ol mq^2
	\eqfs
\end{equation*}
As $m$ is the mid-point between $y_1$ and $y_2$, we obtain
\begin{equation*}
	\ol mq^2 \leq \frac12 \ol{y_1}q^2 + \frac12 \ol{y_2}q^2 - \frac14 \ol{y_1}{y_2}^2
	\eqfs
\end{equation*}
This inequality implies that the space $(\mc Q, d)$ has nonpositive curvature \cite[Definition 2.1]{sturm03}. Such spaces are called \textit{Hadamard spaces}. Aside from the \assu{ass:gr}{Growth} condition they also fulfill the quadruple inequality, which we discuss in section \ref{ssse:quad:npc}.

The \assu{ass:wquad}{Weak Quadruple}-condition will be discussed in detail in \autoref{sec:quadruple}. Among other things, we will show that we have in a nice way in all Hadamard spaces, which include the Euclidean spaces.

The following theorem states rates of convergence for the estimator $m_n$ to the true value $m$ measured with respect to the loss function $\mf l$.
\begin{theorem}[Convergence rate in probability]\label{thm:abstr_rate_prob}
	In the \textit{Abstract Setting} of section \ref{ssec:ares:generalsetting}, assume that following conditions hold:
	\assu{ass:ex}{Existence}, \assu{ass:gr}{Growth}, \assu{ass:wquad}{Weak Quadruple}, \assu{ass:mom}{Moment}, \assu{ass:ent}{Entropy}.
	Define
	\begin{equation*}
		\eta_{\beta,n} := 
		\begin{cases} 
			n^{-\frac12} & \text{ for }\beta < 1\eqcm\\
			n^{-\frac12} \log(n+1) & \text{ for } \beta = 1\eqcm\\
			n^{-\frac1{2\beta}} & \text{ for } \beta > 1\eqfs
		\end{cases} 
		\index[inot]{$\eta_{\beta,n}$}
	\end{equation*}
	Then, for all $t > 0$, we have
	\begin{equation*}
		\PrOf{\eta_{\beta,n}^{-\frac{1}{\gamma-\frac{\alpha}{\beta}}} \mf l (m, m_n) \geq t} \leq 
		c \, \mf M(\zeta)\, t^{-\zeta(\gamma-\frac\alpha\beta)}
	\end{equation*}
	where $c > 0$ depends on $\alpha, \beta, \gamma, c_{\ms e}, c_{\ms g}, \zeta$.
\end{theorem}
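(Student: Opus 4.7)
The plan is to use the classical peeling (slicing) device from M-estimation, with the usual Lipschitz-type increment control on the empirical process replaced by the weak quadruple inequality. Set $\epsilon_n := \eta_{\beta,n}^{1/(\gamma - \alpha/\beta)}$ and, for each integer $j \geq 0$, define the shell $S_j := \{q \in \mc Q : 2^j t \epsilon_n \leq \mf l(m, q) < 2^{j+1} t \epsilon_n\}$. On the event $\{m_n \in S_j\}$, \assu{ass:gr}{Growth} forces $F(m_n) - F(m) \geq c_{\ms g}(2^j t \epsilon_n)^\gamma$, and combined with the empirical optimality $F_n(m_n) \leq F_n(m)$ this gives
\begin{equation*}
	\sup_{q \in \ball_{2^{j+1} t \epsilon_n}(m, \mf l)} |(F_n - F)(q) - (F_n - F)(m)| \geq c_{\ms g} (2^j t \epsilon_n)^\gamma.
\end{equation*}
A union bound $\Pr(\mf l(m, m_n) \geq t \epsilon_n) \leq \sum_{j \geq 0} \Pr(m_n \in S_j)$ followed by Markov's inequality of order $\zeta$ then reduces the theorem to a uniform control of the $\zeta$-th moment of the shell supremum.

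The central step is to prove a chaining estimate of the form
\begin{equation*}
	\Ex*{\sup_{q \in \ball_\delta(m, \mf l)} |(F_n - F)(q) - (F_n - F)(m)|^\zeta} \leq C\, \mf M(\zeta)\, \eta_{\beta, n}^\zeta\, \delta^{\zeta \alpha/\beta}.
\end{equation*}
Here \assu{ass:wquad}{Weak Quadruple} plays its central role: applying it with an independent copy $Y'_i$ of each $Y_i$ (symmetrization) shows that the $L^\zeta$-norm of the increment $(F_n - F)(q) - (F_n - F)(p)$ is bounded by a universal multiple of $n^{-1/2}\, \mf M(\zeta)^{1/\zeta}\, \mf b(q, p)$, the two cases in the definition of $\mf M(\zeta)$ reflecting the Khintchine regime ($\zeta \leq 2$) versus the Rosenthal regime ($\zeta \geq 2$) for i.i.d.\ sums. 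Combining this increment bound with \assu{ass:ent}{Entropy} through Dudley-type chaining in $L^\zeta$ (or generic chaining) yields the supremum bound above; the three regimes of $\eta_{\beta, n}$ correspond to the three asymptotics of the entropy integral $\int_0^R r^{-\beta}\, dr$ at the origin: convergent for $\beta < 1$, logarithmic at $\beta = 1$, and polynomially divergent for $\beta > 1$ where a cut-off at scale $n^{-1/2}$ is needed and introduces the slower $n^{-1/(2\beta)}$ rate.

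Assembling the two ingredients, Markov gives $\Pr(m_n \in S_j) \leq C\, \mf M(\zeta)\, \eta_{\beta,n}^\zeta\, (2^j t \epsilon_n)^{-\zeta (\gamma - \alpha/\beta)}$, and the sum over $j \geq 0$ converges geometrically precisely because $\alpha/\beta < \gamma$. The choice $\epsilon_n = \eta_{\beta, n}^{1/(\gamma - \alpha/\beta)}$ is tuned so that $\eta_{\beta, n}^\zeta / \epsilon_n^{\zeta(\gamma - \alpha/\beta)} = 1$, leaving the claimed bound $c\, \mf M(\zeta)\, t^{-\zeta(\gamma - \alpha/\beta)}$. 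The hard part is the chaining step: only $L^\zeta$ moments are available (no sub-Gaussian tail or bounded envelope), the increment control is indirect via the quadruple inequality, and the resulting estimate must simultaneously produce the correct three-regime $n$-dependence $\eta_{\beta, n}$ and the scaling $\delta^{\zeta \alpha/\beta}$ in the shell radius; keeping both right uniformly in $\delta$ is what makes the peeling sum converge to the advertised bound.
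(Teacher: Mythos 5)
Your skeleton — dyadic peeling shells around $m$, the \texttt{Growth}-plus-empirical-optimality argument on each shell, Markov of order $\zeta$, a localized moment bound of the form $\E\bigl[\sup_{q\in\ball_\delta(m,\mf l)}|(F_n-F)(q)-(F_n-F)(m)|^\zeta\bigr]\lesssim \mf M(\zeta)\,\eta_{\beta,n}^\zeta\,\delta^{\zeta\alpha/\beta}$, and the tuning $\epsilon_n=\eta_{\beta,n}^{1/(\gamma-\alpha/\beta)}$ with geometric summation under $\alpha/\beta<\gamma$ — is exactly the structure of the paper's proof (its ``weak argmin transform'' lemma and the moment bound on $\Delta_n(\delta)$), and those parts are carried out correctly.

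The gap is in how you propose to prove the chaining estimate, which you yourself identify as the hard step. You first derive, via the quadruple inequality and Khintchine/Rosenthal, an increment bound $\Vert (F_n-F)(q)-(F_n-F)(p)\Vert_{L^\zeta}\lesssim n^{-1/2}\mf M(\zeta)^{1/\zeta}\mf b(q,p)$ and then invoke ``Dudley-type chaining in $L^\zeta$.'' Chaining from a fixed-order $L^\zeta$ increment bound does not yield an entropy integral in $\sqrt{\log N}$: with only $L^\zeta$ control, the union bound at each chaining scale costs a factor of order $N(r)^{1/\zeta}$ rather than $\sqrt{\log N(r)}$, so for $\zeta=1$ (which the \texttt{Moment} assumption allows) the claimed bound simply does not follow from the \texttt{Entropy} condition, and for any fixed $\zeta$ the dependence on the nets is polynomial, not logarithmic. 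The order of operations must be reversed, as in the paper: symmetrize the whole supremum with the independent copies $Y_i\pr$ and Rademacher signs, condition on the data, note that the resulting Bernoulli process is sub-Gaussian with respect to the \emph{random} pseudometric $\normof{A}_2\,\mf b(q,p)$ with $A_i=\tfrac1n\mf a(Y_i,Y_i\pr)$ (this is exactly what the weak quadruple inequality delivers for the symmetrized differences), run generic chaining/Dudley conditionally to get $\sup\lesssim\normof{A}_2\min\brOf{\entrn,\gamma_2}$, and only then take $\zeta$-th moments of $\normof{A}_2$, which is where $\mf M(\zeta)$ and the case split $\zeta\lessgtr 2$ enter. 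Two further points you leave implicit but which are needed: the $\delta^{\alpha/\beta}$ scaling comes from showing that the \texttt{Entropy} condition itself forces $\diam(\ball_\delta(m,\mf l),\mf b)\lesssim\delta^{\alpha/\beta}$ (the ball is defined via $\mf l$, not $\mf b$, so this requires an argument), and the increments must be centered against an independent copy before the quadruple inequality applies, since the inequality controls four-point differences rather than the two-point Lipschitz quantity.
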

The proof can be found in appendix \autoref{app:proofs:1and2}.

Without loss of generality, one can choose $\gamma=1$ by using the loss $\mf l\pr = \mf l^\gamma$. This is consistent with the result: If \assu{ass:gr}{Growth} and \assu{ass:ent}{Entropy} are fulfilled with $\mf l, \alpha, \beta, \gamma$, then they are also fulfilled with $\mf l\pr = l^\gamma, \alpha\pr = \frac{\alpha}{\gamma}, \beta\pr = \beta, \gamma\pr = 1$, which gives the same result.
We keep this redundancy in the parameters of the theorem for convenience.

A more common way of stating rates of convergence in probability is the $\bigOp$-notation, as in the following corollary. Note that the $\bigOp$-result is asymptotic and, thus, weaker than the non-asymptotic \autoref{thm:abstr_rate_prob}.
\begin{corollary}\label{cor:Op}
	In the \textit{Abstract Setting} of section \ref{ssec:ares:generalsetting}, assume that following conditions hold:
	\assu{ass:ex}{Existence}, \assu{ass:wquad}{Weak Quadruple}, \assu{ass:gr}{Growth}, \assu{ass:mom}{Moment} with $\zeta=1$, \assu{ass:ent}{Entropy}.
	Then
	\begin{equation*}
		\mf l (m, m_n) = \bigOp\brOf{\eta_{\beta,n}^{\frac{1}{\gamma-\frac{\alpha}{\beta}}}}
	\end{equation*}
	with $\eta_{\beta,n}$ as in \autoref{thm:abstr_rate_prob}.
\end{corollary}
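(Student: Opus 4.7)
The plan is to derive the corollary as a direct consequence of \autoref{thm:abstr_rate_prob} applied with $\zeta = 1$, by translating the nonasymptotic tail bound into the tightness statement encoded by $\bigOp$. Recall that $X_n = \bigOp(a_n)$ means that for every $\epsilon > 0$ there exist $K > 0$ and $N \in \N$ such that $\Prof{\abs{X_n}/a_n \geq K} < \epsilon$ for all $n \geq N$. So I only need to control the scaled quantity $\eta_{\beta,n}^{-1/(\gamma - \alpha/\beta)}\mf l(m, m_n)$ uniformly in $n$.

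First, I would verify that all hypotheses of \autoref{thm:abstr_rate_prob} are satisfied. The assumptions \assu{ass:ex}{Existence}, \assu{ass:gr}{Growth}, \assu{ass:wquad}{Weak Quadruple}, \assu{ass:ent}{Entropy} are listed verbatim in the corollary, and \assu{ass:mom}{Moment} is imposed with $\zeta = 1$, which is the weakest case. Note that since $1 \leq 2$, the corresponding moment quantity is $\mf M(1) = \EOf{\mf a(Y\pr, Y)^2}^{1/2} < \infty$, which is a finite constant not depending on $n$.

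Next, applying \autoref{thm:abstr_rate_prob} with $\zeta = 1$ yields, for every $t > 0$ and every $n \in \N$,
\begin{equation*}
    \PrOf{\eta_{\beta,n}^{-\frac{1}{\gamma - \alpha/\beta}} \mf l(m, m_n) \geq t} \leq c\, \mf M(1)\, t^{-(\gamma - \alpha/\beta)}\eqcm
\end{equation*}
where the right-hand side does not depend on $n$. By \assu{ass:ent}{Entropy} we have $\gamma - \alpha/\beta > 0$, so the exponent $-(\gamma - \alpha/\beta)$ is negative and the right-hand side tends to $0$ as $t \to \infty$.

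Thus, given any $\epsilon > 0$, I can choose $K = K(\epsilon)$ large enough that $c\, \mf M(1)\, K^{-(\gamma - \alpha/\beta)} < \epsilon$, which gives $\Prof{\eta_{\beta,n}^{-1/(\gamma - \alpha/\beta)} \mf l(m, m_n) \geq K} < \epsilon$ uniformly in $n$. This is precisely the definition of $\mf l(m, m_n) = \bigOp(\eta_{\beta,n}^{1/(\gamma - \alpha/\beta)})$. There is no genuine obstacle here; the corollary is simply the qualitative repackaging of the quantitative tail estimate, and the only bookkeeping point is to ensure that the constant $c\,\mf M(1)$ in the tail bound is indeed uniform in $n$, which it is by construction.
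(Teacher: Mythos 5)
Your argument is correct and is exactly the intended one: the paper gives no separate proof of this corollary because it is an immediate consequence of \autoref{thm:abstr_rate_prob} with $\zeta=1$, whose tail bound is uniform in $n$ and vanishes as $t\to\infty$ since $\gamma-\frac{\alpha}{\beta}>0$ by \assu{ass:ent}{Entropy}. Your translation into the definition of $\bigOp$ matches this reasoning, so there is nothing to add.
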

It is possible to weaken the assumptions in \autoref{cor:Op}. In particular, we can restrict the \assu{ass:gr}{Growth} and \assu{ass:ent}{Entropy} conditions to hold only in a neighborhood of $m$ if we already know that $\mf l(m_n,m) \in \smallop(1)$.

In \autoref{thm:abstr_rate_prob}, the probability of large losses decays polynomially. If the exponent $\zeta(\gamma-\frac\alpha\beta)$ is strictly  greater than 1, we can integrate the tail probabilities to obtain a bound on the expectation of the loss.
\begin{corollary}\label{cor:probtoexpec}
	Let $\kappa \geq 1$.
	In the \textit{Abstract Setting} of section \ref{ssec:ares:generalsetting}, assume that following conditions hold:
	\assu{ass:ex}{Existence}, \assu{ass:wquad}{Weak Quadruple}, \assu{ass:gr}{Growth}, \assu{ass:mom}{Moment} with $\zeta > \kappa(\gamma - \frac\alpha\beta)^{-1}$, \assu{ass:ent}{Entropy}. Set $\xi := \zeta(\gamma-\frac\alpha\beta)\kappa^{-1}$. 
	Then
	\begin{align*}
		\eta_{\beta,n}^{-\frac{\kappa}{\gamma-\frac{\alpha}{\beta}}} \Ex{\mf l (m, m_n)^\kappa} 
		&\leq 
		c\pr \frac{\xi}{\xi-1} \mf M(\zeta)^{\frac1\xi}
		\eqfs
	\end{align*}
\end{corollary}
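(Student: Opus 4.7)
The plan is to convert the probability tail bound from \autoref{thm:abstr_rate_prob} into an expectation bound by integrating the tail and optimizing the splitting point. Write $X := \eta_{\beta,n}^{-\frac{1}{\gamma-\frac{\alpha}{\beta}}} \mf l(m, m_n)$, so that \autoref{thm:abstr_rate_prob} gives $\PrOf{X \geq t} \leq c\, \mf M(\zeta)\, t^{-\zeta(\gamma-\frac{\alpha}{\beta})}$ for all $t > 0$. Note that the hypothesis $\zeta > \kappa(\gamma - \frac{\alpha}{\beta})^{-1}$ is equivalent to $\xi = \zeta(\gamma-\frac{\alpha}{\beta})\kappa^{-1} > 1$, which is what we need to integrate the tail.

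Next, use the layer-cake representation for the non-negative random variable $X^\kappa$:
\begin{equation*}
	\EOf{X^\kappa} \;=\; \int_0^\infty \PrOf{X^\kappa \geq s}\,\dl s \;=\; \kappa \int_0^\infty t^{\kappa-1} \PrOf{X \geq t}\,\dl t\eqcm
\end{equation*}
after the substitution $s = t^\kappa$. Then split at a threshold $t_0 > 0$, using the trivial estimate $\PrOf{X \geq t} \leq 1$ for $t \leq t_0$ and the tail bound for $t \geq t_0$:
\begin{equation*}
	\EOf{X^\kappa} \;\leq\; t_0^\kappa \;+\; \kappa\, c\, \mf M(\zeta) \int_{t_0}^\infty t^{\kappa - 1 - \zeta(\gamma - \frac{\alpha}{\beta})}\,\dl t \;=\; t_0^\kappa \;+\; \frac{c\, \mf M(\zeta)\, t_0^{\kappa(1-\xi)}}{\xi - 1}\eqcm
\end{equation*}
using $\zeta(\gamma - \frac{\alpha}{\beta}) = \xi \kappa$ and $\xi > 1$ to ensure convergence of the integral.

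Now optimize by choosing $t_0$ to balance the two terms: the choice $t_0 := (c\, \mf M(\zeta))^{1/(\xi\kappa)}$ makes both terms equal to $(c\,\mf M(\zeta))^{1/\xi}$, so that
\begin{equation*}
	\EOf{X^\kappa} \;\leq\; (c\,\mf M(\zeta))^{1/\xi}\brOf{1 + \frac{1}{\xi-1}} \;=\; \frac{\xi}{\xi - 1}\, c^{1/\xi}\, \mf M(\zeta)^{1/\xi}\eqfs
\end{equation*}
Substituting back $\EOf{X^\kappa} = \eta_{\beta,n}^{-\kappa/(\gamma - \frac{\alpha}{\beta})} \EOf{\mf l(m,m_n)^\kappa}$ and setting $c\pr := c^{1/\xi}$ yields the claim. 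The argument is entirely routine; the only real choices are the splitting point $t_0$ (dictated by balancing) and verifying that the assumption $\xi > 1$ is precisely what allows the polynomial tail to be integrated against $t^{\kappa-1}$. There is no genuine obstacle here once \autoref{thm:abstr_rate_prob} is available.
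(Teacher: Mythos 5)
Your proposal is correct and follows essentially the same route as the paper: integrate the tail bound from \autoref{thm:abstr_rate_prob} via the layer-cake formula and cut the integral where the trivial bound $1$ and the polynomial tail cross. Your explicit threshold $t_0$ with the balancing choice is exactly equivalent to the paper's evaluation of $\int_0^\infty \min\brOf{1, c\,\mf M(\zeta)\, t^{-\xi}}\dl t = \frac{\xi}{\xi-1}\br{c\,\mf M(\zeta)}^{1/\xi}$, so the resulting constant agrees as well.
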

The proof can be found in appendix \autoref{app:proofs:1and2}.

\autoref{cor:probtoexpec} may require unnecessarily high moments as $\xi$ needs to be \textit{strictly} larger than 1. In the next section, we present a more direct approach to rates in expectation, that requires weaker moment conditions, at least in some settings.
\subsection{Rate of Convergence in Expectation}\label{ssec:ares:inexpec}
For obtaining rates in expectation directly, we need slightly modified, stronger assumptions.
\begin{assumptions}
\theoremContentInNewLine
\begin{enumerate}[label=\environmentEnumerateLabel]
	\assitem{ass:squad}{Strong Quadruple}
		Define $\dot{\mc Q} := \mc Q \setminus \ball_0(m, \mf l) = \cb{q\in\mc Q\colon \mf l(m, q) > 0}$.\index[inot]{$\dot{\mc Q}$}\index[inot]{$\ball_0(m, \mf l)$}
		There are functions $\mf b_m \colon \dot{\mc Q} \times \dot{\mc Q} \to [0,\infty)$ (possibly depending on $m$) and $\mf a \colon \mc Y \times \mc Y \to [0,\infty)$ with $\mf a$ measurable and $\xi\in(0,\gamma)$, such that,
		for all $p,q\in\dot{\mc Q}$, $y,z\in\mc Y$, we have
		\begin{equation*}
			\frac{\olc yq- \olc ym- \olc zq + \olc zm}{\mf l(m,q)^\xi} - \frac{\olc yp- \olc ym- \olc zp + \olc zm}{\mf l(m,p)^\xi} \leq \mf a(y,z) \, \mf b_m(q,p)
			\eqcm
			\index[inot]{$\mf b_m$}\index[inot]{$\mf a$}
		\end{equation*}
		Assume that $\mf b_m$ is a pseudo-metric on $\dot{\mc Q}$.
		We call $\mf a$ the \textit{data distance} and $\mf b_m$ the \textit{strong quadruple metric} at $m$.
		\index[inot]{$\xi$}
	\assitem{ass:smom}{Strong Moment}
		For $\zeta>0$, set
		\begin{equation*}
			\mf M(\zeta) :=
			\begin{cases}
				\Ex*{\mf a(Y\pr,Y)^\zeta}\eqcm &\text{ if } \zeta\geq 2\eqcm\\
				\Ex*{\mf a(Y\pr,Y)^2}^{\frac{\zeta}{2}}\eqcm &\text{ if } \zeta\leq 2\eqcm
			\end{cases}
			\index[inot]{$\mf M(\zeta)$}
		\end{equation*}
		where $Y\pr$ is an independent copy of $Y$.
		Let $\kappa\geq \gamma-\xi$ and assume $\mf M\brOf{\frac{\kappa}{\gamma-\xi}} < \infty$.
		\index[inot]{$\kappa$}
	\assitem{ass:sent}{Strong Entropy}
		We have $D := \diam(\dot{\mc Q}, \mf b_m) < \infty$ and there is $\beta > 0$ such that
		\begin{equation*}
			\sqrt{\log N(\dot{\mc Q}, \mf b_m, r)} \leq c_{\ms e} \br{\frac{D}{r}}^\beta
		\end{equation*}
		for all $r \in (0, D)$.
		\index[inot]{$D$}
		\index[inot]{$\beta$}
	\end{enumerate}
\end{assumptions}
For later use in the application to Hilbert spaces, section \ref{ssec:app:hilbert}, and for \autoref{thm:abstr_rate_exp}, we state the entropy part of \autoref{thm:abstr_rate_exp} in a more general way than in \autoref{thm:abstr_rate_prob}. To this end, we need to introduce different measures of entropy. 
\begin{definition}[Measures of Entropy]\label{chaining:def_entropy}
\theoremContentInNewLine
\begin{enumerate}[label=\environmentEnumerateLabel]
\item 
	Given a set $\mc Q$ an \emph{admissible sequence} is an increasing
	sequence $(\mc A_k)_{k\in\N_0}$ of partitions of $\mc Q$ such that $\mc A_0 = \mc Q$ and $\ms{card}(\mc A_k) \leq 2^{2^k}$ for $k\geq 1$.
	
	By an increasing sequence of partitions we mean that every set of $\mc A_{k+1}$ is
	contained in a set of $\mc A_k$. We denote by $A_k(q)$ the unique
	element of $\mc A_k$ which contains $q\in\mc Q$.
\item
	Let $(\mc Q, \mf b)$ be a pseudo-metric space.
	Define 
	\begin{equation*}
		\gamma_2(\mc Q, \mf b) := \inf \sup_{q\in\mc Q}\sum_{k=0}^\infty 2^{\frac k2} \diam(A_k(q), \mf b)\eqcm
	\end{equation*}
	where the infimum is taken over all admissible sequences in $\mc Q$ and
	\begin{equation*}
		\diam(A, \mf b) := \sup_{q,p\in A} \mf b(q,p)
	\end{equation*}
	for $A\subset \mc Q$.
	\index[inot]{$\diam(A, \mf b)$}
	\index[inot]{$\gamma_2(\mc Q, \mf b)$}
\item 
	Let $(Q, \mf b)$ be a pseudo-metric space and $n \in \N$.
	Define 
	\begin{equation*}
		\entrn(Q, \mf b) :=  \inf_{\epsilon > 0} \br{\epsilon \sqrt{n} + \int_\epsilon^\infty \!\! \sqrt{\log N(Q, \mf b, r)}\dl r} 
		\eqfs
	\end{equation*}
	\index[inot]{$\entrn(Q, \mf b)$}
\end{enumerate}
\end{definition}
Items (i) and (ii) are basic definitions form \cite{talagrand14}. Item (iii) is just a convenient notation.
\begin{theorem}[Convergence rate in expectation]\label{thm:abstr_rate_exp}
	In the \textit{Abstract Setting} of section \ref{ssec:ares:generalsetting}, assume that following conditions hold:
	\assu{ass:ex}{Existence}, \assu{ass:gr}{Growth}, \assu{ass:squad}{Strong Quadruple}, \assu{ass:smom}{Strong Moment}.
	Then, we have
	\begin{equation*}
		\Ex*{\mf l(m ,m_n)^\kappa}
		\ \leq \ 
		c n^{-\frac{\kappa}{2(\gamma-\xi)}} \mf M\brOf{\frac{\kappa}{\gamma-\xi}} \min\brOf{\entrn(\mc Q, \mf b_m), \gamma_2(\mc Q, \mf b_m)}^{\frac{\kappa}{\gamma-\xi}}
		\eqcm
	\end{equation*}
	where $c>0$ depends only on $\kappa,\gamma,\xi,c_{\ms g}$.
	
	If additionally \assu{ass:sent}{Strong Entropy} holds, then
	\begin{equation*}
		\Ex*{\mf l(m ,m_n)^\kappa}
		\ \leq \
		C \, \mf M\brOf{\frac{\kappa}{\gamma-\xi}} \,  D^{\frac{\kappa}{\gamma-\xi}} \,  \eta_{\beta,n}^{\frac{\kappa}{\gamma-\xi}}
		\eqcm
	\end{equation*}
	where 
	\begin{equation*}
		\eta_{\beta,n} := 
		\begin{cases} 
			n^{-\frac12} & \text{ for }\beta < 1\eqcm\\
			n^{-\frac12} \log(n+1) & \text{ for } \beta = 1\eqcm\\
			n^{-\frac1{2\beta}} & \text{ for } \beta > 1\eqcm
		\end{cases} 
	\end{equation*}
	and $C>0$ depends only on $\kappa,\beta,\gamma,\xi,c_{\ms g}$.
\end{theorem}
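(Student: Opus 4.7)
The plan is the standard M-estimator strategy, using \assu{ass:squad}{Strong Quadruple} to bypass any peeling or slicing device: it guarantees that a single normalized empirical process is globally Lipschitz in $\mf b_m$. The first step is to exploit $F_n(m_n)\le F_n(m)$, which rearranges to $F(m_n)-F(m)\le (F_n-F)(m)-(F_n-F)(m_n)$. Combining with \assu{ass:gr}{Growth} and, when $m_n\in \dot{\mc Q}$ (otherwise the left-hand side is zero), dividing by $\mf l(m,m_n)^\xi$ yields
\[
c_{\ms g}\,\mf l(m,m_n)^{\gamma-\xi} \;\le\; W_n \;:=\;\sup_{q\in\dot{\mc Q}} \frac{|(F_n-F)(q)-(F_n-F)(m)|}{\mf l(m,q)^\xi}\eqfs
\]
Raising to the power $p:=\kappa/(\gamma-\xi)\ge 1$ and taking expectations, the theorem reduces to the moment bound $\E[W_n^p] \le C\, n^{-p/2}\,\mf M(p)\,\min\br{\entrn(\mc Q,\mf b_m),\gamma_2(\mc Q,\mf b_m)}^{p}$.

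Next I would apply the standard symmetrization inequality (with an independent ghost sample $Y_1',\dots,Y_n'$ and i.i.d.\ Rademacher signs $\epsilon_i$), reducing $W_n$ to the supremum of
\[
Z_q \;:=\; \frac{1}{n\,\mf l(m,q)^\xi}\sum_{i=1}^n \epsilon_i\br{\mf c(Y_i,q)-\mf c(Y_i,m)-\mf c(Y_i',q)+\mf c(Y_i',m)}\eqfs
\]
Conditional on the data, this is a Rademacher sum whose $\ell^2$-norm of increments is, by \assu{ass:squad}{Strong Quadruple}, bounded by $A_n\,\mf b_m(q,p)$ with $A_n := \tfrac{1}{n}(\sum_i \mf a(Y_i,Y_i')^2)^{1/2}$. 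Thus conditionally on $(Y_i,Y_i')$, the process is sub-Gaussian in the rescaled metric $A_n\,\mf b_m$, and Talagrand's generic chaining (in its Dudley/truncated-entropy version for the $\entrn$ bound, and in its majorizing-measure version for the $\gamma_2$ bound) gives
\[
\E_\epsilon\!\ab{\sup_q |Z_q|^p}^{1/p} \;\le\; L\,A_n\,\min\br{\entrn(\mc Q,\mf b_m),\gamma_2(\mc Q,\mf b_m)}\eqfs
\]
Taking the $L^p$-norm in the data and applying Jensen separately in the regimes $p\ge 2$ and $p\le 2$ (precisely the dichotomy in the definition of $\mf M$), I obtain $\|A_n\|_{L^p}\le n^{-1/2}\mf M(p)^{1/p}$, and desymmetrizing yields the first assertion.

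For the second assertion I would plug the \assu{ass:sent}{Strong Entropy} bound $\sqrt{\log N(\dot{\mc Q},\mf b_m,r)}\le c_{\ms e}(D/r)^\beta$ into the definition of $\entrn$ and optimise in $\epsilon$:
\[
\entrn(\dot{\mc Q},\mf b_m) \;\le\; \inf_{\epsilon>0}\br{\epsilon\sqrt n + c_{\ms e}\,D^\beta\!\!\int_\epsilon^D r^{-\beta}\dl r}\eqfs
\]
The three cases $\beta<1$, $\beta=1$, $\beta>1$ correspond exactly to the three branches of $\eta_{\beta,n}$: for $\beta<1$ the integral converges at $0$ and $\epsilon=0$ is optimal; for $\beta=1$ the logarithmic divergence forces $\epsilon\asymp D/\sqrt n$; for $\beta>1$ the optimum is $\epsilon\asymp D\,n^{-1/(2\beta)}$. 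In all three cases $\entrn(\dot{\mc Q},\mf b_m)\le c'\,D\,\sqrt n\,\eta_{\beta,n}$, and inserting into the first assertion delivers the stated $D^{p}\eta_{\beta,n}^{p}$ rate.

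The delicate step will be the chaining: one needs the $L^p$-moment (not just the expectation) of the supremum of a process whose sub-Gaussian scale $A_n$ is itself random, and one must marry this with \assu{ass:smom}{Strong Moment} without picking up extraneous $\sqrt p$ or diameter terms that would spoil the dependence on $\mf M(p)$ and $D$. Everything else — the basic inequality, the symmetrization, and the evaluation of the entropy integral — is routine once \assu{ass:squad}{Strong Quadruple} has done the essential work of globally Lipschitzing the normalized process.
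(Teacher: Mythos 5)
Your proposal is correct and follows essentially the same route as the paper's proof: the basic inequality $c_{\ms g}\,\mf l(m,m_n)^{\gamma-\xi}\le \sup_q |X(q)|$ is the paper's \autoref{lmm:strong:growth} (via the intermediate \texttt{Closeness} condition), and your symmetrization plus conditional generic chaining with the \texttt{Strong Quadruple} inequality as Lipschitz bound, together with the $p\ge2$/$p\le2$ split matching $\mf M$, is exactly \autoref{lmm:strong:close} and \autoref{chaining:empproc}. The evaluation of the entropy integral in the three regimes of $\beta$ likewise coincides with \autoref{lmm:chaining:rate}, so no substantive difference remains.
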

The proof can be found in appendix \autoref{app:proofs:1and2}.

As in \autoref{thm:abstr_rate_prob} the statement contains some redundancy. E.g., by using the loss $\tilde {\mf l} = \mf l^\xi$ we set $\xi=1$ without loss of generality. Then the growth exponent and the resulting rate of convergence will scale accordingly.
\subsection{Further Extensions}\label{ssec:ares:extensions}
In general $M := \argmin_{q\in\mc Q} \Ex{\mf c(Y, q)}$ is some subset of $\mc Q$. One can also extend the main theorems of this paper to deal with a the whole set of Fréchet means and Fréchet mean estimators. To do that, the \assu{ass:gr}{Growth} condition has to be stated as growth of the minimal distance to $M$. Furthermore, some of the statements and assumptions made in the theorems and proofs have to be modified so that the hold uniformly over all $m\in M$. Additionally, one has to think about the right notion of convergence for sets. We found that those results hard to read without significantly increasing insight into the problem, which is why we chose to stick with unique Fréchet means and only remark that an extension to Fréchet mean sets is possible.

One can also consider $\varepsilon\text{-}\argmin$-sets, i.e., the sets of elements which minimize a function up to an $\varepsilon > 0$. If one chooses $m_n \in \varepsilon_n\text{-}\argmin_{q\in\mc Q} F_n(q)$ with $\varepsilon_n \to 0$ fast enough, the convergence rate is of the same as for the absolute minimizer.
\section{Quadruple Inequalities}\label{sec:quadruple}
Recall the definition of the weak and strong quadruple inequalities.
Let $(\mc Q, \mf b)$ be a pseudo-metric space (\textit{descriptor space} with \textit{descriptor metric}), 
$(\mc Y, \Sigma_{\mc Y})$ a measurable space (\textit{data space}), 
$\mf c \colon \mc Y \times \mc Q \to \R$ such that $y\mapsto \mf c(y,q)$ is measurable for every $q\in\mc Q$ (\textit{cost function}),
$\mf a \colon \mc Y \times \mc Y \to [0,\infty)$ measurable (\textit{data distance}),
$m \in \mc Q$ (reference point, usually the Fréchet mean),
$\mf l\colon \mc Q \times \mc Q \to [0,\infty)$ (\textit{loss}),
$\xi > 0$ (\textit{rate parameter}),
$\dot{\mc Q} = \cb{q\in\mc Q\colon \mf l(m, q) > 0}$
$\mf b_m \colon \dot{\mc Q} \times \dot{\mc Q} \to [0,\infty)$ a pseudo-metric on $\dot{\mc Q}$ (\textit{strong quadruple metric} at $m$).
We write $\olc yq :=\mf c(y,q)$.
\begin{enumerate}[label=(\alph*)]
\item 
The tuple $(\mc Q, \mc Y, \mf c, \mf a, \mf b)$ fulfills the \emph{(weak) quadruple inequality}
	if and only if,
	for all $p,q\in\mc Q$, $y,z\in\mc Y$, we have
	\begin{equation*}
		\olc yq- \olc zq -\olc yp+\olc zp \leq \mf a(y,z) \mf b(q,p)
		\eqfs
	\end{equation*}
\item	
	The tuple $(\mc Q, \mc Y, \mf c, \mf l^\xi, \mf a, \mf b_m)$ fulfills the \emph{strong quadruple inequality} at $m\in\mc Q$ 
	if and only if,
	for all $p,q\in\dot{\mc Q}$, $y,z\in\mc Y$, we have
	\begin{equation*}
		\frac{\olc yq- \olc ym- \olc zq + \olc zm}{\mf l(m,q)^\xi} - \frac{\olc yp- \olc ym- \olc zp + \olc zm}{\mf l(m,p)^\xi} \leq \mf a(y,z) \mf b_m(q,p)
		\eqfs
	\end{equation*}
\end{enumerate}
There are a couple of trivial stability results for quadruple inequalities, see appendix \autoref{app:quad_stab}.

In section \ref{ssec:bounded} we compare the quadruple inequality with a more common Lipschitz property. The simplest advantageous applications of the quadruple inequality are in inner product spaces and quasi-inner product spaces, as is discussed in section \ref{ssec:quad:ip}. In section \ref{ssec:pwer_inequality} we state the power inequality, \autoref{thm:power_inequ}. It allows to establish quadruple inequalities for exponentiated metrics. We conclude with \autoref{thm:abstr_weak_strong} in section \ref{ssec:weak_implies_strong}, which yields rates of convergence in expectation under the assumption of only a weak quadruple inequality instead of a strong one as in \autoref{thm:abstr_rate_exp}.
\subsection{Bounded Spaces and Smooth Cost Function}\label{ssec:bounded}
Let $(\mc Q, d)$ be a metric space and use the notation $\ol qp = d(q,p)$.
For obtaining convergence rates in probability for the Fréchet mean estimator, \cite{petersen16} use 
\begin{equation*}
	 \ol yq^2 - \ol yp^2 = \br{\ol yq - \ol yp}\br{\ol yq + \ol yp} \leq 2 \ol qp \diam(\mc Q)
\end{equation*}
for all $q,p,y\in\mc Q$.
In the proof of \autoref{thm:abstr_rate_prob}, we have replaced this bound by the weak quadruple inequality, i.e.,
\begin{equation*}
	\olc yq - \olc yp - \olc zq + \olc zp \leq \mf a(y,z) \mf b(q,p)
	\eqfs
\end{equation*}
This generalizes the results by \cite{petersen16} as for bounded metric spaces $(\mc Q, d)$ and cost function $\mf c = d^2$, the weak quadruple inequality holds with $\mf a(y,z) = 4 \diam(\mc Q)$ and $\mf b = d$:
\begin{equation*}
	\ol yq^2 - \ol yp^2 - \ol zq^2 + \ol zp^2 \leq \abs{\ol yq^2 - \ol yp^2} + \abs{\ol zq^2 - \ol zp^2} \leq 4 \ol qp \diam(\mc Q)
	\eqfs
\end{equation*}
More generally, if we can show Lipschitz continuity in the second argument of the cost function, i.e., $\olc yq - \olc yp \leq \mf a(y) \mf b(q,p)$, then the quadruple inequality holds with data distance $\mf a(y)+\mf a(z)$ and descriptor metric $\mf b$.
But this might lead to an unnecessarily large bound. We will see in section \ref{ssse:quad:npc} that at least for certain metric spaces, we can find a bound via the quadruple inequality that does not involve the diameter of the space and, thus, allows for meaningful results in unbounded spaces.
\subsection{Relation to Inner Product and Cauchy--Schwartz Inequality}\label{ssec:quad:ip}
\subsubsection{Inner Product Space}\label{sssec:inner_product_space}
Let $(\mc Q, d)$ be a metric space such that $d$ comes from an inner product $\ip{\cdot}{\cdot}$ on $\mc Q$, i.e., $\mc Q$ is a subset of am inner product space and $d(y,q)^2 = \ip{y-q}{y-q}$. Use $\mc Y = \mc Q$ and the squared metric as cost function, $\mf c = d^2$. Then
\begin{align*}
	\olc yq- \olc zq -\olc yp+\olc zp &= -2\ip{y-z}{q-p} \\&\leq 2\normof{q-p}\normof{y-z}
	\eqfs
\end{align*} 
Here the Cauchy--Schwartz inequality gives rise to an instance of the weak quadruple inequality.
The very general framework that we impose also allows for a more flexible bound:
If $\mc Q \subset \mb H$ is the subset of an infinite dimensional, separable Hilbert space $\mb H$, we can use a weighted Cauchy--Schwartz inequality:
Let $s = (s_k)_{k\in\N} \subset (0,\infty)$. Then
\begin{align*}
	\olc yq- \olc zq -\olc yp+\olc zp \leq 2 \normof{y-z}_{s^{-1}} \normof{q-p}_s 
	\eqcm
\end{align*} 
where $\normof{x}^2_s = \sum_{k=1}^\infty s_k^2 x_k^2$ with generalized Fourier coefficients $(x_k)_{k\in\N}$ with respect to a fixed orthonormal basis of $\mb H$.

For the strong quadruple inequality, we set $\xi=1$, $\mf l(q,p) = \normof{q-p}$ and obtain
\begin{align*}
	&\frac{\olc yq- \olc ym- \olc zq + \olc zm}{\mf l(m,q)} - \frac{\olc yp- \olc ym- \olc zp + \olc zm}{\mf l(m,p)}
	\\&=
	-2\left\langle y-z, \frac{q-m}{\normof{q-m}}-\frac{p-m}{\normof{p-m}} \right\rangle
	\\&\leq
	2\normof{y-z} \normOf{\frac{q-m}{\normof{q-m}}-\frac{p-m}{\normof{p-m}}}
	\eqfs
\end{align*}
Thus, the strong quadruple inequality hold with $\mf a(y,z) = 2\normof{y-z}$ and $\mf b_m(q,p) = \normOf{\frac{q-m}{\normof{q-m}}-\frac{p-m}{\normof{p-m}}}$. The pseudo-metric $\mf b_m$ first projects the points $q$ and $p$ onto the surface of unit ball around $m$ and then measures their Euclidean distance.

The analogous result for the weighted Cauchy--Schwartz inequality is
\begin{align*}
	&\frac{\olc yq- \olc ym- \olc zq + \olc zm}{\mf l(m,q)} - \frac{\olc yp- \olc ym- \olc zp + \olc zm}{\mf l(m,p)}
	\\&\leq
	2\normof{y-z}_{s^{-1}} \normOf{\frac{q-m}{\normof{q-m}}-\frac{p-m}{\normof{p-m}}}_{s}
	\eqfs
\end{align*}
\subsubsection{Bregman Divergence}
Let $\mc Q\subset \R^r$ be a closed convex set. Let $\psi\colon\mc Q\to \R$ be a continuously differentiable and strictly convex function. The Bregman divergence $D_\psi \colon \mc Q \times \mc Q \to [0,\infty)$ associated with $\psi$ for points $y, q \in \mc Q$ is defined as $D_\psi(y,q) = \psi(y)-\psi(q)-\ip{\nabla \psi(q)}{y-q}$. It is the difference between the value of $\psi$ at point $y$ and the value of the first-order Taylor expansion of $\psi$ around point $q$ evaluated at point $y$. It is well-known, that the minimizer $m$ of $q \mapsto\Ex{D_\psi(Y,q)}$ for a random variable $Y$ with $\Ex{D_\psi(Y,q)}<\infty$ for all $q\in\mc Q$ is the expectation $m = \Ex{Y}$, see \cite[Theorem 1]{banerjee05}. The Bregman divergence $\mf c = D_\psi$ fulfills the weak quadruple inequality:
\begin{align*}
	D_\psi(y,q) - D_\psi(z,q) - D_\psi(y,p) +D _\psi(z,p) &= \ip{\nabla \psi(q)-\nabla \psi(p)}{y-z} \\&\leq \normof{\nabla \psi(q)-\nabla \psi(p)}\normof{y-z}
	\eqfs
\end{align*} 
Similarly, we obtain a version of the strong quadruple inequality with  $\xi=1$, $\mf l(q,p) = \normof{q-p}$, 
\begin{align*}
	&\frac{\olc yq- \olc ym- \olc zq + \olc zm}{\mf l(m,q)} - \frac{\olc yp- \olc ym- \olc zp + \olc zm}{\mf l(m,p)}
	\\&=
	\left\langle y-z, \frac{\nabla \psi(q)-\nabla \psi(m)}{\normof{q-m}}-\frac{\nabla \psi(p)-\nabla \psi(m)}{\normof{p-m}} \right\rangle
	\\&\leq
	\normof{y-z} \normOf{\frac{\nabla \psi(q)-\nabla \psi(m)}{\normof{q-m}}-\frac{\nabla \psi(p)-\nabla \psi(m)}{\normof{p-m}}}
	\eqfs
\end{align*}
\subsubsection{Hadamard Spaces and Quasi-Inner Product}\label{ssse:quad:npc}
Let $(\mc Q, d)$ be a metric space. Use the notation $\ol qp := d(q,p)$. We use the squared metric as the cost function $\mf c(y,q) = d(y,q)^2 = \ol yq^2$.
One particularly nice version of the weak quadruple inequality with this cost function is
\begin{equation*}
	\ol yq^2 - \ol yp^2 - \ol zq^2 + \ol zp^2 \leq 2 \,\ol yz\, \ol qp
	\eqfs
\end{equation*}
Let us call this inequality the \textit{nice quadruple inequality}.
As seen before, this holds for subsets of inner product spaces. It also plays an important role for geodesic metric spaces.
In this section, we paraphrase some results of \cite{berg08}. In particular, we state that the nice quadruple inequality characterizes CAT(0)-spaces.

Let $(\mc Q, d)$ be a metric space. A \textit{curve} is a continuous mapping $\gamma \colon [a,b] \to \mc Q$, where $[a,b]$ is a closed interval. The \textit{length} of the curve $\gamma \colon [a,b] \to \mc Q$ is
\begin{equation*}
	L(\gamma) := \sup\cbOf{\sum_{i=1}^I d(\gamma(t_{i-1}), \gamma(t_i)) \,\Bigg\vert\, a = t_0 < t_1 < \dots < t_I = b, I\in\N}
	\eqfs
\end{equation*}
A curve $\gamma \colon [a,b] \to \mc Q$ is called a \textit{geodesic} if $L(\gamma) = d(\gamma(a),\gamma(b))$.
A metric space is called \textit{geodesic}, if any two points $q,p\in\mc Q$ can be joined by a geodesic $\gamma \colon [a,b] \to \mc Q$ with $\gamma(a) = q, \gamma(b) = p$.
A \textit{midpoint} of two points $q,p\in\mc Q$ is a point $m\in \mc Q$ such that $\ol qm = \ol pm = \frac12 \ol qp$.
A complete metric space is a geodesic space if and only if all pairs of points have a midpoint, see \cite[Proposition 1.2]{sturm03}.
Now, let $(\mc Q, d)$ be a geodesic metric space.
For any triple of points $a,b,c\in\mc Q$ one can construct a \textit{comparison triangle} in the Euclidean plane with corners $a\pr, b\pr, c\pr \in \R^2$, such that $\ol ab = \normof{b\pr - a\pr}$, $\ol ac = \normof{c\pr - a\pr}$, and $\ol bc = \normof{c\pr - b\pr}$. A geodesic metric space  $(\mc Q, d)$ is called \textit{CAT(0)} if and only if for every triple of points $a,b,c\in\mc Q$ with comparison triangle $(a\pr, b\pr, c\pr)$ following condition holds: For every point $d$ on a geodesic connecting $a$ and $b$, we have $\ol dc \leq \normof{c\pr - d\pr}$, where $d\pr \in \R^2$ is the point on the edge of the comparison triangle between $a\pr$ and $b\pr$ such that $\normof{d\pr- a\pr} = \ol ad$.
A complete CAT(0)-space is called \textit{Hadamard space} or \textit{global NPC space} (\textbf{n}on\textbf{p}ositive \textbf{c}urvature).

A metric space $(\mc Q, d)$ is said to fulfill the \textit{NPC-inequality} if and only if for all $y_1, y_2 \in \mc Q$ there exists a point $m\in\mc Q$ such that for all $q \in \mc Q$, we have
$\ol mq^2 \leq \frac12 \ol{y_1}q^2 + \frac12 \ol{y_2}q^2 - \frac14 \ol{y_1}{y_2}^2$. Then $m$ is the midpoint of $y_1$ and $y_2$.

A characterization of CAT(0)-spaces can be found in \cite[Section 2]{sturm03}:
	A metric space is CAT(0) if and only if it fulfills the NPC-inequality.

Another characterization of CAT(0)-spaces by the nice quadruple inequality is given in \cite[Corollary 3]{berg08}:
A geodesic space is CAT(0) if and only if it fulfills the nice quadruple inequality.
	
In \cite{berg08}, the authors define the \textit{quadrilateral cosine} for $q,p,y,z\in\mc Q$ as
\begin{equation*}
	\mr{cosq}\brOf{\vec{yz}, \vec{qp}} := \frac{\ol yq^2 - \ol yp^2 - \ol zq^2 + \ol zp^2}{-2 \,\ol yz\, \ol qp}\eqfs
\end{equation*}
Obviously, the statement $\mr{cosq}\brOf{\vec{yz}, \vec{qp}} \leq 1$ for all $q,p,y,z\in\mc Q$ is equivalent to the nice quadruple inequality. To further motivate this notation and compare it with inner product spaces, they introduce a \textit{quasilinearization} of the metric space and a \textit{quasi-inner product}:
Define $\ip{\vec{yz}}{\vec{qp}}_d =	\mr{cosq}\brOf{\vec{yz}, \vec{qp}} \normof{\vec{yz}}_d \normof{\vec{qp}}_d$, where $\normof{\vec{yz}}_d := \ol yz$.
Thus, the nice quadruple inequality can be viewed as the Cauchy--Schwartz inequality of the quasi-inner product. 
\subsection{Power Inequality}\label{ssec:pwer_inequality}
If the metric space $(\mc Q,d)$ fulfills the nice quadruple inequality, i.e, $\ol yq^2 - \ol yp^2 - \ol zq^2 + \ol zp^2 \leq 2 \,\ol yz\, \ol qp$, where $\ol yq = d(y,q)$, then $(\mc Q,d^a)$, $a\in[\frac12,1]$, also fulfills a weak quadruple inequality with a suitably adapted bound. The implications of this result for the estimators of the corresponding Fréchet means are discussed in section \ref{sssec:app:hadamard:pfm}.

According to \cite{deza09}, the metric $d^a$ is called \textit{power transform metric} or \textit{snowflake transform metric}.
\begin{restatable}[Power Inequality]{theorem}{TheoremPowerIneqaulity}\label{thm:power_inequ}
	Let $(\mc Q, d)$ be a metric space. Use the short notation $\ol qp := d(q,p)$.
	Let $q,p,y,z\in\mc Q$, $a\in[\frac12,1]$. 
	Assume
	\begin{equation}\label{eq:nice_power_inequ}
		\olt yq - \olt yp - \olt zq + \olt zp \leq 2 \, \ol yz\, \ol qp
		\eqfs
	\end{equation}
	Then 
	\begin{equation}\label{eq:nice_power_inequ_res}
		\ol yq^{2a} - \ol yp^{2a} - \ol zq^{2a} + \ol zp^{2a} \leq 8 a 2^{-2a} \, \ol yz^{2a-1}\, \ol qp
		\eqfs
	\end{equation}
\end{restatable}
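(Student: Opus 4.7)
The plan is to reduce the desired inequality \eqref{eq:nice_power_inequ_res} to the assumed inequality \eqref{eq:nice_power_inequ} via the elementary identity $r^{2a} = 2a \int_0^r u^{2a-1}\,du$. With the abbreviations $r_1 := \ol yq$, $r_2 := \ol yp$, $r_3 := \ol zq$, $r_4 := \ol zp$, $s := \ol yz$ and $t := \ol qp$, this identity yields
\begin{equation*}
	r_1^{2a} - r_2^{2a} - r_3^{2a} + r_4^{2a}
	\;=\; 2a\int_0^\infty u^{2a-1}\,g(u)\,du,
\end{equation*}
where $g(u) := \ind_{[0,r_1)}(u) - \ind_{[0,r_2)}(u) - \ind_{[0,r_3)}(u) + \ind_{[0,r_4)}(u)$ is a bounded, compactly supported function that can equivalently be written as a sum of two signed indicator functions of intervals of respective lengths $|r_1-r_2|\leq t$ and $|r_3-r_4|\leq t$.

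Next I would extract the structural information on $g$. The zeroth moment $\int g\,du = r_1 - r_2 - r_3 + r_4$ is at most $2t$ by the triangle inequality applied to the two pairs sharing the $y$- and $z$-point. The first moment $\int u\,g(u)\,du = \tfrac{1}{2}(r_1^2 - r_2^2 - r_3^2 + r_4^2)$ is at most $st$ by the assumption \eqref{eq:nice_power_inequ}. The triangle inequalities $|r_1-r_3|, |r_2-r_4| \leq s$ further constrain the \emph{positions} of the two intervals supporting $g$, not only their lengths. Plugging $a = \tfrac12$ and $a = 1$ into the target inequality reproduces precisely these two moment bounds, and the target constant $4a\cdot 2^{-2a}$ evaluates to $2$ in both endpoint cases.

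The problem therefore reduces to showing
\begin{equation*}
	\int u^{2a-1}\,g(u)\,du \;\leq\; 4\cdot 2^{-2a}\,s^{2a-1}\,t
	\qquad \text{for all } a \in [\tfrac12, 1],
\end{equation*}
for every $g$ of the above form subject to the two moment bounds and the position constraint. The main obstacle is obtaining the sharp constant for intermediate $a$: because $g$ is signed, a naive log-convexity or Hölder interpolation between the two endpoint bounds does not suffice. I would proceed by case analysis on the relative arrangement of the two signed interval indicators making up $g$ (same sign or opposite sign; disjoint, overlapping, or nested) and, in each case, optimise explicitly over the extremal configurations compatible with the moment and position constraints. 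The resulting worst-case configurations have the two intervals roughly a distance $s$ apart with widths of order $t$, so that the integral reduces to a polynomial expression in $s, t$ of the form $(s+t)^{2a} - s^{2a} - t^{2a}$ (or a close variant) that must be bounded by $4\cdot 2^{-2a}\,s^{2a-1}\,t$. The careful verification that the sharp constant holds across all cases — and the check that it is attained, so that it cannot be improved — is the technical calculation deferred to appendix \autoref{app:power_inequality}.
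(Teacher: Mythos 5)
Your reduction is genuinely different from the paper's: instead of your layer--cake identity $r^{2a}=2a\int_0^r u^{2a-1}\,\dl u$ and the signed step function $g$, the paper embeds the two triangles $y,p,q$ and $z,p,q$ isometrically into the Euclidean plane via the cosine rule and reduces \autoref{thm:power_inequ} to a five-variable arithmetic inequality (\autoref{con:ana}, via \autoref{lmm:power_implies}), which it then proves by a long case analysis in appendix \autoref{app:power_inequality}. However, as a proof your proposal has a genuine gap: everything after the two endpoint checks ($a=\tfrac12$ reproduces the zeroth-moment bound, $a=1$ the first-moment bound) is announced rather than carried out. The ``case analysis on the relative arrangement of the two signed intervals'' is precisely where the whole difficulty of the sharp constant $8a2^{-2a}$ for intermediate $a$ lives, and you cannot defer it to the paper's appendix, because the appendix proves a different reduced statement than yours. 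Worse, your reduced claim --- that the bound holds for \emph{every} signed two-interval $g$ subject only to the length constraints $|\ol yq-\ol yp|,|\ol zq-\ol zp|\le \ol qp$, the position constraints $|\ol yq-\ol zq|,|\ol yp-\ol zp|\le \ol yz$, and the first-moment bound --- discards the remaining triangle inequalities (e.g.\ $\ol qp\le \ol yq+\ol yp$, $\ol yz\le \ol yq+\ol zq$), so it is a strictly stronger assertion whose truth you have not established; it would first have to be verified that no sharpness is lost there.

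A concrete symptom of the missing precision: the extremal configuration you gesture at (four collinear points with $\ol yq=s+t$, $\ol yp=s$, $\ol zq=t$, $\ol zp=0$) gives left-hand side $(s+t)^{2a}-s^{2a}-t^{2a}$, which must be compared with the \emph{full} right-hand side $8a2^{-2a}s^{2a-1}t$ (with equality at $a=1$, $s=t$); your sentence asserting that this polynomial ``must be bounded by $4\cdot 2^{-2a}s^{2a-1}t$'' drops the factor $2a$ and is false already at $a=1$, $s=t$, since $2s^2\not\le s^2$. Because the constant is optimal (appendix \autoref{app:power_inequ_opti}), there is no slack to absorb such slips, and the deferred ``careful verification across all cases'' is not a routine afterthought but the entire content of the theorem; until it is written out (in your framework: an explicit optimization over the admissible signed configurations, with the constraints you dropped either restored or shown superfluous), the proposal is an approach, not a proof.
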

In particular, if the metric space $(\mc Q,d)$ fulfills the nice quadruple inequality and $a\in[\frac12,1]$, then the weak quadruple inequality for $\mf c = d^{2a}$ is fulfilled with $\mf a = 8 a 2^{-2a} d^{2a-1}$ and $\mf b = d$.

Following the intermediate step \autoref{lmm:power_implies} (appendix \autoref{app:power_inequality}) in the proof of \autoref{thm:power_inequ}, one can easily show a similar result if the constant on the right hand side of equation \eqref{eq:nice_power_inequ} is larger than 2. Only the constant $8 a 2^{-2a}$ on the right hand side of equation \eqref{eq:nice_power_inequ_res} changes.

The theorem applies to subsets of Hadamard spaces. But note that it is not required that $\mc Q$ is geodesic, but can consist of only the points $q,p,y,z$.  As a statement purely about metric spaces, it might be of interest outside the realm of statistics.  

In \autoref{coro:probrates_power} (section \ref{ssse:quad:npc}) it is used to show rates of  convergence for the Fréchet mean estimator of the power transform metric $d^a$. There the asymmetry of the exponents of the factors on the right hand side of  \eqref{eq:nice_power_inequ_res} is essential for proving the result under weak assumption. 

Unfortunately, the only proof of this statement that the author was able to derive (see appendix \ref{app:power_inequality}) is very long and does not give much insight into the problem as it mostly consists of distinguishing many cases and then using simple calculus. The author is convinced that a more appealing proof is possible.

The concave function $[\frac12, 1] \to (0,\infty), a\mapsto8a2^{-2a}$ is maximal at $a_0 = (2\ln(2))^{-1} \approx 0.721$ with $8a_02^{-2a_0} = \frac{4}{\euler \ln(2)} \leq 2.123$. Thus, the constant factor in the bound is very close to 2, but 2 is not sufficient.

In appendix \autoref{app:power_inequ_opti}, we show that $8a2^{-2a}$ is the optimal constant, and that we cannot extend \autoref{thm:power_inequ} to $a > 1$ or $a < \frac12$.

It is not known to the author whether the nice quadruple inequality in $(\mc Q, d)$ does or does not imply the nice quadruple inequality in $(\mc Q, d^a)$ for $a \in (\frac12, 1)$, i.e.,
\begin{equation*}
	\ol yq^{2a} - \ol yp^{2a} - \ol zq^{2a} + \ol zp^{2a} \leq 2 \, \ol yz^{a}\, \ol qp^{a}
	\eqfs
\end{equation*}
\subsection{Weak Implies Strong}\label{ssec:weak_implies_strong}
The weak quadruple inequality is well justified as a condition: Aside from allowing to establish rates in probability (\autoref{thm:abstr_rate_prob}), it can be interpreted as a form of Cauchy--Schwartz inequality (section \ref{ssse:quad:npc}), it is fulfilled in a large class of metric spaces (bounded metric spaces, Hadamard spaces, appendix \autoref{app:quad_stab}), and the power inequality (\autoref{thm:power_inequ}) implies even more applications with a nice interpretation in statistics (section \ref{sssec:app:hadamard:pfm}).

The case for the strong quadruple inequality, which we use in \autoref{thm:abstr_rate_exp} to establish rates in expectation, seems much weaker. 
Although it can be established in Hilbert spaces, see section \ref{sssec:inner_product_space}, it is not directly clear whether we can have a suitable version for Hadamard spaces or a power inequality. 

The next section examines the strong quadruple inequality in Hadamard spaces and concludes with a negative result. Thereafter, we discuss an approach to infer convergence rates in expectation when only assuming the weak quadruple inequality by showing that a weak quadruple inequality imply certain strong quadruple inequalities. This approach is executed to obtain \autoref{thm:abstr_weak_strong} for convergence rates in expectation, where the result holds only asymptotically, in contrast to \autoref{thm:abstr_rate_exp}.
\subsubsection{Projection Metric}
In Euclidean spaces, we can take $\mf b_m(q,p) = \normOf{\frac{q-m}{\normof{q-m}}-\frac{p-m}{\normof{p-m}}}$ as the strong quadruple metric. This pseudo-metric can be written down only depending on the metric (not the norm or vector space operations) as
\begin{equation*}
	d_{m}^{\ms{proj}}(q,p) := \sqrt{\frac{\ol qp^2 - \br{\ol qm - \ol pm}^2}{\ol qm\, \ol pm}}\eqcm\quad  d_{m}^{\ms{proj}}(q,p) = \mf b_m(q,p)
	\eqfs
\end{equation*} 
The metric $d_{m}^{\ms{proj}}(q,p)$ can be defined in any metric space. Unfortunately, it does not yield a strong quadruple inequality in non-Euclidean Hadamard spaces in the same way as in Euclidean spaces. See appendix \autoref{app:dproj} for details.
\subsubsection{Power Metric}
To establish rates of convergence in expectation for the $\mf c$-Fréchet mean, given that a weak quadruple inequality holds, we first show that some version of the strong quadruple inequality is implied by the weak one, \autoref{lmm:weak_implies_strong_power}. Unfortunately, we obtain a strong quadruple distance $\mf b_m$ such that the measure of entropy $\tsize(\mc Q, \mf b_m)$ might be infinite. To solve this problem, we define an increasing sequence of sets $\mc Q_n$ such that $\mc Q_n \subset \mc Q_{n+1}$ and $\bigcup_{n\in\N} \mc Q_n = \mc Q$
with distances $\mf b_{m,n}$ such that the strong quadruple inequality is fulfilled on $\mc Q_n$ with strong quadruple distance $\mf b_{m,n}$, and $\tsize(\mc Q_n, \mf b_{m,n})$ is finite and can be suitably controlled in $n$. This allows us to prove an asymptotic result for the rate of convergence in expectation, \autoref{thm:abstr_weak_strong}.
\begin{lemma}\label{lmm:weak_implies_strong_power}
Assume $(\mc Q, \mc Y, \mf a, \mf b, \mf c)$ fulfills the weak quadruple inequality.
Let $\xi\in[0,1]$.
Then
\begin{equation}\label{eq:strquadpowerbound}
	\frac{\olc yq - \olc ym - \olc zq + \olc zm}{\mf b(q,m)^{\xi}}
	-
	\frac{\olc yp - \olc ym - \olc zp + \olc zm}{\mf b(p,m)^{\xi}}
	\leq
	2^\xi\, \mf a(y,z)\, {\mf b}(q,p)^{1-\xi}
\end{equation}
for all $y,z,q,p,m \in \mc Q$ with $\mf b(q,m), \mf b(p,m) > 0$.
\end{lemma}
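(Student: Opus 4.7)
My plan is to rewrite the desired inequality as a real-variable estimate, extract the data from the weak quadruple inequality, and then split into two cases based on the size of $u := \mf b(q,p)$ relative to $s := \mf b(q,m)$. Write
\[
A := \olc yq - \olc ym - \olc zq + \olc zm, \qquad B := \olc yp - \olc ym - \olc zp + \olc zm,
\]
set $t := \mf b(p,m)$ and $\alpha := \mf a(y,z)$, so that the claim becomes $A s^{-\xi} - B t^{-\xi} \leq 2^\xi \alpha u^{1-\xi}$. By specialising the weak quadruple inequality to the pairs $(q,m), (m,q), (p,m), (m,p), (q,p), (p,q)$ I extract the three scalar bounds $|A| \leq \alpha s$, $|B| \leq \alpha t$, $|A-B| \leq \alpha u$, and the pseudo-metric triangle inequality supplies $|s-t| \leq u \leq s+t$.

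Next I reduce to the case $s \leq t$ (the case $s \geq t$ is handled by the dual decomposition using $-B \leq \alpha t$ in place of $A \leq \alpha s$). Starting from the identity
\[
\frac{A}{s^\xi} - \frac{B}{t^\xi} = A\Bigl(\frac{1}{s^\xi} - \frac{1}{t^\xi}\Bigr) + \frac{A - B}{t^\xi}
\]
and using the non-negativity of the first parenthesis together with $A \leq \alpha s$ and $A - B \leq \alpha u$, I obtain
\[
\frac{A}{s^\xi} - \frac{B}{t^\xi} \leq \alpha\bigl[\, s^{1-\xi} + (u - s)\, t^{-\xi}\,\bigr].
\]

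For the final step I split on whether $u \geq s$. If $u \geq s$, the triangle inequality $t \geq u - s$ gives $(u - s)\, t^{-\xi} \leq (u - s)^{1-\xi}$, and concavity of $x \mapsto x^{1-\xi}$ applied to the decomposition $s + (u - s) = u$ yields $s^{1-\xi} + (u - s)^{1-\xi} \leq 2(u/2)^{1-\xi} = 2^\xi u^{1-\xi}$. If $u < s$, the triangle inequality $t \leq s + u$ bounds the now-negative term as $(u - s)\, t^{-\xi} \leq (u - s)(s + u)^{-\xi}$, and setting $x := s/u \geq 1$ reduces the claim to $g(x) := x^{1-\xi} - (x - 1)(x + 1)^{-\xi} \leq 2^\xi$. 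Since $g(1) = 1 \leq 2^\xi$, monotonicity $g'(x) \leq 0$ suffices, which after differentiation and the tangent estimate $(1 + 1/x)^\xi \leq 1 + \xi/x$ reduces to the elementary inequality $1 - \xi \leq x(1 + \xi)$.

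I expect the $u < s$ branch to be the main obstacle. The Jensen-based argument of the $u \geq s$ branch does not extend, since $u$ admits no non-negative decomposition into $s$ and $u - s$; the alternative via log-convexity, bounding $h(\xi) := A s^{-\xi} - B t^{-\xi}$ above by $h(0)^{1-\xi} h(1)^\xi \leq (\alpha u)^{1-\xi} (2\alpha)^\xi$, also fails here, since $\ln h$ is convex only when $AB \leq 0$, whereas the extremal configuration $A = \alpha s$, $B = \alpha(s - u) > 0$ forced by $u < s \leq t$ has $AB > 0$. An explicit one-variable calculus estimate therefore appears unavoidable.
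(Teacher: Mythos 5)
Your argument is correct and follows essentially the same route as the paper: the three consequences $|A|\le \mf a(y,z)\,\mf b(q,m)$, $|B|\le \mf a(y,z)\,\mf b(p,m)$, $|A-B|\le \mf a(y,z)\,\mf b(q,p)$ combined with the decomposition $A\bigl(s^{-\xi}-t^{-\xi}\bigr)+(A-B)t^{-\xi}$ for $s\le t$ (and its dual for $s\ge t$) reproduce the paper's \autoref{lmm:fraction_bound}, and your reduction to $s^{1-\xi}+(u-s)t^{-\xi}\le 2^\xi u^{1-\xi}$ with the split $u\ge s$ versus $u<s$ is exactly the content of \autoref{lmm:abc_beta_bound}, with the $u\ge s$ branch argued identically (triangle inequality plus concavity). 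The only divergence is the subcase $u<s$, where you check $g(x)=x^{1-\xi}-(x-1)(x+1)^{-\xi}\le 2^\xi$ via $g(1)=1$, $g'\le 0$ and the Bernoulli estimate $(1+1/x)^\xi\le 1+\xi/x$ (which indeed reduces to $1-\xi\le x(1+\xi)$ and is correct), whereas the paper analyzes $f(x)=1-x-(1+x)^\xi(1-x^{1-\xi})$ by a second-derivative argument and incidentally obtains the sharper constant $1$ in that regime; both are routine, so this is a cosmetic rather than substantive difference.
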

See appendix \autoref{app:power_metric_bound} for a proof.
We would like to have $\xi$ large, i.e., close to 1, to obtain the same rate of convergence in expectation as in probability. We achieve that by defining sequences $\xi_n \nearrow 1$ and $\mc Q_n \nearrow \mc Q$, and control the entropy of $\mc Q_n$ with respect to $\mf b^{1-\xi_n}$.

To state the result, we have to modify the \assu{ass:ent}{Entropy} and the \assu{ass:ex}{Existence} condition.
Recall the definition of the objective function $F(q) = \Ex{\olc Yq}$ and the empirical objective function $F_n(q) = \frac1n \sum_{i=1}^n \olc{Y_i}q$.
\begin{assumptions}
\theoremContentInNewLine
	\begin{enumerate}[label=\environmentEnumerateLabel]
	\assitem{ass:ex2}{Existence'}
		We have $\Ex*{\abs{\mf c(Y, q)}} < \infty$ for all $q\in\mc Q$.
		Let $o\in\mc Q$. Define $R_n := n$ and $\mc Q_n := \Ball{R_n}{o}{\mf b}$.
		There are $m^{Q_n}_n \in \argmin_{q\in \mc Q_n} F_n(q)$ measurable and $m\in\argmin_{q\in\mc Q} F(q)$.
		\index[inot]{$o$}
		\index[inot]{$R_n$}
		\index[inot]{$\mc Q_n$}
		\index[inot]{$m^{Q_n}_n$}
	\assitem{ass:se}{Small Entropy}
		There are $\beta, c_{\ms e} > 0$ such that for $\delta > 0$ large enough
		\begin{equation*}
			\sqrt{\log N(\ball_\delta(o, \mf b), \mf b, r)} \leq c_{\ms e} \log\brOf{\frac{\delta}{r}}^\beta
		\end{equation*}
		for all $r > 0$.
		\index[inot]{$\beta$}
	\end{enumerate}
\end{assumptions}
Note that the \assu{ass:se}{Small Entropy} condition is much stronger than \assu{ass:ent}{Entropy}, which we assumed in \autoref{thm:abstr_rate_prob}. In Euclidean subspaces $\mc Q \subset \R^b$, we have \begin{equation*}
N(r,\Ball{\delta}{0}{d},d) \leq \br{\frac{3 \delta}{r}}^b
\end{equation*} for all $R>r>0$ \cite[section 4]{pollard90}. Thus, \assu{ass:se}{Small Entropy} is fulfilled in Euclidean spaces.
\begin{theorem}[Convergence rate in expectation]\label{thm:abstr_weak_strong}
	In the \textit{Abstract Setting} of section \ref{ssec:ares:generalsetting} with loss $\mf l = \mf b$, where $\mf b$ is a pseudo-metric, and rate parameter $\xi=1$, assume that following conditions hold:
	\assu{ass:ex2}{Existence'}, \assu{ass:gr}{Growth} with $\gamma>1$, \assu{ass:wquad}{Weak Quadruple}, \assu{ass:smom}{Strong Moment} with $\kappa > \gamma-1$, \assu{ass:se}{Small Entropy}.
	Then
	\begin{equation*}
		\Ex*{\mf b(m, m^{Q_n}_n)^\kappa} = \bigO\brOf{\br{n^{-\frac12} \log(n)^\beta}^{\frac{\kappa}{\gamma-1}}}
		\eqfs
	\end{equation*}
\end{theorem}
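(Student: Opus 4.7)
The plan is to reduce to \autoref{thm:abstr_rate_exp} applied to the restricted problem on $\mc Q_n = \Ball{n}{o}{\mf b}$, using \autoref{lmm:weak_implies_strong_power} to manufacture a strong quadruple inequality with rate parameter $\xi_n \nearrow 1$ that is carefully balanced against the entropy. Since $\mf b(o, m) < \infty$ and $R_n = n \to \infty$, for $n$ large enough $m \in \mc Q_n$, so $m$ remains the Fréchet mean of $F$ restricted to $\mc Q_n$; as the conclusion is asymptotic I restrict attention to such $n$.

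I would set $\xi_n := 1 - 1/\log n$ and invoke \autoref{lmm:weak_implies_strong_power} on $\mc Q_n$. This supplies a strong quadruple inequality at $m$ with rate parameter $\xi_n$, data distance $\mf a$ (unchanged), and strong quadruple metric $\mf b_{m,n}(q,p) := 2^{\xi_n} \mf b(q,p)^{1-\xi_n}$; the snowflake transform $\mf b^{1-\xi_n}$ is still a pseudo-metric. The diameter of $\mc Q_n$ in $\mf b_{m,n}$ is at most $D_n := 2^{\xi_n} (2n)^{1-\xi_n}$. All hypotheses of \autoref{thm:abstr_rate_exp} on $\mc Q_n$ are then in place: \assu{ass:ex2}{Existence'} supplies the minimizers, \assu{ass:gr}{Growth} restricts to $\mc Q_n$, the strong quadruple is just the lemma, and \assu{ass:smom}{Strong Moment} makes $\mf M(\kappa/(\gamma - \xi_n))$ uniformly bounded for large $n$ by $\mf M(\kappa/(\gamma-1))$, via Jensen-type monotonicity of the moment map in its exponent.

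The key step is bounding the entropy. Since a $\mf b_{m,n}$-ball of radius $r$ is a $\mf b$-ball of radius $(r/2^{\xi_n})^{1/(1-\xi_n)}$, the \assu{ass:se}{Small Entropy} hypothesis gives, after a short algebraic simplification,
\begin{equation*}
\sqrt{\log N(\mc Q_n, \mf b_{m,n}, r)} \leq c_{\ms e} (1 - \xi_n)^{-\beta} \log(D_n/r)^\beta
\end{equation*}
for $r < D_n$ and $n$ large. Substituting $u = r/D_n$ and using $\int_0^1 |\log u|^\beta du = \Gamma(\beta+1) < \infty$ yields $\entrn(\mc Q_n, \mf b_{m,n}) \leq C (1-\xi_n)^{-\beta} n^{1-\xi_n}$. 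With the choice $\xi_n = 1 - 1/\log n$ one has $n^{1-\xi_n} = e$ and $(1-\xi_n)^{-\beta} = (\log n)^\beta$, so the entropy is of order $(\log n)^\beta$.

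Plugging into the general-entropy form of \autoref{thm:abstr_rate_exp} gives
\begin{equation*}
\Ex{\mf b(m, m^{Q_n}_n)^\kappa} \leq c \, n^{-\kappa/(2(\gamma - \xi_n))} (\log n)^{\beta \kappa / (\gamma - \xi_n)}.
\end{equation*}
Using the identity $(1-\xi_n)\log n = 1$ and expanding $\tfrac{1}{\gamma - \xi_n} - \tfrac{1}{\gamma - 1} = \tfrac{1-\xi_n}{(\gamma-1)(\gamma-\xi_n)}$, one checks $n^{-\kappa/(2(\gamma - \xi_n))} = \bigO(n^{-\kappa/(2(\gamma-1))})$ and $(\log n)^{\beta\kappa/(\gamma - \xi_n)} = \bigO((\log n)^{\beta\kappa/(\gamma-1)})$, which yields the claimed rate. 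The main obstacle is precisely this balancing act: $\xi_n$ must approach $1$ so that the exponent $\kappa/(2(\gamma-\xi_n))$ matches the target $\kappa/(2(\gamma-1))$ up to constants, yet slowly enough for both the entropy blow-up $(1-\xi_n)^{-\beta}$ and the diameter factor $n^{1-\xi_n}$ to remain polylogarithmic; the choice $\xi_n = 1 - 1/\log n$ is the canonical one that reconciles these demands.
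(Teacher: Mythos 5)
Your proposal is correct and follows essentially the same route as the paper's proof: restrict to the balls $\mc Q_n=\Ball{n}{o}{\mf b}$, take $\xi_n=1-1/\log n$, use \autoref{lmm:weak_implies_strong_power} to manufacture the strong quadruple inequality with the snowflake metric $2^{\xi_n}\mf b^{1-\xi_n}$, bound $\entrn(\mc Q_n,\mf b^{1-\xi_n})$ via the \assu{ass:se}{Small Entropy} condition exactly as in \autoref{lmm:entropyuse}, apply \autoref{thm:abstr_rate_exp}, and balance the exponents as in \autoref{lmm:seqbound}, noting $m\in\mc Q_n$ for $n$ large. The only cosmetic imprecision is the moment step: by Lyapunov's inequality $\mf M\brOf{\kappa/(\gamma-\xi_n)}$ is bounded for large $n$ by $\max\brOf{1,\mf M\brOf{\kappa/(\gamma-1)}}$ rather than by $\mf M\brOf{\kappa/(\gamma-1)}$ itself (the paper instead uses the limit $\mf M\brOf{\kappa/(\gamma-\xi_n)}\to\mf M\brOf{\kappa/(\gamma-1)}$), which is harmless for the $\bigO$ conclusion.
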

See appendix \autoref{app:proofs:1and2} for the proof.
\section{Application of the Abstract Results}\label{sec:applications}
We apply the abstract results of Theorems 1 to 4 in this section. We first consider two toy examples -- Euclidean spaces, section \ref{ssec:app:euclid} and infinite dimensional Hilbert spaces, section \ref{ssec:app:hilbert} --  to better understand the result and compare them to optimal bounds. Then we discuss two more involved settings: The Fréchet mean for non-convex subsets of Euclidean spaces, section \ref{ssec:app:nonconvex}, and for Hadamard spaces, section \ref{ssec:app:hadamard}.
\subsection{Euclidean Spaces}\label{ssec:app:euclid}
	Let $\mc Q \subset \R^b$ be convex with the Euclidean metric $d(p,q) = \normof{p-q}$.
	Choose $\mc Y = \mc Q$, $\mf c =  d^2$, $\mf l = d$, $\xi=1$.
	Let $Y$ be a $\mc Q$-valued random variable with $\Ex{\normof{Y}^2}<\infty$.
	Then the Fréchet mean equals the expectation $m =\Ex{Y} \in \mc Q$. We can easily calculate
	\begin{equation*}
		\Ex*{\normof{Y-q}^2-\normof{Y-m}^2} = \normof{q-m}^2\eqfs
	\end{equation*}
	Thus, the \assu{ass:gr}{Growth} condition is fulfilled with $\gamma=2$.
	The space has the strong quadruple inequality at every point with data distance $\mf a(y,z) = 2 \normof{y-z}$ and strong quadruple distance $\mf b_m(p,q) = \normOf{\frac{q-m}{\normof{q-m}}-\frac{p-m}{\normof{p-m}}}$, see section \ref{sssec:inner_product_space}. Thus, \autoref{thm:abstr_rate_exp} implies
	\begin{align*}
		\Ex*{\normOf{\Ex{Y}-\frac1n\sum_{i=1}^nY_i}^2} 
		&=
		\Ex*{\mf l(m,m_n)^2} 
		\\&\leq 
		C
		n^{-1} 
		\entrn(\mc Q, \mf b_m)^2
		\Ex*{\mf a(Y\pr, Y)^2}
		\\&\leq
		C\pr b \frac1n \Ex*{\normOf{Y-\Ex{Y}}^2}
		\eqcm
	\end{align*}
	where we used $N(r,\Ball{R}{0}{d},d) \leq \br{\frac{3 R}{r}}^b$ for all $R>r>0$ \cite[section 4]{pollard90} to fulfill \assu{ass:sent}{Strong Entropy}. The constants $C, C\pr > 0$ are universal.
	Compare this with the result that one obtains by direct calculations, i.e.,
	\begin{equation*}
		\Ex*{\normOf{\Ex{Y}-\frac1n\sum_{i=1}^nY_i}^2}  = \frac1n\Ex*{\normOf{Y-\Ex{Y}}^2}
		\eqfs
	\end{equation*}
	We pay an extra dimension factor $b$ when using the Fréchet mean approach instead of direct calculations. This comes from the use of the Cauchy--Schwartz inequality, which powers the strong quadruple inequality in Euclidean spaces.
\subsection{Hilbert Spaces}\label{ssec:app:hilbert}
	Let $\mb H$ be an infinite dimensional Hilbert space and $\mc Q \subset \mb H$ convex.
	Let $d(p,q)^2 = \normof{p-q}^2 = \ip{p-q}{p-q}$.
	Choose $\mc Y = \mc Q$, $\mf c =  d^2$, $\mf l = d$, $\xi=1$.
	Let $Y$ be a $\mc Q$-valued random variable with $\Ex{\normof{Y}^2}<\infty$.
	As in the Euclidean case, the Fréchet mean $m$ equals the expectation $\Ex{Y}$, the \assu{ass:gr}{Growth} condition holds with $\gamma=2$, and the strong quadruple inequality is fulfilled with $\mf a(y,z) = 2 \normof{y-z}$ and pseudometric $\mf b_m(p,q) = \normOf{\frac{q-m}{\normof{q-m}}-\frac{p-m}{\normof{p-m}}}$.
	
	Unfortunately, \assu{ass:sent}{Strong Entropy} is not fulfilled on $\mb H$ if $\dim(\mb H) = \infty$. 
	By introducing a weight sequence, we can make $\mf b_m$ smaller by making $\mf a$ larger: 
	Assume that the Hilbert space $\mb H$ is separable and thus admits a countable basis. Let $s=(s_k)_{k\in\N} \subset(0,\infty)$.
	In section \ref{sssec:inner_product_space}, we derived that the strong quadruple condition holds with
	$\mf a(y,z) = 2\normof{y-z}_{s^{-1}}$ and $\mf b_m^s (p,q)=  \normOf{\frac{q-m}{\normof{q-m}}-\frac{p-m}{\normof{p-m}}}_{s}$.
	Then $\entrn(\mb H, \mf b_m^s) \leq \gamma_2(\mb H, \mf b_m^s) \leq \gamma_2(\mc E_s, d)$, where 
	\begin{equation*}
		\mc E_s = \cb{h\in\mb H \colon \sum_{k=1}^\infty \frac{h_k^2}{s_k^2}\leq 1}
		\eqfs
	\end{equation*}
	There is a universal constant $c>0$ such that $\gamma_2(\mc E_s, d)^2 \leq c \sum_{k=1}^\infty s_k^2$, see \cite[Proposition 2.5.1]{talagrand14}.
	As a condition on the variance term, we need
	\begin{equation*}
		\Ex*{\normof{Y-\Ex{Y}}_{s^{-1}}^2} = \normof{\sigma}_{s^{-1}}^2 = \sum_{k=1}^\infty \sigma^2_k s_k^{-2}< \infty
		\eqcm
	\end{equation*}
	where $\sigma^2_k := \VOf{Y_k}$ and $\sigma = (\sigma_k)_{k\in\N}$.
	Similar to the Euclidean case, \autoref{thm:abstr_rate_exp} implies
	\begin{equation*}
		\Ex*{\normOf{\Ex{Y}-\frac1n\sum_{i=1}^nY_i}^2} 
		=
		\Ex*{\mf l(m,m_n)^2} 
		\leq
		C \frac1n \normof{s}_{\ell_2}^2 \normof{\sigma}_{s^{-1}}^2
		\eqcm
	\end{equation*}
	where $\normof{s}_{\ell_2}^2 = \sum_{k=1}^\infty s_k^2$.
	
	Direct calculations yield a better result:
	\begin{equation*}
		\Ex*{\normOf{\Ex{Y}-\frac1n\sum_{i=1}^nY_i}^2} 
		=
		\frac1n \normof{\sigma}_{\ell_2}^2
		\eqfs
	\end{equation*}
	As in the Euclidean case, we pay a factor related to the dimension for using the more generally applicable Fréchet mean approach instead of using the inner product for direct calculations.
\subsection{Non-Convex Subsets}\label{ssec:app:nonconvex}
	Assume we are in the setting of section \ref{ssec:app:hilbert} and the mentioned conditions for convergence are fulfilled.
	But now we want to take $\mc Q \subset \mb H$ not necessarily convex and $\mc Y = \mb H$. Assume that \assu{ass:ex}{Existence} of the Fréchet mean $m\in\mc Q$ is fulfilled. The expectation $\mu := \Ex{Y} \in \mb H$ might not be an element of $Q$. Then the Fréchet mean $m$ is the closest projection of $\mu$ to $\mc Q$, in the sense that
	\begin{align*}
		\argmin_{q\in\mc Q} \Ex{\normof{Y-q}^2} 
		= 
		\argmin_{q\in\mc Q} \normof{\mu-q}
		\eqfs
	\end{align*}
	To get the same rate as in section \ref{ssec:app:hilbert}, we mainly need to be concerned with the \assu{ass:gr}{Growth} condition, as the quadruple condition holds in all subsets.
	For $q\in\mb H$, simple calculations show
	\begin{align*}
		\Ex*{\normof{Y-q}^2-\normof{Y-m}^2} 
		=
		\normof{\mu-q}^2-\normof{\mu-m}^2
		\eqfs
	\end{align*}
	We want to find a lower bound of this term in the form of $c_{\ms g} \normof{q-m}^\gamma$ for constants $\gamma, c_{\ms g} > 0$.
	For $a>1$, we have
	\begin{align*}
		&a\normOf{\mu-q}^2-a\normOf{\mu-m}^2 - \normOf{q-m}^2 
		\\&= (a-1)\normOf{q-\br{\mu+\frac{\mu-m}{a-1}}}^2 - \frac{a^2}{a-1} \normOf{m-\mu}^2
		\eqfs
	\end{align*}
	Thus, $\normOf{\mu-q}^2-\normOf{\mu-m}^2 \geq \frac1a\normOf{q-m}^2$ if and only if 
	\begin{equation*}
		\normOf{q-\br{\mu+\frac{\mu-m}{a-1}}} \geq \frac{a}{a-1} \normOf{\mu-m}
		\eqfs
	\end{equation*}
	Equivalently, the \assu{ass:gr}{Growth} condition holds with $\gamma=2$ and $c_{\ms g} \in (0,1)$ if and only if
	\begin{equation*}
		\normOf{q-\br{\mu+\frac{c_{\ms g}}{1-c_{\ms g}} \br{\mu-m}}} \geq \frac{1}{1-c_{\ms g}} \normOf{\mu-m}
	\end{equation*}
	for all $q\in\mc Q$, i.e., if and only if $\mc Q \cap \ball_{r}(p) = \emptyset$, where $r= \frac{1}{1-c_{\ms g}} \normOf{\mu-m}$ and $p=\mu+\frac{1-c_{\ms g}}{c_{\ms g}} \br{\mu-m}$.  Note that $\normof{p-m} = r$.
	\begin{figure}
	\begin{center}
		\begin{tikzpicture}
			\fill[green!20!white] (-7,-4) rectangle (4,4);
			\draw[fill=red!5!white] (-2,0) circle (3);
			\draw[fill=red!12!white] (-1,0) circle (2);
			\draw[fill=red!25!white] (0,0) circle (1);
			\filldraw (0,0) circle (0.05) node[below] {$\mu = p_0$};
			\filldraw (1,0) circle (0.05) node[right] {$m$};
			\filldraw (-1,0) circle (0.05) node[left] {$p_1$};
			\filldraw (-2,0) circle (0.05) node[left] {$p_2$};
			\draw[thick,dashed] (0,0) -- node[left]{$r_0$} (0,1);
			\draw[thick,dashed] (-1,0) -- node[left]{$r_1$} (-1,2);
			\draw[thick,dashed] (-2,0) -- node[near end, left]{$r_2$} (-2,3);
			\node at (1.5,1.5) {$\mc Q$};
		\end{tikzpicture}
	\end{center}
	\caption{If $\mc Q \cap \ball_{r_0}(p_0) \neq \emptyset$, the point $m$ cannot be the Fréchet mean of a distribution with expectation $\mu$. To fulfill the 
	\texttt{Growth}
	condition, we need $\mc Q \cap \ball_{r_1}(p_1) = \emptyset$ for a ball with larger radius $r_1 > r_0$ and adjusted center $p_1$. Increasing the radius further, $r_2>r_1$, only improves the constant $c_{\ms g}$ of the 
	\texttt{Growth}
	condition, but not the exponent $\gamma$.
	}\label{fig:nonconvsubset}
	\end{figure}
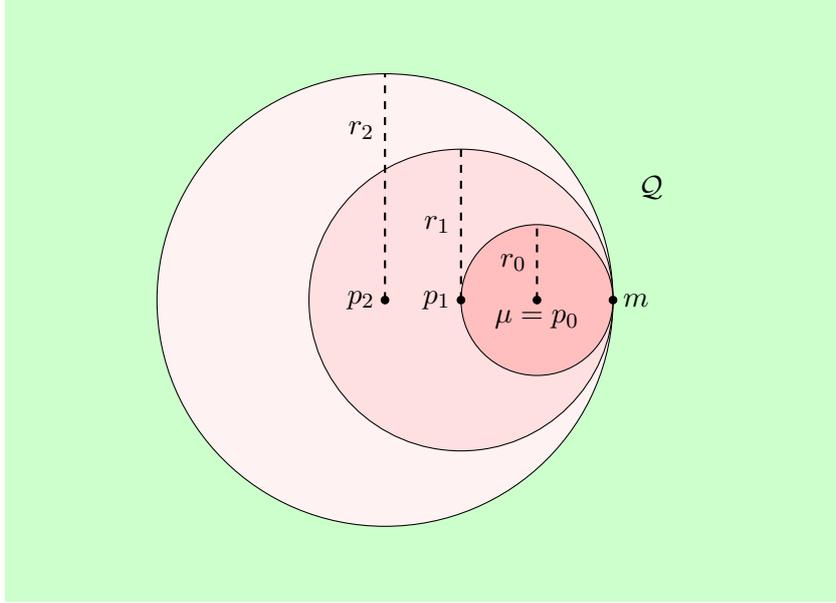
	This is illustrated in \autoref{fig:nonconvsubset}.
	We have answered the question of how $\mc Q$ may look like, given the location of $\mu$ and $m$. Possibly more interesting is the question of, given $\mc Q$, where may $\mu$ be located so that $m$ can be estimated with the same rate as for convex sets. We will answer this question only informally via a description similar to a \textit{medial axis transform} \cite{choi97}:
	
	For simplicity assume $\mc Q = \R^2 \setminus A$, where $A$ is a nonempty, open, and simply connected set with border $\partial A$ that is parameterized by the continuous function $\gamma\colon[0,1] \to \partial A$. Roll a ball along the border on the inside of $A$. Make the ball as large as possible at any point so that it is fully contained in $A$ and touches the border at point $\gamma(t)$. Denote the center of the ball as $c \colon [0,1]\to A$ and the radius as $r \colon [0,1] \to [0,\infty)$. Take $\epsilon\in(0,1)$ and trace the point $p_\epsilon\colon[0,1]\to A$ on the radius connecting the center of the ball $c(t)$ and the border $\gamma(t)$ such that it divides the radius into two pieces of length $\ol {p_\epsilon(t)}{c(t)} = \epsilon r(t)$ and $\ol {p_\epsilon(t)}{\gamma(t)} =(1-\epsilon) r(t)$. If $\mu$ lies on the outside of the set prescribed by $p\colon[0,1]\to A$, it can be estimated with the same rate as for convex sets.
	This is illustrated in \autoref{fig:nonconvxheart}.
	\begin{figure}
		\begin{center}
			\includegraphics[width=0.8\textwidth]{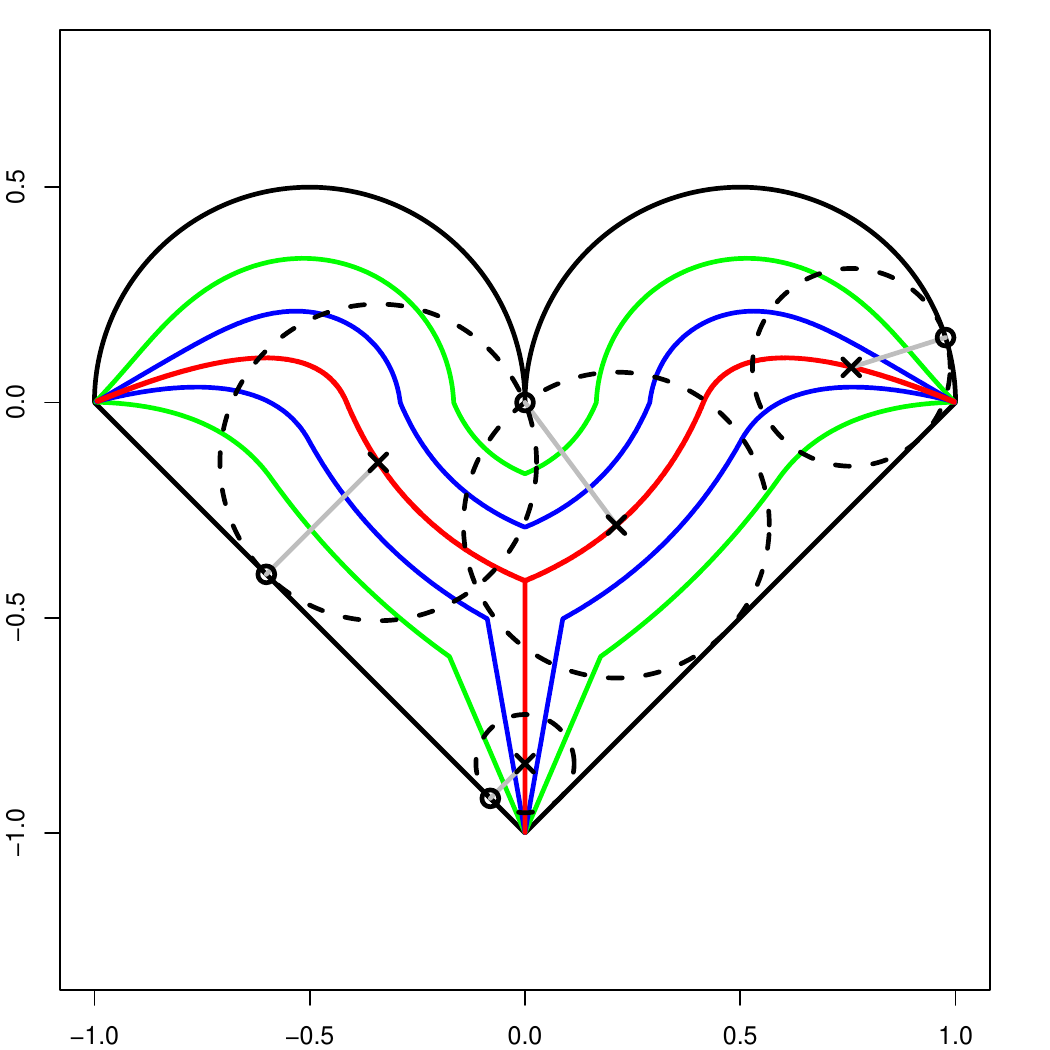}
		\end{center}
		\caption{Let $A\subset\R^2$ be the set enclosed by the heart (solid black lines). Let $\mc Y = \R^2$ and $\mc Q = \R^2 \setminus A$. We consider a distribution on $\R^2$ with mean $\mu\in\mc Y$ and Fréchet mean $m\in\mc Q$ with respect to the Euclidean metric and the descriptor space $\mc Q$. The green, blue, and red lines show $p_\epsilon(t)$ for $\epsilon = 0.6, 0.3, 0$.}\label{fig:nonconvxheart}
	\end{figure}
	The set of all centers $\mc C := \cb{c(t) \,|\, t \in [0,1]}$, also called the \textit{medial axis} ot \textit{cut locus}, is critical: The closer $\mu$ is to $\mc C$, the larger the guaranteed error bound for the estimator. In particular, we cannot guarantee consistency of  the estimator if $\mu \in \mc C$. A very similar phenomenon is described in \cite[section 3]{bhattacharya03}. The authors consider a Riemannian manifold $\mc Q$ that is embedded in an Euclidean space $\mc Y$. The \textit{extrinsic mean} of a distribution on $\mc Q$ is the projection of the mean $\mu$ in $\mc Y$ to $\mc Q$. The points $\mc C$ are called \textit{focal points}. It is shown \cite[Theorem 3.3]{bhattacharya03} that in many cases the \textit{intrinsic mean}, i.e, the Fréchet mean in $\mc Q$ with respect to the Riemannian metric on $\mc Q$, is equal to the extrinsic mean, i.e, the Fréchet mean in $\mc Q$ with respect to the Euclidean metric on $\mc Y$.
	
	The conditions described above are connected to the term reach of a set \cite{federer59}. The reach of $\mc Q \subset \R^b$ is the largest $\epsilon>0$ (possibly $\infty$) such that $\inf_{q\in\mc Q} d(x, q) < \epsilon$ implies that $x \in \R^b$ has a unique projection to $\mc Q$, i.e., a unique point $x_\mc Q$ with $d(x, x_{\mc Q}) = \inf_{q\in\mc Q} d(x, q)$. If the distance of the mean $\mu$ to $\mc Q$ is less than the reach of $\mc Q$, then the \assu{ass:gr}{Growth} condition holds with $\gamma = 2$. Thus, the rate of convergence is upper bounded by $c n^{-\frac12}$ for some $c>0$. Note that convex sets have infinite reach and exhibit this upper bound for any distribution with finite second moment.

	By considering the growth condition $\normOf{\mu-q}^2-\normOf{\mu-m}^2 \geq c_{\ms g}\normOf{q-m}^\gamma$, one can also find examples of subspaces where the growth exponent for specific distributions is different from 2.
\subsection{Hadamard Spaces}\label{ssec:app:hadamard}
Let $(\mc Q, d)$ be a Hadamard space. A definition of Hadamard spaces is given in section \ref{ssse:quad:npc}. Use the notation $\ol yq = d(y,q)$. For our purposes the most notable property of Hadamard spaces is that they fulfill the nice quadruple property, i.e., $\ol yq^2 - \ol yp^2 - \ol zq^2 + \ol zp^2 \leq 2 \,\ol yz\, \ol qp$. In the following subsections, we will see how this translates to convergence rates for the Fréchet mean estimator and use the power inequality to obtain results for a generalized Fréchet mean with cost function $d^{2a}$ for $a\in[\frac12, 1]$.

For an introduction to Hadamard spaces see \cite{bacak14}. A survey of recent developments can be found in \cite{bacak18}. 
In \cite{berg08} the authors characterize Hadamard spaces by the nice quadruple inequality and discuss a quasilinearzation of these spaces by observing that the left hand side of the nice quadruple inequality behaves like an inner product to some extend. \cite{sturm03} shows how some important theorems of probability theory in Euclidean spaces, like the law of large numbers and Jensen's inequality, translate to non-Euclidean Hadamard spaces. In \cite{sturm02} martingale theory on Hadamard spaces is discussed.

Turning to more applied topics, \cite{bacak14b} shows algorithms for calculating the Fréchet mean in Hadamard spaces with cost function $d^{2a}$ for $a=\frac12$ and $a=1$.
An important application of Hadamard spaces in Bioinformatics are phylogenetic trees \cite{billera01}. See also \cite[section 6.3]{bacak18} for a quick overview. Another application of Hadamard spaces is taking means in the manifold of positive definite matrices, e.g., in diffusion tensor imaging. But note that, as the underlying space is a differentiable manifold, one an use gradient-based approaches, see \cite{pennec06}.

Further examples of Hadamard spaces include Hilbert spaces, the Poincaré disc, complete metric trees, complete simply-connected Riemannian manifolds of nonpositive sectional curvature. See also \cite[section 3]{sturm03}.
\subsubsection{Fréchet Mean}\label{sssec:app:hadamard:fm}
Let $(\mc Q, d)$ be a Hadamard space. We use $\mc Q$ as data space as well as descriptor space, i.e., $\mc Q = \mc Y$. The cost function is $\mf c=d^2$, the loss $\mf l = d$. As described in section  \ref{ssse:quad:npc} the weak quadruple inequality holds with $\mf a = 2d$ and $\mf b = d$, i.e., $(\mc Q,d)$ fulfills the nice quadruple inequality.
Let $Y$ be a random variable with values in $\mc Q$. Let $Y_1, \dots, Y_n$ be iid copies of $Y$.

If $\Ex{d(Y,q)^2} < \infty$ for one $q\in\mc Q$, then it is also finite for every $q\in\mc Q$ and the Fréchet mean $m\in\argmin_{q\in\mc Q} \Ex{d(Y,q)^2}$ exists and is unique, see \cite[Proposition 4.3]{sturm03}. The same holds true for the estimator $m_n\in\argmin_{q\in\mc Q} \sum_{i=1}^n{d(Y_i,q)^2}$. Thus, \assu{ass:ex}{Existence} is fulfilled.

Here, we chose a second moment condition, because we will need it for estimation anyway. But note that choosing the cost function as $\mf c(y,q) = d(y,q)^2 - d(y,o)^2$ for a fixed, arbitrary point $o\in\mc Q$ allows us to require only a finite first moment for \assu{ass:ex}{Existence} and the resulting Fréchet mean coincides with the $d^2$-Fréchet mean if the second moment is finite. This is described in more detail and utilized in \cite{sturm03}.

Furthermore, the \assu{ass:gr}{Growth}-condition holds in Hadamard spaces with $\gamma=2$ and $c_{\ms g} = 1$, see \cite[Proposition 4.4]{sturm03}.
Thus, we obtain following corollary of \autoref{thm:abstr_rate_prob}.
\begin{corollary}[Convergence rate in probability]
	Assume \assu{ass:mom}{Moment} with $\zeta = 2$ and $\mf a = 2d$ and \assu{ass:ent}{Entropy}  with $\mf b = d$ and $\alpha=\beta$.
	Define
	\begin{equation*}
		\eta_{\beta,n} := 
		\begin{cases} 
			n^{-\frac12} & \text{ for }\beta < 1\eqcm\\
			n^{-\frac12} \log(n+1) & \text{ for } \beta = 1\eqcm\\
			n^{-\frac1{2\beta}} & \text{ for } \beta > 1\eqfs
		\end{cases} 
	\end{equation*}
	Then, for all $s > 0$, we have
	\begin{equation*}
		\PrOf{\eta_{\beta,n}^{-1} d(m,m_n) \geq s} \leq c \,\Ex{d(Y,Y\pr)^2} \,s^{-2}
		\eqcm
	\end{equation*}
	with a constant $c > 0$ depending only on $\beta$ and $c_{\ms e}$.
	In particular,
	\begin{equation*}
		d(m, m_n) = \bigOp\brOf{\eta_{\beta,n}}
		\eqfs
	\end{equation*}
\end{corollary}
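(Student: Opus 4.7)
The plan is to obtain the corollary by verifying the five hypotheses of \autoref{thm:abstr_rate_prob} in the present Hadamard setting and then specializing its conclusion to the chosen parameters.

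First, I would collect the facts that have already been recorded in the paragraphs immediately preceding the statement. \assu{ass:ex}{Existence} follows from \cite[Propositions 4.3 and 4.4]{sturm03}: under the finite second moment condition implicit in \assu{ass:mom}{Moment} with $\mf a = 2d$, both the population minimizer $m$ and a measurable version of the empirical minimizer $m_n$ exist and are unique. \assu{ass:gr}{Growth} holds with $\gamma = 2$ and $c_{\ms g} = 1$, again by \cite[Proposition 4.4]{sturm03}. \assu{ass:wquad}{Weak Quadruple} holds with $\mf a = 2d$ and $\mf b = d$, since every Hadamard space satisfies Reshetnyak's nice quadruple inequality as discussed in section \ref{ssse:quad:npc}. \assu{ass:mom}{Moment} and \assu{ass:ent}{Entropy} are assumed directly, the latter with $\alpha = \beta$ and $\mf b = d$.

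With these identifications, \autoref{thm:abstr_rate_prob} applies. The relevant exponent in the tail bound is
\begin{equation*}
    \gamma - \frac{\alpha}{\beta} = 2 - 1 = 1\eqcm
\end{equation*}
so the rate exponent $\frac{1}{\gamma - \alpha/\beta}$ reduces to $1$, and the polynomial decay exponent $\zeta(\gamma - \alpha/\beta)$ equals $2$. Plugging in $\mf a = 2d$ and $\zeta = 2$ gives
\begin{equation*}
    \mf M(2) = \Ex*{\mf a(Y\pr, Y)^2} = 4\,\Ex*{d(Y\pr, Y)^2}\eqcm
\end{equation*}
so \autoref{thm:abstr_rate_prob} yields, for every $s > 0$,
\begin{equation*}
    \PrOf{\eta_{\beta,n}^{-1}\, d(m, m_n) \geq s} \leq c\, \Ex*{d(Y, Y\pr)^2}\, s^{-2}\eqcm
\end{equation*}
where $c$ absorbs the universal constant from \autoref{thm:abstr_rate_prob} and the factor $4$, and depends only on $\beta$ and $c_{\ms e}$ (since $\gamma, \alpha/\beta, c_{\ms g}, \zeta$ are fixed numerical constants here).

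The second statement, $d(m, m_n) = \bigOp\brOf{\eta_{\beta,n}}$, is then immediate: the tail bound just derived shows that the sequence $\eta_{\beta,n}^{-1} d(m, m_n)$ is uniformly tight, which is the definition of $\bigOp(1)$ for $\eta_{\beta,n}^{-1} d(m, m_n)$. There is no genuine obstacle in this proof; it is a bookkeeping exercise of matching constants and recognizing that the conditions have already been checked. The only thing to be slightly careful about is that the constant $c$ in the displayed inequality depends on $\beta$ and $c_{\ms e}$ only (through $\eta_{\beta,n}$ and the chaining constant hidden in \autoref{thm:abstr_rate_prob}), which follows by inspecting the statement of \autoref{thm:abstr_rate_prob} with the fixed values $\alpha/\beta = 1$, $\gamma = 2$, $c_{\ms g} = 1$, $\zeta = 2$.
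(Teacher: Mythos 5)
Your proposal is correct and matches the paper's (implicit) proof: the corollary is obtained exactly by checking \assu{ass:ex}{Existence}, \assu{ass:gr}{Growth} ($\gamma=2$, $c_{\ms g}=1$ via \cite[Propositions 4.3, 4.4]{sturm03}) and \assu{ass:wquad}{Weak Quadruple} ($\mf a = 2d$, $\mf b = d$) as in the surrounding text, and then specializing \autoref{thm:abstr_rate_prob} with $\gamma - \frac\alpha\beta = 1$, $\zeta = 2$, absorbing the factor $4$ from $\mf M(2)$ into the constant. Nothing further is needed.
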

As described in section \ref{ssse:quad:npc}, it may be difficult to find a version of the strong quadruple inequality such that the same rate can be derived for convergence in expectation. Thus, instead of trying to apply \autoref{thm:abstr_rate_exp}, we utilize (i) \autoref{cor:probtoexpec} and (ii) \autoref{thm:abstr_weak_strong}, respectively.
\begin{corollary}
\theoremContentInNewLine
	\begin{enumerate}[label=\environmentEnumerateLabel]
	\item 
		Let $\epsilon > 0$.
		Assume $\Ex*{d(Y,Y\pr)^{2+\epsilon}} < \infty$.
		Assume \assu{ass:ent}{Entropy}  with $\mf b = d$ and $\alpha=\beta < 1$.
		Then we have
		\begin{equation*}
			\Ex*{d(m, m_n)^2} \leq c\, \Ex*{d(Y,Y\pr)^{2+\epsilon}}^{\frac{2}{2+\epsilon}} \,\frac1{\epsilon n }
		\end{equation*}
		for a constant $c>0$ depending only on $\beta$.
	\item 
		Assume $\Ex*{d(Y,Y\pr)^2} < \infty$.
		Let $o\in\mc Q$. Assume \assu{ass:se}{Small Entropy} with $\mf b = d$.
		Let $\tilde m_n \in \argmin_{q\in\ball_n(o)} \sum_{i=1}^n d(Y_i, q)^2$.
		Then we have
		\begin{equation*}
			\Ex*{d(m, \tilde m_n)^2} = \bigO\brOf{\frac1n \log(n)^{2\beta}}
			\eqfs
		\end{equation*}
	\end{enumerate}
\end{corollary}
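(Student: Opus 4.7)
The plan is to derive both parts directly from abstract results already proved: part (i) from \autoref{cor:probtoexpec} and part (ii) from \autoref{thm:abstr_weak_strong}. The hypotheses common to both reductions were gathered in the preceding discussion for Hadamard spaces, namely the \assu{ass:wquad}{Weak Quadruple} inequality with $\mf a = 2d$ and $\mf b = d$ (from the nice quadruple inequality), \assu{ass:ex}{Existence} (from \cite[Proposition~4.3]{sturm03}), and \assu{ass:gr}{Growth} with $\gamma = 2$, $c_{\ms g} = 1$ (from \cite[Proposition~4.4]{sturm03}). Throughout I take the loss $\mf l = d$.

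For part (i), I would invoke \autoref{cor:probtoexpec} with $\kappa = 2$ and $\zeta = 2 + \epsilon$. The assumption $\alpha = \beta$ gives $\gamma - \alpha/\beta = 1$, so the moment threshold $\zeta > \kappa(\gamma - \alpha/\beta)^{-1} = 2$ is satisfied, and $\xi = \zeta(\gamma - \alpha/\beta)/\kappa = 1 + \epsilon/2$. With $\beta < 1$, the prefactor is $\eta_{\beta,n}^{-\kappa/(\gamma - \alpha/\beta)} = n$. The corollary then yields
\begin{equation*}
    n\,\Ex*{d(m, m_n)^2} \leq c\, \frac{2 + \epsilon}{\epsilon}\, \mf M(2+\epsilon)^{2/(2+\epsilon)},
\end{equation*}
and since $\mf a = 2d$ one has $\mf M(2+\epsilon) = 2^{2+\epsilon}\Ex*{d(Y,Y\pr)^{2+\epsilon}}$, so absorbing the bounded numerical factor into a constant depending only on $\beta$ reproduces the claimed $1/(\epsilon n)$ dependence.

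For part (ii), I would apply \autoref{thm:abstr_weak_strong} with rate parameter $\xi = 1$ and $\kappa = 2 > \gamma - 1 = 1$. The \assu{ass:smom}{Strong Moment} condition reduces to $\mf M(\kappa/(\gamma-1)) = \mf M(2) < \infty$, which follows from the assumed second moment via $\mf a = 2d$; \assu{ass:se}{Small Entropy} is in hand; and \assu{ass:ex2}{Existence'} on $\mc Q_n = \ball_n(o)$ is supplied by the fact that a closed ball in a Hadamard space is complete and convex, so the empirical Fréchet functional attains its minimum there. Since $\kappa/(\gamma-1) = 2$, the theorem's conclusion is $\bigO((n^{-1/2}\log(n)^\beta)^2) = \bigO(n^{-1}\log(n)^{2\beta})$, as required.

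The main obstacle is bookkeeping rather than a genuine mathematical difficulty: in (i) one must verify that the factor $\xi/(\xi - 1)$ collapses exactly to the advertised $1/\epsilon$ behavior, and in (ii) one must confirm that the restricted empirical minimizer $\tilde m_n$ in the corollary matches $m_n^{Q_n}$ of \autoref{thm:abstr_weak_strong} under the choice $R_n = n$. Both checks are immediate once the parameters are identified.
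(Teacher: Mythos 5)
Your proposal is correct and follows exactly the route the paper intends: the text immediately preceding the corollary states that (i) is obtained from \autoref{cor:probtoexpec} and (ii) from \autoref{thm:abstr_weak_strong}, and your parameter choices ($\kappa=2$, $\zeta=2+\epsilon$, $\xi=1+\epsilon/2$ with $\gamma=2$, $\mf a=2d$, $\mf b=\mf l=d$ for (i); $\xi=1$, $\kappa=2$, $R_n=n$ for (ii)) are precisely the intended instantiations, with the hypotheses verified as in the paper's Hadamard discussion. The only caveat is in your last bookkeeping remark for (i): the factor $\xi/(\xi-1)=(2+\epsilon)/\epsilon$ and the $\zeta$-dependence of the constant from \autoref{thm:abstr_rate_prob} are bounded uniformly only for $\epsilon$ in a bounded range, but this imprecision is inherited from the corollary's own statement rather than introduced by your argument.
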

\subsubsection{Power Fréchet Mean}\label{sssec:app:hadamard:pfm}
We go beyond Hadamard spaces by utilizing the power inequality, \autoref{thm:power_inequ}. 
Let $(\mc Q, d)$ is a Hadamard space and $a \in[\frac12,1)$. Then $(\mc Q, d^a)$ is not Hadamard, but fulfills a weak quadruple inequality: Fix an arbitrary point $o\in\mc Q$. We use the cost function $\mf c(y,q) = d^{2a}(y,q)-d^{2a}(y,o)$ and the loss $\mf l = d$. Then the weak quadruple inequality holds with $\mf a (y,z) = 8a2^{-2a} d(y,z)^{2a-1}$ and $\mf b = d$.

We need to choose the cost function $d^{2a}(y,q)-d^{2a}(y,o)$ instead of $d^{2a}(y,q)$ to obtain a result with minimal moment requirement. To fulfill \assu{ass:mom}{Moment} we need $\Ex{d(Y,Y\pr)^{2(2a-1)}} < \infty$ and for \assu{ass:ex}{Existence}, we need $\Ex{\abs{\mf c(Y,q)}} < \infty$. We fulfill both by assuming that
$\Ex{d(Y,o)^{2(2a-1)}} < \infty$. Then the both conditions are satisfied: On one hand
$\Ex{d(Y,Y\pr)^{2(2a-1)}} \leq 2 \Ex{d(Y,o)^{2(2a-1)}}$. On the other hand, using 
the tight power bound of \autoref{lmm:tightpowerbound} (appendix \autoref{app:power_inequality}),
\begin{align*}
	\ol yq^{2a}-\ol yo^{2a}
	\leq 
	2 a \,\ol qo \br{\frac{\ol yq + \ol yo}{2}}^{2a-1}
\end{align*} 
and thus 
\begin{equation*}
	|\mf c(Y,q)|  \leq 2a \,\ol qo \br{\frac{\ol qo}{2} +  \ol Yo}^{2a-1}\eqcm
\end{equation*}
which implies 
$\Ex{\abs{\mf c(Y, q)}} < \infty$. But $\Ex{d(Y,q)^{2a}}$ might be infinite as $2a > 2(2a-1)$.

\autoref{thm:abstr_rate_prob} with $\zeta=2$ implies following corollary.
\begin{corollary}[Rates in probability for power mean]\label{coro:probrates_power}
	Assume:
	\begin{enumerate}[label=\environmentEnumerateLabel]
	\item
		\assu{ass:ex}{Existence}:	Let $a\in[\frac12,1]$. Let $o\in\mc Q$ be an arbitrary fixed point. Assume there are $m_n\in\argmin_{q\in\mc Q} \frac1n\sum_{i=1}^n \br{\ol{Y_i}{q}^{2a}-\ol {Y_i}o^{2a}}$ measurable and $m\in\argmin_{q\in\mc Q} \Ex{\ol Yq^{2a}-\ol Yo^{2a}}$.
	\item 
		\assu{ass:gr}{Growth}: There are constants $c_{\ms g} > 0, \gamma\in(1,\infty)$ such that 
				$\Ex{\ol Yq^{2a}}-\Ex{\ol Ym^{2a}}  \geq c_{\ms g} d(m,q)^\gamma$ for all $q\in\mc Q$.
	\item \assu{ass:mom}{Moment}:   $\Ex{\ol{Y}{q}^{2(2a-1)}} < \infty$ for one (and thus for all) $q\in\mc Q$.
	\item \assu{ass:ent}{Entropy}: 
		There is $\beta > 0$ such that
		\begin{equation*}
			\sqrt{\log N(\ball_\delta(m, d), d, r)} \leq c_{\ms e} \br{\frac{\delta}{r}}^\beta
		\end{equation*}
		for all $\delta, r > 0$.
	\end{enumerate}
	Then, for all $s > 0$, we have
	\begin{equation*}
		\PrOf{\eta_{\beta,n}^{-\frac1{\gamma-1}}\, \ol{m}{m_n} \geq s} \leq c \,\Ex{\ol{Y}{o}^{2(2a-1)}} \, s^{-2(\gamma-1)}
		\eqcm
	\end{equation*}
	where $c > 0$ depends only on $\beta, \gamma, c_{\ms e}$.
	In particular,
	\begin{equation*}
		d(m, m_n) = \bigOp\brOf{\eta_{\beta,n}^{-\frac1{\gamma-1}}}
		\eqfs
	\end{equation*}
\end{corollary}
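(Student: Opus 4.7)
The plan is to invoke \autoref{thm:abstr_rate_prob} with parameters $\zeta=2$, $\alpha=\beta$, loss $\mf l = d$, descriptor metric $\mf b = d$, and cost function $\mf c(y,q) = \ol yq^{2a} - \ol yo^{2a}$, and to verify each of its five hypotheses. Conditions \assu{ass:ex}{Existence}, \assu{ass:gr}{Growth}, and \assu{ass:ent}{Entropy} are already assumed, the latter with $\alpha = \beta$. Subtracting the anchor $\ol yo^{2a}$ is what makes \assu{ass:ex}{Existence} compatible with the low moment hypothesis: by the tight power bound (\autoref{lmm:tightpowerbound}), $|\mf c(y,q)| \leq 2a\, \ol qo ((\ol qo/2)+\ol yo)^{2a-1}$, so $\Ex{|\mf c(Y,q)|} < \infty$ is inherited from $\Ex{\ol Yo^{2(2a-1)}} < \infty$.

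The crucial step is \assu{ass:wquad}{Weak Quadruple}. The anchor cancels in the quadruple difference,
\begin{equation*}
\olc yq - \olc zq - \olc yp + \olc zp \;=\; \ol yq^{2a} - \ol zq^{2a} - \ol yp^{2a} + \ol zp^{2a}\eqcm
\end{equation*}
so it suffices to bound the right hand side. Since $(\mc Q,d)$ is Hadamard, the nice quadruple inequality holds on $\mc Q$ (section \ref{ssse:quad:npc}), and \autoref{thm:power_inequ} upgrades it to
\begin{equation*}
\ol yq^{2a} - \ol yp^{2a} - \ol zq^{2a} + \ol zp^{2a} \;\leq\; 8a\,2^{-2a}\, \ol yz^{2a-1}\, \ol qp\eqfs
\end{equation*}
This is \assu{ass:wquad}{Weak Quadruple} with $\mf a(y,z) = 8a\,2^{-2a}\,\ol yz^{2a-1}$ and $\mf b = d$. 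For \assu{ass:mom}{Moment} with $\zeta=2$, the triangle inequality gives $\ol Y{Y\pr}^{2(2a-1)} \leq C_a(\ol Yo^{2(2a-1)} + \ol{Y\pr}o^{2(2a-1)})$ for a constant $C_a$ depending only on $a$, so $\mf M(2) = (8a\,2^{-2a})^2 \Ex{\ol Y{Y\pr}^{2(2a-1)}} \leq c_a\,\Ex{\ol Yo^{2(2a-1)}} < \infty$.

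With these inputs, \autoref{thm:abstr_rate_prob} applies: its rate exponent $\gamma - \alpha/\beta$ specializes to $\gamma - 1$, and the theorem delivers
\begin{equation*}
\PrOf{\eta_{\beta,n}^{-\frac{1}{\gamma-1}}\, \ol m{m_n} \geq s} \;\leq\; c_1\,\mf M(2)\,s^{-2(\gamma-1)} \;\leq\; c\,\Ex{\ol Yo^{2(2a-1)}}\,s^{-2(\gamma-1)}\eqcm
\end{equation*}
with $c$ depending only on $\beta,\gamma,c_{\ms e},a$. The $\bigOp$-conclusion is then immediate from the tail bound (the $a$-dependence is absorbed since $a\in[\tfrac12,1]$ lies in a compact interval on which $8a\,2^{-2a}$ is bounded).

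The only nontrivial work is the verification of the quadruple inequality for $d^{2a}$, and this is precisely the content of \autoref{thm:power_inequ}. Everything else is bookkeeping: the asymmetric split of exponents in $\mf a(y,z) = 8a\,2^{-2a}\ol yz^{2a-1}$ versus $\mf b(q,p) = \ol qp$ is what allows the moment assumption to be stated in terms of the low power $2(2a-1)$ rather than $2\cdot 2a$, and this is essential for the statement to be meaningful when $a$ is close to $\tfrac12$.
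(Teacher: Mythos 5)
Your proposal is correct and follows essentially the same route as the paper: the corollary is obtained by applying \autoref{thm:abstr_rate_prob} with $\zeta=2$, $\alpha=\beta$, $\mf b=\mf l=d$, and the anchored cost $\mf c(y,q)=\ol yq^{2a}-\ol yo^{2a}$, with the weak quadruple inequality supplied by \autoref{thm:power_inequ}, the moment bound reduced to $\Ex{\ol Yo^{2(2a-1)}}$ via the triangle inequality, and integrability for \assu{ass:ex}{Existence} via \autoref{lmm:tightpowerbound}. Your observations about the cancellation of the anchor in the quadruple difference and the absorption of the $a$-dependent constant $8a2^{-2a}$ match the paper's argument exactly.
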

Note that the moment condition becomes weaker as $a$ gets smaller and vanishes for $a=\frac12$, where, in the Euclidean case, the Fréchet mean is the median. 

\assu{ass:ex}{Existence} of $m_n$ and $m$ is a purely technical condition, as one will usually only be able to minimize the objective functions up to an $\epsilon>0$ and the set of $\epsilon$-minimizers is always nonempty.

The \assu{ass:gr}{Growth} condition is more interesting. It seems possible to choose $\gamma=2$ for all $a\in[\frac12,1]$ in many circumstances, at least under some conditions on the distribution of $Y$. But precises statements of this sort are unknown to the author. If $\gamma$ really can be chosen independently of $a$, then the rate is the same for all $a\in[\frac12,1]$. In the Euclidean case, this is manifested in the fact that we can estimate median ($a = \frac12$) and mean ($a = 1$) and all statistics ``in between" ($a \in (\frac12,1)$) with the same rate (under some conditions), but with less restrictive moment assumptions for smaller powers $a$.

Similarly to the corollary above, we can apply \autoref{cor:probtoexpec} or \autoref{thm:abstr_weak_strong} to obtain rates in expectation.
\section{Further Research}
The growth condition, especially for power Fréchet means, see section \ref{sssec:app:hadamard:pfm}, needs to be studied further to get a better understanding of what properties a distribution must have, so that all power means can be estimated with the same rate.

In \cite{bacak14} the author describes algorithms for calculating means and medians in Hadamard spaces, i.e., power Fréchet means as in section \ref{sssec:app:hadamard:pfm} with $a\in\{\frac12, 1\}$. As we have shown results also for $a \in (\frac12,1)$, it would be interesting to see, whether one can generalize the algorithms for $a = \frac12$ and $a=1$ to $a\in[\frac12, 1]$.

We plan to use the results of this paper in a regression setting similar to \cite{petersen16}. We will show convergence rates for an orthogonal series-type regression estimator for the conditional Fréchet mean $m(x) \in \argmin_{q\in\mc Q}\Ex*{\mf c(Y,q) | X=x}$.

\begin{appendices}
\section{Proofs of Theorem 1, 2, and 4}\label{app:proofs:1and2}
\subsection{Proof of \autoref{thm:abstr_rate_prob} and \autoref{cor:probtoexpec}}
Define
\begin{equation*}
	\Delta_n(\delta) := \sup_{q\in\mc Q\colon \mf l(m,q)\leq \delta} F(q)-F(m)-F_n(q)+F_n(m)
	\eqfs
\end{equation*}
\index[inot]{$\Delta_n$}
Results similar to following Lemma are well known in the M-estimation literature. The proof relies on the \textit{peeling device}, see \cite{geer00}.
\begin{lemma}[Weak argmin transform]\label{lmm:weak:argmin}	
	Assume \assu{ass:gr}{Growth}. Let $\zeta \geq 1$. Assume that there are constants $\xi\in(0,\gamma)$, $h_n\geq 0$ such that
	$\Ex{\Delta_n(\delta)^\zeta} \leq \br{h_n \delta^\xi}^\zeta$	for all $\delta>0$. Then
	\begin{equation*}
		\PrOf{\mf l(m,m_n) \geq s}
		\leq
		c \br{h_n s^{-(\gamma-\xi)}}^\zeta
		\eqcm
	\end{equation*}
	where $c > 0$ depends only on $c_{\ms g}, \gamma, \xi, \zeta$.
\end{lemma}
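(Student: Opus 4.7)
The plan is to prove this via the standard peeling device (also called slicing), which decomposes the event $\{\mf l(m, m_n) \geq s\}$ into geometrically growing annuli around $m$ and then applies Markov's inequality on each piece.

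First, I would record the key deterministic inequality: on the event $\{\mf l(m,m_n) \leq \delta\}$, since $m_n$ minimizes $F_n$ we have $F_n(m) - F_n(m_n) \geq 0$, and the \assu{ass:gr}{Growth} condition gives $F(m_n) - F(m) \geq c_{\ms g}\, \mf l(m, m_n)^\gamma$. Adding these two inequalities yields
\begin{equation*}
	c_{\ms g}\, \mf l(m, m_n)^\gamma \ \leq\ F(m_n) - F(m) - F_n(m_n) + F_n(m) \ \leq\ \Delta_n(\delta)
	\eqcm
\end{equation*}
because $m_n$ itself lies in the supremum defining $\Delta_n(\delta)$.

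Next I would peel. For $j \in \N_0$, set $A_j := \{2^j s \leq \mf l(m,m_n) < 2^{j+1} s\}$. On $A_j$, the displayed bound with $\delta = 2^{j+1} s$ gives $\Delta_n(2^{j+1} s) \geq c_{\ms g} (2^j s)^\gamma$. By Markov's inequality applied to $\Delta_n(2^{j+1} s)^\zeta$ together with the hypothesis $\Ex{\Delta_n(\delta)^\zeta} \leq (h_n \delta^\xi)^\zeta$,
\begin{equation*}
	\PrOf{A_j}
	\ \leq\
	\frac{\Ex{\Delta_n(2^{j+1} s)^\zeta}}{\br{c_{\ms g} (2^j s)^\gamma}^\zeta}
	\ \leq\
	\br{\frac{2^\xi h_n}{c_{\ms g}}}^\zeta 2^{j(\xi-\gamma)\zeta}\, s^{-(\gamma-\xi)\zeta}
	\eqfs
\end{equation*}

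Finally I would sum. Since $\xi < \gamma$, the geometric series $\sum_{j=0}^\infty 2^{j(\xi - \gamma)\zeta}$ converges to a constant depending only on $\gamma, \xi, \zeta$. Using $\{\mf l(m, m_n) \geq s\} = \bigcup_{j\geq 0} A_j$ and the union bound yields
\begin{equation*}
	\PrOf{\mf l(m, m_n) \geq s}
	\ \leq\
	\sum_{j=0}^\infty \PrOf{A_j}
	\ \leq\
	c\, \br{h_n s^{-(\gamma - \xi)}}^\zeta
	\eqcm
\end{equation*}
with $c$ depending only on $c_{\ms g}, \gamma, \xi, \zeta$. There is no real obstacle here: the only delicate point is to set up the exponents in the peeling so that the geometric series closes (which requires $\xi < \gamma$, exactly the assumption made), and to be careful that the bound on $\Delta_n(\delta)^\zeta$ is used at $\delta = 2^{j+1} s$ while the lower bound uses the smaller radius $2^j s$, so that the ratio carries the favorable factor $2^{j(\xi - \gamma)}$.
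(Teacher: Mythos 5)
Your proof is correct and follows essentially the same route as the paper: the same deterministic comparison via the \assu{ass:gr}{Growth} condition and the minimizing property of $m_n$, the same peeling into dyadic annuli $[2^j s, 2^{j+1}s)$, Markov's inequality with the assumed moment bound on $\Delta_n(2^{j+1}s)^\zeta$, and summation of the resulting geometric series using $\xi<\gamma$. No gaps; your explicit tracking of the factor $2^{\xi}$ is in fact slightly more careful than the paper's constant bookkeeping.
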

\begin{proof}
	Let $0 < a < b$.
	If $\mf l(m, m_n) \in [a,b]$, we have
	\begin{equation*}
		c_{\ms g} a^\gamma 
		\leq 
		c_{\ms g} \mf l(m,m_n)^\gamma
		\leq 
		F(m_n)-F(m)
		\leq 
		F(m_n)-F(m)-F_n(m_n)+F_n(m)
		\leq 
		\Delta_n(b)
		\eqfs
	\end{equation*}
	Let $s>0$. 
	For $k\in \N_0$, set $a_k := s 2^{k}$. We have
	\begin{align*}
		&\PrOf{\mf l(m,m_n) \geq s}
		\\&\leq 
		\sum_{k=0}^{\infty}\PrOf{\mf l(m, m_n) \in [a_k, a_{k+1}]}
		\\&\leq 
		\sum_{k=0}^{\infty} \PrOf{c_{\ms g} a_k^\gamma \leq \Delta_n(a_{k+1})}
		\eqfs
	\end{align*}
	We use Markov's inequality and the bound on $\Ex{\Delta_n(\delta)^\zeta}$ to obtain
	\begin{align*}
		\PrOf{c_{\ms g} a_k^\gamma \leq \Delta_n(a_{k+1})}
		&\leq
		\frac{\EOf{ \Delta_n(a_{k+1})^\zeta}}{\br{c_{\ms g} a_k^\gamma}^\zeta} 
		\\&\leq
		\br{\frac{h_n a_{k+1}^\xi}{c_{\ms g} a_k^\gamma}}^\zeta
		\\&=
		\br{2 c_{\ms g}^{-1} h_n s^{-(\gamma-\xi)} 2^{-k(\gamma-\xi)}}^\zeta
		\eqfs
	\end{align*}
	As $\gamma-\xi>0$, we get 
	\begin{align*}
		\PrOf{\mf l(m,m_n) \geq s}
		&\leq
		\br{2 c_{\ms g}^{-1} h_n s^{-(\gamma-\xi)}}^\zeta \sum_{k=0}^\infty 2^{-k\zeta(\gamma-\xi)}
		\\&=
		\br{2 c_{\ms g}^{-1} h_n s^{-(\gamma-\xi)}}^\zeta \frac{1}{1-2^{-\zeta(\gamma-\xi)}}
		\eqfs
	\end{align*}
\end{proof}
\begin{lemma}\label{lmm:weak:close}	
	Let $\zeta \geq 1$. Assume \assu{ass:mom}{Moment}, \assu{ass:wquad}{Weak Quadruple}, and \assu{ass:ent}{Entropy}.
	Then
	\begin{equation*}
		\Ex{\Delta_n(\delta)^\zeta} 
		\leq c\, \mf M(\zeta)\, \br{\delta^{\frac\alpha\beta}\, \eta_{\beta,n}}^\zeta
	\end{equation*}
	where $Y\pr$ is an independent copy of $Y$, $c > 0$ is a constant depending only on $\beta, c_{\ms e}, \zeta$, and
	\begin{equation*}
		\eta_{\beta,n} := 
		\begin{cases} 
			n^{-\frac12} & \text{ for }\beta < 1\eqcm\\
			n^{-\frac12} \log(n+1) & \text{ for } \beta = 1\eqcm\\
			n^{-\frac1{2\beta}} & \text{ for } \beta > 1\eqfs
		\end{cases} 
	\end{equation*}
\end{lemma}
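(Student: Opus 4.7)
The argument is a paired symmetrization followed by a chaining step conditioned on the sample, with the weak quadruple inequality taking over the role usually played by a Lipschitz bound on increments. Write $g_q(y) := \mf c(y,q) - \mf c(y,m)$ so that $g_m \equiv 0$, $F(q) - F(m) = \E[g_q(Y)]$, and $\Delta_n(\delta) = \sup_{q \in B_\delta}\big(\E[g_q(Y)] - \tfrac{1}{n}\sum_i g_q(Y_i)\big)$, where $B_\delta := \ball_\delta(m, \mf l)$.

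\emph{Symmetrization.} Let $Y_1',\dots,Y_n'$ be an independent ghost copy of the sample. Conditioning on $Y_1,\dots,Y_n$ and applying Jensen's inequality (using $\zeta \geq 1$) inside the supremum gives
\begin{equation*}
\E\!\big[\Delta_n(\delta)^\zeta\big] \,\leq\, \E\Big[\sup_{q \in B_\delta}\!\Big(\tfrac{1}{n}\!\sum_i\!\big(g_q(Y_i') - g_q(Y_i)\big)\Big)^{\!\zeta}\Big].
\end{equation*}
Exchangeability of $(Y_i, Y_i')$ lets us insert independent Rademacher signs $\epsilon_i$ into each summand without changing the joint law, so it suffices to bound $\E\big[\sup_{q \in B_\delta}|X_q|^\zeta\big]$, where $X_q := \tfrac{1}{n}\sum_i \epsilon_i Z_i(q)$ and $Z_i(q) := g_q(Y_i) - g_q(Y_i')$; note that $X_m = 0$.

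\emph{Increments and sub-Gaussianity.} The \assu{ass:wquad}{Weak Quadruple} inequality, applied with $y = Y_i$, $z = Y_i'$ and also with the swapped pair, produces
\begin{equation*}
|Z_i(q) - Z_i(p)| \,\leq\, \bar{\mf a}_i\, \mf b(q, p),\qquad \bar{\mf a}_i := \max\!\big(\mf a(Y_i, Y_i'),\, \mf a(Y_i', Y_i)\big).
\end{equation*}
Conditionally on the sample, $q \mapsto X_q$ is therefore a centered Rademacher process on $B_\delta$ with sub-Gaussian increment parameter $n^{-1/2} B_n\, \mf b(q, p)$, where $B_n := \big(\tfrac{1}{n}\sum_i \bar{\mf a}_i^{\,2}\big)^{1/2}$, together with the pointwise Lipschitz bound $|X_q - X_p| \leq \bar A_n\, \mf b(q, p)$, $\bar A_n := \tfrac{1}{n}\sum_i \bar{\mf a}_i \leq B_n$.

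\emph{Chaining and entropy.} Apply the $L^\zeta$ form of generic chaining above an arbitrary truncation scale $\epsilon > 0$ using the sub-Gaussian estimate, and absorb the sub-$\epsilon$ contribution via the Lipschitz bound. Using $\bar A_n \leq B_n$ the two ingredients combine, conditionally on the sample, to
\begin{equation*}
\E_\epsilon\!\big[\sup_{q \in B_\delta}|X_q|^\zeta\big]^{1/\zeta} \,\leq\, c_\zeta\, \tfrac{B_n}{\sqrt n}\, \inf_{\epsilon > 0}\!\Big(\epsilon\sqrt n + \int_\epsilon^\infty \!\sqrt{\log N(B_\delta, \mf b, r)}\,dr\Big) \,=\, c_\zeta\, \tfrac{B_n}{\sqrt n}\, \entrn(B_\delta, \mf b).
\end{equation*}
The \assu{ass:ent}{Entropy} condition yields $\sqrt{\log N(B_\delta, \mf b, r)} \leq c_{\ms e}\delta^\alpha r^{-\beta}$, and the same bound forces $N(B_\delta, \mf b, r) = 1$ once $r \gtrsim \delta^{\alpha/\beta}$, so the integrand vanishes above that scale. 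Optimising $\epsilon$ in each regime --- the integral converges at the origin for $\beta < 1$, logarithmic truncation for $\beta = 1$, polynomial truncation for $\beta > 1$ --- gives in all three cases
\begin{equation*}
\entrn(B_\delta, \mf b) \,\leq\, c\, \delta^{\alpha/\beta}\, \sqrt n\, \eta_{\beta, n}.
\end{equation*}

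\emph{Finishing, and the main obstacle.} Taking expectation over the sample and using $\E[B_n^\zeta] \lesssim \mf M(\zeta)$ --- which follows from Jensen's inequality applied to $\bar{\mf a}_i^{\,2}$, treating the cases $\zeta \geq 2$ (convex $x \mapsto x^{\zeta/2}$) and $\zeta \leq 2$ (concave) separately to match the piecewise definition of $\mf M(\zeta)$, together with the distributional symmetry $(Y_i,Y_i') \stackrel{d}{=} (Y_i',Y_i)$ --- yields $\E[\Delta_n(\delta)^\zeta] \leq c\, \mf M(\zeta)\,(\delta^{\alpha/\beta}\eta_{\beta,n})^\zeta$. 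The principal technical point is the truncated chaining for $\beta \geq 1$, where Dudley's entropy integral diverges at the origin and the definition of $\entrn$ is tailored to balance the sub-Gaussian chaining above $\epsilon$ against the Lipschitz bound below it; extending from the standard first-moment chaining bound to general $\zeta \geq 1$ is routine via the $L^\zeta$-version of Talagrand's generic chaining inequality.
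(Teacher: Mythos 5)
Your proposal is correct and follows essentially the same route as the paper: symmetrize with a ghost sample and Rademacher signs, use the weak quadruple inequality as the (random) Lipschitz/sub-Gaussian increment control, run truncated generic chaining to get the $\entrn(\ball_\delta(m,\mf l),\mf b)$ bound, deduce the diameter $\asymp\delta^{\alpha/\beta}$ and the rate $\eta_{\beta,n}$ from the entropy condition, and control the random Lipschitz constant by $\mf M(\zeta)$ exactly as in the paper's appeal to its empirical-process and chaining lemmas. The only cosmetic difference is your use of $\max(\mf a(Y_i,Y_i'),\mf a(Y_i',Y_i))$ for the two-sided increment bound (the paper instead gets the absolute value by swapping $q$ and $p$ and using symmetry of $\mf b$), which costs at most a constant factor in the moment bound.
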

\begin{proof}
	Recall the notation $\olc yq := \mf c(y,q)$, $F(q) = \Ex{\olc Yq}$,  $F_n(q) = \frac1n \sum_{i=1}^n \olc {Y_i}q$.
	Define
	\begin{align*}
		Z_i(q) 
		&:= 
		\frac1n \br{\Ex*{\olc Yq-\olc Ym}- \olc{Y_i}q+\olc{Y_i}m}
		\eqfs
	\end{align*}
	Thus, $\Delta_n(\delta) = \sup_{q\in\ball_\delta(m, \mf l)}\sum_{i=1}^n Z_i(q)$.
	The \assu{ass:mom}{Moment} condition together with the \assu{ass:wquad}{Weak Quadruple} condition imply that $Z_i$ are integrable.
	Let $(Z_1\pr, \dots, Z_n\pr)$ be an independent copy of $(Z_1, \dots, Z_n)$, where $(Y_1\pr, \dots, Y_n\pr)$ is an independent copy of $(Y_1, \dots, Y_n)$.
	By \assu{ass:wquad}{Weak Quadruple}, we have
	\begin{align*}
		&
		n^2\br{Z_i(q)-Z_i(p)-Z_i\pr(q) +Z_i\pr(p)}^2 
		\\&= 
		\br{\olc{Y_i}q-\olc{Y_i}p-\olc{Y_i\pr}q+\olc{Y_i\pr}p}^2
		\\&\leq 
		\mf b(q,p)^2 \mf a(Y_i,Y_i\pr)^2
		\eqfs
	\end{align*}
	Furthermore,
	\begin{equation*}
		\Ex*{\br{\frac1n\sum_{i=1}^n \mf a(Y_i,Y_i\pr)^2}^{\frac\zeta2}} \leq \mf M(\zeta)
		\eqfs
	\end{equation*}
	Thus, \autoref{chaining:empproc} (appendix \autoref{app:chaining}) implies
	\begin{equation*}
		\Ex{\Delta_n(\delta)^\zeta} \leq c_1 \mf M(\zeta) \br{\frac1{\sqrt{n}} \entrn(\ball_\delta(m, \mf l), \mf b)}^\zeta\eqcm
	\end{equation*}
	where $\entrn$ is defined in \autoref{chaining:def_entropy}.

	To bound $\entrn(\ball_\delta(m, \mf l), \mf b)$ by applying \autoref{lmm:chaining:rate} (appendix \autoref{app:chaining}), we need to find an upper bound on $\diam(\ball_\delta(m, \mf l), \mf b)$.
	Set $r_0 := (2 c_{\ms e} \delta^\alpha)^{\frac1\beta}$.
	It fulfills $c_{\ms e} \frac{\delta^\alpha}{r_0^\beta} < \sqrt{\log(2)}$. Thus, \assu{ass:ent}{Entropy} implies $N(\ball_\delta(m, \mf l), \mf b, r_0) < 2$.
	As the covering number is an integer, $N(\ball_\delta(m, \mf l), \mf b, r_0) = 1$, which implies, $\diam(\ball_\delta(m, \mf l), \mf b) \leq 2r_0 =: D_\delta$.
	Rewriting the  \texttt{Entropy}-condition in terms of $D_\delta$ yields
	\begin{equation*}
	\sqrt{\log N(\ball_\delta(m, \mf l), \mf b, r)} \leq c_\beta \br{\frac{D_\delta}{r}}^\beta
	\end{equation*}
	for a constant $c_\beta > 0$ depending only on $\beta$ and $c_{\ms e}$.

	Together with \autoref{lmm:chaining:rate} (appendix \autoref{app:chaining}) we get
	\begin{equation*}
		\Ex{\Delta_n(\delta)^\zeta} \leq c \mf M(\zeta) \br{\delta^{\frac\alpha\beta} \eta_{\beta,n}}^\zeta
	\end{equation*}
	for a constant $c > 0$ depending only on $\beta, c_{\ms e}, \zeta$.
\end{proof}
\begin{proof}[of \autoref{thm:abstr_rate_prob}]
	Combine \autoref{lmm:weak:argmin} and \autoref{lmm:weak:close}.
\end{proof}
\begin{proof}[of \autoref{cor:probtoexpec}]
\autoref{thm:abstr_rate_prob} yields
\begin{align*}
	\eta_{\beta,n}^{-\frac{\kappa}{\gamma-\frac{\alpha}{\beta}}} \Ex{\mf l (m, m_n)^\kappa} 
	&= 
	\int_0^\infty 
	\PrOf{\eta_{\beta,n}^{-\frac{1}{\gamma-\frac{\alpha}{\beta}}} \mf l (m, m_n) \geq t^{\frac1\kappa}} \dl t
	\\&\leq 
	\int_0^\infty \max\brOf{1, c\, \mf M(\zeta)\, t^{-\xi}} \dl t
	\eqfs
\end{align*}
In general for $a > 1$, $b>0$, we have
\begin{equation*}
	\int \max(1, bt^{-a}) \dl t = \frac{a}{a-1} b^{\frac1a}
	\eqfs
\end{equation*}
The proof is concluded by applying this statement and noting that $\xi > 1$.
\end{proof}
\subsection{Proof of \autoref{thm:abstr_rate_exp}}
To state the next Lemma, which will be used to prove \autoref{thm:abstr_rate_exp}, we introduce an intermediate condition, which we call \assu{ass:clo}{Closeness}.
\begin{assumptions}
\theoremContentInNewLine
\begin{enumerate}[label=\environmentEnumerateLabel]
\assitem{ass:clo}{Closeness}
	There is $\xi \in (0, \gamma)$ and a random variable $H_n \geq 0$, such that
	\begin{equation}
		F(q) - F(m) - F_n(q) + F_n(m) \leq H_n \mf l(m, q)^\xi
	\end{equation}
	for all $q \in \mc Q$ almost surely.
\end{enumerate}
\end{assumptions}
\begin{lemma}\label{lmm:strong:growth}
	Assume \assu{ass:clo}{Closeness} and \assu{ass:gr}{Growth}, and let $\kappa > 0$. Then, 
	\begin{equation*}
		\Ex*{\mf l(m,m_n)^{\kappa}} \leq c \Ex*{H_n^{\frac{\kappa}{\gamma-\xi}}}\eqcm
	\end{equation*}
	where $c >0$ depends only on $c_{\ms g}, \gamma, \xi, \kappa$.
\end{lemma}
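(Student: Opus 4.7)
The plan is to directly combine the two hypotheses at the point $q = m_n$. Since $m_n$ is a minimizer of $F_n$, we have $F_n(m_n) \leq F_n(m)$, so the increment $F_n(m) - F_n(m_n)$ is nonnegative. Applying \assu{ass:clo}{Closeness} at $q = m_n$ and dropping the nonpositive term $F_n(m_n) - F_n(m)$ gives
\begin{equation*}
    F(m_n) - F(m) \leq H_n\, \mf l(m, m_n)^\xi
    \eqfs
\end{equation*}
On the other hand, \assu{ass:gr}{Growth} applied at $q = m_n$ yields $F(m_n) - F(m) \geq c_{\ms g}\, \mf l(m, m_n)^\gamma$. Chaining these two bounds produces the pointwise (almost sure) inequality
\begin{equation*}
    c_{\ms g}\, \mf l(m, m_n)^\gamma \leq H_n\, \mf l(m, m_n)^\xi
    \eqfs
\end{equation*}

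On the event $\{\mf l(m, m_n) > 0\}$ I can divide by $\mf l(m, m_n)^\xi$ (legitimate because $\xi < \gamma$ so $\gamma - \xi > 0$), obtaining
\begin{equation*}
    \mf l(m, m_n)^{\gamma - \xi} \leq c_{\ms g}^{-1} H_n
    \eqcm
\end{equation*}
which, by taking the $(\gamma - \xi)$-th root, gives $\mf l(m, m_n) \leq c_{\ms g}^{-1/(\gamma-\xi)} H_n^{1/(\gamma - \xi)}$. On the complementary event $\{\mf l(m, m_n) = 0\}$ the inequality is trivial. Raising to the $\kappa$-th power and taking expectations yields
\begin{equation*}
    \Ex*{\mf l(m, m_n)^\kappa} \leq c_{\ms g}^{-\kappa/(\gamma - \xi)}\, \Ex*{H_n^{\kappa/(\gamma - \xi)}}
    \eqcm
\end{equation*}
which is the claim with the explicit constant $c = c_{\ms g}^{-\kappa/(\gamma - \xi)}$ depending only on $c_{\ms g}, \gamma, \xi, \kappa$.

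There is essentially no obstacle here: the lemma is a deterministic pointwise inequality that is then integrated. The only minor subtlety is the division by $\mf l(m, m_n)^\xi$, which I handle by splitting into the two events above. Once \autoref{lmm:weak:close} (or its analogue in the strong quadruple setting) provides a bound on $\Ex*{H_n^{\kappa/(\gamma - \xi)}}$ via chaining, this lemma converts it directly into the rate in expectation stated in \autoref{thm:abstr_rate_exp}.
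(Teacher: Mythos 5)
Your proof is correct and follows essentially the same route as the paper: chain the \assu{ass:gr}{Growth} bound, the minimality of $m_n$ (so $F_n(m_n)\leq F_n(m)$), and \assu{ass:clo}{Closeness} at $q=m_n$ to get $c_{\ms g}\,\mf l(m,m_n)^{\gamma}\leq H_n\,\mf l(m,m_n)^{\xi}$, then divide, take the $\kappa$-th power, and integrate. Your explicit treatment of the event $\{\mf l(m,m_n)=0\}$ is a minor tidy-up the paper leaves implicit; otherwise the arguments coincide, including the constant $c=c_{\ms g}^{-\kappa/(\gamma-\xi)}$.
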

\begin{proof}
	We use \assu{ass:gr}{Growth} and the fact that $m_n$ minimizes $F_n$ to obtain
	\begin{align*}
		c_{\ms g}\mf l(m,m_n)^\gamma 
		&\leq
		F(m_n)-F(m)
		\\&\leq
		F(m_n)-F(m)-F_n(m_n)+F_n(m)
		\\&\leq
		H_n \mf l(m, m_n)^\xi
		\eqcm
	\end{align*}
	where we applied the \assu{ass:clo}{Closeness} condition in the last step. Thus,
	\begin{equation*}
		c_{\ms g} \mf l (m,m_n)^{\gamma-\xi} \leq H_n\eqcm
	\end{equation*}
	which implies the claimed inequality.
\end{proof}
Define
\begin{equation*}
	X(q) :=  \frac{F_n(q)-F_n(m)-F(q)+F(m)}{\mf l(m, q)^\xi}\eqfs
\end{equation*}
\begin{lemma}\label{lmm:strong:close}	
	Let $\zeta \geq 1$.
	Assume \assu{ass:smom}{Strong Moment} and \assu{ass:squad}{Strong Quadruple}.
	Then
	\begin{equation*}
		\Ex*{\sup_{q\in\mc Q} \abs{X(q)}^\zeta} \leq c n^{-\frac\zeta2} \mf M(\zeta) \min\brOf{\entrn(\mc Q, \mf b_m), \gamma_2(\mc Q, \mf b_m)}^\zeta
		\eqcm
	\end{equation*}
	where $c>0$ is a constant depending only on $\zeta$.
	Additionally, assume \assu{ass:sent}{Strong Entropy}. Then
	\begin{equation*}
		\Ex*{\sup_{q\in\mc Q} \abs{X(q)}^\zeta} \leq  C \mf M(\zeta) D^\zeta \eta_{n,\beta}^\zeta
		\eqcm
	\end{equation*}
	where $C>0$ is a constant depending only on $\zeta, \beta, c_{\ms e}$, and
	\begin{equation*}
		\eta_{\beta,n} := 
		\begin{cases} 
			n^{-\frac12} & \text{ for }\beta < 1\eqcm\\
			n^{-\frac12} \log(n+1) & \text{ for } \beta = 1\eqcm\\
			n^{-\frac1{2\beta}} & \text{ for } \beta > 1\eqfs
		\end{cases} 
	\end{equation*}
\end{lemma}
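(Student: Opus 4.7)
The plan is to mirror the proof of \autoref{lmm:weak:close}, substituting the strong quadruple inequality for the weak one and using the whole space $\mc Q$ as the index set of the empirical process, rather than a ball of radius $\delta$.

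First I would rewrite $X$ as a centered average of iid summands. Setting
\begin{equation*}
	Z_i(q) := \frac{1}{n}\cdot\frac{\olc{Y_i}q - \olc{Y_i}m - \Ex*{\olc Yq - \olc Ym}}{\mf l(m,q)^\xi}\eqcm
\end{equation*}
we have $X(q) = \sum_{i=1}^n Z_i(q)$, so that $\sup_{q\in\mc Q} |X(q)| = \sup_{q\in\dot{\mc Q}} |\sum_i Z_i(q)|$ (values at $q=m$ cancel and can be excluded from the supremum). Let $(Y_i\pr)_{i\leq n}$ be an independent copy of $(Y_i)_{i\leq n}$ and let $Z_i\pr$ be the corresponding version of $Z_i$. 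The \assu{ass:squad}{Strong Quadruple} condition then gives, for any $q,p\in\dot{\mc Q}$,
\begin{equation*}
	n^2\br{Z_i(q)-Z_i(p)-Z_i\pr(q)+Z_i\pr(p)}^2 \leq \mf a(Y_i, Y_i\pr)^2 \, \mf b_m(q,p)^2\eqcm
\end{equation*}
and \assu{ass:smom}{Strong Moment} yields $\Ex*{(n^{-1}\sum_i \mf a(Y_i,Y_i\pr)^2)^{\zeta/2}}\leq \mf M(\zeta)$. These are exactly the two hypotheses required by the empirical process chaining bound \autoref{chaining:empproc}, whose conclusion immediately delivers
\begin{equation*}
	\Ex*{\sup_{q\in\mc Q}|X(q)|^\zeta}\leq c\,\mf M(\zeta)\,\br{\tfrac1{\sqrt n}\min\brOf{\entrn(\mc Q,\mf b_m),\,\gamma_2(\mc Q,\mf b_m)}}^\zeta\eqcm
\end{equation*}
proving the first assertion.

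For the second assertion I would plug \assu{ass:sent}{Strong Entropy} into this bound. Since $\diam(\dot{\mc Q},\mf b_m)\leq D$, the covering-number bound $\sqrt{\log N(\dot{\mc Q},\mf b_m,r)}\leq c_{\ms e}(D/r)^\beta$ is exactly of the form required by \autoref{lmm:chaining:rate} (appendix \autoref{app:chaining}), with the diameter of the index set bounded by $D$. Applying that lemma converts $n^{-1/2}\entrn(\mc Q,\mf b_m)$ into $D\cdot\eta_{\beta,n}$ up to a constant depending only on $\beta$ and $c_{\ms e}$, which gives the second displayed inequality.

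The only genuinely delicate point is justifying the chaining step: one must check that \autoref{chaining:empproc} applies with $\mc Q$ replaced by $\dot{\mc Q}$ (where $\mf b_m$ is genuinely a pseudo-metric) rather than only a ball, and that the process $\sum_i Z_i$ admits a $\mf b_m$-continuous version so that the supremum and the chaining bound coincide. This is routine because the set $\cb{q=m}$ is trivial in $X$ (the numerator vanishes), and the chaining bound of \autoref{chaining:empproc} is formulated precisely in terms of the $L^2$-type increment condition verified above. Once this is in place the remainder is just algebra and an appeal to the appendix lemmas.
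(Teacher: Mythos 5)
Your proposal is correct and follows essentially the same route as the paper: the same decomposition $X(q)=\sum_i Z_i(q)$ with the normalized increments, verification of the increment bound via \assu{ass:squad}{Strong Quadruple} and of the moment bound via \assu{ass:smom}{Strong Moment}, then \autoref{chaining:empproc} for the first inequality and \autoref{lmm:chaining:rate} with \assu{ass:sent}{Strong Entropy} for the second. The remark about restricting the supremum to $\dot{\mc Q}$ is a minor point the paper leaves implicit, but it does not change the argument.
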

\begin{proof}
	Define
	\begin{align*}
		Z_i(q) 
		&:= 
		\frac1n \frac{\olc{Y_i}q-\olc{Y_i}m-\Ex*{\olc Yq-\olc Ym}}{\mf l(m, q)^\xi}
		\eqfs
	\end{align*}
	Thus, $X(q) = \sum_{i=1}^n Z_i(q)$.
	The \assu{ass:smom}{Strong Moment} condition together with the \assu{ass:squad}{Strong Quadruple} condition imply that $Z_i$ integrable.
	Let $(Z_1\pr, \dots, Z_n\pr)$ be an independent copy of $(Z_1, \dots, Z_n)$, where $(Y_1\pr, \dots, Y_n\pr)$ is an independent copy of $(Y_1, \dots, Y_n)$.
	By \assu{ass:squad}{Strong Quadruple}, we have
	\begin{align*}
			&
		n^2 \br{Z_i(q)-Z_i(p)-Z_i\pr(q)+Z_i\pr(p)}^2 
		\\&= 
		\br{\frac{\olc{Y_i}q-\olc{Y_i}m-\olc{Y_i\pr}q+\olc{Y_i\pr}m}{\mf l(m,q)^\xi}-\frac{\olc{Y_i}p-\olc{Y_i}m-\olc{Y_i\pr}p+\olc{Y_i\pr}m}{\mf l(m,p)^\xi}}^2
		\\&\leq 
		\mf b_m(q,p)^2 \mf a(Y_i,Y_i\pr)^2
		\eqfs
	\end{align*}
	Furthermore,
	\begin{equation*}
		\Ex*{\br{\frac1n\sum_{i=1}^n \mf a(Y_i, Y_i\pr)^2}^{\frac\zeta2}} \leq \mf M(\zeta)
	\end{equation*}
	with $\mf M(\zeta) < \infty$ due to the assumption \assu{ass:smom}{Strong Moment}.
	Thus, \autoref{chaining:empproc} (appendix \autoref{app:chaining}) implies
	\begin{equation*}
		\Ex*{\sup_{q\in\mc Q} \abs{X(q)}^\zeta} \leq c n^{-\frac\zeta2} \mf M(\zeta) \min\brOf{\entrn(\mc Q, \mf b_m), \gamma_2(\mc Q, \mf b_m)}^\zeta \eqfs
	\end{equation*}
	\assu{ass:sent}{Strong Entropy} together with \autoref{lmm:chaining:rate} (appendix \autoref{app:chaining}) yield
	\begin{equation*}
		\Ex*{\sup_{q\in\mc Q} \abs{X(q)}^\zeta} \leq C \mf M(\zeta) \br{D \eta_{n,\beta}}^\zeta
\end{equation*}
	for a constant $C > 0$ depending only on $\beta, \zeta, c_{\ms e}$.
\end{proof}
\begin{proof}[of \autoref{thm:abstr_rate_exp}]
	Using $H_n := \sup_{q\in\mc Q} \abs{X(q)}$ in \autoref{lmm:strong:growth} fulfills the \assu{ass:clo}{Closeness} condition by definition of $X$. Next, apply  \autoref{lmm:strong:close} with $\zeta := \frac{\kappa}{\gamma-\xi}$ to conclude the proof.
\end{proof}
\subsection{Proof of \autoref{thm:abstr_weak_strong}}
\begin{lemma}\label{lmm:entropyuse}
	The condition \assu{ass:se}{Small Entropy} implies
	\begin{equation*}
		\entrn(\ball_{R}(o, \mf b), \mf b^{1-\xi})
		\leq 
		c R^{1-\xi} (1-\xi)^{-\beta}
	\end{equation*}
	for $\xi \in (0,1)$, where $c > 0$ depends only on $\beta, c_{\ms e}$.
\end{lemma}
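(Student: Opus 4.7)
The plan is to reduce coverings in the $\mf b^{1-\xi}$ pseudo-metric to coverings in the $\mf b$ pseudo-metric via the elementary identity
\[
    N(Q, \mf b^{1-\xi}, r) \;=\; N\bigl(Q, \mf b, r^{1/(1-\xi)}\bigr),
\]
valid for any $Q$ and any $r > 0$, since $\mf b(q,p)^{1-\xi} \leq r$ iff $\mf b(q,p) \leq r^{1/(1-\xi)}$. This lets us import the Small Entropy bound directly.

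First I would set $D := (2R)^{1-\xi}$, so that $D$ bounds the diameter of $\ball_R(o,\mf b)$ in $\mf b^{1-\xi}$; for $r \geq D$ the covering number equals $1$, so the integrand in $\entrn$ vanishes. For $r \in (0, D)$, the above identity together with \assu{ass:se}{Small Entropy} applied at scale $r^{1/(1-\xi)}$ yields, provided $R$ is large enough to satisfy the ``large enough'' hypothesis in \assu{ass:se}{Small Entropy},
\[
    \sqrt{\log N(\ball_R(o,\mf b), \mf b^{1-\xi}, r)}
    \;\leq\; c_{\ms e}\,\Bigl(\log\bigl(R/r^{1/(1-\xi)}\bigr)\Bigr)^{\beta}
    \;=\; c_{\ms e}\,(1-\xi)^{-\beta}\bigl(\log(R^{1-\xi}/r)\bigr)^{\beta}.
\]

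Next I would take $\epsilon \to 0^+$ in the infimum defining $\entrn$ (using that this gives a valid upper bound) and integrate:
\[
    \entrn(\ball_R(o,\mf b),\mf b^{1-\xi})
    \;\leq\; \int_{0}^{D} c_{\ms e}\,(1-\xi)^{-\beta}\bigl(\log(R^{1-\xi}/r)\bigr)^{\beta}\,\dl r .
\]
The substitution $u = r/R^{1-\xi}$ gives $\dl r = R^{1-\xi}\,\dl u$ and $\log(R^{1-\xi}/r) = -\log u$, turning the right-hand side into
\[
    c_{\ms e}\,(1-\xi)^{-\beta}\,R^{1-\xi}\int_{0}^{2^{1-\xi}}(-\log u)^{\beta}\,\dl u .
\]
Since $2^{1-\xi} \leq 2$ for $\xi \in (0,1)$, the remaining integral is bounded by the finite constant $\int_0^2 (-\log u)^\beta\,\dl u$, which depends only on $\beta$. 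Combining the factors absorbs the $\xi$-free constants into a single $c > 0$ depending only on $\beta$ and $c_{\ms e}$, yielding the claim.

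The routine part is the substitution; the only mildly delicate point is the blow-up of the prefactor $(1-\xi)^{-\beta}$, which is precisely what the statement allows. I would also double-check that \assu{ass:se}{Small Entropy} is available at the scale $r^{1/(1-\xi)}$ uniformly in $r \in (0,D)$ (which reduces to $R$ being large enough to cover the ``$\delta$ large enough'' hypothesis), so the single constant $c$ indeed depends only on $\beta$ and $c_{\ms e}$.
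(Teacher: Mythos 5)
Your proof is correct and takes essentially the same route as the paper: both pass from the infimum defining $\entrn$ to the full entropy integral by letting $\epsilon\to 0$, use the identity $N(Q,\mf b^{1-\xi},r)=N(Q,\mf b,r^{1/(1-\xi)})$ together with \assu{ass:se}{Small Entropy} (with the same implicit ``$R$ large enough'' caveat), and evaluate the resulting integral by a substitution --- the paper changes to the $\mf b$-scale and uses $\int_0^1 s^{-\xi}\log(1/s)^\beta\,\dl s=(1-\xi)^{-\beta-1}\Gamma(\beta+1)$, while you pull the factor $(1-\xi)^{-\beta}$ out of the logarithm, which is the same computation. The only cosmetic slip is your upper limit $(2R)^{1-\xi}$: for $r>R^{1-\xi}$ the covering number is already $1$ (one $\mf b$-ball of radius $R$ at $o$ covers everything) while $\log(R^{1-\xi}/r)$ is negative, so the integral should simply be cut at $R^{1-\xi}$, i.e.\ $u\in(0,1]$, after which your bound is exactly the claimed $c\,R^{1-\xi}(1-\xi)^{-\beta}$.
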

\begin{proof}
	Obviously, we have
	\begin{equation*}
		\entrn(Q, \mf b^{1-\xi}) \leq \int_0^\infty \sqrt{\log N(Q, \mf b^{1-\xi}, r)} \dl r
	\end{equation*}
	for any set $Q \subset \mc Q$. Furthermore,
	\begin{equation*}
		N(Q, \mf b^{1-\xi}, r) = N(Q, \mf b, r^{\frac1{1-\xi}})
		\eqcm
	\end{equation*}
	which yields
	\begin{equation*}
		\int_0^\infty \sqrt{\log N(Q, \mf b^{1-\xi}, r)} \dl r
		=
		(1-\xi) \int_0^\infty s^{-\xi} \sqrt{\log N(Q, \mf b, s)} \dl s
	\end{equation*}
	Thus, for $Q := \ball_{R}(o, \mf b)$, we obtain, using the \assu{ass:se}{Small Entropy} condition,
	\begin{equation*}
		\entrn(Q, \mf b^{1-\xi})
		\leq 
		c_{\ms e} (1-\xi) \int_0^R r^{-\xi} \log\brOf{\frac{R}{r}}^\beta \dl r
		\eqfs
	\end{equation*}
	To calculate the integral, we substitute $s := \frac rR$ and get
	\begin{equation*}
		\int_0^R r^{-\xi} \log\brOf{\frac{R}{r}}^\beta \dl r
		=
		R^{1-\xi} \int_0^1 s^{-\xi} \log\brOf{\frac{1}{s}}^\beta \dl s
		\eqfs
	\end{equation*}
	For general $a\in(0,1),b>0$, we have
	\begin{equation*}
		\int_0^1 x^{-a} \log\brOf{\frac1x}^b \dl x = (1-a)^{-b-1} \Gamma(b+1)
		\eqcm
	\end{equation*}
	where $\Gamma(\cdot)$ is the Gamma function.
	Thus,
	\begin{equation*}
		\int_0^1 s^{-\xi} \log\brOf{\frac{1}{s}}^\beta \dl s \leq c_\beta (1-\xi)^{-\beta-1}
	\end{equation*}
	for a constant $c_\beta > 0$ depending only on $\beta$.
	Putting everything together, we obtain
	\begin{equation*}
		\entrn(Q, \mf b^{1-\xi})
		\leq 
		c R^{1-\xi} (1-\xi)^{-\beta}
		\eqfs
	\end{equation*}
\end{proof}
\begin{lemma}\label{lmm:seqbound}
	Set $\xi_n := 1 - \log(n)^{-1}$. Then
	\begin{equation*}
		\br{n^{-\frac12} \br{1-\xi_n}^{-\beta}}^{\frac{1}{\gamma-\xi_n}} 
		\leq
		c_\gamma n^{-\frac1{2(\gamma-1)}} \log(n)^{\frac{\beta}{\gamma-1}}
	\end{equation*}
	where $c_\gamma > 0$ is a constant depending only on $\gamma$.
\end{lemma}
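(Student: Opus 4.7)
The plan is to reduce the claim to a uniform bound on the difference of two logarithms. Substituting $\xi_n = 1 - (\log n)^{-1}$ immediately gives $(1-\xi_n)^{-\beta} = (\log n)^\beta$ and $\gamma - \xi_n = (\gamma-1) + (\log n)^{-1}$, so the left-hand side rewrites as $\br{n^{-1/2}(\log n)^\beta}^{1/((\gamma-1)+(\log n)^{-1})}$. The content of the lemma is therefore the statement that replacing the exponent $1/(\gamma - \xi_n)$ by its limit $1/(\gamma - 1)$ costs only a multiplicative constant depending on $\gamma$.

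To make this precise, I would set $A_n := -\tfrac{1}{2}\log n + \beta \log\log n$, so that $\log(\text{LHS}) = A_n/(\gamma - \xi_n)$ while the target upper bound (ignoring $c_\gamma$) has logarithm $A_n/(\gamma - 1)$. A one-line common-denominator computation gives
\[
\frac{A_n}{\gamma - \xi_n} - \frac{A_n}{\gamma - 1}
= A_n \cdot \frac{\xi_n - 1}{(\gamma - \xi_n)(\gamma - 1)}
= \frac{-A_n/\log n}{(\gamma - \xi_n)(\gamma - 1)}
= \frac{\tfrac{1}{2} - \beta (\log\log n)/\log n}{(\gamma - \xi_n)(\gamma - 1)}.
\]
For $n \geq e$ the numerator is at most $\tfrac{1}{2}$, while the denominator satisfies $(\gamma - \xi_n)(\gamma - 1) \geq (\gamma-1)^2 > 0$ since $\xi_n \leq 1$ and $\gamma > 1$. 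Hence the displayed difference is bounded above by $1/(2(\gamma-1)^2)$, and one may take $c_\gamma := \exp\!\br{1/(2(\gamma-1)^2)}$.

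Two small points need attention but are not genuine obstacles. First, when $A_n > 0$ (which can happen for small $n$ if $\beta$ is large) the numerator above is negative, so the inequality becomes trivial and need not be bounded; one simply splits into the cases $A_n > 0$ and $A_n \leq 0$. Second, for $n$ in a bounded initial range (say $2 \leq n \leq N_0$ with $N_0$ depending on $\gamma$ through the regime where $\log\log n$ behaves irregularly) the inequality can be ensured by absorbing the finitely many exceptional ratios into the constant $c_\gamma$. The only step that requires any care is verifying the sign of $A_n/\log n$ when taking the upper bound $\tfrac12$, which is why I prefer to split on the sign of $A_n$ rather than bound $|A_n|/\log n$ directly.
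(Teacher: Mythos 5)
Your proof is correct and takes essentially the same route as the paper's: both amount to showing that replacing the exponent $\frac{1}{\gamma-\xi_n}$ by its limit $\frac{1}{\gamma-1}$ changes the logarithm of the left-hand side by at most $\frac{1}{2(\gamma-1)^2}$, yielding the same constant $c_\gamma=\exp\left(\frac{1}{2(\gamma-1)^2}\right)$ (the paper just organizes the computation multiplicatively, bounding the $\log(n)$-factor by monotonicity and the $n$-power factor by the identical correction term). The only caveat, shared with the paper's own step $\log(n)^{\beta/(\gamma-\xi_n)}\leq\log(n)^{\beta/(\gamma-1)}$, is that bounding your numerator by $\tfrac12$ needs $\log n\geq 1$; this is harmless since the lemma is invoked only for large $n$, but note that your suggestion of absorbing the remaining small values (e.g.\ $n=2$) into $c_\gamma$ would force the constant to depend on $\beta$ as well, so it is cleaner to simply state the bound for $n\geq 3$.
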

\begin{proof}
	We have
	\begin{align*}
		\br{n^{-\frac12} \br{1-\xi_n}^{-\beta}}^{\frac{1}{\gamma-\xi_n}} 
		&= 
		\br{n^{-\frac{1}{2(\gamma-\xi_n)}}} \log(n)^{\frac{\beta}{\gamma-\xi_n}} 
		\\&= 
		\br{n^{-\frac{1}{2\br{z+\frac1{\log(n)}}}}} \log(n)^{\frac{\beta}{z+\frac1{\log(n)}}} 
		\eqcm
	\end{align*}
	where $z = \gamma-1$. We use
	\begin{equation*}
		\log(n)^{\frac{\beta}{z+\frac1{\log(n)}}} 
		\leq
		\log(n)^{\frac{\beta}{z}}
		\eqcm
	\end{equation*}
	\begin{equation*}
		n^{-\frac{1}{2(z+\frac1{\log(n)})}}
		=
		n^{-\frac{\log(n)}{2z\log(n)+2}}
		=
		\exp\brOf{-\frac{\log(n)^2}{2z\log(n)+2}+\frac{\log(n)}{2z}} n^{-\frac1{2z}}
		\eqcm
	\end{equation*}
	and
	\begin{align*}
		-\frac{\log(n)^2}{2z\log(n)+2}+\frac{\log(n)}{2z}
		&=
		\frac{\log(n)}{2z (z\log(n)+1)}
		\\&\leq
		\frac1{2z^2}
		\eqcm
	\end{align*}
	to obtain
	\begin{equation*}
		\br{n^{-\frac12} \br{1-\xi_n}^{-\beta}}^{\frac{1}{\gamma-\xi_n}} 
		\leq
		\exp\brOf{\frac1{2z^2}} n^{-\frac1{2z}} \log(n)^{\frac{\beta}{z}}
		\eqfs
	\end{equation*}
\end{proof}
\begin{proof}[of \autoref{thm:abstr_weak_strong}]
	For $n\in\N, n\geq 3$, set $\xi_n := 1 - \log(n)^{-1}$, $Q_n := \ball_{R_n}(o, \mf b)$, and $R_n := n$.
	For $n$ large enough, the \assu{ass:ex2}{Existence'} condition implies the existence of $m^{Q_n}_n \in \argmin_{q\in Q_n} F_n(q)$ and $m^{Q_n}\in\argmin_{q\in Q_n} F(q)$.
	
	\autoref{thm:abstr_rate_exp} implies
	\begin{equation*}
		\Ex*{\mf b(m^{Q_n}, m^{Q_n}_n)^{\kappa}} \leq C n^{-\frac{\kappa}{2(\gamma-\xi_n)}} \entrn(Q_n, \mf b^{1-\xi_n})^{\frac{\kappa}{\gamma-\xi_n}}\, \mf M\brOf{\frac{\kappa}{\gamma-\xi_n}}
		\eqcm
	\end{equation*}
	for $n$ large enough.
	Note, that $C>0$ can be chosen independently of $n$ (even for $\xi_n$ depending on $n$).
	
	In \assu{ass:smom}{Strong Moment} we require $\kappa \geq \gamma-1$, because then $x\mapsto x^{\frac{\kappa}{\gamma-1}}$ is convex, which is needed for the symmetrization argument in the proof of \autoref{thm:abstr_rate_exp}. But, if $\kappa = \gamma-1$, then $\frac{\kappa}{\gamma-\xi_n} < 1$, and \autoref{thm:abstr_rate_exp} cannot be applied directly. For this technical reason, we assumed $\kappa > \gamma-1$, so that $\kappa \geq \gamma-\xi_n$ for $n$ large enough.	
	
	By \assu{ass:se}{Small Entropy} and \autoref{lmm:entropyuse} there is $c_\beta>0$ such that for $n\in\N$ large enough, we have 
	\begin{equation*}
		\entrn(\Ball{R_n}{o}{\mf b}, \mf b^{1-\xi_n}) \leq c_\beta R_n^{1-\xi_n} \br{{1-\xi_n}}^{-\beta}
		\eqfs
	\end{equation*}
	Using $R_n^{1-\xi_n} = n^{\frac1{\log(n)}} = \exp(1)$ together with \autoref{lmm:seqbound}, we obtain
	\begin{equation*}
		\Ex*{\mf b(m^{Q_n}, m^{Q_n}_n)^\kappa} \leq C\pr \br{n^{-\frac12} \log(n)^\beta}^{-\frac{\kappa}{\gamma-1}} \, \mf M\brOf{\frac{\kappa}{\gamma-\xi_n}}
		\eqfs
	\end{equation*}
	As $\lim_{n\to\infty}\mf M\brOf{\frac{\kappa}{\gamma-\xi_n}} = \mf M\brOf{\frac{\kappa}{\gamma-1}}$, we have 
	\begin{equation*}
		\Ex*{\mf b(m^{Q_n}, m^{Q_n}_n)^{\kappa}} \leq C\prr n^{-\frac{\kappa}{2(\gamma-\xi_n)}} \br{R_n^{1-\xi_n} \br{{1-\xi_n}}^{-\beta}}^{\frac{\kappa}{\gamma-\xi_n}}\, \mf M\brOf{\frac{\kappa}{\gamma-1}}
		\eqfs
	\end{equation*}
	Finally, there is a $n_0\in\N$ such that for all $n\geq n_0$, we have $m \in Q_n$, which implies $m = m^{Q_n}$. Thus,
	\begin{equation*}
		\Ex*{\mf b(m, m^{Q_n}_n)^\kappa} = \bigO\brOf{\br{n^{-\frac12} \log(n)^\beta}^{-\frac{\kappa}{\gamma-1}}}
		\eqfs
	\end{equation*}
\end{proof}
\section{Stability of Quadruple Inequalities} \label{app:quad_stab}
We present some trivial stability results for quadruple inequalities. The notation we use here is introduced in the beginning of \autoref{sec:quadruple}.
\begin{itemize}
\item[]
	\hspace*{-0.5cm}\textbf{Subsets:}\\
	If $(\mc Q, \mc Y, \mf c, \mf a, \mf b)$ fulfills the weak quadruple inequality, then so does \linebreak$(\mc Q\pr, \mc Y\pr, \mf c, \mf a, \mf b)$ with $\mc Q\pr \subset \mc Q$, $\mc Y\pr \subset \mc Y$.
\item[]
	\hspace*{-0.5cm}\textbf{Images:}\\
	Assume $(\mc Q, \mc Y, \mf c, \mf a, \mf b)$ fulfills the weak quadruple inequality and $f\colon\mc Y\pr\to\mc Y$, $g\colon\mc Q\pr\to\mc Q$. Then $(\mc Q\pr, \mc Y\pr, \mf c\pr, \mf a\pr, \mf b\pr)$ fulfills the weak quadruple inequality with $\mf c\pr(y,q) = \mf c(f(y), g(q))$, $\mf a\pr(y,z) = \mf a(f(y), f(z))$, $\mf b\pr(q,p) = \mf b(g(q), g(p))$.
\item[]
	\hspace*{-0.5cm}\textbf{Limits:}\\
	Let $(\mc Q, \mc Y, \mf c_i, \mf a_i, \mf b_i)$ fulfill the weak quadruple inequality for $i \in \N$ and assume for all $q,p\in\mc Q$ and $y,z\in\mc Y$ the point-wise limits 
	\begin{align*}
		\mf a(y,z) &:= \lim_{i\to\infty} \mf a_i(y,z)\\
		\mf b(q,p) &:= \lim_{i\to\infty} \mf b_i(q,p)\\
		\mf c(y,q) &:= \lim_{i\to\infty} \mf c_i(y,q)
	\end{align*}
	exist. Then $(\mc Q, \mc Y, \mf c, \mf a, \mf b)$ also fulfills the weak quadruple inequality.
\end{itemize}
Similar results hold for the strong quadruple inequality. For the following results it may not be so easy to obtain an analog for the strong quadruple inequality.
\begin{itemize}
\item[] \hspace*{-0.5cm}\textbf{Product Spaces:}\\
	If $(\mc Q_i, \mc Y_i, \mf c_i, \mf a_i, \mf b_i)$ fulfill the weak quadruple inequality for all $i \in \N$, then so does
	$(\mc Q, \mc Y, \mf c, \mf a, \mf b)$ where $\mc Q = \bigtimes_{i\in \N} \mc Q_i$, $\mc Y = \bigtimes_{i\in \N} \mc Y_i$, $\mf c = \sum_{i=1}^\infty \mf c_i$, $\mf a = \normof{(\mf a_i)_{i\in\N}}_{\ell^2}$, $\mf b = \normof{(\mf b_i)_{i\in\N}}_{\ell^2}$.	
	\begin{proof}
		We have
		\begin{align*}
			\olc yq- \olc zq -\olc yp+\olc zp 
			&= 
			\sum_{i=1}^\infty \br{
			\leftidx{^{\mf c_i}}{\ol {y_i}{q_i}}
			- \leftidx{^{\mf c_i}}{\ol {z_i}{q_i}} 
			- \leftidx{^{\mf c_i}}{\ol {y_i}{p_i}}
			+\leftidx{^{\mf c_i}}{\ol {z_i}{p_i}}}
			\\&\leq 
			\sum_{i=1}^\infty \mf a_i(y_i, z_i) \mf b_i(q_i, p_i)
			\\&\leq 
			 \mf a(y,z) \mf b(q,p)
			\eqcm
		\end{align*}
		using the Cauchy--Schwartz inequality.
	\end{proof}
\item[] \hspace*{-0.5cm}\textbf{Measure Spaces:}\\
	Let $(\Omega, \mc A, \mu)$ be a measure space.
	Assume $(\mc Q, \mc Y, \mf c(\omega), \mf a(\omega), \mf b(\omega))$ fulfills the weak quadruple inequality for every $\omega \in \Omega$. Let $s,t>0$ with $\frac1s+\frac1t=1$. 
	Let $L(\Omega, \mc Q)$ be the set of measurable functions from $\Omega$ to $\mc Q$,  define $L(\Omega, \mc Y)$ analogously.
	For $q,p \in L(\Omega, \mc Q)$, $y,z \in L(\Omega, \mc Y)$, let
	\begin{align*}
		\mf C(y, q) &:= \int\!\! \mf c(\omega, y(\omega), q(\omega)) \, \dl \mu(\omega) \eqcm\\
		\mf A(y, z) &:= \br{\int\!\! \mf a(\omega; y(\omega),z(\omega))^t \, \dl \mu(\omega)}^{\frac1t} \eqcm \\
		\mf B(q, p) &:= \br{\int\!\! \mf b(\omega; q(\omega),p(\omega))^s \, \dl \mu(\omega)}^{\frac1s}
		\eqcm
	\end{align*}
	where we implicitly assume that the necessary measurablity and integrability conditions are fulfilled. Then
	\begin{equation*}
		\br{L(\Omega, \mc Q), L(\Omega, \mc Y), \mf C, \mf A, \mf B}
	\end{equation*}
	also fulfills the quadruple inequality.
	\begin{proof}
		We have
		\begin{align*}
			&\mf C(y,q) - \mf C( z,q) -\mf C(y,p)+\mf C(z,p)
			\\&= 
			\int \mf c(\omega; y(\omega),q(\omega)) -  \mf c(\omega; y(\omega),p(\omega))\\
			 &\qquad- \mf c(\omega; z(\omega),q(\omega)) + \mf c(\omega; z(\omega),p(\omega))\dl \mu(\omega)
			\\&\leq 
			\int \mf a(\omega; y(\omega),z(\omega)) \mf b(\omega; q(\omega),p(\omega))\dl \mu(\omega)
			\\&\leq 
			\mf A(y,z) \mf B(q, p)
			\eqcm
		\end{align*}
		by Hölder's inequality.
	\end{proof}
\item[]\hspace*{-0.5cm}\textbf{Minima:}\\
	Let $(\mc Q, \mc Y, \mf c, \mf a, \mf b)$ fulfill the weak quadruple inequality. Let $\tilde{\mc Y} \subset 2^{\mc Y}$. Define the cost function $\mf C \colon \tilde{\mc Y} \times \mc Q \to \R$ by $\mf C(\mo y, q) = \inf_{y\in\mo y} \mf c(y,q)$ and
	$\mf A(\mo y,\mo z) = \sup_{y\in\mo y,z\in\mo z} \mf a(y,z)$ assuming the infinma and suprema are finite. Then $(\mc Q, \tilde{\mc Y}, \mf C, \mf A, \mf b)$ fulfills the weak quadruple inequality.
	\begin{proof}
	Let $\mo y,\mo z\in \tilde{\mc Y}$ and  $q ,p\in \mc Q$.
	Assume there are $y_q, y_p \in \mo y, z_q, z_p \in \mo z$ such that $\mf C(\mo y,q) = \olc {y_{q}}{q}$, $\mf C(\mo y,p) = \olc {y_{p}}{p}$, $\mf C(\mo z,q) = \olc {z_{q}}{q}$, and $\mf C(\mo z,p) = \olc {z_{p}}{p}$. Then
	\begin{align*}
		\mf C(\mo y,q) - \mf C(\mo y,p) - \mf C(\mo z,q) + \mf C(\mo z,p)
		&=
		\olc {y_{q}}{q} - \olc {y_{p}}{p} - \olc {z_{q}}{q} + \olc {z_{p}}{p}
		\\&\leq
		\olc {y_{p}}{q} - \olc {y_{p}}{p} - \olc {z_{q}}{q} + \olc {z_{q}}{p}
		\\&\leq
		\mf a(y_{p},z_{q}) \mf b(q, p) 
		\\&\leq
		\mf A(\mo y,\mo z) \mf b(q,p)
		\eqfs
	\end{align*}
	If the infima are not attained, one can follow the same proof with minimizing sequences.
	\end{proof} 
	In many interesting problems the setting is opposite to what was described before, i.e., $\mf C \colon \mc Y \times \tilde{\mc Q} \to \R ,\, (y, \mo q) \mapsto \inf_{q\in\mo q} \mf c(y,q)$, where $\tilde{\mc Q} \subset 2^{\mc Q}$: the elements of the descriptor space are subsets and the elements of data space are points. Examples are $k$-means, where $\tilde{\mc Q}$ consists of $k$-tuples of points in $\mc Q$, or fitting hyperplanes. A quadruple inequality with $\sup_{q\in\mo q,p\in\mo p} \mf b(q,p)$ as the descriptor distance can be established. Unfortunately, this is usually not useful, as the entropy condition cannot be fulfilled with distances of this type. The framework described in this article can still be applied using inequalities as for bounded spaces, see section \ref{ssec:bounded}. But we cannot directly use the advantage of quadruple inequalities over Lipschitz-continuity.
\end{itemize}
\section{Proof of \autoref{lmm:weak_implies_strong_power}} \label{app:power_metric_bound}
We first state and prove two simple lemmas for some simple arithmetic expressions and then use those for the proof of \autoref{lmm:weak_implies_strong_power}.
\begin{lemma}\label{lmm:fraction_bound}
	Let $A,B \in \R$, $a,b,c,r\geq 0$, $s,t>0$.
	Assume $t \geq s \ \Leftrightarrow\  b\geq a$.
	Assume $|A| \leq ra$, $|B| \leq rb$, $|A-B| \leq rc$ .
	Then
	\begin{equation*}
		\abs{\frac As -\frac Bt} \leq  r \frac{\min(s,t)c+|s-t|\min(a,b)}{st}
		\eqfs
	\end{equation*}
\end{lemma}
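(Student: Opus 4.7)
The plan is to reduce $\abs{A/s - B/t}$ to bounding $\abs{At - Bs}$ and then to split into two cases depending on the ordering of $s$ and $t$ (equivalently, by hypothesis, of $a$ and $b$). First I would write
\begin{equation*}
\frac{A}{s} - \frac{B}{t} = \frac{At - Bs}{st}\eqfs
\end{equation*}
The key observation is that the numerator $At - Bs$ admits two useful algebraic rearrangements, namely
\begin{equation*}
At - Bs = (A-B)s + A(t-s) \qquad \text{and} \qquad At - Bs = (A-B)t + B(t-s)\eqfs
\end{equation*}

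Next I would exploit the equivalence $t \geq s \Leftrightarrow b \geq a$ from the hypothesis. In the case $t \geq s$ (so also $b \geq a$, hence $\min(s,t) = s$ and $\min(a,b) = a$) I would apply the first identity together with the triangle inequality and the bounds $\abs{A-B} \leq rc$, $\abs{A} \leq ra$ to get
\begin{equation*}
\abs{At - Bs} \leq s\abs{A-B} + (t-s)\abs{A} \leq r\br{sc + (t-s)a} = r\br{\min(s,t)\, c + \abs{s-t}\min(a,b)}\eqfs
\end{equation*}
In the opposite case $t \leq s$ (so $b \leq a$, $\min(s,t) = t$, $\min(a,b) = b$) I would instead use the second identity together with $\abs{B} \leq rb$, obtaining the symmetric estimate
\begin{equation*}
\abs{At - Bs} \leq t\abs{A-B} + (s-t)\abs{B} \leq r\br{tc + (s-t)b} = r\br{\min(s,t)\, c + \abs{s-t}\min(a,b)}\eqfs
\end{equation*}
Dividing by $st$ in either case yields the claimed inequality.

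There is no real obstacle here; the only point requiring care is matching the decomposition to the case so that the coefficient of $(A-B)$ becomes $\min(s,t)$ and the coefficient of the other difference term corresponds to the argument of $r$ whose magnitude is $\min(a,b)$. This matching is exactly what the coupled hypothesis $t \geq s \Leftrightarrow b \geq a$ ensures, and without it one would only obtain a weaker bound involving $\max(a,b)$ or $\max(s,t)$.
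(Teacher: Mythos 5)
Your proof is correct and follows essentially the same route as the paper: the same case split $t\geq s$ versus $t\leq s$ with the same two decompositions of $At-Bs$, the hypothesis $t\geq s \Leftrightarrow b\geq a$ serving to identify the minimum terms. The only cosmetic difference is that you bound $\abs{At-Bs}$ directly with the triangle inequality, whereas the paper proves the one-sided estimate and then invokes the symmetry between $(A,a,s)$ and $(B,b,t)$ to obtain the absolute value.
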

\begin{proof}
	For $t \geq s$, using the bound on $A$ and on $A-B$ implies $\frac As -\frac Bt \leq  r \frac{(t-s)a+sc}{st}$. Similarly,
	for $s \geq t$, we get $\frac As -\frac Bt \leq  r \frac{tc+(s-t)b}{st}$ by using the bound on $B$ and $A-B$.
	Together, we obtain
	\begin{equation*}
		\frac As -\frac Bt \leq  r \frac{\min(s,t)c+|s-t|\min(a,b)}{st}
		\eqfs
	\end{equation*}
	We finish the proof by pointing out the symmetry between $(A,a,s)$ and $(B,b,t)$.
\end{proof}
\begin{lemma}\label{lmm:abc_beta_bound}
	Let $a,b,c > 0$, $\beta \in [0,1]$. Assume $a\leq b$, $b \leq a+c$, $c \leq a+b$.
	Then 
	\begin{equation*}
		\frac{c a^\beta+\br{b^\beta-a^\beta}a}{a^\beta b^\beta} \leq 2^{\beta} c^{1-\beta}
		\eqfs
	\end{equation*}
\end{lemma}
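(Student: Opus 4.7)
The plan is to first rewrite the expression so that the dependence on $b$ is isolated, and then perform a short case analysis driven by how $c$ compares to $a$. Clearing the factor $a^\beta$ in the numerator gives the identity
\begin{equation*}
\frac{c a^\beta+(b^\beta-a^\beta)a}{a^\beta b^\beta} \;=\; \frac{c-a}{b^\beta} + a^{1-\beta},
\end{equation*}
so only the first summand depends on $b$, and it is monotone in $b$ (decreasing if $c\geq a$, increasing if $c\leq a$). The triangle-type hypotheses force $b \in [\max(a,c-a),\,a+c]$, and my strategy in each case is to replace $b$ by the endpoint of this interval that maximizes the first summand.

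When $a\leq c\leq 2a$, the binding constraint is $b\geq a$ and $c-a\geq 0$, so
\begin{equation*}
\frac{c-a}{b^\beta}+a^{1-\beta} \;\leq\; \frac{c-a}{a^\beta}+a^{1-\beta} \;=\; c\,a^{-\beta} \;=\; c^{1-\beta}(c/a)^{\beta} \;\leq\; 2^\beta c^{1-\beta}.
\end{equation*}
When $c\geq 2a$, the binding constraint coming from $c\leq a+b$ is $b\geq c-a>0$, giving $(c-a)/b^\beta \leq (c-a)^{1-\beta}$. With $s:=a/c\in[0,1/2]$ the upper bound becomes $c^{1-\beta}\bigl[(1-s)^{1-\beta}+s^{1-\beta}\bigr]$; since $x\mapsto x^{1-\beta}$ is concave for $\beta\in[0,1]$, the bracket is concave and symmetric about $s=1/2$ on $[0,1]$, hence attains its maximum $2\cdot(1/2)^{1-\beta}=2^\beta$ there, which suffices.

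The remaining case $c\leq a$ is the main technical point. Here $c-a\leq 0$, so the binding constraint is $b\leq a+c$, and the target inequality reduces to
\begin{equation*}
(c-a)(a+c)^{-\beta} + a^{1-\beta} \;\leq\; 2^\beta c^{1-\beta}.
\end{equation*}
I plan to substitute $u:=a+c$, $v:=(a-c)/u\in[0,1]$, so that $a = u(1+v)/2$ and $c = u(1-v)/2$; exploiting homogeneity in $u$ and rescaling by $2^{1-\beta}$, the claim becomes the one-variable inequality
\begin{equation*}
(1+v)^{1-\beta} \;\leq\; 2^{1-\beta}v + 2^\beta(1-v)^{1-\beta} \qquad (v\in[0,1]).
\end{equation*}
Setting $g(v)$ equal to the difference (right-hand side minus left-hand side), a direct computation gives
\begin{equation*}
g''(v) \;=\; -\beta(1-\beta)\bigl[2^\beta(1-v)^{-\beta-1} - (1+v)^{-\beta-1}\bigr],
\end{equation*}
which is non-positive on $(0,1)$ because $2^\beta\geq 1$ and $1-v\leq 1+v$; hence $g$ is concave (for $\beta\in\{0,1\}$ it is affine and the inequality is checked directly). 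Combined with the endpoint values $g(0)=2^\beta-1\geq 0$ and $g(1)=0$, concavity forces $g(v)\geq(1-v)g(0)+v\,g(1)\geq 0$ on $[0,1]$, completing the lemma. The only real obstacle is finding the substitution $(u,v)$ that turns the inequality in Case 3 into a one-variable form amenable to a concavity-plus-endpoints argument; once this is done, the argument is routine.
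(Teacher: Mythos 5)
Your proof is correct, and it shares the paper's overall skeleton (a case split on $c$ versus $a$, reduction via the triangle constraints on $b$, then a one-variable concavity argument) while differing in the details of the hard case. Both arguments implicitly or explicitly use the identity $\frac{ca^\beta+(b^\beta-a^\beta)a}{a^\beta b^\beta}=\frac{c-a}{b^\beta}+a^{1-\beta}$; your explicit monotonicity-in-$b$ discussion makes the choice of worst-case $b$ transparent. For $c\geq a$, your subcase $c\geq 2a$ is exactly the paper's Case II (the concavity bound $(c-a)^{1-\beta}+a^{1-\beta}\leq 2^\beta c^{1-\beta}$ combined with $c-a\leq b$); in fact that argument already covers all of $c\geq a$, so your extra subcase $a\leq c\leq 2a$ with the bound $ca^{-\beta}=c^{1-\beta}(c/a)^\beta\leq 2^\beta c^{1-\beta}$ is a harmless but unnecessary shortcut. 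For $c\leq a$, the paper proves the \emph{stronger} bound with constant $1$ (i.e.\ $\leq c^{1-\beta}$) via the auxiliary function $f(x)=1-x-(1+x)^\beta(1-x^{1-\beta})$, $x=c/a$, using $f(0)=f(1)=0$ and $f''\leq 0$; you instead settle for the stated $2^\beta$ constant, substitute $v=(a-c)/(a+c)$, and prove $(1+v)^{1-\beta}\leq 2^{1-\beta}v+2^\beta(1-v)^{1-\beta}$ by showing $g''\leq 0$ together with $g(0)=2^\beta-1\geq 0$, $g(1)=0$. Both routes are second-derivative-plus-endpoints arguments; the paper's buys a sharper intermediate inequality in that case, yours buys a slightly more systematic reduction (homogeneity plus a symmetric change of variables) at no loss for the lemma as stated.
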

\begin{proof}
The statement is trivial for $\beta\in \{0,1\}$. So let $\beta\in(0,1)$.\\
\underline{Case I, $c \leq a$:}
Define $f(x) = 1-x-(1+x)^\beta(1-x^{1-\beta})$. Then $f(0)=f(1)=0$ and $f\prr(x) = -(1-\beta) \beta x^{-\beta - 1} (x + 1)^{\beta - 2} (1-x^{\beta + 1})\leq 0$ for $x\in(0,1)$. Thus, $f(x) \geq 0$ for $x\in[0,1]$. In particular
$f\brOf{\frac{c}{a}} \geq 0$, which implies $a-c \geq (a+c)^\beta(a^{1-\beta} - c^{1-\beta}) \geq b^\beta(a^{1-\beta} - c^{1-\beta})$. Thus,
\begin{equation*}
\frac{c a^\beta+\br{b^\beta-a^\beta}a}{a^\beta b^\beta} \leq  c^{1-\beta}\eqfs
\end{equation*}
\underline{Case II, $c \geq a$:}
As $1-\beta \leq 1$ and $c-a \geq 0$, we have
$(c-a)^{1-\beta} + a^{1-\beta} \leq 2^\beta c^{1-\beta}$. Multiplying by $(c-a)^\beta$ and using $c-a \leq b$, we get
$c-a \leq b^{\beta} \br{2^\beta c^{1-\beta} - a^{1-\beta}}$. Thus,
\begin{align*}
\frac{c a^\beta+\br{b^\beta-a^\beta}a}{a^\beta b^\beta} \leq 2^{\beta} c^{1-\beta}
\eqfs
\end{align*}
\end{proof}
\begin{proof}[of \autoref{lmm:weak_implies_strong_power}]
Applying \autoref{lmm:fraction_bound} to the left hand side of equation \eqref{eq:strquadpowerbound}, yields
\begin{align*}
	\frac{\olc yq - \olc ym - \olc zq + \olc zm}{\mf b(q,m)^{\xi}}
	-
	\frac{\olc yp - \olc ym - \olc zp + \olc zm}{\mf b(p,m)^{\xi}}
	\leq
	\mf a(y,z)\, \tilde {\mf b}_{m,\xi}(q,p)
\end{align*}
where
\begin{equation*}
	\tilde {\mf b}_{m,\xi}(q,p) = \frac{\min(\olb qm, \olb pm)^\xi\, \olb qp + \abs{\olb qm^\xi - \olb pm^\xi}\min(\olb qm, \olb pm)}{\olb qm^\xi\, \olb pm^\xi}
\end{equation*}
with the short notation $\olb qp := \mf b(q,p)$,
for all $y,z,q,p,m \in \mc Q$. Applying \autoref{lmm:abc_beta_bound} yields $\tilde {\mf b}_{m,\xi}(q,p) \leq 2^\xi \mf b(q,p)^{1-\xi}$.
\end{proof}
\section{Projection Metric Counter Example} \label{app:dproj}
We take a tripod $(\mc Q, d)$ as a simple example of a non-Euclidean Hadamard space, see \cite[Example 3.2]{sturm03}, and show that it does not fulfill the strong quadruple inequality with the projection metric 
\begin{equation*}
	d_{m}^{\ms{proj}}(q,p) := \sqrt{\frac{\ol qp^2 - \br{\ol qm - \ol pm}^2}{\ol qm\, \ol pm}}
	\eqfs
\end{equation*}  
Let $r > \varepsilon > 0$ and define $y,z,q,p,o$ on a tripod as in \autoref{fig:tripod}.
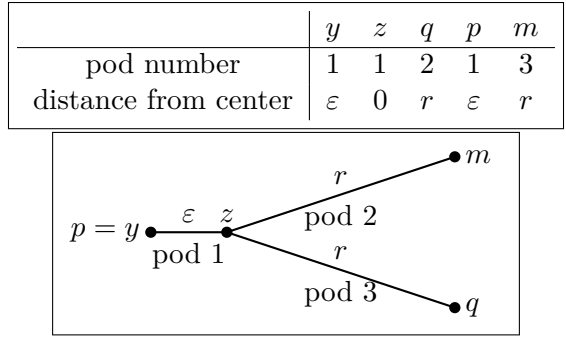
\begin{figure}
	\begin{center}
	\fbox{\begin{tabular}{c|ccccc}
	 & $y$ & $z$ & $q$ & $p$ & $m$ \\ 
	\hline 
	pod number& 1 & 1 & 2 & 1 & 3 \\ 
	distance from center& $\varepsilon$ & 0 & $r$ & $\varepsilon$ & $r$
	\end{tabular}}\\
	\fbox{\begin{tikzpicture}[every path/.style={thick}, baseline=(current bounding box.center)]
		\coordinate[label=left:{$p=y$}] (x) at (-1,0); 
		\coordinate[label=above:{$z$}] (m) at (0,0); 
		\coordinate[label=right:{$q$}] (y) at (3,-1); 
		\coordinate[label=right:{$m$}] (z) at (3,1); 
		\draw (x)--(m) node[midway, above] {$\varepsilon$} node[midway, below] {pod 1};
		\draw (y)--(m) node[midway, above] {$r$} node[midway, below] {pod 3};
		\draw (z)--(m) node[midway, above] {$r$} node[midway, below] {pod 2};
		\draw[fill] (x) circle [radius=0.06]; 
		\draw[fill] (m) circle [radius=0.06]; 
		\draw[fill] (y) circle [radius=0.06]; 
		\draw[fill] (z) circle [radius=0.06]; 
	\end{tikzpicture} }
	\caption{Tripod counter-example for the strong quadruple inequality}
	\label{fig:tripod}
	\end{center}
\end{figure}
We take $\mf c = d^2$, $\xi=1$, $\mf l = d$, $\mf a = K d$, and $\mf b_m = d_{m}^{\ms{proj}}$.
Then 
\begin{align*}
	\frac{\olc yq - \olc ym - \olc zq + \olc zm}{\mf l(q,m)}
	-
	\frac{\olc yp - \olc ym - \olc zp + \olc zm}{\mf l(p,m)}
	&=
	2 \varepsilon\eqcm
	\\
	\mf a(y,z) \mf b_m(q,p) = K \varepsilon \sqrt{2\frac{\varepsilon}{r+\varepsilon}}
	\eqfs
\end{align*}
If the strong quadruple inequality holds, then
\begin{equation*}
	K  \geq \sqrt{2\frac{r+\varepsilon}{\varepsilon}} \xrightarrow{\varepsilon\searrow0} \infty
	\eqfs
\end{equation*}
Thus, $d_{m}^{\ms{proj}}$ is not a suitable candidate for the strong quadruple distance in general Hadamard spaces.
\section{Optimality of Power Inequality}\label{app:power_inequ_opti}
We show that $8\alpha2^{-2\alpha}$ is the optimal constant, and that we cannot extend \autoref{thm:power_inequ} to $\alpha > 1$ or $\alpha < \frac12$.
Let $\epsilon\in(0,1)$ and $(\mc Q, d)$ be a metric space with $q,p,y,z\in\mc Q$ such that for each case below the distances have the values written down in \autoref{tbl:dists}. One can easily show that in all three cases the necessary triangle inequalities and the nice quadruple inequality hold.
\begin{table}
\begin{center}
\begin{tabular}{c||c|c|c|c|c|c}
	 Case&$\ol yq$ & $\ol yp$ & $\ol zq$ & $\ol zp$ & $\ol yz$ & $\ol qp$  \\ 
	\hline 
	\hline 
	 (a) &$1-\epsilon$ & $1-3\epsilon$ & $1-2\epsilon$ & $1$ & $2-3\epsilon$ & $2\epsilon$\\
	 \hline 
	 (b) & $1$ & $\epsilon$ & $1-\epsilon$ & $2\epsilon$ & $2\epsilon$ & $1$\\
	 \hline 
	 (c) & $2\epsilon$ & $\epsilon$ & $1$ & $1$ & $1$ & $\epsilon$ \\
\end{tabular}
\caption{Distances of four points $y,z,q,p\in\mc Q$ for showing lower bounds of the constant in \autoref{thm:power_inequ}.}
\label{tbl:dists}
\end{center}
\end{table}
%
\begin{enumerate}[label=(\alph*)]
\item 
	For $\alpha \in [\frac12,1]$, we have, using l'Hôpital's rule,
	\begin{align*}
	&\lim_{\epsilon\searrow0}\frac{\ol yq^{2\alpha} - \ol yp^{2\alpha} - \ol zq^{2\alpha} + \ol zp^{2\alpha}}{ \ol yz^{2\alpha-1}\, \ol qp} 
	\\&= 
	\lim_{\epsilon\searrow0}\frac{(1-\epsilon)^{2\alpha} - (1-3\epsilon)^{2\alpha} - (1-2\epsilon)^{2\alpha} + 1}{ (2\epsilon)(2-3\epsilon)^{2\alpha-1}}
	\\&=
	\lim_{\epsilon\searrow0} 2\alpha\frac{-(1-\epsilon)^{2\alpha-1} + 3(1-3\epsilon)^{2\alpha-1} + 2(1-2\epsilon)^{2\alpha-1}}{ 2(2-3\epsilon)^{2\alpha-1}-6\epsilon(2\alpha-1)(2-3\epsilon)^{2\alpha-2}}
	\\&=8\alpha2^{-2\alpha}
	\eqfs
	\end{align*}
	Thus, the constant $8\alpha2^{-2\alpha}$ in \autoref{thm:power_inequ} is optimal.
\item	
	For $\alpha >1$, we have, using l'Hôpital's rule,
	\begin{align*}
	&\lim_{\epsilon\searrow0}\frac{\ol yq^{2\alpha} - \ol yp^{2\alpha} - \ol zq^{2\alpha} + \ol zp^{2\alpha}}{ \ol yz^{2\alpha-1}\, \ol qp} 
	\\&= 
	\lim_{\epsilon\searrow0} \frac{1 -\epsilon^{2\alpha} - (1-\epsilon)^{2\alpha} + (2\epsilon)^{2\alpha}}{ (2\epsilon)^{2\alpha-1}} 
	\\&=
	\frac{2\alpha}{2\alpha-1} \lim_{\epsilon\searrow0} \frac{ - \epsilon^{2\alpha-1} + (1-\epsilon)^{2\alpha-1} + 2	(2\epsilon)^{2\alpha-1}}{ 2^{2\alpha-1}\epsilon^{2\alpha-2}}
	\\&=
	\infty
	\eqfs
	\end{align*}
	Thus, there is no power inequality in the form of \autoref{thm:power_inequ} for $\alpha > 1$.
\item
	For $\alpha \in (0,\frac12)$, we have
	\begin{align*}
	&\lim_{\epsilon\searrow0}\frac{\ol yq^{2\alpha} - \ol yp^{2\alpha} - \ol zq^{2\alpha} + \ol zp^{2\alpha}}{ \ol yz^{2 \alpha-1}\, \ol qp} 
	\\&= 
	\lim_{\epsilon\searrow0}\frac{(2\epsilon)^{2\alpha} - \epsilon^{2\alpha} - 1 + 1}{\epsilon} 
	\\&=
	\lim_{\epsilon\searrow0} (2^{2\alpha}-1)\epsilon^{2\alpha-1}
	\\&=\infty
	\eqfs
	\end{align*}
	Thus, there is no power inequality in the form of \autoref{thm:power_inequ} for $\alpha < \frac12$.
\end{enumerate}
\section{Chaining}\label{app:chaining}
Recall the measures of entropy $\gamma_2$ and $\entrn$ defined in \autoref{chaining:def_entropy}. We add another useful entry to this list.
\begin{definition}[Bernoulli Bound]\label{chaining:entropy}
	For $T \subset \R^n$ define
	\begin{equation*}
		b(T) := \inf\cbOf{\sup_{t\in T_1} \normof{t}_1 + \gamma_2(T_2)\,\colon\, T_1, T_2 \subset \R^n, T \subset T_1+T_2}
		\eqcm
	\end{equation*}
	where $\gamma_2(T_2) = \gamma_2(T_2, d_2)$ for the Euclidean metric $d_2$ on $\R^n$, $\normof{t}_1 = \sum_{i=1}^n \abs{t_i}$, and $T_1 + T_2 = \cb{t_1 + t_2 \colon t_1 \in T_1, t_2 \in T_2}$.
\end{definition}
We write down the Bernoulli bound for powers of the Bernoulli process. \cite{bednorz14} show that the bound can be reversed (up to an universal constant). Thus, this step can be regarded as optimal.
\begin{theorem}[Bernoulli bound]\label{chaining:bernoulli}
	Let $\sigma_1, \dots, \sigma_n$ be independent random signs, i.e., $\Prof{\sigma_i = \pm1} =\frac12$.
	For $t\in R^n$ set $\tilde X_t :=\sum_{i=1}^n \sigma_i t_i$. Let $T \subset \R^n$.
	Let $\kappa \geq 1$.
	Then 
	\begin{equation*}
		\Ex*{\sup_{t\in T} \abs{\tilde X_t}^\kappa} \leq c_\kappa b(T)^\kappa
		\eqcm
	\end{equation*}
	where $c_\kappa$ depends only on $\kappa$.
\end{theorem}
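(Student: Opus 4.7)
The plan is to reduce the bound to two standard ingredients: a trivial deterministic $\ell^1$ bound for the part of $T$ covered by $T_1$, and Talagrand's generic chaining tail bound for the part covered by $T_2$. First, I would fix an arbitrary decomposition $T \subset T_1 + T_2$ and, for each $t \in T$, write $t = t^{(1)} + t^{(2)}$ with $t^{(j)} \in T_j$. Since $t \mapsto \tilde X_t$ is linear,
\begin{equation*}
	\sup_{t\in T}|\tilde X_t| \ \leq\ \sup_{t^{(1)}\in T_1}|\tilde X_{t^{(1)}}| + \sup_{t^{(2)}\in T_2}|\tilde X_{t^{(2)}}|\eqfs
\end{equation*}
The first summand is deterministic because $|\sigma_i|=1$, so it is bounded by $\sup_{t^{(1)}\in T_1}\normof{t^{(1)}}_1$.

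For the second summand, I would use that, for any $s,t\in\R^n$, the increment $\tilde X_t-\tilde X_s = \sum_{i=1}^n \sigma_i(t_i-s_i)$ is a Rademacher sum, hence subgaussian with variance parameter $\normof{t-s}_2^2 = d_2(t,s)^2$. Talagrand's generic chaining theorem then yields, for a fixed reference point $s_0\in T_2$ and a universal constant $C>0$,
\begin{equation*}
	\Pr\!\br{\sup_{t^{(2)}\in T_2}\abs{\tilde X_{t^{(2)}}-\tilde X_{s_0}} \geq C\br{\gamma_2(T_2) + u\,\diam(T_2, d_2)}}\ \leq\ 2\euler^{-u^2}
\end{equation*}
for all $u\geq 0$. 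Since $\diam(T_2,d_2) \leq c\,\gamma_2(T_2)$, integrating this tail against $\kappa u^{\kappa-1}\dl u$ gives $\EOf{\sup_{t^{(2)}\in T_2}|\tilde X_{t^{(2)}}|^\kappa} \leq c_\kappa \gamma_2(T_2)^\kappa$.

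Combining the two bounds via $(a+b)^\kappa \leq 2^{\kappa-1}(a^\kappa+b^\kappa)$ and taking expectations,
\begin{equation*}
	\EOf{\sup_{t\in T}|\tilde X_t|^\kappa} \ \leq\ 2^{\kappa-1}\br{\sup_{t^{(1)}\in T_1}\normof{t^{(1)}}_1^\kappa + c_\kappa\,\gamma_2(T_2)^\kappa} \ \leq\ c_\kappa\pr\br{\sup_{t^{(1)}\in T_1}\normof{t^{(1)}}_1 + \gamma_2(T_2)}^\kappa\eqfs
\end{equation*}
Taking the infimum over admissible decompositions $(T_1,T_2)$ of $T$ produces $c_\kappa\pr\, b(T)^\kappa$, as claimed.

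The only nontrivial step is the generic chaining tail estimate; the rest is definitional manipulation and an optimisation over $(T_1,T_2)$. As this chaining machinery is classical (see \cite{talagrand14}), the proof amounts to quoting it and assembling the pieces. A minor obstacle is ensuring the $\kappa$-th moment (not just the expectation) is controlled — this is why one needs the subgaussian tail version of generic chaining rather than just the $L^1$ statement; the constant absorbs the $\Gamma$-function factor from integrating the tail.
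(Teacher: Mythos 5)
Your argument is essentially the paper's proof: fix a decomposition $T \subset T_1 + T_2$, bound the $T_1$-part deterministically by $\sup_{t\in T_1}\normof{t}_1$, bound the $T_2$-part by Talagrand's generic chaining estimate $c_\kappa\,\gamma_2(T_2)^\kappa$, combine via $(a+b)^\kappa \leq 2^{\kappa-1}(a^\kappa+b^\kappa)$, and take the infimum over decompositions. The one step you gloss over --- passing from the chaining bound for increments $\sup_{t\in T_2}\abs{\tilde X_t - \tilde X_{s_0}}$ to $\sup_{t\in T_2}\abs{\tilde X_t}$, which requires the leftover term $\abs{\tilde X_{s_0}}$ to be harmless (e.g.\ a point of $T_2$ at the origin, as in the paper's applications) --- is treated no more carefully in the paper's own proof, which simply cites the chaining bound in the absolute form.
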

\begin{proof}
Let $T_1, T_2 \subset \R^n$ such that $T \subset T_1 + T_2$.
As $(a+b)^\kappa \leq 2^{\kappa-1}\br{a^\kappa + b^\kappa}$ for all $a,b\geq 0$, we can split the supremum into two parts,
\begin{align*}
\Ex*{\sup_{t\in T} \abs{\tilde X_t}^\kappa} 
\leq
2^{\kappa-1} \br{\Ex*{\sup_{t\in T_1} \abs{\tilde X_t}^\kappa}  + \Ex*{\sup_{t\in T_2} \abs{\tilde X_t}^\kappa}}
\eqfs
\end{align*}
The first term is bounded using the 1-norm,
$\Ex*{\sup_{t\in T_1} \abs{\tilde X_t}^\kappa} \leq \sup_{t\in T_1} \normof{t}_1^\kappa$. For the second we use Talagrand's generic chaining bound for the supremum of the subgaussian process $\Ex*{\sup_{t\in T_2} \abs{\tilde X_t}^\kappa} \leq c_\kappa\pr \gamma_2(T_2)^\kappa$, see \cite{talagrand14}. We obtain
\begin{align*}
\Ex*{\sup_{t\in T} \abs{\tilde X_t}^\kappa} 
&\leq 
c_\kappa \br{\sup_{t\in T_1} \normof{t}_1^\kappa + \gamma_2(T_2)^\kappa}
\\&\leq 
c_\kappa \br{\sup_{t\in T_1} \normof{t}_1 + \gamma_2(T_2)}^\kappa
\eqfs
\end{align*}
\end{proof}
\begin{lemma}[Lipschitz connection]\label{chaining:lipschitz}
Let $(\mc Q, b)$ be a pseudo-metric space.
Assume there are function $f_i \colon \mc Q \to \R$ such that $\abs{f_i(q) - f_i(p)} \leq a_i b(q,p)$.
Let $T := \cb{(f_i(q))_{i=1,\dots,n} \colon q\in\mc Q}$. Set $a = (a_1, \dots, a_n)$.
Then
\begin{equation*}
	b(T) \leq C \normof{a}_2 \min\brOf{\entrn(\mc Q, b), \gamma_2(\mc Q, b)}
	\eqcm
\end{equation*}
where $C>0$ is an universal constant.
\end{lemma}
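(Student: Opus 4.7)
The plan is to exploit that the coordinate-wise Lipschitz assumption makes the evaluation map $\phi\colon \mc Q \to \R^n$, $q\mapsto (f_i(q))_{i=1}^n$, Lipschitz as a map from $(\mc Q, b)$ into $(\R^n, d_2)$ with constant $\normof{a}_2$: by Cauchy--Schwartz
\begin{equation*}
d_2(\phi(q), \phi(p))^2 \; = \; \sum_{i=1}^n (f_i(q)-f_i(p))^2 \; \leq \; b(q,p)^2 \sum_{i=1}^n a_i^2 \; = \; \normof{a}_2^2\, b(q,p)^2 \eqfs
\end{equation*}
Since $T = \phi(\mc Q)$, the first alternative in the minimum of the claim follows immediately from the definition of $b(\cdot)$ by choosing $T_1 = \cb{0}$ and $T_2 = T$: as $\gamma_2$ does not increase under Lipschitz maps (one pushes an admissible sequence on $\mc Q$ forward through $\phi$, and every diameter scales by at most $\normof{a}_2$), this yields $b(T) \leq \gamma_2(T, d_2) \leq \normof{a}_2 \gamma_2(\mc Q, b)$.

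For the $\entrn$-bound I would fix $\epsilon > 0$, choose a minimal $\epsilon$-cover $\mc N_\epsilon$ of $\mc Q$ with respect to $b$, and let $\pi\colon \mc Q \to \mc N_\epsilon$ be a nearest-point projection. Writing $\phi(q) = (\phi(q) - \phi(\pi(q))) + \phi(\pi(q))$ gives a decomposition $T \subset T_1 + T_2$ with $T_1 := \cb{\phi(q) - \phi(\pi(q)) \colon q \in \mc Q}$ and $T_2 := \phi(\mc N_\epsilon)$. Every $t \in T_1$ satisfies the coordinate-wise bound $\abs{t_i} \leq a_i\, b(q, \pi(q)) \leq a_i \epsilon$, hence $\sup_{t\in T_1}\normof{t}_1 \leq \epsilon \normof{a}_1 \leq \epsilon \sqrt{n}\, \normof{a}_2$ by Cauchy--Schwartz, which is exactly the form matching the $\epsilon\sqrt{n}$ term of $\entrn$.

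To control $\gamma_2(T_2, d_2)$ I would apply the standard Dudley estimate $\gamma_2(T_2, d_2) \leq C \int_0^\infty \sqrt{\log N(T_2, d_2, r)}\dl r$ (\cite[Theorem~2.3.1]{talagrand14}) and split the integral at $r = \epsilon\normof{a}_2$. On the tail $r \geq \epsilon\normof{a}_2$, the Lipschitz property gives $N(T_2, d_2, r) \leq N(\mc Q, b, r/\normof{a}_2)$, and the substitution $s = r/\normof{a}_2$ yields the desired contribution $\normof{a}_2 \int_\epsilon^\infty \sqrt{\log N(\mc Q, b, s)}\dl s$. Summing this with the $T_1$-term and taking the infimum over $\epsilon > 0$ produces $b(T) \leq C' \normof{a}_2 \entrn(\mc Q, b)$, which, combined with the first step, gives the claimed minimum bound.

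The only non-routine step is the short-range piece $r < \epsilon\normof{a}_2$ of the Dudley integral for $T_2$: the Lipschitz covering bound becomes useless below that scale, and only the crude cardinality estimate $N(T_2, d_2, r) \leq \cardOf{T_2} \leq N(\mc Q, b, \epsilon)$ is available. I expect to absorb the resulting contribution, which is of order $\epsilon\normof{a}_2 \sqrt{\log N(\mc Q, b, \epsilon)}$, into the tail integral by exploiting monotonicity of $r\mapsto \log N(\mc Q, b, r)$ and comparing it to $C\normof{a}_2\int_{\epsilon/2}^\epsilon \sqrt{\log N(\mc Q, b, s)}\dl s$, so that after merging the two ranges the bound retains the form $C\normof{a}_2 \entrn(\mc Q, b)$. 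Everything else in the argument is bookkeeping.
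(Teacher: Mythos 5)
Your proposal is correct and follows essentially the same route as the paper's proof: the same decomposition $T \subset T_1 + T_2$ via an $\epsilon$-net with the $\ell_1$-bound $\epsilon\sqrt{n}\,\normof{a}_2$ on $T_1$ and a chaining/entropy-integral bound on $T_2$, plus the trivial choice $T_1=\cb{0}$, $T_2=T$ for the $\gamma_2(\mc Q, b)$ alternative. The short-range absorption you flag (costing only a shift from $\epsilon$ to $\epsilon/2$ in the integral, harmless after taking the infimum defining $\entrn$) is exactly the bookkeeping hidden in the paper's step $\gamma_2(T_2)\leq c\pr\normof{a}_2\int_\epsilon^\infty\sqrt{\log N(\mc Q,b,r)}\,\dl r$, so there is no gap.
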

\begin{proof}
	For $\epsilon>0$, choose $\mc Q_2$ to be an $\epsilon$-covering of $\mc Q$ with respect to $b$, i.e., for all $q \in \mc Q$ there is a $p_q \in \mc Q_2$ such that $b(q,p_q) \leq \epsilon$. For $q\in\mc Q$ denote $t_q := (f_i(q))_{i=1,\dots,n} \in \R^n$. Define $T_2 := \cb{t_p \colon p\in\mc Q_2}$ and $T_1 := \cb{t_q - t_{p_q} \colon q\in Q}$. Then $T \subset T_1 + T_2$.
		The Lipschitz-condition implies $\normof{t_q-t_p}_2 \leq \normof{a}_2 b(q,p)$ for all $q,p\in\mc Q$.
		Thus, 
		\begin{equation*}
			\sup_{t\in T_1} \normof{t}_1 \leq \sup_{q \in \mc Q} \sqrt{n}\normof{t_q-t_{p_q}}_2 \leq \epsilon \sqrt{n} \normof{a}_2 
			\eqfs
		\end{equation*}
		By the properties of $\gamma_2$, see \cite{talagrand14}, we obtain
		\begin{equation*}
			\gamma_2(T_2) \leq c \normof{a}_2 \gamma_2(\mc Q_2, b) \leq c\pr \normof{a}_2  \int_{\epsilon}^\infty \!\! \sqrt{\log N(Q, b, r)} \dl r
		\end{equation*}
		for universal constants $c, c\pr>0$. Applying the two inequalities to the definition of $b(T)$ concludes the proof.
\end{proof}
\begin{lemma}[Symmetrization]\label{lmm:symm}
	Let $\mc Q$ be set. Let $Z_1, \dots, Z_n$ be centered, independent, and integrable stochastic processes indexed by $\mc Q$. Let $\Phi\colon \R\to\R$ be a convex, non-decreasing function. Let $(Z_1\pr, \dots, Z_n\pr)$ be an independent copy of $(Z_1, \dots, Z_n)$. Let $\varepsilon_1, \dots, \varepsilon_n$ be iid with $\Prof{\varepsilon_1 = \pm1} = \frac12$.
	Then
	\begin{equation*}
		\Ex*{\sup_{q\in\mc Q} \Phi\brOf{\sum_{i=1}^nZ_i(q)}} \leq
		\Ex*{\sup_{q\in\mc Q} \Phi\brOf{\sum_{i=1}^n\varepsilon_i\br{Z_i(q) - Z_i\pr(q)}}}
		\eqcm
	\end{equation*}
	assuming measurability of the involved terms.
\end{lemma}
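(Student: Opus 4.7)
The plan is the classical two-step symmetrization argument: first replace $\sum Z_i$ by $\sum(Z_i-Z_i')$ using Jensen's inequality and the centering of $Z_i'$, and then insert the Rademacher signs $\varepsilon_i$ using the symmetry of the increments $Z_i-Z_i'$.

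For the first step, write $\mc F := \sigma(Z_1,\dots,Z_n)$. Since each $Z_i'$ is centered and independent of $\mc F$, we have $\sum_{i=1}^n Z_i(q) = \Ex*{\sum_{i=1}^n \br{Z_i(q)-Z_i'(q)} \mid \mc F}$ for each $q$. Apply Jensen's inequality to the convex function $\Phi$ (using that conditional expectation preserves convex inequalities) and then monotonicity of $\Phi$ and of the supremum:
\begin{align*}
\sup_{q\in\mc Q}\Phi\br{\sum_{i=1}^n Z_i(q)}
&= \sup_{q\in\mc Q}\Phi\br{\Ex*{\sum_{i=1}^n \br{Z_i(q)-Z_i'(q)} \mid \mc F}}\\
&\leq \sup_{q\in\mc Q}\Ex*{\Phi\br{\sum_{i=1}^n \br{Z_i(q)-Z_i'(q)}} \mid \mc F}\\
&\leq \Ex*{\sup_{q\in\mc Q}\Phi\br{\sum_{i=1}^n \br{Z_i(q)-Z_i'(q)}} \mid \mc F}.
\end{align*}
Taking the outer expectation and using the tower property yields the desymmetrized bound with $Z_i-Z_i'$ in place of $\sum_i \varepsilon_i(Z_i-Z_i')$.

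For the second step, observe that the random vector $(Z_1-Z_1',\dots,Z_n-Z_n')$ (viewed as a process in $q$) is symmetric in each coordinate: because $(Z_i,Z_i')$ is exchangeable, $Z_i-Z_i'$ has the same distribution as $-(Z_i-Z_i')=Z_i'-Z_i$, and by independence across $i$ this holds jointly for any sign pattern. Hence, conditioning on the independent Rademacher vector $\varepsilon=(\varepsilon_1,\dots,\varepsilon_n)$, we get for every realization of $\varepsilon$
\begin{equation*}
\br{Z_i(q)-Z_i'(q)}_{i=1}^n \;\overset{d}{=}\; \br{\varepsilon_i\br{Z_i(q)-Z_i'(q)}}_{i=1}^n \quad\text{as processes in $q$,}
\end{equation*}
and therefore the expectations of $\sup_{q}\Phi(\cdot)$ agree. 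Combining with step one finishes the proof.

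The only delicate point is the measurability of the suprema, which is why the statement carries an explicit measurability assumption; under that assumption, both Jensen's inequality for conditional expectations and the exchange $\sup\circ\E \leq \E\circ\sup$ are straightforward. Integrability of the $Z_i$ and monotonicity of $\Phi$ ensure that all conditional expectations exist in $[-\infty,\infty]$, and convexity of $\Phi$ is used only in the single Jensen step above.
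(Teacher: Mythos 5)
Your proof is correct and is exactly the classical two-step symmetrization argument (conditional Jensen to insert the independent copy, then sign-flip invariance of the increments $Z_i-Z_i'$), which is precisely the intermediate step of the proof of Lemma 2.3.6 in van der Vaart and Wellner that the paper cites instead of proving the lemma itself. No gaps; note only that the monotonicity of $\Phi$ is in fact not needed for this version of the statement, as your own argument shows.
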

The symmetrization lemma is well-known. The statement here is an intermediate step of from the proof of \cite[2.3.6 Lemma]{vaart96}.
\begin{theorem}[Empirical process bound]\label{chaining:empproc}
	Let $(\mc Q, b)$ be a separable pseudo-metric space. Let $Z_1, \dots, Z_n$ be centered, independent, and integrable stochastic processes indexed by $\mc Q$ with a $q_0 \in \mc Q$ such that $Z_i(q_0) = 0$ for $i=1,\dots, n$.
	Let $(Z_1\pr, \dots, Z_n\pr)$ be an independent copy of $(Z_1, \dots, Z_n)$.
	Assume the following Lipschitz-property: There is a random vector $A$ with values in $\R^n$ such that
	\begin{equation*}
		\abs{Z_i(q)-Z_i(p)-Z_i\pr(q)+ Z_i\pr(p)} \leq A_i b(q,p)
	\end{equation*} 
	for $i = 1,\dots, n$ and all $q,p\in\mc Q$.
	Let $\kappa \geq 1$.
	Then
	\begin{equation*}
		\Ex*{\sup_{q\in\mc Q} \abs{\sum_{i=1}^nZ_i(q)}^\kappa} \leq C\, \Ex*{\normof{A}_2^\kappa} \, 
		\min\brOf{\entrn(\mc Q, b), \gamma_2(\mc Q, b)}^\kappa
		\eqcm
	\end{equation*}
	where $C > 0$ is an universal constant.
\end{theorem}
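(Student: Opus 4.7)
The plan is to combine the three preceding tools in sequence: the symmetrization lemma (Lemma~\ref{lmm:symm}) to replace the original process by a conditionally Bernoulli process, the Lipschitz-connection lemma (Lemma~\ref{chaining:lipschitz}) to bound the Bernoulli functional $b(T)$ of the image set by the entropy of $\mc Q$, and the Bernoulli bound (Theorem~\ref{chaining:bernoulli}) to convert this into a moment bound.

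First, I would write $X_q := \sum_{i=1}^n Z_i(q)$ and split the two-sided supremum via
\begin{equation*}
\sup_{q\in\mc Q}|X_q|^\kappa \ \leq\ \sup_{q\in\mc Q}(X_q)_+^\kappa \ +\ \sup_{q\in\mc Q}(-X_q)_+^\kappa,
\end{equation*}
then apply Lemma~\ref{lmm:symm} twice with $\Phi(x) = x_+^\kappa$, which is convex and non-decreasing for $\kappa \geq 1$. Using $(\varepsilon_i) \stackrel{d}{=} (-\varepsilon_i)$ and $x_+^\kappa \leq |x|^\kappa$, this yields a bound of the form
\begin{equation*}
\E\Bigl[\sup_{q\in\mc Q}|X_q|^\kappa\Bigr] \ \leq\ C_\kappa\, \E\Bigl[\sup_{q\in\mc Q}\Bigl|\sum_{i=1}^n \varepsilon_i \bigl(Z_i(q)-Z_i'(q)\bigr)\Bigr|^\kappa\Bigr].
\end{equation*}

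Next, conditioning on the $\sigma$-algebra generated by $(Z, Z', A)$, I would set $t_q := (Z_i(q) - Z_i'(q))_{i=1}^n \in \R^n$ and $T := \{t_q : q \in \mc Q\}$. The assumption $Z_i(q_0) = 0$ guarantees $t_{q_0} = 0 \in T$, and the Lipschitz hypothesis is exactly the statement that $f_i(q) := Z_i(q) - Z_i'(q)$ satisfies $|f_i(q) - f_i(p)| \leq A_i\, b(q,p)$, as required by Lemma~\ref{chaining:lipschitz}. That lemma then delivers, conditionally on $A$,
\begin{equation*}
b(T) \ \leq\ C\, \|A\|_2\, \min\bigl(\entrn(\mc Q, b),\, \gamma_2(\mc Q, b)\bigr).
\end{equation*}
Theorem~\ref{chaining:bernoulli} applied conditionally on $(Z, Z', A)$ gives $\E[\sup_{t\in T}|\tilde X_t|^\kappa \mid Z,Z',A] \leq c_\kappa\, b(T)^\kappa$, and taking outer expectation in $A$ produces the claimed bound with a constant depending only on $\kappa$.

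The main obstacles are bookkeeping rather than substance. Ensuring that $\Phi(x) = x_+^\kappa$ is admissible in Lemma~\ref{lmm:symm} requires $\kappa \geq 1$, which is exactly the hypothesis. Measurability of the suprema is handled by the separability of $(\mc Q, b)$ together with the uniform Lipschitz bound, which allows one to replace the supremum over $\mc Q$ by a supremum over a countable dense subset almost surely. Finally, the role of the normalization $Z_i(q_0) = 0$ is to place the origin inside $T$ so that $b(T)$ controls the full Bernoulli supremum rather than only the oscillation; without it one would need to translate $T$ by $-t_{q_0}$ before applying Theorem~\ref{chaining:bernoulli}, producing an extra centering term.
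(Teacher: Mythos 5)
Your proposal follows essentially the same route as the paper, whose proof is exactly the one-liner "use Lemma~\ref{lmm:symm}, then apply Theorem~\ref{chaining:bernoulli} and Lemma~\ref{chaining:lipschitz} conditionally on $Z_1,\dots,Z_n$"; you have simply filled in the details the paper leaves implicit (splitting into positive parts so that $\Phi(x)=x_+^\kappa$ is convex and non-decreasing, the conditional application with $t_q=(Z_i(q)-Z_i'(q))_i$, and the role of $q_0$ and separability). The argument is correct as written.
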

\begin{proof}
	Use \autoref{lmm:symm}. Then apply \autoref{chaining:bernoulli} and \autoref{chaining:lipschitz} conditionally on $Z_1, \dots, Z_n$.
\end{proof}
\begin{lemma}\label{lmm:chaining:rate}
Let $(\mc Q, b)$ be a pseudo-metric space.
Let $D >0 $ such that $\diam(\mc Q, b) \leq D < \infty$.
Let $\beta > 0$. Assume 
\begin{equation*}
	\sqrt{\log(N(\mc Q, b , r))} \leq c_{\ms e} \br{\frac{D}{r}}^\beta
\end{equation*}
for all $0 < r < D$.
\begin{enumerate}[label=\environmentEnumerateLabel]
	\item If $\beta < 1$ then $\entrn(\mc Q, b) \leq \frac{c_{\ms e}D}{1-\beta}$.
	\item If $\beta = 1$ then $\entrn(\mc Q, b) \leq c_{\ms e}\pr D \log(n+1)$, where $c\pr>0$ depends only on $c_{\ms e}$.
	\item If $\beta > 1$ then $\entrn(\mc Q, b) \leq c_{\ms e}^{\frac1\beta} \frac\beta{\beta-1} n^{-\frac1{2\beta}+\frac12}$.
\end{enumerate}
In particular
\begin{equation*}
	n^{-\frac12} \,\entrn(\mc Q, b) \ \leq\  c \, D\, \eta_{\beta,n} \eqcm
\end{equation*}
where $c$ depends only on $c_{\ms e}$ and $\beta$ and
\begin{equation*}
	\eta_{\beta,n} := 
	\begin{cases} 
		n^{-\frac12} & \text{ for }\beta < 1\eqcm\\
		n^{-\frac12} \log(n+1) & \text{ for } \beta = 1\eqcm\\
		n^{-\frac1{2\beta}} & \text{ for } \beta > 1\eqfs
	\end{cases} 
\end{equation*}
\end{lemma}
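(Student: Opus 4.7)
The argument is essentially a direct computation using the integral in the definition of $\entrn$, so no structural obstacle is expected; the only subtle point is making the right choice of the truncation parameter $\epsilon$ in each regime of $\beta$. First I would observe that since $\diam(\mc Q, b) \leq D$, we have $N(\mc Q, b, r) = 1$ and hence $\sqrt{\log N(\mc Q, b, r)} = 0$ for all $r \geq D$. Therefore the integral in the definition of $\entrn(\mc Q, b)$ reduces to $\int_\epsilon^D \sqrt{\log N(\mc Q, b, r)}\, \dl r$, and plugging in the assumed entropy bound gives
\begin{equation*}
	\entrn(\mc Q, b) \ \leq \ \inf_{\epsilon > 0} \br{\epsilon \sqrt{n} + c_{\ms e} D^\beta \int_\epsilon^D r^{-\beta}\, \dl r}\eqfs
\end{equation*}

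Next I would split into the three cases. For $\beta < 1$, taking $\epsilon \searrow 0$ is legal since $\int_0^D r^{-\beta}\, \dl r = D^{1-\beta}/(1-\beta)$, which yields $\entrn(\mc Q, b) \leq c_{\ms e} D/(1-\beta)$ directly. For $\beta = 1$, the integral equals $\log(D/\epsilon)$; choosing $\epsilon = D/\sqrt{n}$ balances the two summands and produces $\entrn(\mc Q, b) \leq D + \tfrac{c_{\ms e}}{2} D \log n$, which is absorbed into $c_{\ms e}\pr D \log(n+1)$. For $\beta > 1$, the integral is at most $\epsilon^{1-\beta}/(\beta-1)$; differentiating the objective $\epsilon \sqrt{n} + c_{\ms e} D^\beta \epsilon^{1-\beta}/(\beta-1)$ in $\epsilon$ and setting the derivative to zero gives the optimal $\epsilon = c_{\ms e}^{1/\beta} D\, n^{-1/(2\beta)}$; substituting back and simplifying gives $\entrn(\mc Q, b) \leq c_{\ms e}^{1/\beta} \tfrac{\beta}{\beta-1}\, D\, n^{-1/(2\beta) + 1/2}$.

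Finally, for the ``in particular'' statement I would just multiply each of the three bounds by $n^{-1/2}$ and verify that the result matches $c\, D\, \eta_{\beta,n}$ with the specified piecewise definition of $\eta_{\beta,n}$: the first case gives $c_{\ms e} D (1-\beta)^{-1} n^{-1/2}$, the second gives $c_{\ms e}\pr D\, n^{-1/2}\log(n+1)$, and the third gives $c_{\ms e}^{1/\beta}\tfrac{\beta}{\beta-1} D\, n^{-1/(2\beta)}$, so taking $c$ to be the maximum of the three prefactors (each depending only on $c_{\ms e}$ and $\beta$) completes the proof. No step is an obstacle; the only thing to be careful about is the optimization in the $\beta > 1$ case and the constant-tracking in the $\beta = 1$ case, both of which are elementary calculus.
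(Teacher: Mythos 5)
Your proposal is correct and follows exactly the route the paper indicates for this lemma (the paper only sketches it): restrict the entropy integral to $(\epsilon, D)$ using $N(\mc Q, b, r) = 1$ for $r \geq D$, insert the covering-number bound, and optimize over $\epsilon$ (letting $\epsilon \searrow 0$ for $\beta < 1$, balancing at $\epsilon = D/\sqrt{n}$ for $\beta = 1$, and minimizing by calculus for $\beta > 1$). Your $\beta > 1$ bound correctly carries the factor $D$, which is consistent with the concluding ``in particular'' display (the paper's item (iii) evidently omits $D$ by a typo), so nothing further is needed.
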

The proof consists of calculating the entropy integral with the given bound on the covering numbers and, for $\beta \geq 1$, choosing the minimizing starting point of the integral $\epsilon>0$.
\section{Proof of the Power Inequality, \autoref{thm:power_inequ}}\label{app:power_inequality}
Let $(\mc Q, d)$ be a metric space. Use the short notation $\ol qp := d(q,p)$.
Let $q,p,y,z\in\mc Q$, $\alpha\in[\frac12,1]$. 
Assume
\begin{equation*}
	\olt yq - \olt yp - \olt zq + \olt zp \leq 2 \, \ol yz\, \ol qp
	\eqfs
\end{equation*}
The goal of this section is to prove 
\begin{equation*}
	\ol yq^{2\alpha} - \ol yp^{2\alpha} - \ol zq^{2\alpha} + \ol zp^{2\alpha} \leq 8 \alpha 2^{-2\alpha} \, \ol yz^{2\alpha-1}\, \ol qp
	\eqfs
\end{equation*}
\subsection{Arithmetic Form}
\autoref{thm:power_inequ} will be proven in the form of \autoref{con:ana}. 
\begin{lemma}\label{con:ana}
	Let $a,b,c\geq0$, $r,s\in[-1,1]$, and $\alpha\in[\frac12, 1]$.
	Then
	\begin{align*}
		&a^{2\alpha}-c^{2\alpha}-\br{a^2-2rab+b^2}^\alpha+\br{c^2-2scb+b^2}^\alpha 
		\\&\leq 
		8 \alpha 2^{-2\alpha} b \max(ra - sc, |a-c|)^{2\alpha-1}
		\eqfs
	\end{align*}
\end{lemma}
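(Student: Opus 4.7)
The plan has three ingredients: a monotonicity reduction in $(r,s)$, a sharp mean-value-type bound for $t\mapsto t^{2\alpha}$, and a case analysis on $\mathrm{sgn}(ra-sc)$ and on the relative sizes of $a,b,c$. (Once \autoref{con:ana} is established, inequality \eqref{eq:nice_power_inequ_res} of \autoref{thm:power_inequ} follows by setting $a=\ol yq$, $b=\ol qp$, $c=\ol zq$ and using the triangle-inequality parametrization $\ol yp^2=a^2-2rab+b^2$, $\ol zp^2=c^2-2scb+b^2$ with $r,s\in[-1,1]$; under this substitution, the hypothesis \eqref{eq:nice_power_inequ} becomes $ra-sc\le\ol yz$, and $|a-c|\le\ol yz$ follows from the triangle inequality on $y,z,q$, so $\max(ra-sc,|a-c|)\le\ol yz$ and the exponent $2\alpha-1\ge0$ preserves the bound.)

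Denote the left-hand side of \autoref{con:ana} by $L(r,s)$. Direct differentiation gives $\partial_r L=2\alpha ab(a^2-2rab+b^2)^{\alpha-1}\ge0$ and $\partial_s L=-2\alpha cb(c^2-2scb+b^2)^{\alpha-1}\le0$, so $L$ is increasing in $r$ and decreasing in $s$. Hence, for every fixed value of $u:=ra-sc$, the supremum of $L$ over the admissible slice is attained at a vertex of $[-1,1]^2$. The case $b=0$ gives $L\equiv 0$, and the joint homogeneity $(a,b,c)\mapsto(\lambda a,\lambda b,\lambda c)$ scales both sides of \autoref{con:ana} by $\lambda^{2\alpha}$, so we may normalize $b=1$ whenever $b>0$. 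This already reduces the problem to a finite family of one-parameter inequalities.

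The analytic heart of the argument is the tight mean-value bound
\begin{equation*}
	\abs{x^{2\alpha}-y^{2\alpha}}\ \le\ 2\alpha\,\abs{x-y}\,\br{\frac{x+y}{2}}^{2\alpha-1}
	\eqcm
\end{equation*}
valid for $x,y\ge 0$ and $\alpha\in[\frac12,1]$ by concavity of $t\mapsto t^{2\alpha-1}$; this is the tight power bound already invoked in the body of the paper. Applied to the pairs $(a,\sqrt{a^2-2rab+b^2})$ and $(c,\sqrt{c^2-2scb+b^2})$, each $\alpha$-power difference in $L$ is converted into a product of a factor linear in $b$ and an averaged $(2\alpha-1)$-power, and the constant $8\alpha\cdot 2^{-2\alpha}$ emerges naturally from the worst alignment of these averages.

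The remaining step is bookkeeping. I would split into the cases $ra-sc\le|a-c|$ (triangle term dominates), $ra-sc>|a-c|$ with $a\ge c$, and the reflection $c\ge a$. In each case the monotonicity step leaves only vertex configurations $r,s\in\cb{-1,1}$, so the inequality reduces to one purely in $a,b,c$ that can be closed by a single-variable calculus argument, for instance by optimizing in the ratio $b/(a+c)$. The main obstacle is that $8\alpha\cdot2^{-2\alpha}$ is sharp (see \autoref{app:power_inequ_opti}), so no slack is available anywhere in the case analysis; the absence of a single unifying convexity or rearrangement principle is precisely what forces the long case-by-case calculation the author reports. A cleaner proof --- e.g.\ via an integral representation of $t\mapsto t^{2\alpha}$, or via a direct quasi-inner-product argument in the spirit of \cite{berg08} --- would be highly desirable but is not currently visible.
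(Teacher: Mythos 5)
There is a genuine gap in your reduction step. Your monotonicity facts ($L$ increasing in $r$, decreasing in $s$) are correct, but they only settle the regime where the right-hand side equals $8\alpha 2^{-2\alpha}b|a-c|^{2\alpha-1}$, i.e.\ where the bound does not depend on $(r,s)$; this is exactly how the paper uses them in section \ref{ssec:acgreasc}. In the regime $\max(ra-sc,|a-c|)=ra-sc$ the right-hand side varies with $(r,s)$, so the monotonicity of $L$ alone says nothing about where the difference of the two sides is maximized; and along a slice $ra-sc=u$ (with $c>0$) increasing $r$ forces $s$ to increase as well, so the two monotonicities pull in opposite directions and the supremum need not sit at a corner of $[-1,1]^2$. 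The paper instead proves convexity in $r$ of $F(r,s)=\mathrm{LHS}-\mathrm{RHS}$ (\autoref{lmm:ddr}) and explicitly remarks that $F$ is neither convex nor concave in $s$; as a consequence the residual task (\autoref{rmk:outline}) is $F(1,s)\le 0$ for a \emph{continuum} of $s\in[-1,1]$, together with the non-vertex boundary point $r=1-2\tfrac ca$, $s=-1$. So your claim that "the monotonicity step leaves only vertex configurations $r,s\in\cb{-1,1}$" is false, and with it the claim that only finitely many one-parameter inequalities remain.

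The second reduction is also not valid as stated. Applying the tight power bound separately to the pairs $\br{a,\sqrt{a^2-2rab+b^2}}$ and $\br{c,\sqrt{c^2-2scb+b^2}}$ and asserting that $8\alpha 2^{-2\alpha}$ "emerges from the worst alignment of these averages" glosses over precisely the hard part: since the constant is optimal (appendix \autoref{app:power_inequ_opti}), termwise estimates are in general too lossy, and the sum of two such bounds does not collapse to $8\alpha2^{-2\alpha}b(ra-sc)^{2\alpha-1}$. The paper first has to \emph{merge} the four power terms into the two-term expression $2^{1-2\alpha}\br{(ra-sc+b)^{2\alpha}-|ra-sc-b|^{2\alpha}}$ before the tight power bound can be applied (\autoref{lmm:merging:simple}, \autoref{lmm:rascMergingLemma}, \autoref{lmm:merging:asc}), and these merging lemmas hold only under side conditions such as $b\ge 2sc$ or $\tfrac12 b\le sc\le a-b$; the complementary cases require several further genuinely multi-variable arguments (\autoref{lmm:rascgeqacandageqc}, \autoref{lmm:bleqasc}, \autoref{lmm:bgeqascandaleqb}, \autoref{lmm:bgeqascandageqb}, and the Case~1.2 lemmas of section \ref{ssec:rascleqac}), none of which reduce to a single-variable optimization in $b/(a+c)$. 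Your reduction of \autoref{thm:power_inequ} to \autoref{con:ana} via the law of cosines is fine and matches \autoref{lmm:power_implies}, but for \autoref{con:ana} itself the two pillars your outline rests on do not carry the weight, and the sharp constant leaves no slack to absorb the loss.
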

The advantage of using \autoref{con:ana} to prove \autoref{thm:power_inequ} is, that we do not need to consider a system of additional conditions for describing that the real values in the inequality are distances, which have to fulfill the triangle inequality. The disadvantage is, that we loose the possibility for a geometric interpretation of the proof.
\begin{lemma}\label{lmm:power_implies}
	\autoref{con:ana} implies \autoref{thm:power_inequ}.
\end{lemma}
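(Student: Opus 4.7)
The plan is to make the substitutions $a := \ol yq$, $b := \ol qp$, $c := \ol zq$ together with the law-of-cosines parameters
\[
r := \frac{\ol yq^2 + \ol qp^2 - \ol yp^2}{2\,\ol yq\, \ol qp}, \qquad s := \frac{\ol zq^2 + \ol qp^2 - \ol zp^2}{2\,\ol zq\, \ol qp}.
\]
The triangle inequalities $|\ol yq - \ol qp| \le \ol yp \le \ol yq + \ol qp$ (and analogously for $\ol zp$) force $r, s \in [-1, 1]$, so these are admissible parameters for Lemma \ref{con:ana}. By construction, $\ol yp^{2\alpha} = (a^2 - 2rab + b^2)^\alpha$ and $\ol zp^{2\alpha} = (c^2 - 2scb + b^2)^\alpha$, so the left-hand side of the conclusion in Lemma \ref{con:ana} matches $\ol yq^{2\alpha} - \ol zq^{2\alpha} - \ol yp^{2\alpha} + \ol zp^{2\alpha}$, which is the left-hand side of \eqref{eq:nice_power_inequ_res} after the trivial reordering of terms.

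The crucial step is to verify $\max(ra - sc,\, |a-c|) \leq \ol yz$, which is what upgrades the right-hand side of Lemma \ref{con:ana} into the right-hand side of \eqref{eq:nice_power_inequ_res} (using that $2\alpha - 1 \ge 0$, so the map $x \mapsto x^{2\alpha-1}$ is monotone on $[0,\infty)$). The bound $|a - c| = |\ol yq - \ol zq| \leq \ol yz$ is the reverse triangle inequality. For the other term, a direct computation from the definitions of $r$ and $s$ gives
\[
ra - sc = \frac{\ol yq^2 - \ol yp^2 - \ol zq^2 + \ol zp^2}{2\,\ol qp},
\]
and the hypothesis \eqref{eq:nice_power_inequ} bounds the numerator by $2\,\ol yz\,\ol qp$, so $ra - sc \leq \ol yz$. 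Substituting back and recalling $b = \ol qp$ yields exactly \eqref{eq:nice_power_inequ_res}.

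The only (minor) obstacle is handling the degenerate cases where $\ol yq$, $\ol zq$, or $\ol qp$ vanishes so that $r$ or $s$ is undefined. If $\ol qp = 0$, both sides of \eqref{eq:nice_power_inequ_res} are zero. If $\ol yq = 0$, then $y = q$, and the expression $a^2 - 2rab + b^2 = b^2 = \ol qp^2 = \ol yp^2$ is independent of $r$, so we may pick any $r \in [-1,1]$; the bound $ra - sc = -sc \leq \ol yz$ is then checked directly from $\ol zq \leq \ol yz + \ol yq = \ol yz$. The case $\ol zq = 0$ is symmetric. Alternatively, one can first assume all three distances positive and recover the degenerate cases by continuity, since both sides of \eqref{eq:nice_power_inequ_res} are continuous in the four points.
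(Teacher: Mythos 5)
Your proof is correct and follows essentially the same route as the paper: you substitute law-of-cosines parameters into \autoref{con:ana} (the paper writes the cosine rule at the vertex $p$ with $a=\ol zp$, $c=\ol yp$, while you do the mirror-image substitution at $q$ with $a=\ol yq$, $c=\ol zq$), and then bound $\max(ra-sc,\,|a-c|)$ by $\ol yz$ using the nice quadruple inequality for the first term and the reverse triangle inequality for the second. Your explicit treatment of the degenerate cases ($\ol qp=0$, $\ol yq=0$, $\ol zq=0$) is a small bonus over the paper, which only remarks on the case $b=0$.
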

\begin{proof}
	Three points from an arbitrary metric space can be embedded in the Euclidean plane so that the distances are preserved.
	Thus, the cosine formula of Euclidean geometry can be applied to the three points $y,p,q\in\mc Q$: We have
	\begin{equation*}
		\ol yq^2 = \ol yp^2 + \ol qp^2 - 2 s \,\ol yp\, \ol qp\eqcm
	\end{equation*}
	where $s := \cos(\measuredangle ypq)$ with the angle $\measuredangle ypq$ in the Euclidean plane.
	Similarly
	\begin{equation*}
		\ol zq^2 = \ol zp^2 + \ol qp^2 - 2 r \,\ol zp\, \ol qp
		\eqcm
	\end{equation*}
	where $r := \cos(\measuredangle zpq)$. Thus,
	\begin{align*}
		&
		\ol yq^{2\alpha} - \ol yp^{2\alpha} - \ol zq^{2\alpha} + \ol zp^{2\alpha}
		\\&=
		\br{\ol yp^2 + \ol qp^2 - 2 s \,\ol yp\, \ol qp}^\alpha
		-
		\br{\ol zp^2 + \ol qp^2 - 2 r \,\ol zp\, \ol qp}^\alpha
		- \ol yp^{2\alpha}
		+ \ol zp^{2\alpha}
		\\&=
		\br{c^2 + b^2 - 2 s cb}^\alpha
		-
		\br{a^2 + b^2 - 2 r ab}^\alpha
		- c^{2\alpha}
		+ a^{2\alpha}
		\eqcm
	\end{align*}
	where $a:=\ol zp$, $c:=\ol yp$, $b:=\ol qp$.
	Hence, \autoref{con:ana} yields
	\begin{align}\label{eq:arith:intermediate}
		\ol yq^{2\alpha} - \ol yp^{2\alpha} - \ol zq^{2\alpha} + \ol zp^{2\alpha}
		\leq
		8 \alpha 2^{-2\alpha} b \max(ra - sc, |a-c|)^{2\alpha-1}
		\eqfs
	\end{align}
	The assumption of \autoref{thm:power_inequ} states $\olt yq - \olt yp - \olt zq + \olt zp \leq 2 \, \ol yz\, \ol qp$.
	This implies
	\begin{equation*}
		2b \br{ra-sc}
		=
		\br{c^2 + b^2 - 2 scb}
		-
		\br{a^2 + b^2 - 2 rab}
		- c^{2}
		+ a^{2}
		\leq 
		2 b\,
		\ol yz
		\eqfs
	\end{equation*}
	Therefore, $ra-sc \leq \ol yz$ (or $b=0$, but then $q=p$ and \autoref{thm:power_inequ} becomes trivial).
	Furthermore, the triangle inequality implies $|a-c| = |\ol zp-\ol yp| \leq \ol yz$.
	Thus, we obtain
	\begin{equation}\label{eq:arith:maxbound}
		\max(ra - sc, |a-c|) \leq\ol yz
		\eqfs
	\end{equation}
	Finally, \eqref{eq:arith:intermediate} and \eqref{eq:arith:maxbound} together yield
	\begin{align*}
		\ol yq^{2\alpha} - \ol yp^{2\alpha} - \ol zq^{2\alpha} + \ol zp^{2\alpha}
		\leq
		8 \alpha 2^{-2\alpha}  \,\ol qp\, \ol yz^{2\alpha-1}
		\eqfs
	\end{align*}
\end{proof}
The remaining part of this section is dedicated to proving \autoref{con:ana}.

The proof of \autoref{con:ana} can be described as \textit{brute force}. We will distinguish many different cases, i.e., certain bounds on $a,b,c,r,s$, e.g., $a\leq c$ and $a> c$. In each case, we try to simplify the inequality step by step until we can solve it easily. Mostly, the simplification consists of taking some derivative and showing that the derivative is always negative (or always positive). Then we only need to show the inequality at one extremal point. This process may have to be iterated. It is often not clear immediately which derivative to take in order to simplify the inequality. Even after finishing the proof there seems to be no deeper reason for distinguishing the cases that are considered. Thus, unfortunately, the proof does not create a deeper understanding of the result.
\subsection{First Proof Steps and Outline of the Remaining Proof}
We want to show \autoref{con:ana} to prove \autoref{thm:power_inequ}.
We refer to the left hand side of the inequality, $a^{2\alpha}-c^{2\alpha}-\br{a^2-2rab+b^2}^\alpha+\br{c^2-2scb+b^2}^\alpha $, as LHS. By RHS we, of course, mean the right hand side, $8 \alpha 2^{-2\alpha} b \max(ra - sc, |a-c|)^{2\alpha-1}$.

For $\max(ra - sc, |a-c|)=0$ we have $a=c$ and $r\leq s$. Thus, LHS $\leq 0$. If $\max(ra - sc, |a-c|)>0$, LHS and RHS are continuous in all parameters. Thus, it is enough to show the inequality on a dense set. In particular, we can and will ignore certain special cases in the following which might introduce technical problems, e.g., "$0^0$".

We have to distinguish the cases $|a-c|=\max(ra - sc, |a-c|)$ and $ra - sc=\max(ra - sc, |a-c|)$. We further distinguish $a\geq c$ and $c \geq a$.
\begin{lemma}[$ra - sc$ vs $|a-c|$]\label{lmm:srbound}
	Let $a,b,c\geq0$, $r,s\in[-1,1]$, and $\alpha\in[\frac12, 1]$.
	Then
	\begin{align*}
		&ra - sc \geq a-c
		\quad\Leftrightarrow\quad 
		s 
		\leq
		(r-1) \frac ac+1
		\quad\Leftrightarrow\quad 
		r \geq 
		(s-1)\frac ca +1
		\eqcm
\\
		&ra - sc \geq c-a
		\quad\Leftrightarrow\quad 
		s 
		\leq
		(r+1) \frac ac -1
		\quad\Leftrightarrow\quad 
		r \geq 
		(s+1)\frac ca -1
		\eqfs
	\end{align*}
\end{lemma}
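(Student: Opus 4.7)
The statement is a purely algebraic rearrangement, so the plan is entirely bookkeeping rather than anything substantive. I would handle each of the two chains of equivalences separately, treating the generic case $a,c>0$ (the boundary cases $a=0$ or $c=0$ require only checking that the claimed equivalences remain vacuously consistent, since the inequalities involving fractions $a/c$ or $c/a$ are read in the limiting sense they arise from).

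For the first chain, I would start from $ra-sc\geq a-c$ and collect the $a$ and $c$ terms on opposite sides to obtain $(r-1)a\geq(s-1)c$. Dividing this by $c>0$ yields $s-1\leq(r-1)\tfrac{a}{c}$, i.e.\ $s\leq(r-1)\tfrac{a}{c}+1$, which is the first equivalence. Dividing instead by $a>0$ gives $(s-1)\tfrac{c}{a}\leq r-1$, i.e.\ $r\geq(s-1)\tfrac{c}{a}+1$, which is the second. Both divisions preserve the inequality direction since $a,c\geq 0$, so no case split on signs is needed.

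For the second chain I would do the same with $ra-sc\geq c-a$, rewriting it as $(r+1)a\geq(s+1)c$ and then dividing by $c$ or $a$ respectively to obtain $s\leq(r+1)\tfrac{a}{c}-1$ and $r\geq(s+1)\tfrac{c}{a}-1$.

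There is no genuine obstacle here; the lemma merely records the four equivalent forms of two linear inequalities in $r$ and $s$ that will be convenient later when splitting the proof of \autoref{con:ana} into cases according to whether $\max(ra-sc,|a-c|)$ equals $ra-sc$ or $|a-c|$, and according to the sign of $a-c$. The only mild care point is to note that the hypotheses $\alpha\in[\tfrac12,1]$ and $b\geq 0$ play no role in this particular lemma and are carried along only for consistency with the surrounding case analysis.
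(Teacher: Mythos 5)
Your proposal is correct and follows essentially the same route as the paper: both simply rearrange $ra-sc\geq a-c$ (resp.\ $ra-sc\geq c-a$) and divide by $c$ or by $a$ to isolate $s$ or $r$. Your remark about the degenerate cases $a=0$ or $c=0$ is a sensible extra precaution that the paper's own proof silently omits.
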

\begin{proof}
We have
	\begin{align*}
	&
		ra - sc \geq a-c
		\quad\Leftrightarrow\quad
		ra - a+c \geq sc
		\quad\Leftrightarrow\quad
		\frac{a(r-1)+c}{c} \geq s
\eqcm\\&
		ra - sc \geq c-a
		\quad\Leftrightarrow\quad
		ra - c+a \geq sc
		\quad\Leftrightarrow\quad
		\frac{a(r+1)-c}{c} \geq s
\eqcm\\&
		ra - sc \geq a-c
	\quad	\Leftrightarrow\quad
		ra \geq a-c+sc
		\quad\Leftrightarrow\quad
		r \geq (s-1)\frac ca +1
\eqcm\\&
		ra - sc \geq c-a
		\quad\Leftrightarrow\quad
		ra  \geq c-a+sc
		\quad\Leftrightarrow\quad
		r \geq (s+1)\frac ca -1
\eqfs
	\end{align*}
\end{proof}
\subsubsection{The Case $|a-c| \leq ra-sc$}
Consider the case $ra-sc \geq |a-c|$. The next lemma shows convexity in $r$ of the function "LHS minus RHS". This means, we only have to check values of $r$ on the border of its domain.
\begin{lemma}[Convexity in $r$]\label{lmm:ddr}
	Let $a,b,c\geq0$, $s,r\in[-1,1]$, $\alpha\in[\frac12,1]$.
	Assume $ra - sc \geq 0$.
	Define 
	\begin{align*}
		F(r,s) 
		:= 
		&\,a^{2\alpha}-c^{2\alpha}-\br{a^2-2rab+b^2}^\alpha+\br{c^2-2scb+b^2}^\alpha 
		\\&-
		8 \alpha 2^{-2\alpha} b (ra - sc)^{2\alpha-1}
		\eqfs
	\end{align*}
	Then
	\begin{equation*}
		\partial_r^2 F(r,s) \geq 0
		\eqfs
	\end{equation*}
\end{lemma}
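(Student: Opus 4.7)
The plan is to observe that $F(r,s)$, viewed as a function of $r$ with $s,a,b,c,\alpha$ fixed, is a sum of terms that are either constant in $r$ or convex in $r$, so the statement reduces to two elementary convexity facts.

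First I would drop all $r$-independent terms: $a^{2\alpha}$, $-c^{2\alpha}$, and $(c^2-2scb+b^2)^\alpha$ all have vanishing second derivative in $r$. Thus
\[
	\partial_r^2 F(r,s) \ =\ \partial_r^2\!\left[-\br{a^2-2rab+b^2}^\alpha\right] \ +\ \partial_r^2\!\left[-8\alpha 2^{-2\alpha} b (ra-sc)^{2\alpha-1}\right].
\]
The maps $u\mapsto a^2-2ua b+b^2$ and $u\mapsto ua-sc$ are affine in $u=r$, so convexity of each term in $r$ is equivalent to convexity (in its scalar argument) of the outer function composed with a sign.

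For the first term, the outer function is $u\mapsto -u^\alpha$ on $[0,\infty)$. Since $\alpha\in[\tfrac12,1]\subset[0,1]$, $u\mapsto u^\alpha$ is concave, hence $u\mapsto -u^\alpha$ is convex. Composition with the affine function $r\mapsto a^2-2rab+b^2$ (which takes nonnegative values, since $a^2-2rab+b^2\geq a^2-2ab+b^2=(a-b)^2\geq 0$ for $r\leq 1$) preserves convexity, so this term's contribution to $\partial_r^2 F$ is $\geq 0$. A direct check confirms: $\partial_r^2\!\left[-\br{a^2-2rab+b^2}^\alpha\right] = -4\alpha(\alpha-1)a^2b^2\br{a^2-2rab+b^2}^{\alpha-2}\geq 0$ because $\alpha(\alpha-1)\leq 0$.

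For the second term, the outer function is $v\mapsto -v^{2\alpha-1}$ on $[0,\infty)$. Since $2\alpha-1\in[0,1]$, the map $v\mapsto v^{2\alpha-1}$ is concave, hence $v\mapsto -v^{2\alpha-1}$ is convex. Composition with the affine function $r\mapsto ra-sc$, which is $\geq 0$ by the hypothesis $ra-sc\geq 0$, again preserves convexity. Multiplying by the nonnegative constant $8\alpha 2^{-2\alpha} b$ keeps it convex. Explicitly,
\[
	\partial_r^2\!\left[-8\alpha 2^{-2\alpha} b (ra-sc)^{2\alpha-1}\right] \ =\ -8\alpha 2^{-2\alpha} b\,(2\alpha-1)(2\alpha-2)\,a^2\,(ra-sc)^{2\alpha-3} \ \geq\ 0,
\]
since $(2\alpha-1)\geq 0$ and $(2\alpha-2)\leq 0$ on $\alpha\in[\tfrac12,1]$. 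Adding the two nonnegative contributions gives $\partial_r^2 F(r,s)\geq 0$, as claimed. The only point to be slightly careful with is the boundary cases $ra-sc=0$ or $a^2-2rab+b^2=0$ where the power functions may fail to be differentiable; as remarked in the text, it suffices to prove the original inequality on a dense set, so the strict inequalities needed for the second derivative computation may be assumed without loss of generality.
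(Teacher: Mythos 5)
Your proof is correct and follows essentially the same route as the paper: the paper differentiates $F$ in $r$ and shows the resulting expression is nondecreasing, which amounts to exactly your two nonnegative second-derivative terms, using the same sign facts $\alpha(\alpha-1)\leq 0$ and $(2\alpha-1)(2\alpha-2)\leq 0$. Your phrasing as a sum of functions convex in $r$ (with the dense-set caveat for the boundary cases) is only a cosmetic repackaging of the paper's computation.
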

Note, neither $\partial_s^2 F(r,s) \geq 0$ nor $\partial_s^2 F(r,s) \leq 0$ for all $a,b,c,s,r$.
\begin{proof}
	Define
	\begin{equation*}
		\ell(r,s) := a^{2\alpha}-c^{2\alpha}-\br{a^2-2rab+b^2}^\alpha+\br{c^2-2scb+b^2}^\alpha 
		\eqfs
	\end{equation*}
	We have
	\begin{align*}
		\partial_r \ell(r,s) &= 2ab\alpha\br{a^2-2rab+b^2}^{\alpha-1}\eqfs
	\end{align*}
	Define $h(r,s) := 8 \alpha 2^{-2\alpha} b \br{ra - sc}^{2\alpha-1}$. We have
	\begin{align*}
		\partial_r h(r,s) &= 8 \alpha (2\alpha-1) 2^{-2\alpha} b a \br{ra - sc}^{2\alpha-2}\eqfs
	\end{align*}
	We have $F(r,s) = \ell(r,s)-h(r,s)$.
	We have
	\begin{align*}
		f(r) :=	 \frac{	\partial_r \ell(r,s)-\partial_r h(r,s)}{2ab\alpha} &= \br{a^2-2rab+b^2}^{\alpha-1} - (2\alpha-1) \br{\frac{ra - sc}{2}}^{2\alpha-2}
	\end{align*}
	and 
	\begin{align*}
		\partial_r f(r) 
		&= 
		-2ab(\alpha-1) \br{a^2-2rab+b^2}^{\alpha-2} 
		- \frac12 a(2\alpha-1)(2\alpha-2) \br{\frac{ra - sc}{2}}^{2\alpha-3} 
		\eqfs
	\end{align*}
	We have $2ab \br{a^2-2rab+b^2}^{\alpha-2} \geq 0$ and $(\alpha-1) \leq 0$. Thus,
	\begin{equation*}
		-2ab(\alpha-1) \br{a^2-2rab+b^2}^{\alpha-2} \geq 0\eqfs
	\end{equation*}
	We have $\frac12 a(2\alpha-1) \br{\frac{ra - sc}{2}}^{2\alpha-3} \geq 0$ and  $(2\alpha-2)\leq 0$. Thus, $-\frac12 a(2\alpha-1)(2\alpha-2) \br{\frac{ra - sc}{2}}^{2\alpha-3} \geq 0$. Hence,	$\partial_r f(r) \geq 0$. Hence, $\partial_r^2 F(r,s) \geq 0$.
\end{proof}
\subsubsection{The Case $|a-c| \geq ra-sc$}\label{ssec:acgreasc}
In the case $|a-c| \geq ra-sc$, the RHS does not depend on $s$ or $r$. Thus, we maximize the LHS with respect to $r$ and $s$ and only need to show the inequality for this maximized term.

Define
\begin{equation*}
	\ell(r,s) := a^{2\alpha}-c^{2\alpha}-\br{a^2-2rab+b^2}^\alpha+\br{c^2-2scb+b^2}^\alpha 
	\eqfs
\end{equation*}
We have
\begin{equation*}
	\max_{s\geq s_0, r\leq r_0} \ell(r,s) = \ell(r_0,s_0)
	\eqfs
\end{equation*}
Distinguish the two cases $a\geq c$ and $a\leq c$.\\
\underline{Case 1: $a\geq c$.} For fixed $r\in[-1,1]$, set $s = s_{\ms{min}}(r) = (r-1) \frac ac+1$, cf \autoref{lmm:srbound}. Define
\begin{align*}
	f(r) &:= \ell(r,s_{\ms{min}}(r)) 
	\\&= a^{2\alpha}-c^{2\alpha}-\br{a^2-2rab+b^2}^\alpha+\br{c^2-2rab+2ab-2cb+b^2}^\alpha
	\eqfs
\end{align*}
Then
\begin{equation*}
	\frac{f\pr(r)}{2ab\alpha} = \br{a^2-2rab+b^2}^{\alpha-1} - \br{c^2-2rab+2ab-2cb+b^2}^{\alpha-1}
	\eqfs
\end{equation*}
\underline{Case 1.1: $a^2 \leq c^2+2ab-2cb$.} Then
\begin{align*}
	a^2-2rab+b^2 &\leq c^2-2rab+2ab-2cb+b^2\eqcm\\
	\br{a^2-2rab+b^2}^{\alpha-1} &\geq \br{c^2-2rab+2ab-2cb+b^2}^{\alpha-1}
	\eqfs
\end{align*}
Thus, $f\pr(r)\geq0$. In this case, we need to show
\begin{equation*}
	a^{2\alpha}-c^{2\alpha}-|a-b|^{2\alpha}+|c-b|^{2\alpha} = f(1) \leq 8\alpha 2^{-2\alpha} b (a-c)^{2\alpha-1}
	\eqfs
\end{equation*}
\underline{Case 1.2: $a^2 \geq c^2+2ab-2cb$.} Then
\begin{align*}
	a^2-2rab+b^2 &\geq c^2-2rab+2ab-2cb+b^2\eqcm\\
	\br{a^2-2rab+b^2}^{\alpha-1} &\leq \br{c^2-2rab+2ab-2cb+b^2}^{\alpha-1}\eqfs
\end{align*}
Thus, $f\pr(r)\leq0$. The relevant values are $r = r_{\ms{min}} = 1-2\frac ca$, with $s = s_{\ms{min}}(r) = -1$. In this case, we need to show
\begin{equation*}
	a^{2\alpha}-c^{2\alpha}-\br{(a-b)^2+4cb}^{\alpha}+(c+b)^{2\alpha} = f(r_{\ms{min}}) \leq 8\alpha 2^{-2\alpha} b (a-c)^{2\alpha-1}
	\eqfs
\end{equation*}
\underline{Case 2: $a\leq c$.}  For fixed $r\in[-1,1]$, set $s = s_{\ms{min}}(r) = (r+1) \frac ac-1$. Define
\begin{align*}
	f(r) &:= \ell(r,s_{\ms{min}}(r)) 
	\\&= a^{2\alpha}-c^{2\alpha}-\br{a^2-2rab+b^2}^\alpha+\br{c^2-2rab-2ab+2cb+b^2}^\alpha
	\eqfs
\end{align*}
Then 
\begin{equation*}
	\frac{f\pr(r)}{2ab\alpha} = 
	\br{a^2-2rab+b^2}^{\alpha-1} - \br{c^2-2rab-2ab+2cb+b^2}^{\alpha-1}
	\eqfs
\end{equation*}
\underline{Case 2.1: $a^2 \leq c^2-2ab+2cb$.} Then
\begin{align*}
	a^2-2rab+b^2 &\leq c^2-2rab-2ab+2cb+b^2\eqcm\\
	\br{a^2-2rab+b^2}^{\alpha-1} &\geq \br{c^2-2rab-2ab+2cb+b^2}^{\alpha-1}\eqfs
\end{align*}
Thus, $f\pr(r)\geq0$. The critical value is $r = r_{\ms{max}} = 1$, with $s = s_{\ms{min}}(r) = 2\frac ac-1$. In this case, we need to show
\begin{equation*}
	a^{2\alpha}-c^{2\alpha}-|a-b|^{2\alpha}+\br{(c+b)^2-4ab}^{\alpha} = f(1) \leq 8\alpha 2^{-2\alpha} b (c-a)^{2\alpha-1}
	\eqfs
\end{equation*}
\underline{Case 2.2: $a^2 \geq c^2-2ab+2cb$.} This cannot happen for $a\leq c$.
\begin{remark}
Assume $a\geq c$. Then
\begin{align*}
	a^2 \geq c^2+2ab-2cb
	\quad\Leftrightarrow\quad
	a^2-c^2 \geq 2b(a-c)
	\quad\Leftrightarrow\quad
	a+c \geq 2b
	\eqfs
\end{align*}
\end{remark}
\subsubsection{Outline}
\begin{remark}[What we need to show]\label{rmk:outline}
	Define
	\begin{align*}
		F(r,s) 
		:= 
		&a^{2\alpha}-c^{2\alpha}-\br{a^2-2rab+b^2}^\alpha+\br{c^2-2scb+b^2}^\alpha 
		\\&-
		8 \alpha 2^{-2\alpha} b (ra - sc)^{2\alpha-1}
		\eqfs
	\end{align*}
	\begin{enumerate}[label=(\roman*)]
	\item 
		For the case $a\geq c$, we need $F(1,s) \leq 0$ for all $s\in[-1,1]$ (cf \autoref{lmm:ddr}, includes  case 1.1 of section \ref{ssec:acgreasc}) and $F(1-2\frac ca, -1) \leq 0$ (case 1.2 of section \ref{ssec:acgreasc}).
		Note $r= 1-2\frac ca, s=-1$ means $ra-sc = a-c$.
	\item	
		For the case $a \leq c$, we need $F(1,s) \leq 0$ for all $s\in[-1,2\frac ac-1]$  (cf \autoref{lmm:ddr}, includes case 2.1 of section \ref{ssec:acgreasc}).
		Note $r= 1, s=2\frac ac-1$ means $ra-sc = c-a$.
	\end{enumerate}
	The part $F(1-2\frac ca, -1) \leq 0$ for $a\geq c$ is discussed in section \ref{ssec:rascleqac}.
	The different case for $F(1,s) \leq 0$ ($a\leq c$ and $a\geq c$) are covered in the following way:
	\begin{enumerate}[label=(\alph*)]
	\item 
	$b \geq 2sc$: \autoref{lmm:rascMergingLemma} (Merging Lemma) + \autoref{lmm:tightpowerbound} (Tight Power Bound)
	\item
	$b \leq 2sc$ and $sc \leq a-b$: \autoref{lmm:merging:asc} (Merging Lemma) + \autoref{lmm:tightpowerbound} (Tight Power Bound)
	\item
	$b \leq 2sc$ and $sc \geq a-b$ and $a \geq c$: \autoref{lmm:rascgeqacandageqc} 
	\item 
	$b \leq 2sc$ and $sc \geq a-b$ and $a \leq c$, $sc \leq 2a-c$ and $b \leq 2a-2sc$: \autoref{lmm:bleqasc}
	\item
	$b \leq 2sc$ and $sc \geq a-b$ and $a \leq c$, $sc \leq 2a-c$ and $b \geq 2a-2sc$ and $a \leq b$ : \autoref{lmm:bgeqascandaleqb}
	\item
	$b \leq 2sc$ and $sc \geq a-b$ and $a \leq c$, $sc \leq 2a-c$ and $b \geq 2a-2sc$ and $a \geq b$ : \autoref{lmm:bgeqascandageqb}
	\end{enumerate}
\end{remark}
The proofs consist of distinguishing many different cases and applying simple analysis methods in each case.
Nonetheless, finding the poofs is often quite hard, as the inequalities are usually very tight and the right steps necessary for the proof are hard to guess. 

As intermediate steps we can, in some cases, use two lemmas: the Tight Power Bound, see section \ref{ssec:tightpowerbound}, and the Merging Lemma, see \ref{ssec:merginglemma}.
The remaining cases that cannot be solved via Tight Power Bound and Merging Lemma will be discussed in sections \ref{ssec:rascgeqac} and \ref{ssec:rascleqac}.
\subsection{Tight Power Bound}\label{ssec:tightpowerbound}
Following lemma gives one very useful inequality in three different forms. It gives a hint to why the power $\dots^{2\alpha-1}$ comes up in the RHS of \autoref{con:ana}.
\begin{lemma}[Tight Power Bound]\label{lmm:tightpowerbound}
	Let $x,y\geq0$.
	\begin{enumerate}[label=\environmentEnumerateLabel]
	\item 
		If  $a\in[1,2]$, $x\geq y$, then
		\begin{equation*}
			2^a x^{a-1}y \quad\leq\quad (x+y)^a-(x-y)^a \quad\leq\quad 2a x^{a-1}y \eqfs
		\end{equation*}
	\item 
		If  $a\in[1,2]$, then
		\begin{equation*}
			(x+y)^a-|x-y|^a \quad\leq\quad 2a\min(xy^{a-1},x^{a-1}y)\eqfs
		\end{equation*}
	\item 
		If  $a\in[1,2]$, $x\geq y$, then
		\begin{equation*}
			(x+y)^{a-1} (x-y) \quad\leq\quad x^a - y^a \quad\leq\quad a (x-y) \br{\frac{x+y}{2}}^{a-1}
			\eqfs
		\end{equation*}
	\end{enumerate}
\end{lemma}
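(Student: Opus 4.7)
My plan is first to observe that the three parts of the lemma are essentially the same inequality viewed differently, and then to prove the core statement by elementary one-variable calculus.

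First I would reduce (iii) to (i) via the substitution $u = (x+y)/2$, $v = (x-y)/2$, under which $x = u+v$, $y = u-v$, so that the two bounds in (iii) become precisely
\begin{equation*}
  2^{a} u^{a-1} v \;\leq\; (u+v)^{a} - (u-v)^{a} \;\leq\; 2 a \, u^{a-1} v\eqcm
\end{equation*}
which is (i) with $(u,v)$ in place of $(x,y)$. Next I would deduce (ii) from (i): without loss of generality $x \geq y$, and for $a \in [1,2]$ the inequality $y^{a-1} x \geq x^{a-1} y$ holds (equivalent to $(y/x)^{a-2} \geq 1$ since $a-2 \leq 0$ and $y/x \leq 1$), so the minimum in (ii) equals $x^{a-1} y$, and the claim reduces to the upper bound in (i).

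The whole lemma therefore rests on (i). I would normalize by the case $x = 0$ being trivial and otherwise setting $t = y/x \in [0,1]$, reducing (i) to the two inequalities
\begin{equation*}
  2^{a} t \;\leq\; (1+t)^{a} - (1-t)^{a} \;\leq\; 2 a t\eqcm \qquad t\in[0,1],\; a\in[1,2]\eqfs
\end{equation*}
For the upper bound I would set $f(t) := 2 a t - (1+t)^{a} + (1-t)^{a}$ and differentiate: $f(0)=0$ and $f'(t) = a\bigl(2 - (1+t)^{a-1} - (1-t)^{a-1}\bigr)$. Since $a-1\in[0,1]$, the map $s\mapsto s^{a-1}$ is concave on $[0,\infty)$, so by Jensen $(1+t)^{a-1} + (1-t)^{a-1} \leq 2 \cdot 1^{a-1} = 2$, giving $f' \geq 0$, hence $f \geq 0$.

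For the lower bound I would set $g(t) := (1+t)^{a} - (1-t)^{a} - 2^{a} t$ and note that $g(0) = 0$ and $g(1) = 2^{a} - 0 - 2^{a} = 0$. The second derivative is $g''(t) = a(a-1)\bigl[(1+t)^{a-2} - (1-t)^{a-2}\bigr]$, and since $a-2 \leq 0$ we have $(1+t)^{a-2} \leq (1-t)^{a-2}$ for $t \in [0,1)$, so $g'' \leq 0$. Thus $g$ is concave on $[0,1]$ with zero boundary values, which forces $g \geq 0$ on $[0,1]$.

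The main obstacle is recognizing the right normalization for (i) and noticing that the lower bound follows cleanly from concavity plus two zero boundary values, rather than from monotonicity of $g$ itself (which fails, as $g'(1) = (a-2) 2^{a-1} \leq 0$). Once that observation is in place, both bounds are one-line calculus arguments, and the reductions to (i) take care of (ii) and (iii).
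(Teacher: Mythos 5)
Your argument is correct, and at its core it follows the same basic strategy as the paper (normalize to $t=y/x\in[0,1]$ and settle part (i) by one-variable calculus), but the execution differs enough to be worth noting. The paper proves only (i) in detail, and does so by showing that the single quotient $z\mapsto \bigl((1+z)^a-(1-z)^a\bigr)/z$ is non-increasing on $(0,1]$ (via an auxiliary function and a nested derivative computation), so that it is sandwiched between its value $2^a$ at $z=1$ and its limit $2a$ as $z\searrow 0$, the latter evaluated by L'Hospital; parts (ii) and (iii) are then dismissed with ``the other parts follow immediately.'' You instead treat the two bounds separately: the upper bound from monotonicity of $t\mapsto 2at-(1+t)^a+(1-t)^a$, where $f'\geq 0$ follows from concavity of $s\mapsto s^{a-1}$, and the lower bound from concavity of $g(t)=(1+t)^a-(1-t)^a-2^a t$ together with $g(0)=g(1)=0$ — a nice touch, since as you observe $g$ itself is not monotone. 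Your route avoids L'Hospital and the second-level auxiliary function, and it makes explicit the reductions of (iii) (via $u=(x+y)/2$, $v=(x-y)/2$) and of (ii) (identifying $\min(xy^{a-1},x^{a-1}y)=x^{a-1}y$ for $x\geq y$), which the paper leaves to the reader; the paper's route, in exchange, delivers both bounds from one monotonicity statement, which is marginally stronger information (the quotient interpolates monotonically between $2^a$ and $2a$). Both proofs are complete and elementary.
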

Note that this result is slightly stronger than the application of the mean value theorem to the function $x \mapsto x^a$, which yields $x^a - y^a \leq a (x-y) z^{a-1}$ for all $x \geq y \geq 0$ and $a > 0$, where $z \in [y,x]$.
\begin{proof}
	Assume $x\geq y$.
	Set $z = \frac yx \in[0,1]$. Define
	\begin{equation*}
		f(z) = \frac{(1+z)^a-(1-z)^a}{z}
		\eqfs
	\end{equation*}
	If we can show $f(z) \leq 2a$, then
	\begin{align*}
		&&(1+z)^a-(1-z)^a &\leq 2a z\\
		\Rightarrow && (x+zx)^a-(x-zx)^a &\leq 2a x^a z\\
		\Rightarrow && (x+y)^a-(x-y)^a &\leq 2a x^{a-1} y
		\eqfs
	\end{align*}
	We have 
	\begin{equation*}
		f\pr(z) = \frac {g(z)}{z^2}
		\eqcm
	\end{equation*}
	where
	\begin{equation*}
		g(z) = az \br{(1+z)^{a-1}+(1-z)^{a-1}}-\br{(1+z)^{a}-(1-z)^{a}}
		\eqfs
	\end{equation*}
	We have
	\begin{equation*}
		g\pr(z) = az(a-1)\br{(1+z)^{a-2}-(1-z)^{a-2}} \leq 0\eqfs
	\end{equation*}
	Thus, $g(z)\leq g(0) = 0$.
	Thus, $f\pr(z)\leq0$.
	Thus, for all $z_0 \in [0,1]$,
	\begin{equation*}
		f(z_0) 
		\leq 
		\lim_{z\searrow 0}f(z) 
		\stackrel{\ms{L'H}}{=} 
		\lim_{z\searrow 0}\frac{a(1+z)^{a-1}+a(1-z)^{a-1}}{1} 
		= 
		2a
		\eqcm
	\end{equation*}
	where $\ms{L'H}$ indicates the use of L'Hospital's rule. 
	Furthermore, $f(z_0) \geq f(1) = 2^a$, which implies the lower bound. 
	This finishes the proof for (i). The other parts follow immediately.
\end{proof}
\subsection{Merging Lemma}\label{ssec:merginglemma}
In many cases (i.e., with additional assumption on $a,b,c,r$ or $s$), we prove the inequality of \autoref{con:ana} by applying first a merging lemma to the LHS to reduce the four summands to two summands of a specific form. Then we apply the Tight Power Bound. The Merging Lemma is discussed in this section.
\subsubsection{Simple Merging Lemma}
\begin{lemma}[Simple Merging Lemma]\label{lmm:merging:simple}
	Let $\alpha\in[\frac12,1]$, $b\geq0$, $a,c\in\R$.
	Then
	\begin{equation*}
		|a|^{2\alpha}-|c|^{2\alpha}-|a-b|^{2\alpha}+|c-b|^{2\alpha} \leq 
		2^{1-2\alpha} \bigg(
							(a-c + b)^{2\alpha}  -  |a-c - b|^{2\alpha}
						\bigg) \indOf{a-c>0}
		\eqfs
	\end{equation*}
\end{lemma}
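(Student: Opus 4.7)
The plan is to recognize a hidden symmetry in the LHS. Define $f(x) := |x|^{2\alpha} - |x-b|^{2\alpha}$, so that the LHS equals $f(a) - f(c)$. The key observation is that $f$ satisfies the antisymmetry $f(b-x) = -f(x)$, i.e., $f$ is odd about the point $b/2$. Accordingly, introduce $h(y) := f(b/2 + y)$, which is an odd function of $y$, and the new variables $t := a - b/2$, $d := a - c$. Then the LHS rewrites as $h(t) - h(t - d)$, and an immediate computation gives $2h(d/2) = 2^{1-2\alpha}\bigl[(b+d)^{2\alpha} - |b-d|^{2\alpha}\bigr]$, exactly the RHS of the lemma (when $d > 0$).

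First I would dispose of the case $a \le c$ (i.e.\ $d \le 0$) by showing that $f$ is monotonically increasing, so the LHS is $\le 0 =$ RHS. Monotonicity follows from a short case split on the signs of $x$ and $x-b$: in each of the three regions ($x<0$, $0 < x < b$, $x > b$), a direct computation of $f'$ together with $2\alpha - 1 \in [0,1]$ gives $f' \ge 0$.

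For $a > c$, the task reduces to showing
\begin{equation*}
	\sup_{t \in \R} \bigl[h(t) - h(t-d)\bigr] \;\le\; 2 h(d/2) \eqfs
\end{equation*}
I would prove this by analyzing $h'$. Since $h$ is odd, $h'$ is even. A case-by-case computation (on the regions $|y| < b/2$ and $|y| > b/2$) shows that $h''(y) \le 0$ on $(0, b/2)$ and on $(b/2, \infty)$, using $2\alpha - 2 \le 0$. Combined with evenness of $h'$, this yields that $h'$ is unimodal with unique maximum at $0$ and is decreasing on $[0, \infty)$. Writing $\phi(t) := h(t) - h(t-d) = \int_{t-d}^{t} h'(s)\,\mathrm{d}s$, the symmetric decreasing rearrangement principle (which here is elementary: $\phi'(t) = h'(t) - h'(t-d)$ and one checks $\phi'(t) > 0$ for $t < d/2$, $\phi'(t) < 0$ for $t > d/2$) shows $\phi$ attains its maximum at $t = d/2$, with value $h(d/2) - h(-d/2) = 2h(d/2)$.

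The main obstacle is the technical verification that $h'$ is decreasing on $[0,\infty)$ across the corner at $b/2$: one must carefully assemble the one-sided computations and use $\alpha \in [\tfrac12, 1]$ (so that $2\alpha - 1 \ge 0$ and $2\alpha - 2 \le 0$ simultaneously) to ensure both monotonicity of $f$ and concavity of $h$ on the relevant pieces. Apart from that, the proof is essentially a change of variables followed by the observation that odd functions with a symmetric, unimodal derivative obey the centered-interval inequality.
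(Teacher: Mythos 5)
Your proof is correct, and it takes a genuinely different route from the paper's. The paper fixes $q=a-b$ and $c$, regards the difference of the two sides as a function of $b$, notes that it vanishes at $b=0$, and shows its $b$-derivative is nonpositive through a four-way sign analysis that uses subadditivity and midpoint concavity of $x\mapsto x^{2\alpha-1}$ (plus the same monotonicity of $x\mapsto|x|^{2\alpha}-|x-b|^{2\alpha}$ that you use to dispose of $a\le c$). You instead fix $b$ and the gap $d=a-c$, exploit the antisymmetry of $f(x)=|x|^{2\alpha}-|x-b|^{2\alpha}$ about $b/2$, and show that the increment $f(a)-f(c)$ over an interval of fixed length $d$ is maximized at the symmetric position $a+c=b$, where it equals the right-hand side exactly; the verification reduces to $h''\le0$ on $(0,b/2)$ and on $(b/2,\infty)$ (from $2\alpha-2\le0$) together with the corner check at $b/2$, where for $\alpha>\frac12$ both one-sided limits of $h'$ equal $2\alpha b^{2\alpha-1}$, while for $\alpha=\frac12$ there is a downward jump (from $2$ to $0$); so $h'$ is indeed even and non-increasing on $[0,\infty)$ and your rearrangement step goes through. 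What your route buys is an explanation of the right-hand side: it is precisely the equality case $a+c=b$, which makes the constant $2^{1-2\alpha}$ and the combination $(a-c+b)^{2\alpha}-|a-c-b|^{2\alpha}$ transparent. What the paper's route buys is that it never has to track the kinks of $h'$: the absolute values are absorbed into the sign cases, and only a one-variable monotonicity argument anchored at $b=0$ is needed. Two small wording fixes: the strict statements (``unique maximum'', ``$\phi'(t)>0$ for $t<d/2$, $\phi'(t)<0$ for $t>d/2$'') should be non-strict, since for $\alpha=\frac12$ the relevant derivatives are piecewise constant; neither affects the conclusion, because non-strict unimodality of $h'$ already yields $\sup_{t}\,[h(t)-h(t-d)]=2h(d/2)$.
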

\begin{proof}
	For $\tilde \alpha\geq 1$, the function $\R\to\R,\, x\mapsto |x|^{\tilde\alpha}-|x-1|^{\tilde\alpha}$ is increasing.
	We have $2\alpha\geq1$. Thus, if $a\leq c$, then
	\begin{equation*}
		|a|^{2\alpha}-|a-b|^{2\alpha}\leq |c|^{2\alpha}-|c-b|^{2\alpha} 
		\eqfs
	\end{equation*}
	This shows the inequality for the case $a\leq c$.
	
	Set $q:= a-b$ and define
	\begin{equation*}
		g(b) :=  |q+b|^{2\alpha}-|c|^{2\alpha}-|q|^{2\alpha}+|c-b|^{2\alpha} - 2\br{\br{\frac{q-c}{2}+b}^{2\alpha}-\br{\frac{q-c}{2}}^{2\alpha}}
		\eqfs
	\end{equation*}
	We have $g(0)=0$
	and
	\begin{equation*}
		\frac{g\pr(b)}{2\alpha} = \sgn(q+b)|q+b|^{2\alpha-1}-\sgn(c-b)|c-b|^{2\alpha-1} -
						2\br{\frac{q-c}{2}+b}^{2\alpha-1}
		\eqfs
	\end{equation*}
	\underline{Case 1: $\sgn(q+b) = +1$, $\sgn(c-b)=+1$}:
		\begin{equation*}
			\frac{g\pr(b)}{2\alpha} = (q+b)^{2\alpha-1}-(c-b)^{2\alpha-1} -
							2\br{\frac{q-c}{2}+b}^{2\alpha-1}
			\eqcm
		\end{equation*}
		\begin{equation*}
			(q+b)^{2\alpha-1}-(c-b)^{2\alpha-1} \leq \br{q+b-(c-b)}^{2\alpha-1}\leq 2\br{\frac{q-c}{2}+b}^{2\alpha-1}
			\eqfs
		\end{equation*}
	\underline{Case 2: $\sgn(q+b) = -1$, $\sgn(c-b)=-1$}:
		\begin{equation*}
			\frac{g\pr(b)}{2\alpha} = (b-c)^{2\alpha-1}-(-q-b)^{2\alpha-1}
							-2\br{\frac{q-c}{2}+b}^{2\alpha-1}
			\eqcm
		\end{equation*}
		\begin{equation*}
			(b-c)^{2\alpha-1}-(-q-b)^{2\alpha-1} \leq \br{b-c-(-q-b)}^{2\alpha-1}\leq 2\br{\frac{q-c}{2}+b}^{2\alpha-1}
			\eqfs
		\end{equation*}
	\underline{Case 3: $\sgn(q+b) = +1$, $\sgn(c-b)=-1$}:
		\begin{equation*}
			\frac{g\pr(b)}{2\alpha} = (q+b)^{2\alpha-1}+(b-c)^{2\alpha-1} -
									2\br{\frac{q-c}{2}+b}^{2\alpha-1}
									\eqcm
		\end{equation*}
		\begin{equation*}
			(q+b)^{2\alpha-1}+(b-c)^{2\alpha-1} \leq 2\br{\frac{q-c}{2}+b}^{2\alpha-1}
			\eqfs
		\end{equation*}
	\underline{Case 4: $\sgn(q+b) = -1$, $\sgn(c-b)=+1$}:
		\begin{equation*}
			\frac{g\pr(b)}{2\alpha} = -(-q-b)^{2\alpha-1}-(c-b)^{2\alpha-1} -
									2\br{\frac{q-c}{2}+b}^{2\alpha-1}
									\eqcm
		\end{equation*}
		\begin{equation*}
			-(-q-b)^{2\alpha-1}-(c-b)^{2\alpha-1} \leq 0
			\eqfs
		\end{equation*}
	\underline{Together:}
	In every case, we have $g\pr(b) \leq 0$ and $g(0)=0$. Thus, $g(b) \leq 0$.
\end{proof}
\subsubsection{$ra-sc$--Merging Lemma}
\begin{lemma}
	Let $\alpha\in[0,1]$.
	\theoremContentInNewLine
	\begin{enumerate}[label=\environmentEnumerateLabel]
	\item 
		Let $b,c\geq0$, $s\in[-1,1]$.
		Assume $2sc \leq b$.
		Then
		\begin{equation*}
			-c^{2\alpha}+(c^2 - 2 s c b+b^2)^\alpha \leq 
			-|sc|^{2\alpha}+|sc-b|^{2\alpha}
			\eqfs
		\end{equation*}
	\item
		Let $a,b\geq0$, $r\in[-1,1]$.
		Assume $2ra \geq b$.
		Then
		\begin{equation*}
			a^{2\alpha}-(a^2 - 2 r a b+b^2)^\alpha \leq 
			|ra|^{2\alpha}-|ra-b|^{2\alpha}
			\eqfs
		\end{equation*}
	\end{enumerate}
\end{lemma}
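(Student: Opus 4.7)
The two parts are dual, so I would prove part (i) in detail and only indicate the symmetric argument for part (ii). The key observation is that both sides of the claimed inequality in (i) have the same ``shape'' $x^{\alpha} - y^{\alpha}$ with the same shift: writing out $|sc|^{2\alpha} = (s^2 c^2)^{\alpha}$ and $|sc - b|^{2\alpha} = \br{(sc)^2 - 2scb + b^2}^\alpha$, both differences are of the form $(u - 2scb + b^2)^\alpha - u^\alpha$ with $u = c^2$ on the left and $u = s^2 c^2$ on the right. Since $s^2 \leq 1$ forces $s^2 c^2 \leq c^2$, the claim is equivalent to saying that this one-variable function is non-increasing.

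Concretely, my plan is to define $f \colon [0, \infty) \to \R$ by $f(u) := (u - 2scb + b^2)^\alpha - u^{\alpha}$, which is well-defined since both $c^2 - 2scb + b^2 = (c-sb)^2 + (1-s^2)b^2 \geq 0$ and $s^2 c^2 - 2scb + b^2 = (sc-b)^2 \geq 0$. Rewriting, the claim becomes $f(c^2) \leq f(s^2 c^2)$. Since $s^2 c^2 \leq c^2$, it suffices to show $f$ is non-increasing on $[s^2 c^2, c^2]$. Differentiating gives
\begin{equation*}
	f'(u) = \alpha \left[ (u - 2scb + b^2)^{\alpha - 1} - u^{\alpha - 1} \right].
\end{equation*}
For $\alpha \in (0,1)$ the map $x \mapsto x^{\alpha - 1}$ is strictly decreasing on $(0, \infty)$, so $f'(u) \leq 0$ iff $u - 2scb + b^2 \geq u$, i.e.\ iff $b(b - 2sc) \geq 0$. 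Given $b \geq 0$, this is equivalent to the hypothesis $b \geq 2sc$. The boundary cases $\alpha \in \{0, 1\}$ are immediate (the inequality becomes trivial or linear), and the degenerate cases $b = 0$ or $c = 0$ reduce both sides to the same value.

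For part (ii), I would run the same argument with $g(u) := (u - 2rab + b^2)^\alpha - u^\alpha$, noting that $a^2 - 2rab + b^2 = (a - rb)^2 + (1-r^2)b^2 \geq 0$ and $(ra - b)^2 = r^2 a^2 - 2rab + b^2$. The desired inequality $a^{2\alpha} - (a^2 - 2rab + b^2)^\alpha \leq |ra|^{2\alpha} - |ra - b|^{2\alpha}$ rearranges to $g(r^2 a^2) \leq g(a^2)$, which now requires $g$ to be non-decreasing; the sign analysis of $g'$ flips and the condition becomes $b \leq 2ra$, exactly the hypothesis.

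There is no real obstacle here: the proof is a short monotonicity check in one variable. The only thing to be careful about is verifying that the arguments of the fractional powers are nonnegative (so that $f$ and $g$ are well-defined on the relevant intervals) and, most importantly, recognizing the algebraic identity $s^2 c^2 - 2scb + b^2 = (sc - b)^2$ (resp.\ $r^2 a^2 - 2rab + b^2 = (ra - b)^2$), which is what allows the right-hand side of each inequality to be identified with $f$ (resp.\ $g$) evaluated at the smaller endpoint.
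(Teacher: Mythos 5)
Your proof is correct and is essentially the paper's argument: both hinge on the identity $s^2c^2-2scb+b^2=(sc-b)^2$ (resp.\ $r^2a^2-2rab+b^2=(ra-b)^2$) together with the monotonicity of $u\mapsto (u+x)^\alpha-u^\alpha$ for a shift $x$ whose sign is fixed by the hypothesis $2sc\leq b$ (resp.\ $2ra\geq b$); the paper invokes the non-increasingness of $t\mapsto(t+1)^\alpha-t^\alpha$ while you verify the same monotonicity by differentiating in $u$, which is only a cosmetic difference.
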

\begin{proof}
	The function $t \mapsto (t+1)^\alpha-t^\alpha$, $t\geq0$ is non-increasing for all $\alpha\in[0,1]$.
	We have $0\leq s^2c^2\leq c^2$ and $x := -2 s c b+b^2 \geq 0$. Thus,
	\begin{equation*}
		\br{c^2+x}^\alpha-\br{c^2}^\alpha \leq \br{(sc)^2+x}^\alpha-\br{(sc)^2}^\alpha
		\eqfs
	\end{equation*}
	Thus, 
	\begin{equation*}
		(c^2-2 s c b+b^2)^\alpha -c^{2\alpha}\leq |-sc+b|^{2\alpha}-|sc|^{2\alpha}
		\eqfs
	\end{equation*}
	For the second part, set $x:=2rab-b^2$, $y := a^2-x\geq0$, $\tilde y := |ra|^2-x\geq 0$. The condition $2ra\geq b$ implies $x\geq 0$.
	Thus, as before,
	\begin{equation*}
		a^{2\alpha}-(a^2 - 2 r a b+b^2)^\alpha 
		=
		(y+x)^\alpha-y^\alpha 
		\leq 
		(\tilde y+x)^\alpha-\tilde y^\alpha 
		=
		|ra|^{2\alpha}-|ra-b|^{2\alpha}
		\eqfs
	\end{equation*}
\end{proof}
\begin{lemma}[$ra-sc$--Merging Lemma]\label{lmm:rascMergingLemma}
	Let $\alpha\in[\frac12,1]$.
	Let $a,b,c\geq0$, $r,s\in[-1,1]$.
	\theoremContentInNewLine
	\begin{enumerate}[label=\environmentEnumerateLabel]
	\item 
		Assume $2ra \geq b$, $s\in\cb{-1,1}$.
		Then
		\begin{align*}
			&a^{2\alpha}-c^{2\alpha}-(a^2-2rab+b^2)^{\alpha} + (c^2 - 2 s c b + b^2)^\alpha
			\\& \leq 2^{1-2\alpha} \bigg((ra - s c + b)^{2\alpha}-\abs{ra - s c - b}^{2\alpha}\bigg) \ind_{ra-sc>0}
			\eqfs
		\end{align*}	
	\item 
		Assume $b \geq 2sc$, $r\in\cb{-1,1}$.
		Then
		\begin{align*}
			&a^{2\alpha}-c^{2\alpha}-(a^2-2rab+b^2)^{\alpha} + (c^2 - 2 s c b + b^2)^\alpha 
			\\&\leq 2^{1-2\alpha} \bigg((ra - s c + b)^{2\alpha}-\abs{ra - s c - b}^{2\alpha}\bigg)\ind_{ra-sc>0}
			\eqfs
		\end{align*}	
	\item 
		Assume $2ra \geq b \geq 2sc$.
		Then
		\begin{align*}
			&a^{2\alpha}-c^{2\alpha}-(a^2-2rab+b^2)^{\alpha} + (c^2 - 2 s c b + b^2)^\alpha 
			\\&\leq 2^{1-2\alpha} \br{(ra - s c + b)^{2\alpha}-\abs{ra - s c - b}^{2\alpha}}\ind_{ra-sc>0}
			\eqfs
		\end{align*}	
	\end{enumerate}
\end{lemma}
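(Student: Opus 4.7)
The plan is to derive all three parts by combining two tools already proved in the preceding discussion: the unnamed two-part ``reduction lemma'' stated just before the Merging Lemma (which upgrades $(c^2-2scb+b^2)^\alpha$ to $|sc-b|^{2\alpha}$ under $2sc\le b$, and $(a^2-2rab+b^2)^\alpha$ to $|ra-b|^{2\alpha}$ under $2ra\ge b$), together with \autoref{lmm:merging:simple} (the Simple Merging Lemma), which already produces exactly the target right-hand side once the left-hand side is in the form $|A|^{2\alpha}-|C|^{2\alpha}-|A-b|^{2\alpha}+|C-b|^{2\alpha}$.

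I would first handle case (iii), since it is the cleanest. Both hypotheses $2ra\ge b$ and $b\ge 2sc$ are in force, so I apply part (ii) of the reduction lemma to the $a$-terms and part (i) to the $c$-terms and add, obtaining
\begin{equation*}
\mathrm{LHS}\ \le\ |ra|^{2\alpha}-|sc|^{2\alpha}-|ra-b|^{2\alpha}+|sc-b|^{2\alpha}.
\end{equation*}
Now I invoke the Simple Merging Lemma with the substitutions $a\mapsto ra$, $c\mapsto sc$, $b\mapsto b$ (all of which lie in the allowed ranges), yielding the desired bound with the factor $2^{1-2\alpha}$ and the indicator $\ind_{ra-sc>0}$.

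Cases (i) and (ii) are degenerations of (iii). When $s\in\{-1,1\}$, we have $s^2=1$, so $c^{2\alpha}=|sc|^{2\alpha}$ and $c^2-2scb+b^2=(sc-b)^2$; hence $(c^2-2scb+b^2)^\alpha=|sc-b|^{2\alpha}$ holds identically, independently of any sign hypothesis, and the $c$-side of the reduction step is free. Only part (ii) of the reduction lemma (which needs $2ra\ge b$) is then applied, after which the Simple Merging Lemma concludes case (i). Case (ii) is the mirror image: $r\in\{-1,1\}$ makes the $a$-side collapse automatically and only part (i) of the reduction lemma (needing $b\ge 2sc$) is invoked.

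The only point requiring a moment's care is the indicator $\ind_{ra-sc>0}$: when $ra\le sc$, the target right-hand side vanishes, and one must verify that the intermediate quantity $|ra|^{2\alpha}-|sc|^{2\alpha}-|ra-b|^{2\alpha}+|sc-b|^{2\alpha}$ is already non-positive in that regime. This is precisely the monotonicity observation used at the start of the proof of \autoref{lmm:merging:simple} (the map $x\mapsto |x|^{2\alpha}-|x-b|^{2\alpha}$ is non-decreasing for $2\alpha\ge 1$), so nothing beyond citing that lemma is needed. No single step is a real obstacle; the whole argument is a two-line bookkeeping exercise once the reduction lemma and the Simple Merging Lemma are in hand.
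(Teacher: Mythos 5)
Your proposal is correct and follows essentially the same route as the paper: reduce the $a$- and $c$-terms to $|ra|^{2\alpha}-|ra-b|^{2\alpha}$ and $|sc|^{2\alpha}-|sc-b|^{2\alpha}$ via the preceding two-part lemma, then invoke \autoref{lmm:merging:simple} with $a\mapsto ra$, $c\mapsto sc$. Your explicit observation that for $s\in\{-1,1\}$ (resp.\ $r\in\{-1,1\}$) the corresponding reduction holds as an identity, and that the indicator case is already covered by the Simple Merging Lemma, simply spells out what the paper's terse proof leaves implicit.
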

\begin{proof}
	The lemma above and the simple merging lemma imply
	\begin{align*} 
		&a^{2\alpha}-c^{2\alpha}-(a^2-2rab+b^2)^{\alpha} + (c^2 - 2 s c b + b^2)^\alpha 
		\\&\leq 
		(ra)^{2\alpha}-(sc)^{2\alpha}-(ra-b)^{2\alpha} + (sc-b)^{2\alpha}
		\\&\leq 
		2^{1-2\alpha} \br{(ra - s c + b)^{2\alpha}-\abs{ra - s c - b}^{2\alpha}}\ind_{ra-sc>0}
		\eqfs
	\end{align*}	
\end{proof}
%
%
%
%
%
\subsubsection{$a-sc$--Merging Lemma}
\autoref{lmm:rascMergingLemma} covers the case $\frac12 b \geq sc$.
The following lemma covers  $\frac12 b \leq sc$  under the additional restriction $sc \leq a-b$.
\begin{lemma}[$a-sc$--Merging Lemma]\label{lmm:merging:asc}
	Let $\alpha\in[\frac12,1]$.
	Let $a,b,c\geq0$, $s\in[-1,1]$.
	Assume $\frac12 b \leq sc\leq a-b$.
	Then
	\begin{equation*}
		a^{2\alpha}-c^{2\alpha}-(a-b)^{2\alpha}+(c^2 - 2 s c b + b^2)^\alpha \leq 2^{1-2\alpha} \bigg(
			(a - s c + b)^{2\alpha}  -  (a - s c - b)^{2\alpha}
		\bigg)
		\eqfs
	\end{equation*}
\end{lemma}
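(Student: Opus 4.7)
The approach mirrors the proof of the Simple Merging Lemma (\autoref{lmm:merging:simple}). With $a, c, s, \alpha$ fixed and $b$ treated as the running variable on the interval $[0, \min(2sc, a-sc)]$ on which the hypothesis $b/2 \leq sc \leq a-b$ remains satisfied, set
\begin{equation*}
	g(b) := a^{2\alpha} - c^{2\alpha} - (a-b)^{2\alpha} + (c^2-2scb+b^2)^\alpha - 2^{1-2\alpha}\bigl((a-sc+b)^{2\alpha} - (a-sc-b)^{2\alpha}\bigr).
\end{equation*}
Direct substitution gives $g(0) = 0$, so the plan reduces to showing $g'(b) \leq 0$ on the valid interval; then $g(b) \leq g(0) = 0$.

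Computing the derivative and dividing out the positive factor $2\alpha$, the required inequality becomes
\begin{equation*}
	(a-b)^{2\alpha-1} + (b-sc)(c^2-2scb+b^2)^{\alpha-1} \leq 2^{1-2\alpha}\bigl[(a-sc+b)^{2\alpha-1}+(a-sc-b)^{2\alpha-1}\bigr].
\end{equation*}
As a sanity check, at $\alpha = 1$ this reduces to the identity $(a-b)+(b-sc) = (a-sc)$, confirming that the inequality is tight at the upper endpoint of the $\alpha$-range.

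To establish the inequality I would split on the sign of $b-sc$, corresponding to the two cases $b \leq sc$ and $b \geq sc$. In each case, exploit the identity $c^2-2scb+b^2 = (sc-b)^2 + c^2(1-s^2)$ to estimate $(c^2-2scb+b^2)^{\alpha-1}$ in terms of $|sc-b|^{2\alpha-2}$ (using $\alpha - 1 \leq 0$), and then combine with the concavity of $t \mapsto t^{2\alpha-1}$ (for $2\alpha-1 \in [0,1]$) and the Tight Power Bound (\autoref{lmm:tightpowerbound}) applied with exponent $2\alpha$ to the pair $(a-sc, b)$ to balance the two terms on the right-hand side.

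The main obstacle is the tightness at $\alpha = 1$, which forbids any bounding that loses even a constant factor. For this reason I expect the algebra to be delicate and to require further subdivisions within each of the two main cases, paralleling the four subcases that appear in the proof of \autoref{lmm:merging:simple}. Each individual subcase however reduces to elementary calculus together with standard sub/super-additivity inequalities such as $(x+y)^\gamma \leq x^\gamma + y^\gamma$ and $x^\gamma - y^\gamma \leq (x-y)^\gamma$, valid for $\gamma \in [0,1]$ and $x \geq y \geq 0$.
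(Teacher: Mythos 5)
Your derivative computation is correct, but the central claim --- that $g'(b)\le 0$ on the whole admissible interval --- is false, so the proposed monotonicity-in-$b$ argument cannot be completed by any case analysis. Concretely, take $\alpha=\tfrac34$, $s=1$, $c=1$, $a=4$, $b=1.9$, which satisfies $\tfrac12 b=0.95\le sc=1\le a-b=2.1$. Then
\begin{equation*}
	(a-b)^{2\alpha-1}+(b-sc)\,(c^2-2scb+b^2)^{\alpha-1}
	=\sqrt{2.1}+0.9\cdot 0.81^{-1/4}\approx 2.398\eqcm
\end{equation*}
while
\begin{equation*}
	2^{1-2\alpha}\bigl[(a-sc+b)^{2\alpha-1}+(a-sc-b)^{2\alpha-1}\bigr]
	=2^{-1/2}\bigl(\sqrt{4.9}+\sqrt{1.1}\bigr)\approx 2.307\eqcm
\end{equation*}
so $g'(1.9)>0$. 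The lemma itself is not contradicted: at this point $g(1.9)\approx-2.04<0$, and $g$ first decreases and then increases again towards the upper end $b=\min(2sc,\,a-sc)$, so the endpoint inequality holds only because of slack accumulated at smaller $b$. The trouble is structural: you anchor at $b=0$, where equality holds and there is no slack to spend, and for $\alpha$ strictly between $\tfrac12$ and $1$ the difference is simply not monotone along the $b$-direction; your own observation that the derivative inequality is an identity at $\alpha=1$ is exactly the warning sign that it can fail in either direction for smaller $\alpha$.

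The paper's proof moves along a different variable: it fixes $b$ and varies $\delta=a-b$, shows $f'(\delta)\le 0$ by pairing the four derivative terms through the concavity of $x\mapsto x^{2\alpha-1}$ (using that $(\delta+b)+\tfrac{\delta-sc}{2}=\delta+\bigl(\tfrac{\delta-sc}{2}+b\bigr)$ with the outer pair more spread out than the inner pair), and then reduces to the boundary case $\delta=sc$, i.e.\ $a=sc+b$. That boundary case is a genuinely nontrivial inequality, proved in a separate lemma with its own case distinction; it carries the real content of the statement, unlike the trivial anchor $b=0$. If you want to salvage a one-variable monotonicity argument, differentiate in $a$ (equivalently $\delta$) rather than in $b$, and be prepared to prove the resulting boundary inequality at $a=sc+b$.
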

\begin{proof}
	Set $\delta:=a-b$.
	Define
	\begin{align*}
		f(\delta) 
		= &\,(\delta+b)^{2\alpha}-c^{2\alpha}-\delta^{2\alpha}+(c^2 - 2 s c b + b^2)^\alpha -
		\\&2 \br{
					\br{\frac{\delta - s c}{2} + b}^{2\alpha}  -  \br{\frac{\delta - s c}{2}}^{2\alpha}
				}
		\eqfs
	\end{align*}
	Then
	\begin{equation*}
		\frac{f\pr(\delta)}{2\alpha} =
		(\delta+b)^{2\alpha-1}-\delta^{2\alpha-1} 
		- \br{\frac{\delta - s c}{2} + b}^{2\alpha-1}
		+ \br{\frac{\delta - s c}{2}}^{2\alpha-1}
		\eqfs
	\end{equation*}
	We have
	\begin{align*}
		\delta+b &\geq  \delta\eqcm\\
		\frac{\delta - s c}{2} &\leq  \frac{\delta - s c}{2} + b\eqcm\\
		(\delta+b) + \frac{\delta - s c}{2} &= \delta +\br{\frac{\delta - s c}{2} + b}\eqfs
	\end{align*}
	Thus,
	\begin{equation*}
		(\delta+b)^{2\alpha-1}
		+ \br{\frac{\delta - s c}{2}}^{2\alpha-1}
		\leq
		\delta^{2\alpha-1} 
		+\br{\frac{\delta - s c}{2} + b}^{2\alpha-1}
		\eqfs
	\end{equation*}
	Thus, $f\pr(\delta) \leq 0$.\\ 
	The next lemma shows $f(sc) \leq 0$. Thus, $f(\delta) \leq 0$ for all $\delta \geq sc$.
\end{proof}
\begin{lemma}
	Let $x,a,b,c \geq 0$.
	Assume $b \leq 2x$, $x+b \geq c$, $x\leq c$.
	Then
	\begin{equation*}
		(x+b)^{2\alpha}+(c^2 - 2 x b + b^2)^\alpha \leq c^{2\alpha}+x^{2\alpha} + 2 b^{2\alpha} 
		\eqfs
	\end{equation*}	
\end{lemma}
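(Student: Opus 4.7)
My plan is to fix $x$ and $b$ and view the claimed inequality as a statement about $c$, treating the gap
\begin{equation*}
g(c) := c^{2\alpha} + x^{2\alpha} + 2 b^{2\alpha} - (x+b)^{2\alpha} - (c^2 - 2xb + b^2)^{\alpha}
\end{equation*}
as the quantity to bound below by $0$ on the interval $c \in [x, x+b]$.

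First, I would compute $g'(c) = 2\alpha\, c\,\bigl[c^{2\alpha-2} - (c^2 - 2xb + b^2)^{\alpha-1}\bigr]$. Since $2\alpha \leq 2$, the assumption $b \leq 2x$ gives $c^2 - 2xb + b^2 \leq c^2$, and the exponent $\alpha - 1 \leq 0$ reverses this to $(c^2 - 2xb + b^2)^{\alpha-1} \geq c^{2\alpha-2}$. Hence $g'(c) \leq 0$, so $g$ is non-increasing in $c$. (Non-negativity of $c^2 - 2xb + b^2$ is automatic: it is bounded below by $(c-b)^2 \geq 0$ using $x \leq c$, so no technical issue arises.)

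Because $g$ is non-increasing and $c \leq x+b$ by assumption, it suffices to verify $g(x+b) \geq 0$. Substituting $c = x+b$ collapses the cost term: $(x+b)^2 - 2xb + b^2 = x^2 + 2b^2$, and $g(x+b) = x^{2\alpha} + 2b^{2\alpha} - (x^2 + 2b^2)^{\alpha}$. Here I would invoke subadditivity of $t \mapsto t^{\alpha}$ on $[0,\infty)$, which holds since $\alpha \leq 1$: applied with $u = x^2$, $v = 2b^2$ this gives $(x^2 + 2b^2)^{\alpha} \leq x^{2\alpha} + (2b^2)^{\alpha} = x^{2\alpha} + 2^{\alpha} b^{2\alpha} \leq x^{2\alpha} + 2 b^{2\alpha}$, where the last step uses $\alpha \leq 1$ once more.

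The only subtle point is the use of the hypothesis $b \leq 2x$, which is exactly what ensures the monotonicity step has the right sign; the hypothesis $x + b \geq c$ is then what lets us evaluate at the favorable endpoint, and $x \leq c$ is only needed to keep $c^2 - 2xb + b^2 \geq 0$ so that fractional powers are well-defined. Since both reductions are elementary, no serious obstacle should arise; the main thing to be careful about is the direction of the inequality when raising a smaller number to a negative exponent in the derivative step.
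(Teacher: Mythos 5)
Your proof is correct, but it takes a genuinely different route from the paper's. The paper fixes $c$ and differentiates in $x$: writing $g(x)$ for the difference, it shows $h(x)=g'(x)/(2\alpha)$ is non-increasing and then has to verify both $h\le 0$ and $g\le 0$ at the minimal admissible point $x_{\min}=\max(\tfrac b2,\,c-b)$, which forces a two-case analysis ($c-b\le \tfrac b2$ versus $c-b\ge \tfrac b2$) with several ad hoc estimates. You instead fix $x,b$ and differentiate in $c$: the hypothesis $b\le 2x$ gives $c^2-2xb+b^2\le c^2$, and since $\alpha-1\le 0$ this yields $g'(c)=2\alpha c\,[c^{2\alpha-2}-(c^2-2xb+b^2)^{\alpha-1}]\le 0$, so the worst case is the endpoint $c=x+b$ allowed by the hypothesis $x+b\ge c$; there the cost term collapses to $(x^2+2b^2)^{\alpha}$ and subadditivity of $t\mapsto t^{\alpha}$ together with $2^{\alpha}\le 2$ finishes. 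Your handling of the technical point is also right: $x\le c$ gives $c^2-2xb+b^2\ge (c-b)^2\ge 0$, and in fact this quantity is strictly positive in the interior of $[x,x+b]$ when $b>0$ (it equals $(c-b)^2+2b(c-x)$), so the differentiation is legitimate, while the degenerate cases $b=0$ or $x=b=c$ are trivial or handled by continuity. What your approach buys is a shorter argument with no case distinction that works for all $0<\alpha\le 1$, not just $\alpha\in[\tfrac12,1]$ as used in the paper's version; the only cosmetic slip is that your phrase ``since $2\alpha\le 2$'' is not needed for the bound $c^2-2xb+b^2\le c^2$, which follows from $b\le 2x$ alone.
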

\begin{proof}
	Define
	\begin{align*}
		g(x) &:=(x+b)^{2\alpha}+(c^2 - 2 x b + b^2)^\alpha - c^{2\alpha}-x^{2\alpha} - 2 b^{2\alpha} \eqcm
		\\
		h(x) &:= \frac{g\pr(x)}{2\alpha}
		=
		(x+b)^{2\alpha-1}-x^{2\alpha-1}-b(c^2 - 2 x b + b^2)^{\alpha -1}
		\eqfs
	\end{align*}
	We have
	\begin{equation*}
		h\pr(x)
		=
		(2\alpha-1)(x+b)^{2\alpha-2}-(2\alpha-1)x^{2\alpha-2}+2(\alpha -1) b^2(c^2 - 2 x b + b^2)^{\alpha -2}
		\eqfs
	\end{equation*}
	As $2\alpha-2 \leq 0$ and $2\alpha-1 \geq 0, (2\alpha-1)(x+b)^{2\alpha-2}-(2\alpha-1)x^{2\alpha-2}\leq 0$.
	As $\alpha-1\leq0$, $2(\alpha -1) b^2(c^2 - 2 x b + b^2)^{\alpha -2} \leq 0$.
	Thus, $h\pr(x)\leq0$. \\
	We have $x\geq x_{\ms{min}} := \max(\frac b2, c-b)$. For checking $h(x_{\ms{min}}) \leq 0$ and $g(x_{\ms{min}}) \leq 0$, we distinguish $x_{\ms{min}} = c-b$ and $x_{\ms{min}} = \frac b2$.\\
	\underline{Case 1, $c-b\leq\frac b2$:}\\
	If $c-b\leq\frac b2$, then $c\leq \frac32 b \leq (1+\sqrt{3})b$, $c^2 -2cb-2b^2\leq0$, and
	\begin{align*}
		h\brOf{c-b} 
		&= 
		c^{2\alpha-1}-(c-b)^{2\alpha-1}-b(c^2 - 2 (c-b) b + b^2)^{\alpha -1}
		\\&= 
		c^{2\alpha-1}-(c-b)^{2\alpha-1}-b(c^2 - 2 c b - b^2)^{\alpha -1}
		\\&\leq 
		b^{2\alpha-1}-b(c^2 - 2 c b - b^2)^{\alpha -1}
		\\&\leq 
		b \br{\br{b^2}^{\alpha-1}-\br{c^2 - 2 c b - b^2}^{\alpha -1}}
	\end{align*}
	And, thus, $h\brOf{c-b}  \leq 0$ as $c^2 -2cb-2b^2\leq0$.
	Furthermore,
	\begin{align*}
		g\brOf{c-b} 
		&=
		-(c-b)^{2\alpha}+(c^2 - 2 c b-b^2)^\alpha 
		- 2 b^{2\alpha} 
		\\&=
		-(c-b)^{2\alpha}+(c^2 - 2 c b-2b^2+b^2)^\alpha 
				- 2 b^{2\alpha} 
		\\&\leq 
		-(c-b)^{2\alpha}+b^{2\alpha }
		- 2 b^{2\alpha} 
		\\&=
		-(c-b)^{2\alpha}
		- b^{2\alpha} 
		\\&\leq
		0
		\eqfs
	\end{align*}
	Thus, $g(x)\leq 0$ for all valid $x$.\\
	\underline{Case 2, $c-b\geq\frac b2$:}\\
	If $c-b\geq\frac b2$, then $c\geq b$ and
	\begin{align*}
		h\brOf{\frac b2} 
		&= 
		\br{\frac32 b}^{2\alpha-1}-\br{\frac12 b}^{2\alpha-1}-b(c^2)^{\alpha -1}
		\\&=
		\br{\br{\frac32}^{2\alpha-1}-\br{\frac12}^{2\alpha-1}} b^{2\alpha-1}-b(c^2)^{\alpha -1}
		\\&\leq 
		b^{2\alpha-1}-b(c^2)^{\alpha -1}
		\\&\leq 
		b^{2\alpha-1}-b(b^2)^{\alpha -1}
		\\&\leq
		0\eqcm
	\end{align*}
	\begin{align*}
		g\brOf{\frac b2} 
		&=
		\br{\frac32b}^{2\alpha}-c^{2\alpha}-\br{\frac12b}^{2\alpha}+c^{2\alpha}
				- 2 b^{2\alpha} 
		\\&=
		\br{\br{\frac94}^\alpha-\br{\frac14}^\alpha-2} b^{2\alpha}
		\\&\leq 
		0
		\eqfs
	\end{align*}
	Thus, $g(x)\leq 0$ for all valid $x$.
\end{proof}
\subsection{Application of Tight Power Bound and Merging Lemma} \label{ssec:apply}
Whenever a Merging Lemma holds, we apply it as a first step and then use the Tight Power Bound, \autoref{lmm:tightpowerbound}, to obtain
\begin{align*}
	&a^{2\alpha}-c^{2\alpha}-(a^2-2rab+b^2)^{\alpha} + (c^2 - 2 s c b + b^2)^\alpha 
	\\&\leq 
	2^{1-2\alpha} \br{(ra - s c + b)^{2\alpha}-\abs{ra - s c - b}^{2\alpha}}
	\\& \leq 
	4\alpha 2^{1-2\alpha} (ra - s c)^{2\alpha-1} b
	\eqfs
\end{align*}	
In particular, we have finished the proof of \autoref{con:ana} in following cases:
\begin{itemize}
\item $ra\geq sc$ and $s, r \in \cb{-1,1}$: \autoref{lmm:merging:simple},
\item $2ra \geq b$ and $s\in\cb{-1,1}$; or $b \geq 2sc$ and $r\in\cb{-1,1}$; or  $2ra \geq b \geq 2sc$: \autoref{lmm:rascMergingLemma}, 
\item $\frac12 b \leq sc\leq a-b$ and $r=1$: \autoref{lmm:merging:asc}.
\end{itemize}
\subsection{The Case $ra-sc\geq |a-c|$} \label{ssec:rascgeqac}
\subsubsection{The Case $a\geq c$}
First we prove to simple lemmas, before we solve this case.
\begin{lemma}\label{lmm:f1}
	Let $a\geq b\geq 0 $, $d \geq c \geq 0$, and $\alpha\in[0,1]$.
	Then
	\begin{equation*}
		a^\alpha-b^\alpha-c^\alpha+d^\alpha \leq 2^{1-\alpha}(a-b-c+d)^{\alpha}
		\eqfs
	\end{equation*}
\end{lemma}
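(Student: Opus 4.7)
The plan is to split the left-hand side into two ``slope'' differences and bound each by a single power, then combine the two powers using concavity of $t \mapsto t^\alpha$.

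First I would introduce $u := a-b \geq 0$ and $v := d-c \geq 0$ so that the right-hand side becomes $2^{1-\alpha}(u+v)^\alpha$ (which is well defined since $u+v \geq 0$). The left-hand side splits as $(a^\alpha - b^\alpha) + (d^\alpha - c^\alpha) = ((b+u)^\alpha - b^\alpha) + ((c+v)^\alpha - c^\alpha)$. The first key step is the subadditivity bound $(t+x)^\alpha \leq t^\alpha + x^\alpha$ for $t,x \geq 0$ and $\alpha \in [0,1]$, which yields
\begin{equation*}
a^\alpha - b^\alpha \leq u^\alpha \qquad\text{and}\qquad d^\alpha - c^\alpha \leq v^\alpha\eqfs
\end{equation*}
Subadditivity itself follows either from concavity of $t\mapsto t^\alpha$ with $0^\alpha = 0$, or by noting that $g(t) := (b+t)^\alpha - t^\alpha - b^\alpha$ satisfies $g(0) = 0$ and $g'(t) = \alpha\br{(b+t)^{\alpha-1} - t^{\alpha-1}} \leq 0$ since $\alpha-1 \leq 0$.

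The second key step is the inequality $u^\alpha + v^\alpha \leq 2^{1-\alpha}(u+v)^\alpha$, which is just Jensen's inequality applied to the concave function $t \mapsto t^\alpha$ on the two-point average of $u$ and $v$:
\begin{equation*}
\frac{u^\alpha + v^\alpha}{2} \leq \br{\frac{u+v}{2}}^\alpha = 2^{-\alpha}(u+v)^\alpha\eqfs
\end{equation*}
Chaining the two bounds gives $a^\alpha - b^\alpha - c^\alpha + d^\alpha \leq u^\alpha + v^\alpha \leq 2^{1-\alpha}(u+v)^\alpha = 2^{1-\alpha}(a-b-c+d)^\alpha$, which is the claim.

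There is no real obstacle here — both ingredients (subadditivity of $t^\alpha$ and concavity-based averaging) are standard facts about $t\mapsto t^\alpha$ for $\alpha \in [0,1]$. The only thing to watch is the sign convention: the hypotheses $a \geq b$ and $d \geq c$ ensure $u,v \geq 0$ so that all powers are taken of non-negative arguments and the two steps apply directly.
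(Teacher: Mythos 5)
Your proof is correct and follows essentially the same route as the paper: first bound $a^\alpha-b^\alpha$ and $d^\alpha-c^\alpha$ by $(a-b)^\alpha$ and $(d-c)^\alpha$ via subadditivity of $t\mapsto t^\alpha$, then combine the two terms with the concavity (Jensen) bound $u^\alpha+v^\alpha\leq 2^{1-\alpha}(u+v)^\alpha$. The only difference is that you spell out the justification of the subadditivity step, which the paper leaves implicit.
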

\begin{proof}
	As $a\geq b$, $d\geq c$, $\alpha\leq 1$,
	\begin{equation*}
		a^\alpha-b^\alpha + d^\alpha-c^\alpha \leq (a-b)^\alpha + (d-c)^\alpha \eqfs
	\end{equation*}
	Furthermore, by concavity of $x \mapsto x^\alpha$,
	\begin{equation*}
		(a-b)^\alpha + (d-c)^\alpha
		\leq
		2^{1-\alpha}(a-b+d-c)^\alpha
		\eqfs
	\end{equation*}
\end{proof}
\begin{lemma}\label{lmm:f2}
	Let $a\geq b\geq c \geq d\geq 0$, $a+d\geq b+c$, and $\alpha\in[0,1]$.
	Then
	\begin{equation*}
		a^\alpha-b^\alpha-c^\alpha+d^\alpha \leq (a-b-c+d)^{\alpha}
		\eqfs
	\end{equation*}
\end{lemma}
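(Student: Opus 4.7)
The plan is to reduce to the extreme case by a monotonicity argument, then verify the base case with another monotonicity argument.

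First I note the two useful inequalities $0 \leq a-b-c+d \leq a$: the lower bound is the hypothesis $a+d \geq b+c$, and the upper bound holds since $b+c \geq d + 0 \geq d$ (using $b \geq d$ and $c \geq 0$). So all the quantities under the exponent $\alpha$ are nonnegative.

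Next I would fix $b,c,d$ and regard
\begin{equation*}
	f(a) := a^\alpha - b^\alpha - c^\alpha + d^\alpha - (a-b-c+d)^\alpha
\end{equation*}
as a function of $a$. Since $\alpha \in [0,1]$ and $a \geq a-b-c+d \geq 0$, the power $x \mapsto x^{\alpha-1}$ is non-increasing on $(0,\infty)$, hence
\begin{equation*}
	f'(a) = \alpha a^{\alpha-1} - \alpha (a-b-c+d)^{\alpha-1} \leq 0,
\end{equation*}
so $f$ is non-increasing in $a$. The smallest admissible value of $a$ under the hypotheses $a \geq b$ and $a+d \geq b+c$ is $a_0 := b+c-d$ (which satisfies $a_0 \geq b$ because $c \geq d$). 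Hence it suffices to prove $f(a_0) \leq 0$, i.e.
\begin{equation*}
	(b+c-d)^\alpha + d^\alpha \leq b^\alpha + c^\alpha.
\end{equation*}

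For this last inequality I would introduce $g(t) := (b+c-t)^\alpha + t^\alpha$ on $[0,c]$, observe that $g(c) = b^\alpha + c^\alpha$, and compute
\begin{equation*}
	g'(t) = \alpha\bigl(t^{\alpha-1} - (b+c-t)^{\alpha-1}\bigr).
\end{equation*}
Since $x \mapsto x^{\alpha-1}$ is non-increasing and $t \leq c \leq \tfrac{b+c}{2}$ (because $b \geq c$), we have $t \leq b+c-t$ and therefore $g'(t) \geq 0$ on $[0,c]$. Thus $g(d) \leq g(c) = b^\alpha + c^\alpha$, which is exactly what was needed.

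The whole argument is routine calculus of one variable; the only obstacle is spotting that the joint constraint $a+d \geq b+c$ pins down the relevant extremum at $a = b+c-d$, and that the subsequent base case is again monotone because the ordering $d \leq c \leq b$ forces $d \leq \tfrac{b+c}{2}$. No subtler tool (like concavity of $x^\alpha$ via \autoref{lmm:f1}) seems to be required for this sharper version.
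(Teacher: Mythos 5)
Your proof is correct, but it takes a different route from the paper. The paper's argument works with the superadditivity defect $f(x,y) = x^\alpha + y^\alpha - (x+y)^\alpha$, which is non-decreasing in each argument; setting $\delta = a-b$ and $\epsilon = c-d$ (so $\delta \geq \epsilon \geq 0$ by the hypotheses) it compares $f(b,\delta) \geq f(d,\epsilon)$ to obtain $a^\alpha - b^\alpha - c^\alpha + d^\alpha \leq \delta^\alpha - \epsilon^\alpha$, and then finishes with the subadditivity bound $\delta^\alpha - \epsilon^\alpha \leq (\delta-\epsilon)^\alpha = (a-b-c+d)^\alpha$. You instead fix $b,c,d$, observe that the difference of the two sides is non-increasing in $a$ (because the right-hand side grows at least as fast, by monotonicity of $x \mapsto x^{\alpha-1}$), and thereby reduce to the extreme case $a = b+c-d$ where the right-hand side vanishes; the remaining inequality $(b+c-d)^\alpha + d^\alpha \leq b^\alpha + c^\alpha$ is a majorization-type statement that you settle with a second one-variable monotonicity argument. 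Your reduction makes the role of the constraint $a+d \geq b+c$ very transparent (it pins down the binding extremum) and avoids the intermediate quantity $\delta^\alpha - \epsilon^\alpha$ altogether, at the cost of differentiating near the boundary where $a-b-c+d = 0$ (harmless, since continuity plus the sign of the derivative on the open interval suffices); the paper's version reuses one monotone two-variable function for the main comparison and never touches that boundary. Both arguments are elementary and both are complete, modulo the degenerate conventions (such as $0^0$) that the paper explicitly sets aside.
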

\begin{proof}
	Define $f(x,y) = x^\alpha+y^\alpha-(x+y)^\alpha$ for $x,y\geq0$.
	Then $\partial_xf(x,y) = \alpha(x^{\alpha-1}-(x+y)^{\alpha-1}) \geq 0$ and similarly $\partial_yf(x,y) \geq 0$.
	Set $\delta := a-b$ and $\epsilon := c-d$. The assumptions ensure $\delta\geq\epsilon\geq0$.
	Then, 
	\begin{equation*}
		f(b,\delta) \geq f(b,\epsilon) \geq f(d,\epsilon)
		\eqfs
	\end{equation*}
	Thus,
	\begin{align*}
		0 
		&\geq 
		f(d,\epsilon) - f(b,\delta) 
		\\&= 
		d^\alpha+\epsilon^\alpha-(d+\epsilon)^\alpha
		-b^\alpha-\delta^\alpha+(b+\delta)^\alpha
		\\&=
		d^\alpha+\epsilon^\alpha-c^\alpha
		-b^\alpha-\delta^\alpha+a^\alpha
		\eqfs
	\end{align*}
	With this we get
	\begin{align*}
		d^\alpha-c^\alpha-b^\alpha+a^\alpha
		&\leq
		\delta^\alpha-\epsilon^\alpha
		\\&\leq 
		(\delta-\epsilon)^\alpha
		\\&=
		(a-b-c+d)^\alpha
		\eqfs
	\end{align*}
\end{proof}%
For $a\geq c$, the remaining case is solved by following lemma.
\begin{lemma}\label{lmm:rascgeqacandageqc}
	Let $\alpha\in[0,1]$.
	Let $a,b,c\geq0$, $s\in[-1,1]$.
	Assume $\frac12 b \leq sc$, $sc \geq a-b$, and $a\geq c$.
	Then
	\begin{equation*}
		a^{2\alpha}-c^{2\alpha}-(a-b)^{2\alpha}+(c^2 - 2 s c b + b^2)^\alpha 
		\leq 
		2 b^\alpha (a-sc)^{\alpha}
		\leq 
		2 b (a-sc)^{2\alpha-1}
		\eqfs
	\end{equation*}
\end{lemma}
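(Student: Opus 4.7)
The plan is to reduce the first claimed inequality to an algebraic inequality involving four non-negative quantities. Set $A=a^2$, $B=(a-b)^2$, $C=c^2$, $D=c^2-2scb+b^2$, so that $(a-b)^{2\alpha}=B^\alpha$ and $(c^2-2scb+b^2)^\alpha=D^\alpha$. A direct computation gives $A+D-B-C=2b(a-sc)$, and the hypotheses $a\geq c$, $b\leq 2sc$, and $b\leq 2sc\leq 2c\leq 2a$ yield the orderings $A\geq C\geq D\geq 0$ and $A\geq B\geq 0$. Writing $u:=A-C=a^2-c^2\geq 0$ and $v:=A+D-B-C=2b(a-sc)\geq 0$, the desired inequality becomes
\[
A^\alpha+D^\alpha-B^\alpha-C^\alpha \ \leq\ 2^{1-\alpha}v^\alpha \ =\ 2 b^\alpha(a-sc)^\alpha,
\]
and the identity $D-B=v-u$ splits the argument naturally into two cases.

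In the first case, $v\geq u$, equivalently $D\geq B$. Subadditivity of $x\mapsto x^\alpha$ on $[0,\infty)$ (valid for $\alpha\in[0,1]$) applied to $A=C+u$ gives $A^\alpha-C^\alpha\leq u^\alpha$, and similarly applied to $D=B+(v-u)$ gives $D^\alpha-B^\alpha\leq (v-u)^\alpha$. Adding the two and applying Jensen's inequality (concavity of $x^\alpha$) to the pair $(u,v-u)$ yields $u^\alpha+(v-u)^\alpha\leq 2^{1-\alpha}v^\alpha$, which is exactly the required bound.

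The second case, $v<u$ (so $D<B$), is the main obstacle. Simply discarding the now-negative term $D^\alpha-B^\alpha$ would give only $A^\alpha-C^\alpha\leq u^\alpha$, which is weaker than $2^{1-\alpha}v^\alpha$ (e.g.\ for $\alpha=1$ the two sides are $u$ versus $v$ with $u>v$). The cancellation coming from $B^\alpha-D^\alpha>0$ must be used. The key observation is that the forward-difference function $g_x(y):=(x+y)^\alpha-x^\alpha$ is nonincreasing in $x\geq 0$, since its $x$-derivative $\alpha[(x+y)^{\alpha-1}-x^{\alpha-1}]$ is nonpositive by concavity of $x\mapsto x^\alpha$. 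Since $D\leq C$, this gives $B^\alpha-D^\alpha=g_D(u-v)\geq g_C(u-v)$, and therefore
\[
A^\alpha-C^\alpha-(B^\alpha-D^\alpha)\ \leq\ g_C(u)-g_C(u-v) = (C+u)^\alpha-(C+u-v)^\alpha\ \leq\ v^\alpha,
\]
where the last step is subadditivity again. Since $v^\alpha\leq 2^{1-\alpha}v^\alpha$ for $\alpha\leq 1$, this concludes the first inequality.

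The second inequality $2b^\alpha(a-sc)^\alpha\leq 2b(a-sc)^{2\alpha-1}$ is immediate: dividing through, it is equivalent to $(a-sc)^{1-\alpha}\leq b^{1-\alpha}$, and this follows from the hypothesis $sc\geq a-b$ (so $a-sc\leq b$) together with $1-\alpha\geq 0$.
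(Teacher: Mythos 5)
Your proof is correct and follows essentially the same route as the paper: after passing to the squared quantities and using $a^2-(a-b)^2-c^2+(c^2-2scb+b^2)=2b(a-sc)$, your Case 1 (subadditivity plus concavity when $D\geq B$) and Case 2 (monotonicity of $x\mapsto(x+y)^\alpha-x^\alpha$ when $D<B$) are exactly inlined versions of the paper's auxiliary Lemmas \ref{lmm:f1} and \ref{lmm:f2}, and the final step $a-sc\leq b$ is handled identically.
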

\begin{proof}
	Because $a \geq c$ and $\frac12 b \leq sc$, we have $a-b \geq a-2sc \geq a-2c \geq -c$. Hence, $a^2 \geq \max(c^2, (a-b)^2)$.
	Thus, applying either \autoref{lmm:f1} (if $c^2 - 2 s c b + b^2$ is larger then either $c^2$ or $(a-b)^2$) or \autoref{lmm:f2} yields
	\begin{align*}
		&a^{2\alpha}-c^{2\alpha}-(a-b)^{2\alpha}+(c^2 - 2 s c b + b^2)^\alpha 
		\\&\leq 
		2^{1-\alpha}
		\br{a^{2}-c^{2}-(a-b)^{2}+(c^2 - 2 s c b + b^2)}^\alpha 
		\\&=
		2 b^\alpha (a-sc)^{\alpha}
		\eqfs
	\end{align*}
	The condition $0 \leq a-sc \leq b$ implies
	\begin{equation*}
		2 b^\alpha (a-sc)^{\alpha}
		\leq 
		2 b (a-sc)^{2\alpha-1}
		\eqfs
	\end{equation*}
\end{proof}
\subsubsection{The Case $a\leq c$}
For the case $c\geq a$, we only need $ra - sc \geq c-a$	(for $r=1$), i.e., $sc \leq 2a - c$.
Assume $c\geq a$, $sc \geq a-b$, $\frac12b \leq sc$, and $sc \leq 2a - c$.
Then 
\begin{align*}
	c^2 &\geq c^2-2scb+b^2 \geq (a-b)^2\\
	c^2 &\geq a^2 \geq (a-b)^2
\end{align*}
We distinguish $\frac12 b \leq a-sc$ and $\frac12 b \geq a-sc$.
\begin{lemma}[$\frac12 b\leq a-sc$]\label{lmm:bleqasc}
	Let $\alpha\in[0,1]$.
	Let $a,b,c\geq0$, $s\in[-1,1]$.
	Assume $\frac12 b \leq sc$, $sc \geq a-b$, $c\geq a$, $sc \leq 2a - c$, and $\frac12 b\leq a-sc$.
	Then
	\begin{equation*}
		a^{2\alpha}-c^{2\alpha}-(a-b)^{2\alpha}+(c^2 - 2 s c b + b^2)^\alpha 
		\leq  
		2 b^\alpha (a-sc)^{\alpha}
		\eqfs
	\end{equation*}
\end{lemma}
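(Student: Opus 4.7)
The plan is to follow the pattern used for the neighbouring lemmas in this appendix: combine an algebraic rewriting of $c^2-2scb+b^2$ with the subadditivity of $x\mapsto x^\alpha$ on $[0,\infty)$ (valid for $\alpha\in[0,1]$), and reduce via monotonicity to an extremal configuration in $c$. First I will verify the ordering of the four squared arguments under the hypotheses: from $c\geq a$, $b\leq 2sc\leq 2a$, and $sc\leq a$, one gets $c^2\geq\max(a^2,\,c^2-2scb+b^2)\geq\min(a^2,\,c^2-2scb+b^2)\geq (a-b)^2$. Since the largest term $c^2$ carries a negative sign in the expression, \autoref{lmm:f1} and \autoref{lmm:f2} cannot be applied directly, exactly as happened in the proof of \autoref{lmm:rascgeqacandageqc}; the replacement tool will be the identity
\[
	c^2-2scb+b^2 \;=\; (a-b)^2 + (c^2-a^2) + 2b(a-sc),
\]
all three summands on the right being non-negative under the hypotheses.

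Next I will treat the endpoint $c=a$, where the inequality reduces to $(a^2-2sab+b^2)^\alpha - (a-b)^{2\alpha} \leq 2\,b^\alpha a^\alpha(1-s)^\alpha$. This follows immediately from the identity $a^2-2sab+b^2=(a-b)^2+2ab(1-s)$ together with subadditivity: the left side is bounded by $(2ab(1-s))^\alpha = 2^\alpha a^\alpha b^\alpha(1-s)^\alpha\leq 2\,a^\alpha b^\alpha(1-s)^\alpha$, and the right-hand side at $c=a$ is exactly $2b^\alpha(a-sc)^\alpha = 2b^\alpha a^\alpha(1-s)^\alpha$. So the inequality holds (with equality in the limit $\alpha\to 1$).

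The remaining step is to extend from $c=a$ to general $c\in[a,\,2a/(1+s)]$. For this I would compute
\[
	\partial_c(\mathrm{LHS}) \;=\; 2\alpha\bigl[(c-sb)(c^2-2scb+b^2)^{\alpha-1} - c^{2\alpha-1}\bigr]
\]
and show $\partial_c(\mathrm{LHS})\leq 0$ using $c^2-2scb+b^2\geq (c-sb)^2$ (so that $(c^2-2scb+b^2)^{\alpha-1}\leq (c-sb)^{2\alpha-2}$ by $\alpha-1\leq 0$) together with $2\alpha-1\geq 0$ and $c-sb\leq c$. Since the right-hand side is also decreasing in $c$, a direct monotonicity comparison is not enough; instead I would iterate the subadditivity bound to write
\[
	(c^2-2scb+b^2)^\alpha \;\leq\; (a-b)^{2\alpha} + (c^2-a^2)^\alpha + (2b(a-sc))^\alpha,
\]
and then absorb the residual $a^{2\alpha}-c^{2\alpha}+(c^2-a^2)^\alpha$ into the remaining budget $(2-2^\alpha)\,b^\alpha(a-sc)^\alpha$ using $c^2-a^2=(c-a)(c+a)\leq (a-sc)(c+a)$, which is a consequence of the constraint $sc\leq 2a-c$. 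This last step will likely require splitting according to whether $c+a\leq 2b$ or $c+a\geq 2b$ and, correspondingly, which of $a^2$ or $c^2-2scb+b^2$ is larger.

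The hard part will be the tightness of the inequality at $\alpha=1$: for $\alpha=1$ the left-hand side simplifies to $2b(a-sc)$ identically in $c$, and so does the right-hand side, so every intermediate estimate has to be sharp in that limit. A single coarse application of subadditivity will therefore be too lossy; the identity above is indispensable, and the casework on $a$ versus $b$ must be performed delicately, mirroring the technique used in the surrounding lemmas of this appendix.
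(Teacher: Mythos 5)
Your setup (the identity $c^2-2scb+b^2=(a-b)^2+(c^2-a^2)+2b(a-sc)$, the ordering of the four squared terms, and the endpoint case $c=a$) is correct, but the step that is supposed to carry the proof fails. After splitting off $(c^2-a^2)^\alpha$ and $(2b(a-sc))^\alpha$ by subadditivity, you must show the absorption inequality
\begin{equation*}
	a^{2\alpha}-c^{2\alpha}+(c^2-a^2)^\alpha \ \leq\ (2-2^\alpha)\, b^\alpha (a-sc)^\alpha ,
\end{equation*}
and this is false under the lemma's hypotheses; no split into the cases $c+a\lessgtr 2b$ can repair it. Take $\alpha=\tfrac12$, $a=1$, $c=1.1$, $sc=0.9$ (so $s=9/11$), $b=0.2$: all five hypotheses hold (two with equality), the left side is $-0.1+\sqrt{0.21}\approx 0.358$, while the right side is $(2-\sqrt2)\sqrt{0.02}\approx 0.083$. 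In fact your whole chained upper bound $a^{2\alpha}-c^{2\alpha}+(c^2-a^2)^\alpha+(2b(a-sc))^\alpha\approx 0.558$ exceeds the target $2b^\alpha(a-sc)^\alpha\approx 0.283$, whereas the true left-hand side of the lemma is only $\approx 0.043$. The loss is structural: when $c-a$ and $a-sc$ are both of order $u\ll a$, the leftover $a^{2\alpha}-c^{2\alpha}+(c^2-a^2)^\alpha$ is of order $(ua)^\alpha$, while the available budget $b^\alpha(a-sc)^\alpha$ is only of order $u^{2\alpha}$ (recall $a-sc\le b\le 2(a-sc)$), so subadditivity applied to peel off $(c^2-a^2)^\alpha$ is irreparably lossy. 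A side remark: your monotonicity computation $\partial_c(\mathrm{LHS})\le 0$ uses $2\alpha-1\ge 0$, while the lemma is claimed for all $\alpha\in[0,1]$; you rightly abandoned that route anyway, since the right-hand side also decreases in $c$.

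For comparison, the paper proceeds quite differently: with $a,b,c$ fixed it shows the left-hand side minus the right-hand side is nondecreasing in $x=sc$ (using $b(a-x)\le c^2-2xb+b^2$), so one may put $x=a-\tfrac b2$; the resulting expression is then nondecreasing in $c$ (using $b\le a$), so one may put $c=a+b$; what remains is the genuinely hard two-variable inequality $a^{2\alpha}+(a^2+3b^2)^\alpha\le(a+b)^{2\alpha}+(a-b)^{2\alpha}+2(b^2/2)^\alpha$ for $a\ge b$, which the paper settles by differentiating in $\alpha$ and proving a delicate logarithmic inequality via a Taylor expansion. Your proposal never reaches a tractable extremal configuration of this kind, and the reduction it does propose is blocked by the counterexample above, so the argument as written cannot be completed.
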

\begin{proof}
	The conditions imply
	\begin{equation*}
		\max\brOf{\frac 12 b,\, a-b} \leq sc \leq \min\brOf{a-\frac12b,\, 2a-c}
		\eqfs
	\end{equation*}
	In particular, $\frac 12 b \leq a-\frac12b$, and $a-b \leq 2a-c$. Thus, $a+b \geq c \geq a \geq b$.
	
	Fix $a,b,c \geq 0$.
	Assume $c \geq a$. Let $x \in[0,a)$. Then $a-x>0$ and $c^2 - 2 x b + b^2 >0$.
	Define 
	\begin{equation*}
		f(x) := a^{2\alpha}-c^{2\alpha}-(a-b)^{2\alpha}+(c^2 - 2 x b + b^2)^\alpha  - 2 b^\alpha (a-x)^{\alpha}
		\eqfs
	\end{equation*}
	We have
	\begin{equation*}
		\frac{f\pr(x) }{2 \alpha b} = -(c^2 - 2 x b + b^2)^{\alpha-1}  + b^{\alpha-1} (a-x)^{\alpha-1}
		\eqfs
	\end{equation*}
	Furthermore,
	\begin{align*}
		&a-c\leq 0 \leq (c-b)^2
		\\\Rightarrow\qquad&
		2ab-2cb \leq 0 \leq c^2+b^2-2cb
		\\\Rightarrow\qquad&
		xb \leq 2ab-cb \leq c^2+b^2-cb \leq c^2+b^2 -ab
		\\\Rightarrow\qquad&
		ab -xb \leq c^2-2xb + b^2
		\\\Rightarrow\qquad&
		b^{\alpha-1} (a-x)^{\alpha-1} \geq (c^2 - 2 x b + b^2)^{\alpha-1}
		\\\Rightarrow\qquad&
		f\pr(x) \geq 0\eqfs
	\end{align*}
	We define
	\begin{equation*}
		g(c) := f(a-\frac12b) = a^{2\alpha}-c^{2\alpha}-(a-b)^{2\alpha}+(c^2 - 2 a b + 2 b^2)^\alpha  - 2^{1-\alpha} b^{2\alpha}
		\eqfs
	\end{equation*}
	Thus,
	\begin{equation*}
		\frac{g\pr(c)}{2\alpha c} = -c^{2\alpha-2}+\br{c^2 - 2 a b + 2 b^2}^{\alpha-1} \geq 0
		\eqfs
	\end{equation*}
	Define
	\begin{align*}
		h (a,b) &:= 
		g(a+b) 
		\\&= 
		a^{2\alpha}-(a+b)^{2\alpha}-(a-b)^{2\alpha}+(a^2  + 3 b^2)^\alpha  - 2^{1-\alpha} b^{2\alpha}
		\\&=
		a^{2\alpha}-(a+b)^{2\alpha}-(a-b)^{2\alpha}+(a^2  + 3 b^2)^\alpha  - 2\br{\frac{b^2}{2}}^{\alpha}
		\eqfs
	\end{align*}
	The next lemma shows $h(a,b) \leq 0$ for $a\geq b$. Thus, $g(c) \leq 0$. Thus, $f(x) \leq 0$.
\end{proof}
\begin{lemma}
	Let $\alpha\in[0, 1]$, $a,b\geq0$. Assume $a\geq b$.
	Then
	\begin{equation*}
		a^{2\alpha}+(a^2  + 3 b^2)^\alpha \leq (a+b)^{2\alpha} + (a-b)^{2\alpha} + 2\br{\frac{b^2}{2}}^{\alpha}
		\eqfs
	\end{equation*}
\end{lemma}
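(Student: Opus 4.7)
By homogeneity of degree $2\alpha$ in $(a,b)$, I would first normalize $a=1$ and set $t:=b\in[0,1]$. The inequality then becomes
\[
f(t,\alpha) \;:=\; (1+t)^{2\alpha}+(1-t)^{2\alpha}+2\!\left(\tfrac{t^{2}}{2}\right)^{\!\alpha}-1-(1+3t^{2})^\alpha \;\geq\; 0 \eqcm
\]
with the boundary identities $f(0,\alpha)=0$ for all $\alpha$ and $f(t,1)=0$ (both sides collapse to $2+3t^{2}$ at $\alpha=1$). The key structural observation is that the six quantities $(1+t)^{2}, (1-t)^{2}, t^{2}/2, t^{2}/2$ (with sign $+1$) and $1, 1+3t^{2}$ (with sign $-1$) have weighted sum equal to zero.

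The plan is to combine two facts:
\begin{enumerate}[label=(\roman*)]
\item $\alpha\mapsto f(t,\alpha)$ is convex on $[0,1]$ for every fixed $t\in[0,1]$;
\item $\partial_\alpha f(t,1)\leq 0$ for every $t\in[0,1]$.
\end{enumerate}
Given (i) and (ii), convexity together with $f(t,1)=0$ yields
\[
f(t,\alpha) \;\geq\; f(t,1)+(\alpha-1)\,\partial_\alpha f(t,1) \;=\; (\alpha-1)\,\partial_\alpha f(t,1) \;\geq\; 0
\]
for $\alpha\in[0,1]$, since both factors on the right are non-positive.

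For (i), direct computation gives
\[
\partial_\alpha^{2}f \;=\; 4(1+t)^{2\alpha}[\ln(1+t)]^{2}+4(1-t)^{2\alpha}[\ln(1-t)]^{2}+2(t^{2}/2)^{\alpha}[\ln(t^{2}/2)]^{2}-(1+3t^{2})^\alpha[\ln(1+3t^{2})]^{2}\eqfs
\]
The single negative term is controlled by the first positive one: since $(1+t)^{2}-(1+3t^{2})=2t(1-t)\geq 0$ on $[0,1]$, we have $(1+3t^{2})^\alpha\leq(1+t)^{2\alpha}$ and $\ln(1+3t^{2})\leq \ln(1+t)^{2}=2\ln(1+t)$, whence $(1+3t^{2})^\alpha[\ln(1+3t^{2})]^{2}\leq 4(1+t)^{2\alpha}[\ln(1+t)]^{2}$, so $\partial_\alpha^{2}f\geq 0$.

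For (ii), writing $\phi(x):=x\ln x$, the claim $\partial_\alpha f(t,1)\leq 0$ reads
\[
h(t)\;:=\;\phi((1+t)^{2})+\phi((1-t)^{2})+2\phi(t^{2}/2)-\phi(1)-\phi(1+3t^{2})\;\leq\;0 \eqcm
\]
a single-variable inequality on $[0,1]$ with $h(0)=h'(0)=0$ and $h(1)=-\ln 2$. Although $\phi$ is convex and both multisets have identical total sum $2+3t^{2}$, this cannot be obtained from Karamata's inequality alone: the partial sums of the two sorted multisets cross (at $k=1$ the LHS has the larger maximum, since $(1+t)^{2}\geq 1+3t^{2}$, but at $k=2$ the RHS pulls ahead). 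Accordingly, the \emph{main obstacle} is establishing $h\leq 0$ by direct analysis, in the same brute-force spirit as the other lemmas of this appendix. I would split $[0,1]$ at the threshold $t=2-\sqrt{2}$, at which the sorted order of $(1-t)^{2}$ and $t^{2}/2$ flips, and within each sub-interval study $h'(t)$ to locate its extrema (numerical inspection shows $h$ has a local minimum near $t\approx 0.25$, a local maximum near $t\approx 0.55$, then decreases to $-\ln 2$), verifying non-positivity at each such point by substitution and a further differentiation step — exactly the case analysis pattern used for \autoref{lmm:bleqasc} and its relatives.
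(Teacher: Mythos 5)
Your reduction is sound and is in substance the same as the paper's: after normalizing $t=b/a$, the paper also works with the function $1+(1+3t^2)^\alpha-(1+t)^{2\alpha}-(1-t)^{2\alpha}-2(t^2/2)^\alpha$ (the negative of your $f$), establishes concavity/monotonicity in $\alpha$ by exactly your comparison $(1+3t^2)^\alpha\log(1+3t^2)^2\leq(1+t)^{2\alpha}\log((1+t)^2)^2$ (using $1+3t^2\leq(1+t)^2$ on $[0,1]$), uses that both sides agree at $\alpha=1$, and reduces everything to the sign of the $\alpha$-derivative at $\alpha=1$. Your item (ii), $h(t)\leq 0$, is precisely the paper's inequality $\ell(x)\geq 0$, which the paper isolates as a separate lemma. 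Your convexity computation and the tangent-line conclusion $f(t,\alpha)\geq(\alpha-1)\,\partial_\alpha f(t,1)\geq 0$ are correct.

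The genuine gap is that you never prove (ii), and (ii) is the entire difficulty of this lemma. Your plan — split at $t=2-\sqrt2$, locate the extrema of $h$ by numerical inspection near $t\approx0.25$ and $t\approx0.55$, and "verify non-positivity at each such point by substitution" — is not a proof: the critical points of $h$ solve a transcendental equation and are not available in closed form, so there is nothing to substitute, and the inequality is tight there (the interior local maximum of $h$ is only about $-0.02$), so a crude bound will not close it. A rigorous argument needs an explicit, verifiable lower bound for $-h=\ell$; the paper does this by computing $\ell^{(4)}$, checking it is nonnegative on $(0,1)$ (positivity of $81x^8+324x^6-186x^4+36x^2+1$), bounding $\ell$ from below by its third-order Taylor polynomial at $x_0=\tfrac13$ with explicitly evaluated coefficients, showing that cubic is nonnegative for $x\geq\tfrac1{10}$, and treating $x\in[0,\tfrac1{10}]$ separately via the elementary bounds $4\log\frac{1+x}{1-x}\leq 10x\leq 2x\log\bigl(2(1+3x^2)^3/(x^2(1-x^2)^2)\bigr)$ together with $\ell(0)=0$. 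Until you supply an argument of this kind (or an equivalent rigorous certificate), your proposal establishes only the easy half of the lemma.
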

\begin{proof}
	Set $x =\frac{b}{a} \in [0,1]$. Define
	\begin{equation*}
		f(\alpha,x) = 1+\br{1+3x^2}^\alpha-\br{1+x}^{2\alpha}-\br{1-x}^{2\alpha}-2\br{\frac{x^2}2}^\alpha
		\eqfs
	\end{equation*}
	We have
	\begin{align*}
		\frac{\partial_\alpha f(\alpha, x)}{\alpha} &= \br{1+3x^2}^\alpha \log\brOf{1+3x^2}-\br{1+x}^{2\alpha}\log\brOf{(1+x)^2}
		\\&\hphantom{=}
		-\br{1-x}^{2\alpha}\log\brOf{(1-x)^2}-2\br{\frac{x^2}2}^\alpha\log\brOf{\frac{x^2}2}
		\\&=:g(x,\alpha)\eqcm
	\end{align*}
	\begin{align*}
		\frac{\partial_\alpha g(\alpha, x)}{\alpha} &=
		\br{1+3x^2}^\alpha \log\brOf{1+3x^2}^2-\br{1+x}^{2\alpha}\log\brOf{(1+x)^2}^2
		\\&\hphantom{=}
		-\br{1-x}^{2\alpha}\log\brOf{(1-x)^2}^2-2\br{\frac{x^2}2}^\alpha\log\brOf{\frac{x^2}2}^2
		\\&\leq
		\br{1+3x^2}^\alpha \log\brOf{1+3x^2}^2-\br{1+x}^{2\alpha}\log\brOf{(1+x)^2}^2
		\\&=:h(x,\alpha)
		\eqfs
	\end{align*}
	For $x\in[0,1]$, we have $1+3x^2 \leq (1+x)^2$. Thus,
	\begin{equation*}
		\br{\frac{1+3x^2}{(1+x)^2}}^\alpha \leq 1 \leq \br{\frac{\log\brOf{(1+x)^2}}{\log\brOf{1+3x^2}}}^2
		\eqfs
	\end{equation*}
	Thus,
	\begin{equation*}
		\br{1+3x^2}^\alpha \log\brOf{1+3x^2}^2 \leq \br{1+x}^{2\alpha}\log\brOf{(1+x)^2}^2
	\end{equation*}
	Thus, $h(x,\alpha) \leq 0$ and $\partial_\alpha g(\alpha,x) \leq 0$. Thus, $g(\alpha,x) \geq g(1,x)$ and
	\begin{align*}
		g(1,x) &= 
		\br{1+3x^2} \log\brOf{1+3x^2}-\br{1+x}^{2}\log\brOf{(1+x)^2}
		\\&\hphantom{=}
		-\br{1-x}^{2}\log\brOf{(1-x)^2}-2\br{\frac{x^2}2}\log\brOf{\frac{x^2}2}
		\\&=:\ell(x)
		\eqfs
	\end{align*}
	The next lemma shows $\ell(x) \geq 0$.
	Thus, $g(\alpha,x) \geq g(1,x) \geq 0$. Thus $\partial_\alpha f(\alpha,x) \geq 0$. Thus, $f(\alpha,x) \leq f(1,x)$ and
	\begin{equation*}
		f(1,x) = 1+\br{1+3x^2}-\br{1+x}^{2}-\br{1-x}^{2}-2\br{\frac{x^2}2} = 0
		\eqfs
	\end{equation*}
	Thus, $f(\alpha, x) \leq 0$.
\end{proof}
\begin{lemma}
	Let $x \in \R$.
	Define
	\begin{align*}
		f(x) &:= \br{1+3x^2} \log\brOf{1+3x^2}-\br{1+x}^{2}\log\brOf{(1+x)^2}
				\\&\hphantom{=}-\br{1-x}^{2}\log\brOf{(1-x)^2}-x^2\log\brOf{\frac{x^2}2}
				\eqfs
	\end{align*}
	Then
	\begin{equation*}
		f(x) \geq 0
		\eqfs
	\end{equation*}
\end{lemma}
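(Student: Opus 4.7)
Every term of $f$ is invariant under $x \mapsto -x$ except the pair $(1\pm x)^2 \log((1 \pm x)^2)$, which get exchanged; hence $f$ is even and it suffices to prove $f(x) \geq 0$ for $x \geq 0$. At the boundary one computes $f(0) = 0$, while the Taylor expansion about zero yields
\begin{equation*}
f(x) \ =\ x^2 \log\!\left(\tfrac{2}{\mathrm{e}^3 x^2}\right) + \tfrac{29}{6}\,x^4 + O(x^6),
\end{equation*}
which is strictly positive for $x>0$ sufficiently small; in the other direction, as $x\to\infty$ the dominant contribution is $f(x) \sim x^2 \log 54 > 0$. The non-trivial content of the lemma is therefore to rule out negative excursions of $f$ on a bounded interval.

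Attempting to prove $f'(x) \geq 0$ on all of $[0,\infty)$ will fail: numerical inspection shows that $f$ has an interior local minimum on $(0,1)$ near $x \approx 0.5$ at which $f \approx 0.021$, so $f'$ changes sign. The proof must instead localise the zeros of $f'$ and control $f$ at each critical point. The proposed route is to compute
\begin{equation*}
f'(x) \ =\ 6 x \log(1+3x^2) - 4(1+x)\log(1+x) + 4(1-x)\log|1-x| - 4x \log|x| + 2x \log 2,
\end{equation*}
split the domain into $[0,1]$ and $[1,\infty)$ so that the absolute values disappear, and use one or two further differentiations to determine the sign pattern of $f'$ on each piece. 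On each resulting monotone sub-interval, $f$ attains its minimum at an endpoint, and there the value of $f$ can either be computed in closed form (at $0$, $1$, and $\infty$) or bounded below by elementary $\log$-estimates. An equivalent viewpoint is afforded by the rewriting $f(x) = \phi(1+3x^2) - \phi((1+x)^2) - \phi((1-x)^2) - 2\phi(x^2/2)$ with $\phi(t) := t\log t$, combined with the mass-matching identity
\begin{equation*}
(1+3x^2) + 1 \ =\ (1+x)^2 + (1-x)^2 + x^2,
\end{equation*}
so that the two collections $(1+3x^2, 1)$ and $((1+x)^2, (1-x)^2, x^2/2, x^2/2)$ have equal total weight and a weighted second-order convexity estimate for $\phi$ (the curvature term absent from a straight Jensen bound) will supply the required positive reserve.

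The main obstacle will be the tightness of the inequality at the interior minimum: the slack is of order $10^{-2}$, so any estimate that discards second-order information in $\log$ (a tangent-line or secant bound, or a first-order Taylor approximation of $\phi$) will overshoot. The remedy, in line with the pattern of proof used throughout \autoref{app:power_inequality}, is either to retain an explicit quadratic remainder in the Taylor expansion or to subdivide the critical interval finely enough that a coarser estimate suffices on each piece; on each such piece the sign of the resulting expression is then settled by a further elementary monotonicity argument. I expect the finished proof to be conceptually elementary but heavily computational, matching the case-analytic style of the surrounding lemmas.
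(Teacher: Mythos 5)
Your structural analysis is sound: $f$ is even, your expansion at $0$ and your formula for $f'$ are correct, and your numerical observation that $f$ dips to an interior local minimum of only about $0.02$ near $x\approx 0.5$ (so that no global monotonicity argument can work) is exactly the difficulty any proof must negotiate. But what you have written is a programme, not a proof: the decisive step — certifying positivity across that interior dip, where the slack is of order $10^{-2}$ — is never carried out. Your plan ``on each monotone sub-interval the minimum is at an endpoint, where $f$ is computable in closed form (at $0$, $1$, $\infty$) or bounded below by elementary log-estimates'' fails as stated, because the endpoints of the monotone pieces are precisely the interior zeros of $f'$, which are not available in closed form; producing a quantitative lower bound for $f$ there is the whole task, and ``elementary log-estimates'' is not yet an argument. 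Your alternative route through $\phi(t)=t\log t$ and the mass-matching identity $(1+3x^2)+1=(1+x)^2+(1-x)^2+x^2$ also cannot be closed by any straightforward convexity or majorization argument: on $[0,1]$ one has $(1+x)^2\ge 1+3x^2$, so the largest entry sits in the subtracted collection and Jensen/Karamata points the wrong way; the ``weighted second-order convexity estimate'' that is supposed to supply the positive reserve is left entirely unspecified. Finally, your treatment of $x>1$ is only asymptotic, so even that (easier) range is not settled by the proposal.

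For comparison, the paper closes the gap as follows (it treats only $x\in[0,1]$, which is all its application needs). On $[0,\tfrac1{10}]$ it proves $f'\ge 0$ from the two-sided bound $4\log\frac{1+x}{1-x}\le 10x\le 2x\log\frac{2(1+3x^2)^3}{x^2(1-x^2)^2}$ and uses $f(0)=0$. On $[\tfrac1{10},1]$ it shows $f^{(4)}\ge 0$ (positivity of $81x^8+324x^6-186x^4+36x^2+1$), so $f$ dominates its third-order Taylor polynomial $g_{1/3}$ at $x_0=\tfrac13$, and then verifies $g_{1/3}\ge 0$ on $[\tfrac1{10},1]$. This is precisely your ``Taylor expansion with an explicit remainder'' option, but executed with a concrete expansion point, a concrete order, and a concrete sign certificate for the remainder; until you supply such a certificate (or a finite subdivision with explicit derivative bounds on each piece, plus a separate argument for $x>1$ if you insist on all of $\R$), the lemma is not proved.
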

\begin{proof}
	Let us first calculate some derivatives:
	\begin{align*}
		f(x) &= x^2 \log\brOf{\frac{2\br{1+3x^2}^3}{x^2\br{1-x^2}^2}} - 4 x \log\brOf{\frac{1+x}{1-x}} + \log\brOf{\frac{1+3x^2}{\br{1-x^2}^2}}
		\eqcm
		\\
		f\pr(x) &= 2 x \log\brOf{\frac{2\br{1+3x^2}^3}{x^2\br{1-x^2}^2}} - 4 \log\brOf{\frac{1+x}{1-x}}\eqcm
		\\
		f\prr(x) &= 2 \log\brOf{\frac{2\br{1+3x^2}^3}{x^2\br{1-x^2}^2}} - \frac{12}{1+3x^2}\eqcm
		\\
		f\prrr(x) &= \frac{4\br{9x^4+24x^2-1}}{x(1-x)(1+x)\br{3x^2+1}^2}\eqcm
		\\
		f^{(4)}(x) &= \frac{4\br{81x^8+324x^6-186x^4+36x^2+1}}{x^2\br{1-x^2}^2\br{3x^2+1}^3}
		\eqfs
	\end{align*}
	We consider the cases $x \in [0,\frac1{10}]$ and  $x \in [\frac1{10},1]$ separately, and start with the latter.
	For $x_0 \in (0,1)$, define
	\begin{equation*}
		g_{x_0}(x) := f(x_0) + f\pr(x_0)\br{x-x_0} + \frac12f\prr(x_0)\br{x-x_0}^2 + \frac16f\prrr(x_0)\br{x-x_0}^3
		\eqfs
	\end{equation*}
	Then the Taylor-Expansion for $x\in(0,1)$ is
	\begin{equation*}
		f(x) = g_{x_0}(x) + \frac{1}{24} f^{(4)}(\xi(x,x_0)) \br{x-x_0}^4
		\eqcm
	\end{equation*}
	with suitable $\xi(x,x_0)$.
	One can show that $81x^4+324x^3-186x^2+36x^1+1 > 0$ for all $x\geq 0$. In particular, $f^{(4)}(x) \geq 0$ for $x \in[0,1]$.
	Thus,
	\begin{equation*}
		f(x) \geq  g_{x_0}(x)
		\eqfs
	\end{equation*}
	We use $x_0 = \frac13$:
	\begin{align*}
		f\brOf{\frac13} &= \frac{10}{3} \log(3) - \frac{47}{9}\log(2)\eqcm
		\\
		f\pr\brOf{\frac13} &= 2\log(3) - \frac{10}{3}\log(2)\eqcm
		\\
		f\prr\brOf{\frac13} &= -9+2\log(2)+6\log(3)\eqcm
		\\
		f\prrr\brOf{\frac13} &= \frac{27}{2}
		\eqfs
	\end{align*}
	One can show that $g_{\frac13}(x) \geq 0$ for $x \geq \frac1{10}$. Thus, $f(x)\geq 0$ for $x \in [\frac1{10},1]$.
	
	The case $x \in [0,\frac1{10}]$ is left.
	One can show
	\begin{align*}
		 4 \log\brOf{\frac{1+x}{1-x}}  \leq 10x \leq 2 x \log\brOf{\frac{2\br{1+3x^2}^3}{x^2\br{1-x^2}^2}}
	\end{align*}
	for $x \in [0,\frac1{10}]$.
	This implies $f\pr(x) \geq 0$. Together with $f(0)=0$, this yields $f(x)\geq 0$ for $x \in [0,\frac1{10}]$.
\end{proof}
\begin{lemma}[$\frac12 b\geq a-sc$ and $a \leq b$]\label{lmm:bgeqascandaleqb}
	Let $\alpha\in[\frac12,1]$.
	Let $a,b,c\geq0$, $s\in[-1,1]$.
	Assume 
	\begin{equation*}
		0 \leq c - a \leq a -sc \leq \frac b2 \leq sc \leq 2a-c \leq a \leq c
	\end{equation*}
	and
	$a \leq b$.
	Then
	\begin{equation*}
		a^{2\alpha}-c^{2\alpha}-|a-b|^{2\alpha}+(c^2 - 2 s c b + b^2)^\alpha 
		\leq 
		2 b (a-sc)^{2\alpha-1}
		\eqfs
	\end{equation*}
\end{lemma}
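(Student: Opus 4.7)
The plan is to prove the inequality by successive dimension reduction through monotonicity arguments, in the spirit of the neighboring lemmas \autoref{lmm:bleqasc} and \autoref{lmm:rascgeqacandageqc}. First introduce $u := a - sc$, which by the chain of hypotheses satisfies $u \in [0, b/2]$, and note the target bound reads $2bu^{2\alpha-1}$. Also, by hypothesis $a \leq b$ so $|a-b| = b-a$.

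\emph{First reduction (varying $c$).} Fix $a$, $b$, $u$ and treat $s = (a-u)/c$ as the dependent variable, so that $sc = a - u$ is fixed. Then
\begin{equation*}
c^2 - 2scb + b^2 = c^2 + K, \qquad K := b^2 - 2(a-u)b.
\end{equation*}
The hypothesis $b \leq 2sc = 2(a-u)$ yields $K \leq 0$. Consequently, since $\alpha - 1 \leq 0$, we have $(c^2+K)^{\alpha-1} \geq (c^2)^{\alpha-1}$, so the derivative of the left-hand side $F(c) := a^{2\alpha} - c^{2\alpha} - (b-a)^{2\alpha} + (c^2+K)^\alpha$ equals $2\alpha c\bigl[(c^2+K)^{\alpha-1} - c^{2\alpha-2}\bigr] \geq 0$. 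Hence $F$ is nondecreasing in $c$, and the worst case is the largest admissible $c$, namely $c = a+u$ (saturating $sc \leq 2a-c$).

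\emph{Second reduction.} Substituting $c = a+u$ and expanding yields $(a+u)^2 - 2(a-u)b + b^2 = (b-a)^2 + 2u(a+b) + u^2$, so the inequality reduces to a three-variable statement
\begin{equation*}
a^{2\alpha} - (a+u)^{2\alpha} - (b-a)^{2\alpha} + \bigl((b-a)^2 + 2u(a+b) + u^2\bigr)^\alpha \leq 2bu^{2\alpha-1},
\end{equation*}
with $u \in [0, \min(b/2, a - b/2)]$ and $b \in [a, 2a]$. The plan is now to apply a second monotonicity argument in $b$ (or equivalently in the dimensionless ratio $u/b$) to isolate an extremal configuration, the natural candidates being $u = b/2$ (giving $sc = a - b/2$, i.e., saturating $b/2 \leq sc$) and $b = a$ (saturating $a \leq b$, connecting to the borderline case treated in the companion lemma for $a \geq b$). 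Verifying the inequality at the one-parameter boundary should reduce to showing nonpositivity of an explicit function of a single real variable on a closed interval, tractable by computing one or two derivatives.

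The main obstacle I anticipate is that the inequality is sharp in the sense that the constant $2 = 8\alpha 2^{-2\alpha}\big|_{\alpha \in \{1/2, 1\}}$ is achieved in the limits, so no slack is available in the final step. In particular, after reducing to the boundary, the remaining one-variable inequality cannot be proved by crude estimates; it will likely require either a careful Taylor expansion (as in the auxiliary lemma in the proof of \autoref{lmm:bleqasc}) or a split into the regimes $\alpha$ near $1/2$ and $\alpha$ near $1$ with separate arguments. A secondary subtlety is keeping track of the admissible range of $u$ after fixing $c = a+u$, since the constraint $a - b/2 \geq u$ interacts nontrivially with $a \leq b$, forcing $b$ close to $a$ when $u$ is near $b/2$.
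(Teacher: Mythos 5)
Your first reduction is correct and is exactly the paper's first step: with $sc = a-u$ held fixed, the left-hand side is nondecreasing in $c$ because $b \leq 2sc$ makes $c^2-2scb+b^2 \leq c^2$ and $\alpha-1\leq 0$, so the worst case is $c = a+u = 2a-sc$, and the resulting three-variable inequality you write down is precisely the paper's reduced function $g(a,b,w)\le 0$ with $w = a-u$. However, from that point on your proposal is only a plan, and the two steps it defers are exactly where the work lies. First, the claimed "second monotonicity argument in $b$" is not established, and it is not routine: in the paper one must show $\partial_b g \leq 0$, which is done by bounding $\partial_b g$ by a multiple of an auxiliary function $h(a,b,w) = -(b-a)^{2\alpha-1}+(b-w)\bigl((2a-w)^2-2wb+b^2\bigr)^{\alpha-1}-(a-w)^{2\alpha-1}$ and proving $h\le 0$ via convexity of $h$ in $a$ (second derivative nonnegative) together with the endpoint checks $h(w,b,w)=0$ and $h(b,b,w)\le 0$ on $a\in[w,b]$. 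Your proposal does not even commit to the direction of monotonicity (you list both $u=b/2$ and $b=a$ as candidate extremals), whereas identifying and proving that $g$ is nonincreasing in $b$, so that only $b=a$ must be checked, is the crux.

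Second, the boundary case $b=a$ is itself a sharp inequality, namely (with $y=u$) $a^{2\alpha} + (4ay+y^2)^\alpha \leq (a+y)^{2\alpha} + 2a\,y^{2\alpha-1}$ for $0\le y\le a$, and your remark that it "should be tractable by computing one or two derivatives" substantially understates it: the paper needs a separate lemma with several nested derivative computations (including a rescaling $z=y/a$ and a sign analysis of a mixed partial) to close it, precisely because, as you note yourself, there is no slack. So while your route coincides with the paper's and the setup is sound, the proposal as written has a genuine gap: neither the monotonicity in $b$ nor the terminal one-variable inequality is proved, and both require nontrivial arguments rather than direct estimates.
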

\begin{proof} 
	Define
	\begin{equation*}
		f(a,b,y,w) := a^{2\alpha}-(y+a)^{2\alpha}-(b-a)^{2\alpha}+((y+a)^2 - 2 w b + b^2)^\alpha - 2 b (a-w)^{2\alpha-1}
		\eqfs
	\end{equation*}
	We have
	\begin{equation*}
		\partial_y f(a,b,y,w) = -2\alpha(y+a)^{2\alpha-1}+\alpha\br{2(y+a)}\br{(y+a)^2 - 2 w b + b^2}^{\alpha-1}
		\eqfs
	\end{equation*}
	Because of $\frac{b}2 \leq w$, we have
	\begin{equation*}
		(y+a)^2 \geq  (y+a)^2 - 2 w b + b^2
		\eqfs
	\end{equation*}
	Thus $\partial_y f(a,b,y,w) \geq 0$.
	Thus, for $y\in[0,a-w]$, we have $f(a,b,y,w) \leq f(a,b,a-w,w)$. We have
	\begin{align*}
		&f(a,b,a-w,w) 
		\\&= a^{2\alpha}-(2a-w)^{2\alpha}-(b-a)^{2\alpha}+((2a-w)^2 - 2 w b + b^2)^\alpha - 2 b (a-w)^{2\alpha-1} 
		\\&=: g(a,b,w)
		\eqcm
	\end{align*}
	\begin{align*}
		&\partial_b g(a,b,w) 
		\\&= -2\alpha(b-a)^{2\alpha-1}+2\alpha(b-w)((2a-w)^2 - 2 w b + b^2)^{\alpha-1} - 2 (a-w)^{2\alpha-1} 
		\\&\leq  2\alpha h(a,b,w)
		\eqcm
	\end{align*}
	\begin{equation*}
		h(a,b,w) = -(b-a)^{2\alpha-1}+(b-w)((2a-w)^2 - 2 w b + b^2)^{\alpha-1} -  (a-w)^{2\alpha-1}
		\eqfs
	\end{equation*}
	The conditions $0\leq a-w \leq \frac b2 \leq w$ and $a \leq b$ imply $w \leq a \leq b$. We have,
	\begin{align*}
		&\partial_a^2 h(a,b,w) \\= &-(2\alpha-1)(2\alpha-2)(b-a)^{2\alpha-3}\\&+4(\alpha-1)(\alpha-2)(2a-w)^2(b-w)((2a-w)^2 - 2 w b + b^2)^{\alpha-3} \\&-  (2\alpha-1)(2\alpha-2)(a-w)^{2\alpha-3} \\\geq&0
		\eqcm
	\end{align*}
	\begin{align*}
		h(w,b,w) 
		&=
		-(b-w)^{2\alpha-1}+(b-w)((2w-w)^2 - 2 w b + b^2)^{\alpha-1} -  (w-w)^{2\alpha-1}
		\\&=
		-(b-w)^{2\alpha-1}+(b-w)^{2\alpha-1} = 0
		\eqcm
	\end{align*}
	\begin{equation*}
		h(b,b,w) =-(b-b)^{2\alpha-1}+(b-b)((2b-w)^2 - 2 w b + b^2)^{\alpha-1} -  (b-b)^{2\alpha-1} \leq 0
		\eqfs
	\end{equation*}
	Thus, $h(a,b,w) \leq 0$ for all $a \in [w,b]$.
	Thus, $\partial_b g(a,b,w) \leq 0$.
	The conditions for $g$ are $0 \leq a-w \leq \frac b2 \leq w \leq a \leq b$.
	As $a \leq b$ and $\frac b2 \leq w$, we have $a \leq 2 w$ and thus $a \geq 2a-2w$.
	\begin{align*}
		g(a,a,w) 
		&= a^{2\alpha}-(2a-w)^{2\alpha}-(a-a)^{2\alpha}+((2a-w)^2 - 2 w a + a^2)^\alpha -
		\\&\qquad 2 a (a-w)^{2\alpha-1}
		\\&=
		a^{2\alpha}-(2a-w)^{2\alpha}+(5 a^2 - 6 a w + w^2)^\alpha - 2 a (a-w)^{2\alpha-1}
		\eqfs
	\end{align*}
	Set $w = a - y$, $y\in[0,a]$. We have
	\begin{align*}
		\ell(a,y) &:= a^{2\alpha}+(5 a^2 - 6 a (a-y) + (a-y)^2)^\alpha - (a+y)^{2\alpha} - 2 a y^{2\alpha-1}
		\\&= 
		a^{2\alpha}+(4 a y + y^2)^\alpha - (a+y)^{2\alpha} - 2 a y^{2\alpha-1}
		\eqfs
	\end{align*}
	We have $g(a,a,w) = \ell(a,y)$.
	Under the condition $0 \leq y \leq a$, we have $\ell(a,y) \leq 0$, cf next lemma.
\end{proof}
\begin{lemma}
		Let $\alpha\in[\frac12, 1]$, $a,y\geq0$.
		Assume $y \leq a$.
		Then
		\begin{equation*}
			a^{2\alpha} + (4 a y + y^2)^\alpha 
			\leq
			(a+y)^{2\alpha} + 2 a y^{2\alpha-1}
			\eqfs
		\end{equation*}
\end{lemma}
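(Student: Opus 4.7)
The plan is to reduce to a single-variable inequality by homogeneity and then analyze it in the brute-force style of the preceding lemmas of Appendix F. Both sides of the claimed inequality are homogeneous of degree $2\alpha$ in the pair $(a,y)$, so after scaling by $a$ and setting $t:=y/a\in[0,1]$ the claim becomes
\begin{equation*}
G(t,\alpha)\ :=\ (1+t)^{2\alpha}+2t^{2\alpha-1}-1-(4t+t^2)^\alpha\ \geq\ 0
\end{equation*}
for $t\in[0,1]$ and $\alpha\in[\tfrac12,1]$.

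I would first dispose of the two extreme values of $\alpha$, where $G$ simplifies drastically. At $\alpha=1$ direct expansion gives $G\equiv 0$; at $\alpha=\tfrac12$ the claim reduces to $2+t\geq\sqrt{t(4+t)}$, which is immediate on squaring since $(2+t)^2-t(4+t)=4$. I would then verify the two boundary values in $t$: at $t=0$ one has $G=0$ (or $G=2$ when $\alpha=\tfrac12$), and at $t=1$ the inequality becomes $5^\alpha-4^\alpha\leq 1$, which holds throughout $[\tfrac12,1]$ since $\alpha\mapsto 5^\alpha-4^\alpha$ is increasing and attains the value $1$ at $\alpha=1$.

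For the interior range $\alpha\in(\tfrac12,1)$ and $t\in(0,1)$, the plan is to split at $t=\tfrac12$, motivated by the identity $(1+t)^2-(4t+t^2)=1-2t$ whose sign flips there. In the regime $t\leq\tfrac12$ I would exploit concavity of $x\mapsto x^\alpha$ applied to the pair $(1+t)^2\geq 4t+t^2$ in order to bound $(1+t)^{2\alpha}-(4t+t^2)^\alpha$ from below, reducing the problem to an elementary one-variable inequality involving $2t^{2\alpha-1}-1$. In the regime $t\geq\tfrac12$ I would differentiate $G$ in $t$, try to show $G'$ has at most one sign change on the subinterval, and then conclude from the already-verified values at $t=\tfrac12$ and $t=1$; if a direct monotonicity argument in $t$ fails, the fallback is to differentiate in $\alpha$ instead, using the vanishing of $G(t,1)$ to propagate the inequality downward.

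The main obstacle, as in the neighbouring lemmas, is the tightness of the inequality: $G$ vanishes identically at $\alpha=1$, vanishes at $t=0$ for every $\alpha>\tfrac12$, and the bound at $t=1$ becomes sharp as $\alpha\to1$. Because of these multiple equality cases, neither monotonicity in $t$ alone nor monotonicity in $\alpha$ alone will settle the claim, so the proof almost certainly requires iterated differentiation together with the kind of case-by-case elimination of signs that pervades this appendix. I expect no conceptual novelty beyond this calculus, but the bookkeeping will be substantial.
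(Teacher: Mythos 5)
Your reduction by homogeneity to $G(t,\alpha)=(1+t)^{2\alpha}+2t^{2\alpha-1}-1-(4t+t^2)^\alpha\ge 0$ for $t\in[0,1]$ is correct, and your boundary checks (the cases $\alpha\in\{\tfrac12,1\}$, $t=0$, and $t=1$, the last being $5^\alpha-4^\alpha\le 1$) are fine; indeed $5^\alpha\le 4^\alpha+1$ is exactly the evaluation the paper also needs at $a=y$. But what you submit is a plan, not a proof: the entire interior region $\alpha\in(\tfrac12,1)$, $t\in(0,1)$ is handled only by strategies that are not carried out, and the one concrete tool you name for $t\le\tfrac12$ fails. The tangent-line (concavity) bound $(1+t)^{2\alpha}-(4t+t^2)^\alpha\ge \alpha(1+t)^{2\alpha-2}(1-2t)$ reduces the claim to $\alpha(1+t)^{2\alpha-2}(1-2t)+2t^{2\alpha-1}\ge 1$, whose left-hand side tends to $\alpha<1$ as $t\to 0$ for every fixed $\alpha\in(\tfrac12,1)$; so the reduced inequality is false near $t=0$, which lies in the very regime you assign to it. (The original inequality survives there only because $(1+t)^{2\alpha}-(4t+t^2)^\alpha$ is close to $1$ rather than to $\alpha$; the concavity step discards the constant $1-\alpha$, and the term $2t^{2\alpha-1}$, which vanishes as $t\to0$, cannot recover it.) The $t\ge\tfrac12$ branch is likewise only a hope: you never verify the value at $t=\tfrac12$ that your sign-change argument would need, a single sign change of $\partial_t G$ with an interior minimum is not settled by endpoint values, and the fallback $\partial_\alpha G\le 0$, while numerically plausible, is itself a nontrivial inequality you do not prove.

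For comparison, the paper closes the argument by first discarding the cross term via subadditivity, $(4ay+y^2)^\alpha\le (4a)^\alpha y^\alpha+y^{2\alpha}$, and then showing that $f(a,y)=a^{2\alpha}+(4a)^\alpha y^\alpha+y^{2\alpha}-(a+y)^{2\alpha}-2ay^{2\alpha-1}$ is nonincreasing in $a$ on $a\ge y$: it bounds $\partial_a f$ by $\alpha\,g(a,y)$ for an auxiliary $g$, shows $\partial_y\partial_a g\le 0$ so that $\partial_a g\le\partial_a g(a,0)=0$ and hence $g(a,y)\le g(y,y)=0$, and thereby reduces everything to the single evaluation at $a=y$, i.e.\ again to $5^\alpha\le 4^\alpha+1$. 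If you want to keep your one-variable formulation, you need an analogous explicit monotonicity statement (in $t$ or in $\alpha$) established by concrete derivative estimates; as it stands, the central part of the lemma is missing.
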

\begin{proof}
	For $y\leq a$ define
	\begin{equation*}
		g(a,y) := 2a^{2\alpha-1} + 4^\alpha y^\alpha a^{\alpha-1} - 2(a+y)^{2\alpha-1} - 2y^{2\alpha-1}
		\eqfs
	\end{equation*}
	We have
	\begin{equation*}
		\partial_a g(a,y) = 2(2\alpha-1)a^{2\alpha-2} + 4^\alpha(\alpha-1) y^\alpha a^{\alpha-2} - 2(2\alpha-1)(a+y)^{2\alpha-2}
		\eqcm
	\end{equation*}
	\begin{equation*}
		\partial_y \partial_a g(a,y) = 4^\alpha (\alpha-1) \alpha y^{\alpha-1} a^{\alpha-2} - 2(2\alpha-1)(2\alpha-2)(a+y)^{2\alpha-3}
		\eqfs
	\end{equation*}
	Set $z := \frac{y}{a} \in[0,1]$. Then
	\begin{equation*}
	 \frac{\partial_y \partial_a g(a,y)}{a^{2\alpha-3}} =  (\alpha-1) \br{4^\alpha \alpha z^{\alpha-1} - 4(2\alpha-1)(1+z)^{2\alpha-3}}
	 \eqfs
	\end{equation*}
	The function $z \mapsto \frac{\br{1+z}^{2\alpha-3}}{z^{\alpha-1}}$ is maximized in $[0,1]$ at $\frac{1-\alpha}{2-\alpha}$ with maximum 
	\begin{equation*}
		\frac{\br{1+\frac{1-\alpha}{2-\alpha}}^{2\alpha-3}}{\br{\frac{1-\alpha}{2-\alpha}}^{\alpha-1}}
		=
		\frac{\br{3-2\alpha}^{2\alpha-3} \br{2-\alpha}^{2-\alpha}}{\br{1-\alpha}^{\alpha-1}} 
		\eqfs
	\end{equation*}
	The function $\alpha\mapsto \frac{4^\alpha}{4(2\alpha-1)}$ is decreasing on $[\frac12, 1]$ and equal to $1$ at $\alpha=1$. 
	Thus, using \autoref{lmm:alphaFraction} below, 
	\begin{equation*}
		\frac{4^\alpha \alpha}{4(2\alpha-1)} \geq \alpha \geq \frac{\br{3-2\alpha}^{2\alpha-3} \br{2-\alpha}^{2-\alpha}}{\br{1-\alpha}^{\alpha-1}} 
		\eqfs
	\end{equation*}
	Thus,
	\begin{equation*}
		4^\alpha \alpha z^{\alpha-1} \geq 4(2\alpha-1) \br{1+z}^{2\alpha-3}
		\eqfs
	\end{equation*}
	Thus, $\partial_y \partial_a g(a,y) \leq 0$. Thus, $\partial_a g(a,y) \leq \partial_a g(a,0)$ and
	\begin{equation*}
		\partial_a  g(a,0) = 2(2\alpha-1)a^{2\alpha-2} - 2(2\alpha-1)(a)^{2\alpha-2} = 0
		\eqfs
	\end{equation*}
	Thus, $\partial_a g(a,y) \leq 0$. Thus, $g(a,y) \leq g(y,y)$, and
	\begin{equation*}
		g(y,y) = \br{2 + 4^\alpha - 2^{2\alpha} - 2}y^{2\alpha-1} = 0
		\eqfs
	\end{equation*}
	Thus,  $g(a,y) \leq 0$. We have
	\begin{align*}
		&a^{2\alpha} + (4 a y + y^2)^\alpha - (a+y)^{2\alpha} - 2 a y^{2\alpha-1}
		\\&\leq 
		a^{2\alpha} + (4 a)^\alpha y^\alpha +y^{2\alpha} - (a+y)^{2\alpha} - 2 a y^{2\alpha-1}
		\\&=:f(a,y)
		\eqcm
	\end{align*}
	\begin{equation*}
		\partial_a f(a,y) = 2\alpha a^{2\alpha-1} + 4^\alpha \alpha y^\alpha a^{\alpha-1} - 2\alpha(a+y)^{2\alpha-1} - 2y^{2\alpha-1}
		= \alpha g(a,y) \leq 0
		\eqfs
	\end{equation*}
	Thus,
	\begin{equation*}
		f(a,y) \leq f(y,y) = \br{1+4^\alpha+1-4^\alpha-2} y^{2\alpha} = 0
		\eqfs
	\end{equation*}
\end{proof}
\begin{lemma}\label{lmm:alphaFraction}
	Let $x\in[\frac12,1 ]$. Then
	\begin{equation*}
		\frac{\br{3-2x}^{2x-3} \br{2-x}^{2-x}}{\br{1-x}^{x-1}} \leq x
		\eqfs
	\end{equation*}
\end{lemma}
\begin{proof}
	Define 
	\begin{equation*}
		f(x) := x \log(2-x) + 3 \log(3-2x) - 2x\log(3-2x) - 2\log(2-x) + \log(x)
		\eqfs
	\end{equation*}
	Then 
	\begin{align*}
		f\pr(x) &= \frac1x - 2 \log(3 - 2 x) + \log(2 - x) - 1\eqcm\\
		f\prr(x) &= -\frac1{x^2} - \frac1{2 - x} + \frac4{3 - 2 x}\eqcm\\
		f\prrr(x) &= \frac2{x^3} - \frac1{(2 - x)^2} + \frac8{(3 - 2 x)^2}\eqfs
	\end{align*}
	As $8(2 - x)^2 \geq  (3 - 2 x)^2$ for $x \leq \frac52 - \frac1{\sqrt2}$, we have
	\begin{equation*}
		\frac1{(2 - x)^2} \leq \frac8{(3 - 2 x)^2}
	\end{equation*}
	and, therefore, $f\prrr(x) \geq 0$ for $x\in[\frac12,1]$. Hence, $f\pr$ is convex. We have $f\pr(1) = 0$. Thus, between $\frac12$ and $1$, $f\pr$ could be nonnegative, nonpositive or swap signs from positive to negative. In all cases, the minimum of $f$ is attained at the borders of the interval. As $f(1) = 0$ and $f(\frac12) = \frac12\log(2^5/3^3) \geq 0$, we have shown $f(x) \geq 0$ for all $x\in [\frac12, 1]$. Thus,
	\begin{equation*}
		x \log\brOf{\frac{2-x}{(3-2x)^2}} \geq \log\brOf{\frac{(2-x)^2}{x(3-2x)^3}}
		\eqfs
	\end{equation*}
	Applying the exponential function yields
	\begin{equation*}
		\br{\frac{2-x}{(3-2x)^2}}^x \geq \frac{(2-x)^2}{x(3-2x)^3}
		\eqfs
	\end{equation*}
	As $(1-x)^x \geq 1-x$, we obtain
	\begin{equation*}
		x(1-x)^x(3-2x)^3(2-x)^x \geq (1-x)(2-x)^2(3-2x)^{2x}\eqfs
	\end{equation*}
	Reordering the factors yields the desired inequality.
\end{proof}
\begin{lemma}[$\frac12 b\geq a-sc$ and $a \geq b$]\label{lmm:bgeqascandageqb}
	Let $\alpha\in[\frac12,1]$.
	Let $a,b,c\geq0$, $s\in[-1,1]$.
	Assume 
	\begin{equation*}
		0 \leq c - a \leq a -sc \leq \frac b2 \leq sc \leq 2a-c \leq a \leq c
	\end{equation*}
	and
	$a \geq b$.
	Then
	\begin{equation*}
		a^{2\alpha}-c^{2\alpha}-(a-b)^{2\alpha}+(c^2 - 2 s c b + b^2)^\alpha 
		\leq 
		2 b (a-sc)^{2\alpha-1}
		\eqfs
	\end{equation*}
\end{lemma}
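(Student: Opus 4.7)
My plan is to mirror the strategy of \autoref{lmm:bgeqascandaleqb}, since the constraints differ only by the single inequality $a\geq b$ versus $a\leq b$. The idea is to introduce an auxiliary function of the free parameters, show by partial differentiation that the quantity we want to bound is monotone in one variable after another, and so reduce the inequality to a boundary case that can be verified by a direct computation.

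First I would set $w := sc$ and $y := a-w$, so that $y\in[0,b/2]$ and $w\in[b/2,a]$, and rewrite $c = a + (c-a)$ with $c-a\in[0,y]$. Define
\begin{equation*}
f(a,b,y,w) := a^{2\alpha}-(y+a)^{2\alpha}-(a-b)^{2\alpha}+\bigl((y+a)^2-2wb+b^2\bigr)^{\alpha}-2b(a-w)^{2\alpha-1}\eqfs
\end{equation*}
As in the previous lemma, $\partial_y f\geq 0$ because $\frac b2\leq w$ forces $(y+a)^2\geq (y+a)^2-2wb+b^2$, so it suffices to check $f$ at $y=a-w$. Substituting gives a function $g(a,b,w)$ of three variables. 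Next I would examine $\partial_b g(a,b,w)$; the calculation should be the same as in \autoref{lmm:bgeqascandaleqb}, but now the relevant range is $b\in[0,a]$ rather than $b\in[a,c]$. I expect a sign determination showing $g$ is monotone in $b$ on the feasible interval, reducing the problem to the boundary value $b=a$ or $b=2(a-w)$.

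At the reduced stage, we arrive at a one- or two-variable inequality. Setting $z := w/a\in[1/2,1]$ (or analogously $y/a$) should homogenize the remaining expression so that $a$ factors out as $a^{2\alpha}$ and we are left with a pure inequality in $z$ and $\alpha$. I would verify this final inequality in two steps: (i) show monotonicity in $\alpha$ by differentiating and applying concavity of $x\mapsto x^\alpha$; (ii) verify the boundary cases $\alpha=1/2$ and $\alpha=1$ directly, the latter being an algebraic identity and the former reducing to a statement about linear combinations of square roots which can be checked by squaring.

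The main obstacle I expect is determining the correct sign of $\partial_b g$ in the regime $a\geq b$, because the previous proof exploited $a\leq b$ to control the term $(b-a)^{2\alpha-1}$, and in the present regime this term becomes $(a-b)^{2\alpha-1}\geq 0$ on the other side of the inequality. If $\partial_b g$ does not have a constant sign, I would instead fix $a$ and treat $g$ as a function of two remaining variables, apply a second-order Taylor expansion around a suitably chosen interior point (analogously to the $x_0=\tfrac13$ trick used earlier in the appendix), and bound the remainder using the fact that the fourth derivative of the polynomial numerator of $f^{(4)}$ is nonnegative on the relevant interval. If neither monotonicity nor a global Taylor bound suffices, a final split into $b\leq\tfrac{a}{2}$ and $b\geq \tfrac{a}{2}$ should localize the problem enough that the one-variable reduction goes through.
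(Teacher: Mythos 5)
Your first step is sound: with $w=sc$ held fixed, the $c$-dependent part of the difference is $-c^{2\alpha}+(c^2-2wb+b^2)^\alpha$, whose $c$-derivative equals $2\alpha c\,\bigl[(c^2-2wb+b^2)^{\alpha-1}-(c^2)^{\alpha-1}\bigr]\ge 0$ because $2w\ge b$ and $\alpha-1\le 0$, so the binding endpoint is $c=2a-w$ (note this is the opposite endpoint from the $c=a$ reduction in the paper's proof, which asserts the reverse sign for $\partial_c f$; your direction is the one supported by the computation). Everything after that, however, is a plan rather than a proof, and the step you yourself flag as the main obstacle is exactly where it is unsubstantiated. Nothing in the proposal determines the sign of $\partial_b g$ in the regime $a\ge b$, and the argument you hope to copy from \autoref{lmm:bgeqascandaleqb} does not transfer: there the bound $\partial_b g\le 2\alpha h$ was closed by showing $h$ is convex in $a$ and nonpositive at the two endpoints $a=w$ and $a=b$, which uses $w\le a\le b$; with $a\ge b$ that interval argument is unavailable, so "the calculation should be the same" is not a step. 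The endgame is likewise not carried out: the reduced inequality is never written down, and "monotonicity in $\alpha$ plus the cases $\alpha\in\{\tfrac12,1\}$" is delicate here because the inequality is an identity at $\alpha=1$, so the $\alpha$-derivative must be controlled exactly (the analogous $\alpha$-monotonicity claims in this appendix each needed their own lemmas with logarithmic estimates and a Taylor expansion at $x_0=\tfrac13$). The fallbacks — a Taylor remainder bound imported from a specific one-variable lemma, a split at $b=a/2$ — are too vague to close these holes.

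The idea your outline is missing is that no $b$-differentiation is needed: the hypothesis $a\ge b$ enters through monotonicity in $sc$. After its reduction in $c$, the paper sets $x=a-sc$, $w=sc$ and observes that the $w$-derivative of $-(x+w-b)^{2\alpha}+\bigl((x+w)^2-2wb+b^2\bigr)^\alpha$ carries the factor $x+w-b=a-b\ge 0$ and is nonpositive, since $(x+w-b)^2-\bigl((x+w)^2-2wb+b^2\bigr)=-2bx\le 0$; moreover the endpoint $w=b-x$ is reachable precisely because $a\ge b$. At that endpoint the expression collapses to $(2xb)^\alpha-2bx^{2\alpha-1}\le 0$, which holds since $x\le\tfrac b2$. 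That short mechanism (monotone in $sc$, trivial endpoint) is the decisive content of the paper's argument; without it, or a fully worked-out substitute for your $\partial_b g$ step and the final $\alpha$-inequality, your proposal does not yet prove the lemma.
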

\begin{proof}
	Define
	\begin{equation*}
		f(a,b,c,w) := a^{2\alpha}-c^{2\alpha}-(a-b)^{2\alpha}+(c^2 - 2 w b + b^2)^\alpha 
			- 2 b (a-w)^{2\alpha-1}
			\eqfs
	\end{equation*} 
	We have
	\begin{equation*}
		\partial_c f(a,b,c,w) =2\alpha\br{-c^{2\alpha-1}+c(c^2 - 2 w b + b^2)^{\alpha-1}}
		\eqfs
	\end{equation*} 
	Because of $2w \geq b$, we have $\partial_c f(a,b,c,w) \leq 0$.
	\begin{equation*}
		f(a,b,a,w) = -(a-b)^{2\alpha}+(a^2 - 2 w b + b^2)^\alpha - 2 b (a-w)^{2\alpha-1}
		\eqfs
	\end{equation*}
	Set $x := a-w$.
	The conditions for $x$ are $0 \leq x \leq \frac b2 \leq w$ and $b \leq x+w$. Define
	\begin{equation*}
		g(x,b,w) := -(x+w-b)^{2\alpha}+((x+w)^2 - 2 w b + b^2)^\alpha - 2 b x^{2\alpha-1}
		\eqfs
	\end{equation*}
	We have
	\begin{equation*}
		\partial_w g(x,b,w) =-2\alpha(x+w-b)^{2\alpha-1}+2\alpha(x+w-b)((x+w)^2 - 2 w b + b^2)^{\alpha-1} 
		\eqfs
	\end{equation*}
	We have
	\begin{equation*}
		(x+w-b)^2 - \br{(x+w)^2 - 2 w b + b^2} = -2 b x \leq 0
		\eqfs
	\end{equation*}
	Thus,
	\begin{equation*}
		-2\alpha(x+w-b)^{2\alpha-2} + 2\alpha\br{(x+w)^2 - 2 w b + b^2}^{\alpha-1} \leq 0
		\eqfs
	\end{equation*}
	As $a \geq b$ and thus $x+w-b \geq 0$, $\partial_w g(x,b,w) \leq 0$.
	We have $w \geq b-x \geq \frac b2$ and 
	\begin{equation*}
		g(x,b,b-x)
		=
		(2 x b)^\alpha - 2 b x^{2\alpha-1}
		\leq 0
	\end{equation*}
	because $x\leq b$.
\end{proof}
\subsection{The Case $|a-c| \geq ra-sc$}\label{ssec:rascleqac}
\begin{lemma}[Case 1.2]
	Let $\alpha\in[\frac12,1]$.
	Let $a,b,c\geq0$.
	Assume $a \geq c$ and $a+c\geq 2b$.
	Then 
	\begin{equation*}
		a^{2\alpha}-c^{2\alpha}-\br{(a-b)^2+4cb}^{\alpha}+(c+b)^{2\alpha} \leq 8\alpha 2^{-2\alpha} b (a-c)^{2\alpha-1}
		\eqfs
	\end{equation*}
\end{lemma}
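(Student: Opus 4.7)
The plan is to exploit the identity $L(0) = R(0) = 0$, where $L(b) := a^{2\alpha}-c^{2\alpha}-((a-b)^2+4cb)^\alpha+(c+b)^{2\alpha}$ and $R(b) := 8\alpha\,2^{-2\alpha} b(a-c)^{2\alpha-1}$, so that the claim $L\leq R$ on $[0,(a+c)/2]$ will follow by integrating a derivative inequality. Direct computation gives
\[
L'(b) = 2\alpha(c+b)^{2\alpha-1} + 2\alpha(a-b-2c)\bigl((a-b)^2+4cb\bigr)^{\alpha-1},\qquad R'(b) = 2\alpha\cdot 2^{2-2\alpha}(a-c)^{2\alpha-1}.
\]
Assuming $a>c$ (the case $a=c$ being immediate by continuity), I normalize with $x := a-c$, $\sigma := c/x$, $\tau := (a-b-2c)/x$, and verify by direct algebra that $c+b = x(1-\tau)$ and $(a-b)^2+4cb = x^2(\tau^2+4\sigma)$. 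After cancellation the derivative inequality becomes the dimensionless form
\[
(1-\tau)^{2\alpha-1} + \tau(\tau^2+4\sigma)^{\alpha-1} \;\leq\; 2^{2-2\alpha},\qquad (\star)
\]
with $\sigma\geq 0$ and $\tau\in[(1-4\sigma)/2,\, 1-\sigma]$.

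For the case $\tau \geq 0$, since $\alpha-1\leq 0$ and $\tau^2+4\sigma\geq \tau^2$ one has $(\tau^2+4\sigma)^{\alpha-1}\leq \tau^{2\alpha-2}$, so the left side of $(\star)$ is dominated by $(1-\tau)^{2\alpha-1}+\tau^{2\alpha-1}$. This one-variable function is concave on $[0,1]$ (as $2\alpha-1\in[0,1]$) and symmetric about $\tau=1/2$, attaining its maximum $2\cdot (1/2)^{2\alpha-1} = 2^{2-2\alpha}$ there. This disposes of the case in a single step.

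For the case $\tau<0$, write $\tau=-s$ with $s>0$; the admissible range becomes $\sigma\in[(2s+1)/4,\,s+1]$. Because $(\tau^2+4\sigma)^{\alpha-1}$ decreases in $\sigma$ and $\tau<0$, the left side of $(\star)$ is increasing in $\sigma$ and hence maximized at $\sigma=s+1$, where $\tau^2+4\sigma = (s+2)^2$. So it suffices to prove
\[
\psi(s) := (1+s)^{2\alpha-1} - s(s+2)^{2\alpha-2} \;\leq\; 1\qquad (s\geq 0),
\]
noting $1\leq 2^{2-2\alpha}$ as $\alpha\leq 1$. Viewing $\psi$ as a function of $\alpha$ for fixed $s$, I observe $\psi|_{\alpha=1}=1$ and aim to show $\partial_\alpha\psi\geq 0$ on $[\tfrac12,1]$. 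Dividing by $(s+2)^{2\alpha-1}$ rewrites this as
\[
\bigl(\tfrac{1+s}{s+2}\bigr)^{2\alpha-1}\ln(1+s) \;\geq\; \tfrac{s}{s+2}\ln(s+2);
\]
the left side decreases in $\alpha$ (base is $\leq 1$), so the binding case is $\alpha=1$, reducing to $(1+s)\ln(1+s)\geq s\ln(s+2)$. For this, set $f(s) := (1+s)\ln(1+s) - s\ln(s+2)$: then $f(0)=0$, $f'(0) = 1-\ln 2 > 0$, and $f''(s) = -s/[(1+s)(s+2)^2]\leq 0$, so $f'$ is non-increasing with $\lim_{s\to\infty} f'(s) = 0$; hence $f'\geq 0$ throughout and $f\geq 0$.

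The main obstacle is the $\tau<0$ branch: it requires first the worst-case reduction over $\sigma$ and then the monotonicity-in-$\alpha$ argument reducing everything to the elementary calculus fact $(1+s)\ln(1+s)\geq s\ln(s+2)$. As a sanity check, at the boundary $b=(a+c)/2$ one has $(a-b)^2+4cb=(c+b)^2$ and the derivative inequality reduces, by direct computation, to $(a+3c)^{2\alpha-2}(a-c)\leq (a-c)^{2\alpha-1}$, which holds because $(a+3c)^{2\alpha-2}\leq (a-c)^{2\alpha-2}$ (the exponent is non-positive and $a-c\leq a+3c$); at $c=0$ and $b=a/2$ the derivative inequality is in fact tight, matching the known sharpness of the constant $8\alpha\,2^{-2\alpha}$.
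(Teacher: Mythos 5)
Your proof is correct, and it takes a genuinely different route from the paper's. The paper proves this lemma by splitting the hypothesis region into $a\geq b+c$ and $b\geq a-c$ and treating each piece with its own multi-step argument: the first via a ``merging'' reduction to $2^{1-2\alpha}\big((a-c+b)^{2\alpha}-(a-c-b)^{2\alpha}\big)$ followed by the Tight Power Bound $(x+y)^{2\alpha}-(x-y)^{2\alpha}\leq 4\alpha x^{2\alpha-1}y$, the second by monotonicity in $a$ and the auxiliary inequality $(x+y)^{2\alpha}-(x^\alpha-y^\alpha)^2\leq(4xy)^\alpha$. You instead compare derivatives in $b$ starting from the common value $L(0)=R(0)=0$, normalize by $x=a-c$, and reduce everything to the single dimensionless inequality $(1-\tau)^{2\alpha-1}+\tau(\tau^2+4\sigma)^{\alpha-1}\leq 2^{2-2\alpha}$, splitting only on the sign of $\tau$: for $\tau\geq 0$ by concavity and symmetry of $t\mapsto t^{2\alpha-1}$, and for $\tau<0$ by the worst-case reduction $\sigma=s+1$ followed by monotonicity in $\alpha$, which leaves only the elementary fact $(1+s)\ln(1+s)\geq s\ln(s+2)$. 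I checked the algebra ($c+b=x(1-\tau)$, $(a-b)^2+4cb=x^2(\tau^2+4\sigma)$, the range $\tau\in[(1-4\sigma)/2,\,1-\sigma]$, the $\sigma$-monotonicity, the sign of $f''$, and the limit of $f'$), and it all holds; the degenerate configurations ($a=c$, or $\tau=\sigma=0$, where the left-hand side vanishes or everything collapses to zero) are harmless and consistent with the paper's stated convention of proving the inequality on a dense set. What your route buys is a shorter, self-contained argument that bypasses the merging machinery and directly exhibits the equality case of the derivative inequality at $c=0$, $b=a/2$, matching the paper's separate optimality discussion for the constant $8\alpha 2^{-2\alpha}$; the paper's route, in exchange, stays within its uniform merging-plus-tight-power-bound template that it reuses across the other cases of the quadruple-inequality proof.
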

This lemma follows from the next two lemmas, which split the proof of this inequality into two cases.
\begin{lemma}[Case 1.2, Merging]
	Let $\alpha\in[\frac12,1]$.
	Let $a,b,c\geq0$.
	Assume $a \geq b+c$.
	Then 
	\begin{equation*}
		a^{2\alpha}-c^{2\alpha}-\br{(a-b)^2+4cb}^{\alpha}+(c+b)^{2\alpha} 
		\leq 
		2^{1-2\alpha} \br{	(a-c + b)^{2\alpha}  -  (a-c - b)^{2\alpha}}
	\end{equation*}
\end{lemma}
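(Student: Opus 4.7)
My plan is to mirror the technique used in the Simple Merging Lemma (\autoref{lmm:merging:simple}). Substitute $q := a - b$; the hypothesis $a \geq b + c$ becomes $q \geq c$, and the inequality to be proved becomes, for $q \geq c \geq 0$ and $b \geq 0$,
\begin{equation*}
(q+b)^{2\alpha} - c^{2\alpha} - (q^2 + 4cb)^{\alpha} + (c+b)^{2\alpha} \leq 2^{1-2\alpha}\br{(q-c+2b)^{2\alpha} - (q-c)^{2\alpha}}\eqfs
\end{equation*}
Denoting the difference LHS minus RHS by $f(b)$, a direct check gives $f(0) = 0$, so it suffices to establish $f\pr(b) \leq 0$ for $b \geq 0$.

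Differentiating yields
\begin{equation*}
\frac{f\pr(b)}{2\alpha} = (q+b)^{2\alpha-1} + (c+b)^{2\alpha-1} - 2c(q^2+4cb)^{\alpha-1} - 2\br{\tfrac{q-c}{2} + b}^{2\alpha-1}\eqfs
\end{equation*}
At $\alpha = 1$ this expression vanishes identically (consistent with both sides of the original inequality collapsing to $2b(a-c)$); at $\alpha = \tfrac12$, only the cross term survives, yielding $-2c(q^2+4cb)^{-1/2} \leq 0$. For intermediate $\alpha$, the strategy is to invoke the concavity of $t \mapsto t^{2\alpha-1}$ on $[0,\infty)$ (valid since $2\alpha-1 \in [0,1]$) to bound the first two positive terms by $2\br{\tfrac{q+c}{2}+b}^{2\alpha-1}$. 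The task then reduces to comparing the finite-difference quotient of $t^{2\alpha-1}$ between $\tfrac{q+c}{2}+b$ and $\tfrac{q-c}{2}+b$ (length $c$) against $c \cdot (q^2+4cb)^{\alpha-1}$, using the Tight Power Bound (\autoref{lmm:tightpowerbound}) to bridge between the exponents $2\alpha - 1$ and $\alpha - 1$.

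The main obstacle lies precisely in this exponent mismatch: the cross term has exponent $\alpha-1 \in [-\tfrac12, 0]$ while the other terms have exponent $2\alpha-1 \in [0,1]$, so the coarse midpoint concavity bound is too weak near $\alpha = \tfrac12$ where $(q^2+4cb)^{\alpha-1}$ can be large. A case split---most naturally on the relative size of $b$ versus $c$, or of $q$ versus $3c$, to control the sign of $(c+b) - (\tfrac{q-c}{2}+b)$---will likely be required, with each subcase closed by direct application of the Tight Power Bound. As sanity checks, the inequality is an equality at $\alpha = 1$, at $b = 0$, and at $\alpha = \tfrac12$ with $c = 0$; this tightness in several limits simultaneously indicates the proof must be carried out carefully without any slack, and any successful argument must respect all of these equality cases.
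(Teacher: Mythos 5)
Your setup coincides with the paper's own proof: fix $q:=a-b$ (the paper writes $\delta$), observe $q\geq c$, define the same function $f(b)$, check $f(0)=0$, and aim to show $f\pr(b)\leq 0$; your expression for $f\pr(b)/(2\alpha)$ is correct, as are the sanity checks at $\alpha=1$ and $\alpha=\tfrac12$. The problem is that everything beyond this point is a plan, not a proof. After the midpoint-concavity step $(q+b)^{2\alpha-1}+(c+b)^{2\alpha-1}\leq 2\br{\tfrac{q+c}{2}+b}^{2\alpha-1}$ you still need the inequality $\br{\tfrac{q+c}{2}+b}^{2\alpha-1}-\br{\tfrac{q-c}{2}+b}^{2\alpha-1}\leq c\,(q^2+4cb)^{\alpha-1}$ for all $q\geq c\geq0$, $b\geq0$, and this you neither prove nor reduce to anything proved; you concede as much ("a case split \dots will likely be required"). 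That inequality carries essentially the full weight of the lemma: it is an equality at $q=c$, $b=0$ and becomes an identity as $\alpha\to1$ (numerically it does appear to be true), so no mean-value or crude concavity estimate can close it. Moreover, the tool you name for bridging the exponents, the Tight Power Bound (\autoref{lmm:tightpowerbound}), is stated for differences of powers with exponent $2\alpha\in[1,2]$, whereas here you must control a difference of $(2\alpha-1)$-powers, $2\alpha-1\in[0,1]$, against an $(\alpha-1)$-power; this is exactly the mismatch you identify, but the lemma gives no purchase on it. (Your diagnosis that the difficulty sits near $\alpha=\tfrac12$ is also off: there the reduced inequality is trivial since its left side vanishes; the tight regime is $\alpha$ near $1$ and $q=c$, $b=0$.)

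For comparison, the paper does not pair the two positive terms by concavity at all. It bounds $f\pr(b)/(2\alpha)$ by $g(c):=(\delta+2b+c)^{2\alpha-1}-(\delta+2b-c)^{2\alpha-1}-2c\br{\delta^2+4cb}^{\alpha-1}$, proves $\partial_c^2 g\geq0$, and then checks both endpoints, $g(0)=0$ and $g(\delta)\leq0$, the latter via a further monotonicity argument in $\delta$ (an auxiliary $h(\delta)$ with $h\pr\leq0$, $h(0)=0$); convexity then forces $g\leq0$ on $[0,\delta]$, hence $f\pr\leq0$. An argument of that calibre --- a second-derivative/convexity analysis with explicit endpoint checks --- is what your missing step would require; as it stands, the proposal identifies the right reduction but does not prove the lemma.
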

\begin{proof}	
	Set $\delta := a-b \geq c \geq 0$ and define
	\begin{equation*}
		f(b) := (\delta+b)^{2\alpha}-c^{2\alpha}-\br{\delta^2+4cb}^{\alpha}+(c+b)^{2\alpha} -2^{1-2\alpha} \br{	(\delta-c + 2b)^{2\alpha}  -  (\delta-c)^{2\alpha}}
		\eqfs
	\end{equation*}
	Then
	\begin{equation*}
		\frac{f\pr(b)}{2\alpha} =
		(\delta+b)^{2\alpha-1}-2c\br{\delta^2+4cb}^{\alpha-1}+(c+b)^{2\alpha-1} -2^{2-2\alpha} (\delta-c + 2b)^{2\alpha-1}
		\eqcm
	\end{equation*}
	\begin{align*}
		\frac{f\pr(b)}{2\alpha} 
		\leq 
		(\delta+2b+c)^{2\alpha-1}- (\delta+2b-c)^{2\alpha-1} - 2c\br{\delta^2+4cb}^{\alpha-1} =: g(c)
		\eqcm
	\end{align*}
	\begin{align*}
		g\pr(c) 
		&= (2\alpha-1)(\delta+2b+c)^{2\alpha-2}+ (2\alpha-1)(\delta+2b-c)^{2\alpha-2} - 
		\\&\qquad 2\br{\delta^2+4cb}^{\alpha-1} -8cb(\alpha-1)\br{\delta^2+4cb}^{\alpha-2}
		\eqcm
	\end{align*}
	\begin{align*}
		&g\prr(c)\\ &= (2\alpha-1)(2\alpha-2)(\delta+2b+c)^{2\alpha-3}- 
		(2\alpha-1)(2\alpha-2)(\delta+2b-c)^{2\alpha-3}  - A\eqcm
	\end{align*}
	where
	\begin{align*}
		A := &\,2(4b)(\alpha-1)\br{\delta^2+4cb}^{\alpha-2} 
		8b(\alpha-1)\br{\delta^2+4cb}^{\alpha-2}
		+\\&\qquad
		8cb(4b)(\alpha-1)(\alpha-2)\br{\delta^2+4cb}^{\alpha-3}
		\\=&
		16b(\alpha-1)\br{\delta^2+4cb}^{\alpha-3} 
		\br{\delta^2+4cb + 2cb(\alpha-2)}
		\eqfs
	\end{align*}
	Thus, 	$g\prr(c) \geq 0$.
	We have $0 \leq c \leq \delta$ and
	\begin{equation*}
		g(0) =
		(\delta+2b)^{2\alpha-1}- (\delta+2b)^{2\alpha-1}-0=0
		\eqcm
	\end{equation*}
	\begin{equation*}
		g(\delta) = (2\delta+2b)^{2\alpha-1}- (2b)^{2\alpha-1} - 2\delta\br{\delta^2+4\delta b}^{\alpha-1} =: h(\delta)\eqcm
	\end{equation*}
	\begin{equation*}
		h\pr(\delta) = 2(2\alpha-1)(2\delta+2b)^{2\alpha-2}-2\br{\delta^2+4\delta b}^{\alpha-1}-2\delta(\alpha-1)(2\delta+4b)\br{\delta^2+4\delta b}^{\alpha-2}\eqfs
	\end{equation*}
	For $\alpha\in[\frac12,1]$, we have
	\begin{align*}
		&\br{\delta^2+4\delta b}^{\alpha-1}+\delta(\alpha-1)(2\delta+4b)\br{\delta^2+4\delta b}^{\alpha-2}
		\\&=
		\br{\delta^2+4\delta b}^{\alpha-2}\br{\delta^2+4\delta b+\delta (\alpha-1)(2\delta+4b)}
		\\& \geq 
		\br{\delta^2+4\delta b}^{\alpha-2}\br{\delta^2+4\delta b+ -\delta(1\delta+2b)}
		\geq0 
		\eqfs
	\end{align*}
	Thus, 
	\begin{equation*}
	h\pr(\delta) \leq 0
	\eqfs
	\end{equation*}
	We have
	\begin{equation*}
		h(0) = (2b)^{2\alpha-1}- (2b)^{2\alpha-1} - 0 = 0
		\eqfs
	\end{equation*}
	Thus, $h(\delta)\leq 0$, thus, $g(b=\delta) \leq 0$, thus $g(b) \leq 0$, thus $f\pr(b)\leq 0$
	\begin{equation*}
		f(0) = (\delta)^{2\alpha}-c^{2\alpha}-\br{\delta}^{2\alpha}+(c)^{2\alpha} -2^{1-2\alpha} \br{	(\delta-c)^{2\alpha}  -  (\delta-c)^{2\alpha}} =0
		\eqfs
	\end{equation*}
	Thus, $f(b) \leq 0$.
\end{proof}
\begin{lemma}[Case 1.2, $b\geq a-c$]
	Let $\alpha\in[\frac12,1]$.
	Let $a,b,c\geq0$.
	Assume $b \geq a-c \geq 2\max(0, b-c)$.
	Then 
	\begin{equation*}
		a^{2\alpha}-c^{2\alpha}-\br{(a-b)^2+4cb}^{\alpha}+(c+b)^{2\alpha} 
		\leq 
		2 b^{2\alpha-1} (a-c)
		\leq
		2 b (a-c)^{2\alpha-1}
		\eqfs
	\end{equation*}
\end{lemma}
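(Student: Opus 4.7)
The second inequality $2b^{2\alpha-1}(a-c) \leq 2b(a-c)^{2\alpha-1}$ is immediate: from $0 \leq a-c \leq b$ and $2-2\alpha \geq 0$ we have $\br{(a-c)/b}^{2-2\alpha}\leq 1$, which rearranges to the claim. The work is entirely in the first inequality, for which I plan to introduce the deficit function
\begin{equation*}
	\phi(a) := a^{2\alpha} - c^{2\alpha} - ((a-b)^2+4cb)^\alpha + (c+b)^{2\alpha} - 2 b^{2\alpha-1}(a-c)
\end{equation*}
(with $b,c,\alpha$ fixed) and show $\phi(a)\leq 0$ on the valid range. The hypothesis $b \geq a-c \geq 2\max(0,b-c)$ is equivalent to $a\geq c$ together with $a+c\geq 2b$, which in turn forces $c \geq b/2$ (hence $b \leq 2c$) and confines $a$ to $[\max(c,2b-c),\,b+c]$.

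The anchor point is $\phi(c) = 0$, which follows from the factorization $(c-b)^2 + 4cb = (c+b)^2$: this makes the two bracketed powers cancel while the linear term vanishes at $a=c$. Even more strikingly, a direct expansion shows the algebraic identity $a^2 - c^2 - ((a-b)^2+4cb) + (c+b)^2 = 2b(a-c)$, so $\phi(a,\alpha)\equiv 0$ at $\alpha = 1$. This degeneracy explains why the claimed inequality is tight at $\alpha=1$ and is the mechanism that should give the full bound for $\alpha<1$ via a single-variable monotonicity argument.

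The plan is to show $\phi'(a)\leq 0$ on $[c,\,b+c]$, which together with $\phi(c)=0$ yields $\phi(a)\leq 0$ throughout (and in particular on the valid subinterval). Computing
\begin{equation*}
	\tfrac12 \phi'(a) = \alpha a^{2\alpha-1} - \alpha(a-b)\br{(a-b)^2+4cb}^{\alpha-1} - b^{2\alpha-1},
\end{equation*}
the task reduces to the scalar inequality
\begin{equation*}
	\alpha a^{2\alpha-1} - \alpha(a-b)\br{(a-b)^2+4cb}^{\alpha-1} \leq b^{2\alpha-1}.
\end{equation*}
By homogeneity in $(a,b,c)$, I would normalize $b=1$. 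The substitution $\tau=a-b$ transforms the left side into $\alpha(1+\tau)^{2\alpha-1}-\alpha\tau(\tau^2+4c)^{\alpha-1}$, and I would then split into $\tau\geq 0$ (i.e.\ $a\geq b$) and $\tau\leq 0$, verify equality-type behaviour at the endpoints $\tau=0$ and $\tau=c$, and handle the interior via calculus — essentially the same brute-force pattern (differentiate, determine sign, reduce to a boundary case) used throughout Appendix G.

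The main obstacle will be the case $\tau<0$ (equivalently $a<b$), because both summands on the left are positive and the naive bound $(\tau^2+4c)^{\alpha-1}\leq |\tau|^{2\alpha-2}$ is too lossy (one can check that without using the constraint $c\geq b/2$ the inequality $\alpha[a^{2\alpha-1}+(b-a)^{2\alpha-1}]\leq b^{2\alpha-1}$ fails at $a=b/2$, $\alpha\in(\tfrac12,1)$). The resolution is to keep the term $4cb$ inside the power and exploit $c\geq b/2$ to lower-bound $(\tau^2+4c)^{\alpha-1}$; an alternative, possibly cleaner route is to use the identity $\phi(a,1)\equiv 0$ and instead establish $\partial_\alpha\phi(a,\alpha)\geq 0$ on $[\tfrac12,1]$, from which $\phi(a,\alpha)\leq\phi(a,1)=0$ follows by integration in $\alpha$.
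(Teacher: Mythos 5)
Your handling of the second inequality and your reduction of the first one to the sign of $\phi'(a)$ are fine as far as they go, but the proposal has a genuine gap: the decisive step is never carried out. You choose to anchor at $a=c$ (where $\phi(c)=0$ is free) and therefore must prove $\phi'(a)\leq 0$ on all of $[c,b+c]$, including the stretch $c\leq a<b$ when $c<b$. That stretch lies \emph{outside} the hypothesis region — note the assumptions force $a\geq\max(b,c)$, since $a\geq c$ and $a+c\geq 2b$ give $2a\geq 2b$ — and it is exactly where the derivative inequality is delicate: as you yourself observe, the naive bound obtained by discarding $4cb$ fails, so the constraint $c\geq b/2$ must enter in an essential way. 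Your text only names this as "the main obstacle" and gestures at two possible resolutions (keeping $4cb$ inside the power, or proving $\partial_\alpha\phi\geq 0$ and integrating from the identity at $\alpha=1$), neither of which is verified. Since this is the entire analytic content of the lemma, the proposal as written does not establish it.

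For comparison, the paper avoids the $a<b$ regime altogether: it proves $f'(a)\leq 0$ only for $a\geq\max(b,c)$, where the single comparison $\br{(a-b)^2+4cb}^{\alpha-1}\geq (a+b)^{2\alpha-2}$ (using $a\geq c$, $a\geq b$, $\alpha\leq 1$) makes the derivative bound short, and then pays the price at the anchor instead: when $b\geq c$ it must show $f(b)\leq 0$, which after the estimate $2b^{2\alpha-1}c\leq 2(cb)^\alpha$ reduces to $(c+b)^{2\alpha}-(b^\alpha-c^\alpha)^2\leq(4cb)^\alpha$, i.e.\ \autoref{lmm:alpha_binom}, itself proved via two auxiliary lemmas. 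So the difficulty does not disappear in either architecture; it sits at the endpoint $a=b$ in the paper and inside the interval $[c,b)$ in your plan. If you keep your anchor at $a=c$, you still owe a complete proof of $\phi'\leq 0$ on $[c,b)$ using $c\geq b/2$ (or a proof of the claimed monotonicity in $\alpha$); if you instead anchor at $a=\max(b,c)$, you owe an analogue of \autoref{lmm:alpha_binom}. Either way, a substantive piece is currently missing.
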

\begin{proof}
	As $b\geq a-c$ and $2\alpha-1 \in[0,1]$, we have $b^{2\alpha-1} (a-c) \leq b (a-c)^{2\alpha-1}$.
	Define
	\begin{equation*}
		f(a) := a^{2\alpha}-c^{2\alpha}-\br{(a-b)^2+4cb}^{\alpha}+(c+b)^{2\alpha}  -2 b^{2\alpha-1} (a-c)
		\eqfs
	\end{equation*}
	We have
	\begin{align*}
		f\pr(a) &= 	2\alpha a^{2\alpha-1}-2\alpha (a-b)\br{(a-b)^2+4cb}^{\alpha-1}-2 b^{2\alpha-1}
		\\ {\scriptstyle a \geq b,c \text{ and } 2\alpha-2\leq 0} \qquad&\leq
		2\alpha a^{2\alpha-1}-2\alpha (a-b)\br{a+b}^{2\alpha-2}-2 b^{2\alpha-1}
		\\&=
		2\br{\alpha a^{2\alpha-1}- \alpha \frac{a-b}{a+b}\br{a+b}^{2\alpha-1} - b^{2\alpha-1}}
		\eqfs
	\end{align*}
	Set $x = \br{\frac{a-b}{a+b}}^{\frac{1}{2\alpha-1}} \leq 1$, $y = \alpha^{\frac{1}{2\alpha-1}} \leq 1$.
	Then 
	\begin{align*}
		\alpha a^{2\alpha-1}- \alpha \frac{a-b}{a+b}\br{a+b}^{2\alpha-1} - b^{2\alpha-1}
		&\leq
		(ya)^{2\alpha-1}- \br{xya+xyb}^{2\alpha-1} - b^{2\alpha-1}
		\\&\leq
		\br{ya-xya-xyb}^{2\alpha-1} - b^{2\alpha-1}
		\\&\leq
		\br{ya-xya-xyb-b}^{2\alpha-1}
		\\&=
		\br{(y-xy)a-(xy+1)b}^{2\alpha-1}
		\\&\leq
		\br{a-b}^{2\alpha-1}
		\\&\leq
		0\eqfs
	\end{align*}
	Thus, $f\pr(a) \leq 0$. Thus, only need to show $f(b) \leq 0$.
	Assume $b \geq c$. Then
	\begin{align*}
		f(b) 
		&= 
		b^{2\alpha}-c^{2\alpha}-\br{4cb}^{\alpha}+(c+b)^{2\alpha}  -2 b^{2\alpha-1} (b-c)
		\\&=
		-c^{2\alpha}-\br{4cb}^{\alpha}+(c+b)^{2\alpha}  - b^{2\alpha}+2 b^{2\alpha-1} c
		\\&\leq
		(c+b)^{2\alpha}- b^{2\alpha}-c^{2\alpha}-\br{4^\alpha-2}\br{cb}^{\alpha}
		\\&= 
		(c+b)^{2\alpha}-(b^\alpha-c^\alpha)^2-4^\alpha\br{cb}^{\alpha}
		\eqfs
	\end{align*}
	Thus, the next lemma implies $f(b) \leq 0$.
\end{proof}
\begin{lemma}\label{lmm:alpha_binom}
	Let $\alpha\in[\frac12,1]$, $x,y\geq0$.
	Then 
	\begin{equation*}
		(x+y)^{2\alpha}-(x^\alpha-y^\alpha)^2 \leq (4xy)^\alpha
		\eqfs
	\end{equation*}
\end{lemma}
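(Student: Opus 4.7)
The plan is to homogenize, reduce to a one-parameter inequality, and exploit the double equality at the endpoints of the $\alpha$-range.

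First, by symmetry in $(x,y)$, homogeneity of degree $2\alpha$, and trivial handling of $y = 0$ (both sides equal $x^{2\alpha}$), I assume WLOG that $x = 1$ and set $t := y \in (0, 1]$. Expanding $(1 - t^\alpha)^2 + (4t)^\alpha = 1 + (4^\alpha - 2)\, t^\alpha + t^{2\alpha}$, the claim reduces to
\[
(1+t)^{2\alpha} \ \leq\ 1 + (4^\alpha - 2)\, t^\alpha + t^{2\alpha}
\qquad (t \in [0,1],\ \alpha \in [\tfrac12, 1]).
\]

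Second, I would note four ``degenerate'' equalities: at $t = 0$ both sides equal $1$; at $t = 1$ both sides equal $2^{2\alpha}$; at $\alpha = \tfrac12$ both sides collapse to $1 + t$; and at $\alpha = 1$ both sides collapse to $(1+t)^2$. The double degeneracy in $\alpha$ strongly suggests fixing $t$ and studying the gap as a function of $\alpha$ rather than $t$.

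Third, the main step: for fixed $t \in (0,1)$, set
\[
\phi(\alpha) \ :=\ 1 + (4t)^\alpha - 2\, t^\alpha + t^{2\alpha} - (1+t)^{2\alpha},
\]
so that $\phi(\tfrac12) = \phi(1) = 0$, and the inequality is equivalent to $\phi(\alpha) \geq 0$ on $[\tfrac12, 1]$. Writing each summand as $\pm e^{c\alpha}$ gives
\[
\phi''(\alpha) \ =\ (\ln 4t)^2 (4t)^\alpha - 2(\ln t)^2 t^\alpha + 4(\ln t)^2 t^{2\alpha} - 4(\ln(1+t))^2 (1+t)^{2\alpha}.
\]
The plan is to prove $\phi''(\alpha) \leq 0$ on $[\tfrac12, 1]$; concavity combined with $\phi(\tfrac12) = \phi(1) = 0$ forces $\phi \geq 0$ (the function lies above its chord), which finishes the proof.

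The hard part will be verifying $\phi'' \leq 0$, because the four summands carry mixed signs and different growth rates as $t$ varies; numerics suggest the bound is tight only at $t = 0, 1$ and at $\alpha = \tfrac12, 1$. My plan is to split the analysis on whether $4t \leq 1$ or $4t \geq 1$ (which controls the sign of $\ln 4t$) and, within each regime, group the positive term $(\ln 4t)^2 (4t)^\alpha$ against $4(\ln(1+t))^2 (1+t)^{2\alpha}$ using the elementary facts $4t \leq (1+t)^2$ and $|\ln 4t| \leq 2\ln(1+t) + |\ln t|$ combined with $2(\ln t)^2 t^\alpha \geq 4(\ln t)^2 t^{2\alpha}$ (from $t \leq 1$). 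As a backup, if the $\alpha$-convexity route proves too delicate, the paper's ``brute force'' style of \autoref{app:power_inequality} applies directly: fix $\alpha$, let $f(t) := 1 + (4^\alpha-2) t^\alpha + t^{2\alpha} - (1+t)^{2\alpha}$, check $f(0) = f(1) = 0$, $f'(1) = 0$, $f'(0^+) = +\infty$ for $\alpha > \tfrac12$, and show that $f'$ has exactly one interior zero via case analysis on $t$, forcing $f \geq 0$ on $[0,1]$.
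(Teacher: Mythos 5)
Your reduction to $(1+t)^{2\alpha}\le 1+(4^\alpha-2)t^\alpha+t^{2\alpha}$ for $t\in[0,1]$ is correct, but the main step of your plan fails: $\phi(\alpha)=1+(4t)^\alpha-2t^\alpha+t^{2\alpha}-(1+t)^{2\alpha}$ is \emph{not} concave in $\alpha$ on $[\tfrac12,1]$ when $t$ is small. Concretely, at $t=0.005$, $\alpha=1$ the four terms of your formula for $\phi''$ evaluate to approximately $0.3061-0.2807+0.0028-0.0001\approx 0.028>0$. This is not a boundary artifact: for any fixed $\alpha\in(\tfrac12,1]$, as $t\searrow 0$ one has $(\ln 4t)^2(4t)^\alpha-2(\ln t)^2t^\alpha\sim(4^\alpha-2)(\ln t)^2t^\alpha>0$, while the remaining two terms are only $O\bigl((\ln t)^2t^{2\alpha}\bigr)$ and $O(t^2)$, hence $\phi''(\alpha)>0$ for all sufficiently small $t>0$. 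So no regrouping of the four summands (your case split on $4t\le 1$ versus $4t\ge 1$) can establish $\phi''\le 0$; the concavity-plus-chord argument cannot be repaired, and the double degeneracy at $\alpha=\tfrac12$ and $\alpha=1$ only tells you that $\phi$ vanishes at the endpoints, not that it is concave in between.

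Your backup sketch points in a workable direction --- it is essentially the paper's strategy, which fixes $\alpha$ and performs a monotonicity analysis in the single variable $z=x/y\ge 1$, showing $f(z)=(z+2+z^{-1})^\alpha-z^\alpha-z^{-\alpha}\le f(1)=4^\alpha-2$ via $f'\le 0$ --- but as written it is only a plan. The assertion that $f'(t)=\alpha(4^\alpha-2)t^{\alpha-1}+2\alpha t^{2\alpha-1}-2\alpha(1+t)^{2\alpha-1}$ has exactly one interior sign change is precisely the hard part and is left unproven; note that $f'(1)=0$ as well, and the normalized function $t^{1-\alpha}f'(t)/(2\alpha)=2^{2\alpha-1}-1+t^\alpha-t^{1-\alpha}(1+t)^{2\alpha-1}$ is not monotone on $(0,1)$ (numerically it decreases through zero, dips negative, and climbs back to $0$ at $t=1$), so the single-crossing claim needs a genuine argument rather than a remark. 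In the paper this is exactly where the work goes: the sign of $f'$ is settled by \autoref{lmm:abxfrac}, a concavity-in-the-exponent statement for $x\mapsto(a^x-b^x)/(a+b)^x$ with endpoint comparison at $x=1,2$ --- a correct incarnation of the ``exploit equality at both ends of the exponent range'' idea that your primary route applied to the wrong function. As it stands, the proposal has a genuine gap.
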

We need two further lemmas before we prove this inequality.
\begin{lemma}\label{lmm:slogs}
	For $s\in[0,\frac12]$, we have
	\begin{equation*}
		\frac{1-s}{s} \leq \frac{\log(s)}{\log(1-s)}
		\eqfs
	\end{equation*}
\end{lemma}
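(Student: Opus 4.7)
The plan is to rewrite the inequality by clearing denominators. For $s \in (0, 1/2]$, both $\log(s)$ and $\log(1-s)$ are negative, so multiplying by the negative quantity $s\log(1-s)$ flips the inequality and reduces it to the more symmetric form
\[
(1-s)\log(1-s) \ \geq \ s\log(s),
\]
interpreted via $0\log 0 = 0$ at the endpoint $s=0$. Equivalently, I would define $f(s) := (1-s)\log(1-s) - s\log(s)$ and prove $f \geq 0$ on $[0, 1/2]$.

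The immediate observation is that the boundary values match: $f(0) = 0$ and $f(1/2) = \tfrac12\log\tfrac12 - \tfrac12\log\tfrac12 = 0$. So the statement is an assertion that a continuous function which vanishes at both endpoints is nonnegative on the interval between them. The natural tool is a concavity/convexity analysis via the second derivative.

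I would compute
\[
f'(s) = -\log\bigl(s(1-s)\bigr) - 2, \qquad f''(s) = \frac{1}{1-s} - \frac{1}{s} = \frac{2s-1}{s(1-s)},
\]
and note that $f''(s) < 0$ on $(0, 1/2)$. Hence $f'$ is strictly decreasing there, with $f'(0^+) = +\infty$ and $f'(1/2) = 2\log 2 - 2 < 0$, so $f'$ has a unique zero $s_\star \in (0, 1/2)$ (solving $s(1-s) = e^{-2}$). Consequently $f$ is increasing on $[0, s_\star]$ and decreasing on $[s_\star, 1/2]$; combined with $f(0) = f(1/2) = 0$, this forces $f \geq 0$ throughout $[0, 1/2]$, which is the desired inequality.

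There is no real obstacle: the only subtle point worth flagging in the write-up is the sign reversal when multiplying through by $s\log(1-s) < 0$ (which is precisely why the original right-hand side is a positive number). After that reduction, the proof is a one-shot application of the second-derivative test and matching boundary values.
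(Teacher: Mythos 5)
Your proof is correct and follows essentially the same route as the paper: clear the denominators (with the sign flip from $s\log(1-s)<0$) to reduce the claim to $s\log(s)\leq(1-s)\log(1-s)$, then use a second-derivative argument together with the vanishing boundary values at $s=0$ and $s=\tfrac12$. The only difference is cosmetic — you work with the negated function (concave, shown nonnegative via a monotonicity split) where the paper shows its convex counterpart is nonpositive.
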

\begin{proof}
	Define
	\begin{equation*}
		f(s) := s\log(s) - (1-s)\log(1-s)
		\eqfs
	\end{equation*}
	It hold
	\begin{equation*}
		f\prr(s) = \frac1s-\frac1{1-s} 
		\eqfs
	\end{equation*}
	Thus, $f\prr(s) \geq 0$ for $s\leq\frac12$.
	We have $f(0)=f(\frac12)=0$.
	Thus, $f(s) \leq 0$. Thus,
	\begin{equation*}
		s\log(s) \leq (1-s)\log(1-s)
		\eqfs
	\end{equation*}
	Because of $\log(1-s) \leq 0$, thus implies 
	\begin{equation*}
		\frac{1-s}{s} \leq \frac{\log(s)}{\log(1-s)}
		\eqfs
	\end{equation*}
\end{proof}
\begin{lemma}\label{lmm:abxfrac}
	Let $a,b\geq0$, $x\in[1,2]$.
	Define
	\begin{equation*}
		f(x) := \frac{a^{x}-b^{x}}{(a+b)^{x}}
		\eqfs
	\end{equation*}
	Assume $a\geq b$.
	Then $f\prr(x) \leq 0$.
	In particular,
	\begin{equation*}
		\inf_{x\in[1,2]} f(x) = f(1) = f(2) = \frac{a^2-b^2}{(a+b)^2} = \frac{a-b}{a+b}
		\eqfs
	\end{equation*}
\end{lemma}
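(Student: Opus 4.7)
The plan is to reduce the statement to a one-variable problem via the substitution $s := b/(a+b)$, which lies in $[0,\tfrac12]$ since $a \geq b$. Writing $\tfrac{a}{a+b} = 1-s$, we obtain the clean form $f(x) = (1-s)^x - s^x$, so that
\begin{equation*}
  f''(x) = (1-s)^x \log(1-s)^2 - s^x \log(s)^2.
\end{equation*}
The inequality $f''(x) \leq 0$ thus rearranges to
\begin{equation*}
  \left(\frac{1-s}{s}\right)^{\!x} \leq \left(\frac{\log(s)}{\log(1-s)}\right)^{\!2}.
\end{equation*}

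Because $s \in (0,\tfrac12]$, the base $(1-s)/s$ is at least $1$, so the left-hand side is non-decreasing in $x$ and hence maximised at $x=2$ on the interval $[1,2]$. It therefore suffices to verify the inequality at $x=2$, where it reduces to $\tfrac{1-s}{s} \leq \tfrac{\log(s)}{\log(1-s)}$ (both sides being positive, one can take square roots). This last bound is exactly the content of \autoref{lmm:slogs}, so the concavity claim $f''(x) \leq 0$ follows. The degenerate cases $b = 0$ (giving $f \equiv 1$) and $s = \tfrac12$ (giving equality throughout) are immediate and can be handled separately.

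For the ``in particular'' statement, a direct computation gives
\begin{equation*}
  f(1) = \frac{a-b}{a+b} \qquad \text{and} \qquad f(2) = \frac{a^2-b^2}{(a+b)^2} = \frac{(a-b)(a+b)}{(a+b)^2} = \frac{a-b}{a+b},
\end{equation*}
so the two endpoint values coincide. Since $f$ is concave on $[1,2]$ with equal boundary values, $f(x) \geq f(1) = f(2)$ for all $x \in [1,2]$, and so $\inf_{x\in[1,2]} f(x) = \tfrac{a-b}{a+b}$, attained at both endpoints.

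The only nontrivial step is the reduction via \autoref{lmm:slogs}; everything else is algebraic manipulation and monotonicity, so I expect no significant obstacle once the substitution $s = b/(a+b)$ is in place.
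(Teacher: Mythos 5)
Your proof is correct and follows essentially the same route as the paper's: compute $f''$, substitute $s = b/(a+b)$, bound the power term using $x \le 2$, and reduce the key inequality to \autoref{lmm:slogs}. The only difference is cosmetic — you normalise by $(a+b)^x$ first and you spell out explicitly the concavity-implies-endpoint-infimum step that the paper leaves implicit.
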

\begin{proof}
	We have
	\begin{equation*}
		f\prr(x) = \br{a+b}^{-x} \br{a^x \log\brOf{\frac{a}{a+b}}^2 - b^x \log\brOf{\frac{b}{a+b}}^2}
		\eqfs
	\end{equation*}
	Set $s = \frac{b}{a+b}$. Then $1-s = \frac{a}{a+b}$. Then \autoref{lmm:slogs} implies
	\begin{equation*}
		\frac{a}{b} \leq \frac{\log\brOf{\frac{b}{a+b}}}{\log\brOf{\frac{a}{a+b}}}
		\eqfs
	\end{equation*}
	Thus,
	\begin{align*}
		\br{\frac{a}{b}}^x \leq \br{\frac{a}{b}}^2 \leq \br{\frac{\log\brOf{\frac{b}{a+b}}}{\log\brOf{\frac{a}{a+b}}}}^2
		\eqfs
	\end{align*}
	Thus,
	\begin{equation*}
		a^x \log\brOf{\frac{a}{a+b}}^2 \leq b^x \log\brOf{\frac{b}{a+b}}^2
		\eqfs
	\end{equation*}
	Thus, $f\prr(x) \leq 0$.
\end{proof}
\begin{proof}[of \autoref{lmm:alpha_binom}]
	For $z\geq1$ define
	\begin{equation*}
		f(z) := \br{z+2+z^{-1}}^{\alpha}-z^\alpha-z^{-\alpha}
		\eqfs
	\end{equation*}
	We will show that $f(z) \leq 4^\alpha-2$.
	This implies
	\begin{equation*}
		\frac{(z+1)^{2\alpha}-z^{2\alpha}-1}{z^\alpha} \leq  4^\alpha-2
		\eqfs
	\end{equation*}
	Thus,
	\begin{equation*}
		(z+1)^{2\alpha} \leq \br{ 4^\alpha-2}z^\alpha+z^{2\alpha}+1
		\eqfs
	\end{equation*}
	By setting $z = \frac xy$  for $x\geq y$, we obtain
	\begin{equation*}
		(x+y)^{2\alpha}-(x^\alpha-y^\alpha)^2 \leq (4xy)^\alpha
		\eqfs
	\end{equation*}
	The condition $x\geq y$ can be dropped because of symmetry.
	It remains to show that $f(z) \leq 4^\alpha-2$ is indeed true.
	We have $f(1) = 4^\alpha-2$. To finish the proof, we will show $f\pr(z) \leq 0$. Define
	\begin{equation*}
		g(z) := (z^2-1)(z+2)^{2\alpha}-(z+1)^2(z^{2\alpha}-1)
		\eqfs
	\end{equation*}
	Then
	\begin{equation*}
		f\pr(z) \frac{z^{\alpha+2}\br{z+2+z^{-1}}}{\alpha} = g(z)
		\eqfs
	\end{equation*}
	We show $g(z) \leq 0$, and therefore $f\pr(z)\leq0$, by applying \autoref{lmm:abxfrac} with $a=z$, $b=1$, and $x=2\alpha$:
	\begin{equation*}
		\frac{z^{x}-1^{x}}{(z+1)^{x}} \geq \frac{z^{2}-1^{2}}{(z+1)^{2}}
		\eqcm
	\end{equation*}
	which implies
	\begin{equation*}
		\br{z^{2\alpha}-1}(z+1)^{2} \geq \br{z^{2}-1}(z+1)^{2\alpha}
		\eqfs
	\end{equation*}
\end{proof}
According to \autoref{rmk:outline}, we have now finally finished to proof of \autoref{con:ana} and therefore of \autoref{thm:power_inequ}.
\end{appendices}
\phantomsection
\addcontentsline{toc}{section}{Index of Notations}
\printindex[inot]
\phantomsection
\addcontentsline{toc}{section}{Index of Assumptions}
\printindex[iass]
\phantomsection
\addcontentsline{toc}{section}{References}
\printbibliography
\end{document}